\documentclass[reqno]{amsart}

\usepackage[all]{xy}
\usepackage{lscape}
\usepackage{amsmath, amssymb, amsfonts, comment}
\usepackage{mathrsfs}
\usepackage{stmaryrd}
\usepackage{color}
\usepackage{mathtools}
\usepackage[normalem]{ulem}
\usepackage{framed}
\usepackage{tikz-cd}
\usepackage{hyperref}
\usepackage{silence}
\usepackage{bbm}

\setcounter{tocdepth}{1}

\WarningFilter{latex}{Marginpar on page}

\newcommand{\fdg}[1]{\widetilde{#1}}

\DeclareMathOperator*{\colim}{colim}
\DeclareMathOperator{\coker}{coker}
\DeclareFontFamily{U}{wncy}{}
    \DeclareFontShape{U}{wncy}{m}{n}{<->wncyr10}{}
    \DeclareSymbolFont{mcy}{U}{wncy}{m}{n}
    \DeclareMathSymbol{\Sha}{\mathord}{mcy}{"58}

\DeclareMathOperator{\image}{im}
\DeclareMathOperator{\coim}{coim}

\DeclareMathOperator{\Filt}{Filt}

\newcommand{\ul}{\underline}
\newcommand{\Aa}{\mathcal{A}}
\newcommand{\Cc}{\mathcal{C}}
\newcommand{\CR}{\mathfrak{cr}}
\newcommand{\dbydx}{i^\dagger}

\newcommand{\Bb}{\mathcal{B}}

\newcommand{\Dd}{\mathcal{D}}

\newcommand{\Ii}{\mathcal{I}}

\newcommand{\Mm}{\mathcal{M}}
\newcommand{\Nn}{\mathcal{N}}
\newcommand{\Oo}{\mathcal{O}}
\newcommand{\Pp}{\mathcal{P}}
\newcommand{\Qq}{\mathcal{Q}}
\newcommand{\Ss}{\mathcal{S}}
\newcommand{\Tt}{\mathcal{T}}

\newcommand{\Ww}{\mathcal{W}}
\newcommand{\Zz}{\mathcal{Z}}

\newcommand{\lL}{\mathbb{L}}
\newcommand{\nN}{\mathbb{N}}

\newcommand{\qQ}{\mathbb{Q}}
\newcommand{\rR}{\mathbb{R}}
\newcommand{\sS}{\mathbb{S}}
\newcommand{\zZ}{\mathbb{Z}}

\newcommand{\I}{\mathrm{I}}
\newcommand{\im}{\image}
\newcommand{\id}{\mathrm{id}}
\newcommand{\Hom}{\mathrm{Hom}}

\renewcommand{\H}{\mathrm{H}}
\newcommand{\End}{\mathrm{End}}
\newcommand{\Tw}{\mathrm{Tw}}
\newcommand{\cAs}{\mathrm{cAs}}
\newcommand{\cAsa}{\mathrm{cAs}^{\text{!`}}}
\newcommand{\cAi}{\mathrm{cA}_\infty}
\newcommand{\cLie}{\mathrm{cLie}}
\newcommand{\Li}{\mathrm{L}_\infty}
\newcommand{\cLi}{\mathrm{cL}_\infty}
\newcommand{\As}{\mathrm{As}}
\newcommand{\Asa}{\mathrm{As}^{\text{!`}}}
\newcommand{\Ai}{\mathrm{A}_\infty}
\newcommand{\B}{\mathrm{B}}
\newcommand{\hB}{\hat{\mathrm{B}}}
\newcommand{\hOm}{\hat{\mathrm{\Omega}}}

\newcommand{\Gr}{\mathrm{Gr}\,}

\newcommand{\op}{\mathrm{op}}
\newcommand{\Prim}{\mathrm{Prim}}

\newcommand{\graded}{\mathsf{g}}
\newcommand{\dg}{\mathsf{dg}}
\newcommand{\pg}{\mathsf{pg}}

\newcommand{\pgA}{\mathsf{pg\Aa}}
\newcommand{\complete}{\widehat{\Filt}}
\newcommand{\Comp}{\complete}
\newcommand{\compa}{\complete^{\mathsf{gr}}\! \!}

\DeclareMathOperator{\ob}{ob}

\newcommand{\cdgA}{\widehat{\mathsf{dg \Aa}}}

\newcommand{\capgA}{\widehat{\mathsf{\pgA}}^{\mathsf{gr}}}
\newcommand{\cdgSA}{\mathsf{\sS \text{-}Mod}(\cdgA)}

\newcommand{\capgSA}{\mathsf{\sS \text{-} Mod}(\widehat{\Aa}^{\mathsf{gr}})}

\newcommand{\fapgA}{\Filt^{\mathsf{gr}}(\Aa)}
\newcommand{\fapg}{\Filt^{\mathsf{gr}}}

\newcommand{\Mod}{\mathsf{Mod}}
\newcommand{\Coop}{\mathsf{Coop}}

\newcommand{\cOp}{\mathsf{cOp}}

\newcommand{\Alg}{\mathsf{Alg}}
\newcommand{\Hoalg}{\mathsf{Hoalg}}

\newcommand{\ringK}{\mathbf{R}}

\newcommand{\del}{\partial}

\newcommand{\free}{\mathcal{T}}
\newcommand{\cfree}{\mathsf{c}\mathcal{T}}
\newcommand{\cofree}{\free^c}
\newcommand{\antishriek}{\text{!`}}
\newcommand{\Ooa}{\Oo^\text{!`}}
\newcommand{\Ppa}{\Pp^\text{!`}}

\newcommand{\oD}{\overline{D}}

\newcommand{\dt}{\delta_\theta}

\newcommand{\coideal}[1]{\widetilde{#1}}

\theoremstyle{plain}
\newtheorem{thm}[subsection]{Theorem}
\newtheorem{prop}[subsection]{Proposition}
\newtheorem{lemma}[subsection]{Lemma}

\newtheorem{cor}[subsection]{Corollary}

\theoremstyle{definition}

\newtheorem{defi}[subsection]{Definition}
\newtheorem{fact}[subsection]{Fact}
\newtheorem{construction}[subsection]{Construction}
\newtheorem{notation}[subsection]{Notation}

\theoremstyle{remark}
\newtheorem{remark}[subsection]{Remark}

\newtheorem{examples}[subsection]{Examples}


\newcommand{\curvA}{\vcenter{
\xymatrix@M=0pt@R=3pt@C=3pt{&&\\
& *{} \ar@{{*}}[u] \ar@{-}[d]\\
&&}}}

\newcommand{\as}{\vcenter{
\xymatrix@M=0pt@R=4pt@C=4pt{&&\\
& *{} \ar@{-}[lu] \ar@{-}[ur] \ar@{-}[d] &\\
&&}}}

\newcommand{\ass}{\vcenter{
    \xymatrix@M=0pt@R=4pt@C=4pt{&&&&\\
      \ar@{-}[drdr] && \ar@{-}[dr] && \ar@{-}[dldl]\\
      &&&& \\
      && *{}\ar@{-}[d] && \\
      &&&&}}
- 
\vcenter{ \xymatrix@M=0pt@R=4pt@C=4pt{
      &&&&\\ \ar@{-}[ddrr] && \ar@{-}[dl] && \ar@{-}[ddll]\\
      &&&& \\
      && *{} \ar@{-}[d] &&\\
      &&}}}

\newcommand{\curvAs}{\vcenter{\xymatrix@M=0pt@R=4pt@C=4pt{
      &&&&\\
      &&&&\\
      & \ar@{-}[dr] && \ar@{{*}}[u] \ar@{-}[dl] &\\
      && *{} \ar@{-}[d] &&\\
      &&}}
-
\vcenter{ \xymatrix@M=0pt@R=4pt@C=4pt{
      &&&&\\
      &&&&\\
      & \ar@{{*}}[u] \ar@{-}[dr] && \ar@{-}[dl] & \\
      && *{} \ar@{-}[d] &&\\
      &&}}}

\newcommand{\thetamun}{\vcenter{
\xymatrix@M=1pt@R=3pt@C=3pt{&&\\ & \mu_n^S \ar@{-}[ul] \ar@{-}[u] \ar@{-}[ur] \ar@{-}[d] &\\ & \theta \ar@{-}[d] &\\
&&}}}

\newcommand{\muntheta}{\vcenter{
\xymatrix@M=1pt@R=3pt@C=3pt{&&\\ & \theta \ar@{-}[u] &\\ & \mu_n^S \ar@{-}[ul] \ar@{-}[u] \ar@{-}[ur] \ar@{-}[d] &\\
&&}}}

\newcommand{\munpucurvA}{\vcenter{
\xymatrix@M=1pt@R=3pt@C=3pt{&&&\\ & *{} \ar@{-}[u] \ar@{-}[u] &&\\ & \mu_{n+1}^{S_j} \ar@{-}[ul] \ar@{-}[u] \ar@{-}[ur] \ar@{-}[d] &&\\
&&}}}

\newcommand{\munpucurvAp}{\vcenter{
\xymatrix@M=1pt@R=3pt@C=3pt{&&&\\ & *{} \ar@{-}[u] \ar@{-}[u] &&\\ & \mu_{n+1}^{S_j'} \ar@{-}[ul] \ar@{-}[u] \ar@{-}[ur] \ar@{-}[d] &&\\
&&}}}

\newcommand{\gentheta}{\vcenter{
\xymatrix@M=0pt@R=3pt@C=3pt{&&\\ &&\\ & *{} \ar@{-}[uu] &\\
& *{} \ar@{{*}}[u] \ar@{-}[d]\\
&&}}}

\newcommand{\unit}{\vcenter{
\xymatrix@M=0pt@R=3pt@C=3pt{&&\\ & *{} \ar@{-}[u] &\\
& *{} \ar@{-}[u] \ar@{-}[d]\\
&&}}}

\newcommand{\curvCork}{\tiny \vcenter{\xymatrix@R=2pt@C=2pt{
      &&&&\\
      & \ar@{-}[dr] && \theta_2 \ar@{-}[dl] &\\
      && \theta_1 \ar@{-}[d] &&\\
      &&}}}

\newcommand{\curvCog}{\tiny \vcenter{\xymatrix@R=2pt@C=2pt{
      &&\\
      & x_k \ar@{-}[d] \ar@{-}[u] &\\
      & x_k \ar@{-}[d] &\\
      &&}}}

\newcommand{\term}{\tiny \vcenter{\xymatrix@R=2pt@C=2pt{
      &&&&\\
      & t_1 \ar@{-}[ul] \ar@{-}[u] \ar@{-}[dr] & \dots & t_k \ar@{-}[u] \ar@{-}[ur] \ar@{-}[dl] &\\
      && \mu \ar@{-}[d] &&\\
      &&}}}
      
\newcommand{\dtbottom}{\tiny \vcenter{\xymatrix@R=2pt@C=2pt{
      &&&&\\
      & t_1 \ar@{-}[ul] \ar@{-}[u] \ar@{-}[dr] & \dots & t_k \ar@{-}[u] \ar@{-}[ur] \ar@{-}[dl] &\\
      && \dt(\mu) \ar@{-}[d] &&\\
      &&}}}

\newcommand{\dttop}{\tiny \vcenter{\xymatrix@R=2pt@C=2pt{
      &&&&&&\\
      & t_1 \ar@{-}[ul] \ar@{-}[u] \ar@{-}[drr] & \dots & \dt(t_j) \ar@{-}[ul] \ar@{-}[u] \ar@{-}[ur] \ar@{-}[d] & \dots & t_k \ar@{-}[u] \ar@{-}[ur] \ar@{-}[dll] &\\
      &&& \mu \ar@{-}[d] &&&\\
      &&&&}}}

\newcommand{\tin}{\tiny \vcenter{\xymatrix@R=2pt@C=2pt{
      &&&&&&\\
      &&& t_j \ar@{-}[ul] \ar@{-}[u] \ar@{-}[ur] &&&\\
      & t_1 \ar@{-}[ul] \ar@{-}[u] \ar@{-}[drr] & \dots & \theta \ar@{-}[u] \ar@{-}[d] & \dots & t_k \ar@{-}[u] \ar@{-}[ur] \ar@{-}[dll] &\\
      &&& \mu \ar@{-}[d] &&&\\
      &&&&}}}

\newcommand{\dtmu}{\tiny \vcenter{\xymatrix@R=2pt@C=2pt{
      &&\\
      & \mu \ar@{-}[d] \ar@{-}[ul] \ar@{-}[u] \ar@{-}[ur] &\\
      & \theta \ar@{-}[d] &\\
      &&}}
      -
      \sum_{E(\mu)} \vcenter{\xymatrix@R=2pt@C=2pt{
      &&\\
      & \theta \ar@{-}[u] &\\
      & \mu \ar@{-}[ul] \ar@{-}[u] \ar@{-}[ur] \ar@{-}[d] &\\
      &&}}}

\newcommand{\tij}{\tiny \vcenter{\xymatrix@R=2pt@C=2pt{
      &&&\\
      && t_j \ar@{-}[u] \ar@{-}[ul] \ar@{-}[ur] \ar@{-}[dl] &\\
      & t_i \ar@{-}[d] \ar@{-}[ul] &&\\
      &&}}}

\newcommand{\tdtij}{\tiny \vcenter{\xymatrix@R=2pt@C=2pt{
      &&&\\
      && t_j \ar@{-}[u] \ar@{-}[ul] \ar@{-}[ur] \ar@{-}[dl] &\\
      & \dt(t_i) \ar@{-}[d] \ar@{-}[ul] &&\\
      &&}}}

\newcommand{\tidtj}{\tiny \vcenter{\xymatrix@R=2pt@C=2pt{
      &&&\\
      && \dt(t_j) \ar@{-}[u] \ar@{-}[ul] \ar@{-}[ur] \ar@{-}[dl] &\\
      & t_i \ar@{-}[d] \ar@{-}[ul] &&\\
      &&}}}

\newcommand{\titj}{\tiny \vcenter{\xymatrix@R=2pt@C=2pt{
      &&&\\
      && t_j \ar@{-}[u] \ar@{-}[ul] \ar@{-}[ur] \ar@{-}[d] &\\
      && \theta \ar@{-}[u] \ar@{-}[dl] &\\
      & t_i \ar@{-}[d] \ar@{-}[ul] &&\\
      &&}}}

\newcommand{\toDdtij}{\tiny \vcenter{\xymatrix@R=2pt@C=2pt{
      &&&\\
      && t_j \ar@{-}[u] \ar@{-}[ul] \ar@{-}[ur] \ar@{-}[dl] &\\
      & \oD \left(\dt(t_i)\right) \ar@{-}[d] \ar@{-}[ul] &&\\
      &&}}}

\newcommand{\tioDdtj}{\tiny \vcenter{\xymatrix@R=2pt@C=2pt{
      &&&\\
      && \oD \left(\dt(t_j) \right) \ar@{-}[u] \ar@{-}[ul] \ar@{-}[ur] \ar@{-}[dl] &\\
      & t_i \ar@{-}[d] \ar@{-}[ul] &&\\
      &&}}}

\newcommand{\tittj}{\tiny \vcenter{\xymatrix@R=1pt@C=1pt{
&&&&\\
&&& t_j \ar@{-}[dd] \ar@{-}[ul] \ar@{-}[u] \ar@{-}[ur] &\\
\ar@{.}[rrrr] &&&& \\
&&& \theta & \\
\ar@{.}[rrrr] &&&& \\
&& t_i \ar@{-}[d] \ar@{-}[ul] \ar@{-}[uur] && \\
&&&}}}

\newcommand{\dottjdt}{\tiny \vcenter{\xymatrix@R=1pt@C=1pt{
&&&&\\
& \delta_i^3 \ar@{-}[dddrr] &&&&\\
\ar@{.}[rrrrrr] &&&&&& \\
&&&& t_j \ar@{-}[dl] \ar@{-}[uul] \ar@{-}[uu] \ar@{-}[uur] &&\\
&&& \delta_i^2 \ar@{-}[ddl] &&& \\
\ar@{.}[rrrrrr] &&&&&& \\
&& \delta_i^1 \ar@{-}[d] \ar@{-}[ul] && \\
&&&}}}

\newcommand{\tktj}{\tiny \vcenter{\xymatrix@R=1pt@C=1pt{
&&&&\\
& \theta_l \ar@{-}[dddrr] &&&&\\
\ar@{.}[rrrrrr] &&&&&& \\
&&&& t_j \ar@{-}[dl] \ar@{-}[uul] \ar@{-}[uu] \ar@{-}[uur] &&\\
&&& \theta_k \ar@{-}[ddl] &&& \\
\ar@{.}[rrrrrr] &&&&&& \\
&& t_i \ar@{-}[d] \ar@{-}[ul] && \\
&&&}}}

\newcommand{\tott}{\tiny \vcenter{\xymatrix@R=1pt@C=1pt{
&&&&\\
& \theta_2 \ar@{-}[dddrr] &&&&\\
\ar@{.}[rrrrrr] &&&&&& \\
&&&& t_j \ar@{-}[dl] \ar@{-}[uul] \ar@{-}[uu] \ar@{-}[uur] &&\\
&&& \theta_1 \ar@{-}[ddl] &&& \\
\ar@{.}[rrrrrr] &&&&&& \\
&& t_i \ar@{-}[d] \ar@{-}[ul] && \\
&&&}}}

\newcommand{\dotttj}{\tiny \vcenter{\xymatrix@R=1pt@C=1pt{
&&&&&\\
\ar@{.}[rrrrrr] &&&&&& \\
&&&& t_j \ar@{-}[dl] \ar@{-}[uul] \ar@{-}[uu] \ar@{-}[uur] &&\\
&&& \delta_i^3 \ar@{-}[ddl] \ar@{-}[uuull] &&& \\
\ar@{.}[rrrrrr] &&&&&& \\
&& \delta_i^{12} \ar@{-}[d] && \\
&&&}}}

\newcommand{\xltj}{\tiny \vcenter{\xymatrix@R=1pt@C=1pt{
&&&&&\\
\ar@{.}[rrrrr] &&&&& \\
&&& t_j \ar@{-}[d] \ar@{-}[uul] \ar@{-}[uu] \ar@{-}[uur] &&\\
&&& x_l \ar@{-}[dd] &&\\
\ar@{.}[rrrrr] &&&&& \\
& \ar@{-}[dr] && x_k \ar@{-}[dl] && \\
&& t_i \ar@{-}[d] &&& \\
&&&&}}}

\newcommand{\tidoo}{\tiny \vcenter{\xymatrix@R=1pt@C=1pt{
&&&&\\
&&& \delta_j^{23} \ar@{-}[dd] \ar@{-}[ul] \ar@{-}[u] \ar@{-}[ur] &&\\
\ar@{.}[rrrr] &&&& \\
&&& \delta_j^1 \ar@{-}[dl] & \\
&& t_i \ar@{-}[dd] \ar@{-}[uuull] && \\
\ar@{.}[rrrr] &&&& \\
&&&&}}}

\newcommand{\tidot}{\tiny \vcenter{\xymatrix@R=1pt@C=1pt{
&&&&&\\
&& \delta_j^{2} \ar@{-}[u] \ar@{-}[ddr] && \delta_j^{3} \ar@{-}[ddl] \ar@{-}[ul] \ar@{-}[ur] &\\
\ar@{.}[rrrr] &&&& \\
&&& \delta_j^1 \ar@{-}[dl] & \\
&& t_i \ar@{-}[dd] \ar@{-}[uuull] && \\
\ar@{.}[rrrr] &&&& \\
&&&&}}}

\newcommand{\titko}{\tiny \vcenter{\xymatrix@R=1pt@C=1pt{
&&&&\\
&&& t_j \ar@{-}[d] \ar@{-}[ul] \ar@{-}[u] \ar@{-}[ur] &\\
&&& x_{l} \ar@{-}[dd] \ar@{-}[u] &&\\
\ar@{.}[rrrr] &&&& \\
&&& x_k \ar@{-}[dl] & \\
&& t_i \ar@{-}[dd] \ar@{-}[uuull] && \\
\ar@{.}[rrrr] &&&& \\
&&&&}}}

\newcommand{\titkt}{\tiny \vcenter{\xymatrix@R=1pt@C=1pt{
&&&&&\\
&& \theta_{l} \ar@{-}[ddr] && t_j \ar@{-}[ddl] \ar@{-}[ul] \ar@{-}[u] \ar@{-}[ur] &\\
\ar@{.}[rrrr] &&&& \\
&&& \theta_k \ar@{-}[dl] & \\
&& t_i \ar@{-}[dd] \ar@{-}[uuull] && \\
\ar@{.}[rrrr] &&&& \\
&&&&}}}

\newcommand{\toto}{\tiny \sum_{\overset{\circ}{E}(t_i)} \vcenter{\xymatrix@R=1pt@C=1pt{
&&&&&&\\
& \theta_2 \ar@{-}[ddr] && \ar@{-}[ddl] && t_j \ar@{-}[dddll] \ar@{-}[ul] \ar@{-}[u] \ar@{-}[ur] &\\
\ar@{.}[rrrrrr] &&&&&& \\
&& \theta_1 \ar@{-}[dr] &&&& \\
&&& t_i \ar@{-}[dd] &&& \\
\ar@{.}[rrrrrr] &&&&&& \\
&&&&&}}
+
\vcenter{\xymatrix@R=1pt@C=1pt{
&&&&&&\\
& \ar@{-}[dddrr] && \theta_2 \ar@{-}[ddr] && t_j \ar@{-}[ddl] \ar@{-}[ul] \ar@{-}[u] \ar@{-}[ur] &\\
\ar@{.}[rrrrrr] &&&&&& \\
&&&& \theta_1 \ar@{-}[dl] && \\
&&& t_i \ar@{-}[dd] &&& \\
\ar@{.}[rrrrrr] &&&&&& \\
&&&&&}}
+
\vcenter{\xymatrix@R=1pt@C=1pt{
&&&&&&\\
& \theta_2 \ar@{-}[dddrr] && \ar@{-}[ddr] && t_j \ar@{-}[ddl] \ar@{-}[ul] \ar@{-}[u] \ar@{-}[ur] &\\
\ar@{.}[rrrrrr] &&&&&& \\
&&&& t_i \ar@{-}[dl] && \\
&&& \theta_1 \ar@{-}[dd] &&& \\
\ar@{.}[rrrrrr] &&&&&& \\
&&&&&}}
}

\newcommand{\toxoa}{\tiny \vcenter{\xymatrix@R=1pt@C=1pt{
&&&&&\\
& x_k \ar@{-}[dd] &&& t_j \ar@{-}[dddll] \ar@{-}[ul] \ar@{-}[u] \ar@{-}[ur] &\\
\ar@{.}[rrrrr] &&&&&& \\
& x_k \ar@{-}[dr] &&&& \\
&& t_i \ar@{-}[dd] &&& \\
\ar@{.}[rrrrr] &&&&&& \\
&&&&}}}
 
\newcommand{\toxob}{\tiny \vcenter{\xymatrix@R=1pt@C=1pt{
&&&&\\
&&& t_j \ar@{-}[d] \ar@{-}[ul] \ar@{-}[u] \ar@{-}[ur] &&\\
&&& x_k \ar@{-}[dd] &\\
\ar@{.}[rrrr] &&&& \\
& \ar@{-}[dr] && x_k \ar@{-}[dl] & \\
&& t_i \ar@{-}[dd] && \\
\ar@{.}[rrrr] &&&& \\
&&&&}}}

\newcommand{\xttt}{\tiny \vcenter{\xymatrix@R=1pt@C=1pt{
&&&&&\\
\ar@{.}[rrrrr] &&&&& \\
&&& t_j \ar@{-}[d] \ar@{-}[uul] \ar@{-}[uu] \ar@{-}[uur] &&\\
&&& x_k \ar@{-}[dd] &&\\
\ar@{.}[rrrrr] &&&&& \\
& \ar@{-}[dr] && x_k \ar@{-}[dl] && \\
&& t_i \ar@{-}[d] &&& \\
&&&&}}}

\newcommand{\xtto}{\tiny \vcenter{\xymatrix@R=1pt@C=1pt{
&&&&\\
& \ar@{-}[ddr] && t_j \ar@{-}[ddl] \ar@{-}[ul] \ar@{-}[u] \ar@{-}[ur] &\\
\ar@{.}[rrrr] &&&& \\
&& t_i \ar@{-}[d] &&&\\
&& x_k \ar@{-}[dd] &&&& \\
\ar@{.}[rrrr] &&&& \\
&& x_k \ar@{-}[d] && \\
&&&}}}

\newcommand{\ttto}{\tiny \vcenter{\xymatrix@R=1pt@C=1pt{
&&&&\\
&& \theta_2 \ar@{-}[ddr] && \ar@{-}[ddl] &\\
\ar@{.}[rrrrrr] &&&&&& \\
&&& \theta_1 &&&\\
&&& t_j \ar@{-}[ddl] \ar@{-}[uuull] \ar@{-}[u] \ar@{-}[uuurr] &&& \\
\ar@{.}[rrrrrr] &&&&&& \\
&& t_i \ar@{-}[d] \ar@{-}[ul] && \\
&&&}}}

\newcommand{\ttxo}{\tiny \vcenter{\xymatrix@R=1pt@C=1pt{
&&&&\\
&& x_k \ar@{-}[dd] \ar@{-}[u] &&&\\
\ar@{.}[rrrrr] &&&&& \\
&& x_k &&&\\
&&& t_j \ar@{-}[ddl] \ar@{-}[ul] \ar@{-}[uuu] \ar@{-}[uuur] && \\
\ar@{.}[rrrrr] &&&&& \\
&& t_i \ar@{-}[d] \ar@{-}[ul] && \\
&&&}}}

\urldef\joanhomepage\url{math.univ-toulouse.fr/~jmilles}
\urldef\gabrielhomepage\url{drummondcole.com/gabriel/academic/}
\newcommand{\mailurl}[1]{\email{\href{mailto:#1}{#1}}}

\author{Joan Bellier-Mill\`{e}s}
\address{B.-M.: IMT ; UMR5219\\ Université de Toulouse ; CNRS ; UPS, F-31400 Toulouse, France}
\mailurl{joan.milles@math.univ-toulouse.fr}
\urladdr{\joanhomepage}

\title{Homotopy theory of curved operads and curved algebras}
\author{Gabriel C. Drummond-Cole}
\address{D.-C.:}
\mailurl{gabriel.c.drummond.cole@gmail.com}
\urladdr{\gabrielhomepage}

\thanks{Bellier-Mill\`{e}s was supported by ANR-13-BS02-0005-02 CATHRE, ANR-14-CE25-0008-0 SAT and ANR-17-CE40-0014 CatAG, funded by Agence Nationale pour la Recherche.
Drummond-Cole was supported by IBS-R003-D1.}

\subjclass[2020]{Primary 18M70, 18N40 ; Secondary 18E10, 18D15}

\begin{document}

\begin{abstract}
Curved algebras are algebras endowed with a predifferential, which is an endomorphism of degree $-1$ whose square is not necessarily $0$. This makes the usual definition of quasi-isomorphism meaningless and therefore the homotopical study of curved algebras cannot follow the same path as differential graded algebras.

In this article, we propose to study curved algebras by means of curved operads. We develop the theory of bar and cobar constructions adapted to this new notion as well as Koszul duality theory. To be able to provide meaningful definitions, we work in the context of objects which are filtered and complete and become differential graded after applying the associated graded functor.

This setting brings its own difficulties but it nevertheless permits us to define a combinatorial model category structure that we can transfer to the category of curved operads and to the category of algebras over a curved operad using free-forgetful adjunctions.

We address the case of curved associative algebras. We recover the notion of curved $\Ai$-algebras, and we show that the homotopy categories of curved associative algebras and of curved $\Ai$-algebras are Quillen equivalent.
\end{abstract}

\maketitle

\section*{Introduction}

\subsection*{Motivation}

The primary goal of this paper is to give a framework to deal with the homological and homotopical theory of curved algebras. 

The most elementary definition of an associative algebra is an underlying vector space or module endowed with algebraic structures (multiplication, unit). The study of extensions or of deformations of an associative algebra leads to the definition of the Hochschild (co)homology, which is defined by means of the Hochschild (co)chain complex. Revisiting this definition in terms of derived functors leads up to the notion of resolution and in particular of quasi-isomorphism. Going further, the theory of model categories provides powerful tools to extend the previous ideas to many other contexts including the study of other kinds of algebras (commutative algebras, Lie algebras, \dots). The category of dg modules over a ring is endowed with a model category structure whose weak equivalences are quasi-isomorphisms. Hochschild (co)homology has a meaningful interpretation in this context and it opens the door for (co)homology theories of other types of algebras. 

Now suppose that we want to follow this path for \emph{curved} associative algebras. 
Such algebras are equipped not with a differential but rather with a predifferential. Instead of squaring to zero, the square of the predifferential is equal to the bracket with a closed element called the curvature. 
This difference means that it is not reasonable to expect the category of curved algebras to have an underlying category of dg modules.
There is therefore the need to define a new category (possibly containing the category of dg modules as a subcategory) endowed with a notion of weak equivalences to replace quasi-isomorphisms. 
Only then we will be able to achieve our goal.

\subsection*{Approach and antecedents}

At the heart of the homological and homotopical study of algebras are the notions of an operad, used to encode algebras, and Koszul duality theory for such operads. 
Koszul duality theory is a homological theory in its definition and in its range of applications. 
The approach taken in this paper is:
\begin{enumerate} 
      \item to define curved algebras as representations of curved operads,
      \item to find an appropriate base category to study curved algebras, and 
      \item to extend Koszul duality theory to the curved context.
\end{enumerate}

The constructions given in this paper are in some sense dual to the constructions given in \cite{HirshMilles}. 
However, the homotopy theory of curved algebras is complicated by the need to work in a kind of filtered context. 
In an unfiltered context, a homotopy algebra equipped with a non-zero curvature is \emph{isomorphic} to an ``algebra'' which has a curvature but otherwise the zero algebra structure~\cite[7.3]{Positselski:TKDCKDCCC} (see also Proposition 3.3 in \cite{DSV20} for a filtered version with the curvature in filtration degree $0$). 

Moving to Koszul duality poses its own problems and this is the reason why we cannot use the existing literature \cite{vL13, vL14}. 
The formulas that one is led naturally to write are infinite sums whose terms eventually go to smaller and smaller submodules of the filtration. 
These sums then only make sense if one further refines from a filtered to a complete context. 
Depending on the side of the duality, there are further minor refinements to make in order to fully capture the phenomena at play; see below for the gory details. 
The filtered context already appears in \cite{DDL18}, and previously, in a more restricted setting in \cite{lP12}. 

In this last article, Positselski has similar motivations to ours.  
He proposes a framework to study the derived category of a curved associative or $\Ai$-algebra. He is also able to develop a Koszul duality theory in this context. We highly recommend the reading of this article's engaging and fruitful introduction. 
We cannot however use his framework since our main example doesn't fit into it. We therefore propose a filtered framework which is different in two ways: we consider filtrations on the base ring which aren't induced by a maximal ideal and we consider filtrations on our objects (algebras, operads, \dots ) which aren't induced by the filtration on the base ring. Another example we have in mind is the curved $\Ai$-algebras appearing in Floer theory in \cite{Fukaya}, which fit into the framework introduced here.

\subsection*{On filtered objects}

Dealing with filtered and complete filtered objects poses multiple technical challenges. 
If the ground category is Abelian, the categories of filtered objects and complete filtered objects in the ground category are only quasi-Abelian~\cite{Schneiders:QACS}.
Colimits are more difficult to compute and the monoidal product must be redefined in order to inherit various desirable qualities.

These details can be addressed more or less by hand; for example, the reference~\cite[7.3]{bF17} contains a good deal of the setup work for the case where the ground ring is a field. It is also possible to perform an $\infty$-category treatment as in \cite{GP18}. 
But for us, a useful way to organize the necessary changes was by recognizing that the various categories of objects of interest form a lattice of \emph{normal reflective embeddings}. 
A subcategory inclusion is reflective when it admits a left adjoint (called a \emph{reflector}) and a reflective inclusion of closed symmetric monoidal categories is a normal reflective embedding when the reflector is extended into a strong symmetric monoidal functor. 

Most of these categorical details are siloed off in Appendix~\ref{appendix: categorical stuff}.

\subsection*{Our main algebraic categories}

Let us give a little more precise focus here to guide the development of the exposition.
The reader is encouraged to think of most of the symmetric monoidal categories as making up a scaffolding for the two cases of actual interest.
These are \emph{curved augmented operads} in the category of \emph{gr-dg complete filtered objects} (treated in Section~\ref{section: operads}) and \emph{altipotent cooperads} in the category of \emph{dg complete filtered objects} (treated in Section~\ref{section:cooperads}).

\subsection*{Our operads}

Again, a curved associative algebra is an associative algebra $A$ endowed with a predifferential $d$ satisfying
\[ d^2 = [\theta,\, -] \]
where $\theta$ is a closed element in $A$ of degree $-2$. In the filtered complete context $A = F_0A \supset F_1A \supset \cdots$, a filtered object endowed with a predifferential is gr-dg when its associated graded is differentially graded, and we will assume that $\theta \in F_1A$ so that our curved algebras are gr-dg. 
The curved algebras in this article can therefore be considered as infinitesimal deformations of flat algebras. We extend this definition to define the notion of a curved operad. A first example is given by the endomorphism operad of a gr-dg object and there is a curved operad $\cAs$ whose algebras are precisely curved associative algebras.

\subsection*{Our cooperads}

To tell a story about the bar-cobar adjunction, we need a notion dual to the notion of (complete) curved augmented operads. 
We use (complete) altipotent cooperads for this dual notion. 
Altipotence is a variation of the notion of conilpotence adapted to the complete setting. These cooperads are only coaugmented after applying the graded functor.

\subsection*{Outline of the remaining contents}

We introduce several (co)free constructions. In the operadic context, we define a free pointed gr-dg operad and a free curved operad. The first one is used to build the cobar construction of an altipotent cooperad, the latter is used to endow the category of curved operads with a model structure. In the cooperadic context, we provide a cofree construction and we use it to define the bar construction of a curved operad.

In the definition of the cobar construction, the notion of an \emph{infinitesimal coideal} of a cooperad appears. This is a little different than an ordinary coideal; it has the same underlying unfiltered object as the cooperad. The only difference is that the counit is constrained to lie in degree $1$ of the filtration. 
As in the article \cite{HirshMilles}, this can be seen as an incarnation of the fact that curvature and counit play a dual role and that asking for the curvature to be in filtration degree $1$ corresponds dually to the requirement that the counit also must be in filtration degree $1$.

The bar and the cobar constructions, presented in Section \ref{section: bar cobar constructions}, fit as usual in an adjunction and are represented by a notion of curved twisting morphisms. These are different from the constructions in \cite{vL14} because of the filtered complete framework and by the fact that we consider a curvature only on one side of the adjunction. In the context of $\ringK$-modules (for a field $\ringK$ of characteristic $0$), the counit of the adjunction provides a graded quasi-isomorphism between curved operads. This resolution is functorial and we can hope to obtain a ``smaller" resolution when dealing with specific examples. This is the objective of the Koszul duality theory for curved operads that we develop in Section \ref{section: koszul duality for curved operads}. The constructions in this section are a little bit more subtle than the classical constructions because of the fact that infinite sums appear. In particular, it is difficult to describe the Koszul dual cooperad associated with a quadratic curved operad. Nevertheless, we define under certain conditions the Koszul dual operad which is easier to compute. Moreover, in the situation where the curved operad is Koszul, a Poincaré--Birkhoff--Witt type isomorphism provides a description of the underlying $\sS$-module by means of the (classical) Koszul dual cooperad of a quadratic operad which is the associated graded of the curved operad. Under the Koszul condition, we again obtain a resolution of the quadratic curved operad, smaller in a precise sense: the generators embed in the bar construction as the 0-homology group for the ``syzygy degree".

We make explicit the case of the curved operad $\cAs$ encoding curved associative algebras in Section \ref{section: assocase}. The curved operad $\cAs$ is Koszul and we can compute its Koszul dual cooperad (and operad), as well as the Koszul resolution. The algebras over the Koszul resolution are the curved $\Ai$-algebras which appear in the literature (to give only a few examples see \cite{GJ90, CD01, bK06, Fukaya, Nicolas, lP19, DSV20}). We finally show that the homotopy categories of curved associative algebras and of curved $\Ai$-algebras are Quillen equivalent.

\subsection*{Model category structure}

Speaking of resolutions and Quillen equivalence are indications that we have a model category structure in mind. 
We establish the existence and properties of this structure in Appendix \ref{appendix: MCS on gr-dg objects}. 
More precisely, there we describe a model structure on the base category of complete gr-dg $\ringK$-modules. This model category enjoys several nice properties: it is a proper cofibrantly generated model structure, it is combinatorial and it is a monoidal model category structure. 
Classical theorems allow us to transfer this cofibrantly generated model structure along a free-forgetful adjunction. 
This enables us to endow the category of complete curved operads with a cofibrantly generated model structure. The bar-cobar resolution and the Koszul resolution are cofibrant only in the underlying category of complete gr-dg $\sS$-modules. 
Similarly, we describe a free functor in the context of algebras over a curved operad and endow the category of algebras over a curved operad with a cofibrantly generated model structure. We provide base change results to compare the homotopy categories of algebras over some curved operads.

In \cite{LegrignouLejay:HTLC}, Le Grignou and Lejay endow the category of algebras over a curved cooperad with a model structure. 
For a curved cooperad, we could try to compare our curved algebras over the dual operad with their algebras and their model structure. 
Because of the freedom we have for our filtrations, it is not clear how to compare the two categories of algebras and the two model structures in general.

\subsection*{Conventions}

The ground category is a Grothendieck category $\Aa$ equipped with a closed (symmetric monoidal) tensor product. We assume moreover that the tensor product preserve colimits in each variable.
For various parts of the exposition, weaker hypotheses suffice but this seems a reasonable place to cut things off.
For example, $\Aa$ can be 
\begin{itemize}
\item the category of $\ringK$-modules for $\ringK$ a commutative ring,
\item the category of graded $\ringK$-modules or complexes of $\ringK$-modules,
\item the category of sheaves of $\ringK$-modules on a topological space $X$,
\item the category of graded sheaves of $\ringK$-modules or complexes of sheaves of $\ringK$-modules on $X$, or
\item for a (graded) ringed space $(X,\, \Oo_X)$, the category of sheaves of (graded) $\Oo_X$-modules.
\end{itemize} 
When we deal with symmetric operads, we want to assume that $\Aa$ is $\mathbb{Q}$-linear.
In the examples above, this is the assumption that $\ringK$ is a $\mathbb{Q}$-algebra.

We use the notation $(\Mm,\, \amalg,\, \otimes,\, \mathbbm{1})$ to denote an additive closed monoidal category with small colimits and limits. 
We assume moreover that the monoidal structure preserves colimits in each variable. 
Examples of such categories $\Mm$ are given by the categories $\Aa$, $\Filt(\Aa)$, $\complete(\Aa)$, and $\compa(\Aa)$ when $\Aa$ satisfies the conditions above.

We want to work with \emph{$\sS$-modules} or \emph{collections} in $\Mm$, which are functors from the groupoid of finite sets to $\Mm$.
We also want to (simultaneously) work with \emph{$\nN$-modules} in $\Mm$, which are functors from the groupoid of ordered finite sets to $\Mm$. 
We sometimes implicitly pass to a skeleton of either of these categories with objects $[n]=\{1,\ldots, n\}$.

The categories of $\sS$-modules and $\nN$-modules support a number of monoidal products built using the monoidal product of $\Mm$. 
The primary one we will want to use is the \emph{composition product} that we denote by $\circ$. 
The forgetful functor from $\sS$-modules to $\nN$-modules does not intertwine the composition products on each of these categories, but nevertheless we will use the same symbol for both cases. 
In the few cases where this might cause confusion, we will be explicit about the distinctions.

In Sections \ref{section: koszul duality for curved operads} and \ref{section: assocase}, we restrict ourselves to categories $\Aa$ of (unbounded) $\zZ$-graded $\ringK$-modules or complexes of $R$-modules, for $R$ a commutative ring.

When $M$ is a $\mathbb{Z}$-graded object, we denote by $s M$ the suspension of $M$, that is the graded object such that $(sM)_n \coloneqq M_{n-1}$.

\tableofcontents

\section*{Acknowledgements}
We would like to thank Mathieu Anel, Joey Hirsh, Brice Le Grignou, Damien Lejay, Victor Roca Lucio, Massimo Pippi, Joseph Tapia, Bertrand To\"en, Bruno Vallette and Sinan Yalin for useful discussions and for sharing early drafts of related work with us.
The first author also thanks Damien Calaque, Ricardo Campos and Joost Nuiten for a discussion about their recent preprint \cite{CCN21} that helped improve a revision of this article.

\section{The filtered framework}

In this preliminary section we establish definitions, terminology, and notation for the base world of filtered and complete objects in which we will work.

\subsection{Predifferential graded objects}

We first fix our convention for filtered objects and predifferential graded objects.

\begin{defi}
\leavevmode
\begin{itemize}
\item A \emph{filtered object} $(\vec{X},\, F)$ (often expressed just as $X$) in $\Aa$ is a $\mathbb{N}^{\op}$-indexed diagram 
\[
F_0 X\gets F_1 X\gets F_2 X\gets \cdots
\]
where each map is a monomorphism.
We often think of the object $(\vec{X},\, F)$ as the object $X\coloneqq F_0 X$ equipped with the extra data of the family of subobjects $\{F_p X\}$.

Every non filtered object $X$ gives rise to a trivially filtered object with 
\[F_p X = 
\begin{cases}
X & p=0\\
\varnothing & p>0.
\end{cases}\]
Morphisms of filtered modules $f : (\vec{X},\, F) \rightarrow (\vec{Y},\, F')$ are morphisms of diagrams. 
In other words, they are $\Aa$-maps $X\to Y$ which are \emph{filtration preserving} in that $f(F_p X) \subset F'_p Y$, for all $p \in \nN$.

\item To the filtered object $(\vec{X},\, F)$, we associate the graded object (i.e., object indexed by the elements of $\mathbb{N}^{\op}$) $\Gr X$ defined by $(\Gr X)_p \coloneqq F_p X/ F_{p+1} X$.
\end{itemize}
We denote by $\Filt(\Aa)$ the category of filtered objects in the category $\Aa$.
\end{defi}

\begin{remark}
\begin{itemize}
\item
Since we are only interested in \emph{decreasing} filtrations, we omit the adjective ``decreasing'' in this article.
\item
The indexing category $\nN$ is not the most general possibility. 
The case of $\zZ$-filtered objects is also interesting but it's technically useful for us to work with a partially ordered monoid with the identity as its lowest element. 
Of course, there are many examples of such monoids other than $\nN$, such as $\qQ_+$ and $\rR_+$. 
But in most examples which arise in practice, it seems that coarsening the filtration to a discrete countable filtration by a monoid isomorphic to $\nN$ does no harm in terms of the algebra that interests us. 
On the other hand, working with non-discrete filtrations poses a number of technical problems, most importantly the fact that the associated graded functor fails to be conservative in general. 
So we restrict our attention to $\nN$-filtrations.
\end{itemize}
\end{remark}

\begin{remark}
It is well-known that the category $\Filt(\Aa)$ is not Abelian in general.
However, this category is a reflective subcategory of the category of $\nN^{\op}$-indexed diagrams in $\Aa$ and thus is bicomplete.
See Appendix~\ref{appendix: categorical stuff} for details.
\end{remark}

\begin{notation}
Denote by $\graded \Aa$ the closed symmetric monoidal category of $\zZ$-graded $\Aa$-objects and by $\dg\Aa$ the closed symmetric monoidal category of $\zZ$-graded complexes in $\Aa$ (dg objects in $\Aa$ for short). A \emph{predifferential} on a graded object $X$ in $\graded \Aa$ is a degree $-1$ map of graded objects $d : X \rightarrow X$. We denote by $\pg\Aa$ the closed symmetric monoidal category of predifferential graded objects (pg objects in $\Aa$ for short). 
\end{notation}

\begin{remark}
When $\Aa$ is a Grothendieck category, the category $\pg\Aa$ of predifferential graded objects is again a Grothendieck category. Colimits are taken degreewise and filtered colimits are degreewise exact hence exact. Finally, if $\{ U_i\}_{i \in I}$ is a family of generators of $\Aa$, then $\{ D^{n,\infty}U_i := (\amalg_{k\leq n} U_i,\, d) \}_{n \in \zZ,\, i \in I}$, where the $k$-th copy of $U_i$ is in degree $k$, $d$ sends $u$ in the $k$-th copy to $u$ in the $(k-1)$-th copy and where the filtration is given by $F_p (D^{n,\infty}U_i) = \amalg_{k\leq n-2p} U$, is a family of generators of $\pg\Aa$.
\end{remark}

\subsection{``Gr'' and associated graded}
For any property $p$ of $\graded \Aa$, we say that a filtered object $X$ is \emph{gr-$p$} if the associated graded object $\Gr X$ is $p$.
We extend this terminology in the obvious way to all other contexts where we have a variant of the associated graded functor (and we make the definitions precise when the terminology is not obvious). This convention already appears in \cite{gS73}.

The following example illustrates this terminological choice.

\begin{defi}
Let $(\vec{X},\, F,\, d)$ be an object of $\Filt(\pg\Aa)$. 
When the predifferential $d$ induces a differential on $\Gr X$, that is to say, 
\[d^2 : F_p X\to F_p X\] 
factors through $F_{p+1}X$ for all $p$, we call $(\vec{X},\, F,\, d)$ \emph{gr-dg}. 
A natural way to associate a dg object to a gr-dg object $(\vec{X},\, F,\, d)$ is to consider the associated graded object
\[ (X,\, d)^{\text{gr}} \coloneqq (\Gr X,\, \Gr d). \]
Accordingly, we say that the \emph{gr-homology} of the gr-dg object $(\vec{X},\, F,\, d)$ is the graded object
\[ \H_\bullet^{\text{gr}} (\vec{X},\, F,\, d) \coloneqq \H_\bullet((X,\, d)^{\text{gr}}) = \H_\bullet(\Gr X,\, \Gr d).\]
We define the corresponding notion of ``quasi-isomorphism" between gr-dg objects. We say that a map $f : (\vec{X},\, F,\, d) \to (\vec{Y},\, F',\, d')$ is a \emph{graded quasi-isomorphism} if it induces a quasi-isomorphism $f^{\text{gr}} : (X,\, d)^{\text{gr}} \to (Y,\, d')^{\text{gr}}$, that is, when $\H_\bullet^{\text{gr}}(f) : \H_\bullet^{\text{gr}} (\vec{X},\, F,\, d) \to \H_\bullet^{\text{gr}} (\vec{Y},\, F',\, d')$ is an isomorphism.
\end{defi}

\begin{remark}
The graded quasi-isomorphisms are the weak equivalences in the model category structure on gr-dg $\ringK$-modules given in Appendix \ref{appendix: MCS on gr-dg objects}.
\end{remark}

From now on we use this kind of ``gr'' terminology without comment.

\subsection{Complete objects}
Given a filtered object $X=(\vec{X},\, F)$, the filtration structure induces maps $F_p X\xrightarrow{i_p} F_0 X$. 

\begin{defi}
We say that $X$ is \emph{complete} if the natural morphism
\[
X\to \lim_p \coker i_p
\]
is an isomorphism.
\end{defi}
We use the notation $\complete(\Aa)$ for the category of complete filtered objects in $\Aa$.
\begin{remark}
Complete objects are a reflective subcategory of filtered objects, with a \emph{completion functor} as a reflector which we will write
\[V\mapsto \hat{V},\quad V\mapsto \widehat{V},\quad V\mapsto V^\wedge,\]
whichever seems typographically most appropriate in context.
See Appendix~\ref{appendix: categorical stuff} for details.
\end{remark}

\subsection{Monoidal products}
The closed symmetric monoidal product of $\Aa$ extends to the filtered (see Corollary~\ref{cor: product on filtered objects}) and complete (Corollary~\ref{cor: product on complete filtered objects}) settings.
The product in the filtered setting has components:
\[
F_p(V\bar\otimes W) \coloneqq \image\left(\left(\colim_{a+b\ge p} F_a V\otimes F_b W\right)\to V\otimes W\right),
\]
and the product in the complete setting is the completion:
\[
V\hat\otimes W \coloneqq (V\bar\otimes W)^\wedge.
\]
In both cases the internal hom objects can be calculated in $\mathbb{N}^{\op}$-indexed diagrams in $\Aa$.

\begin{defi}
We denote by $\compa(\Aa)$ the category of complete gr-dg objects $(\vec{X},\, F,\, d)$.
It is a full subcategory of $\Comp(\pgA)$. Moreover, when $\Aa$ is assumed to be a Grothendieck category, it is a reflexive subcategory of complete pg modules (see Corollary \ref{cor: complete gr-dg reflective}) and the closed symmetric monoidal product of $\Aa$ extends to the complete gr-dg setting (see Corollary \ref{cor: product on fdg objects}). The monoidal product is again $\hat{\otimes}$.
\end{defi}

\section{Operads in the complete and filtered setting}
\label{section: operads}

Let $(\Mm,\, \amalg,\, \otimes,\, \mathbbm{1})$ be an additive closed monoidal category with small colimits and limits.
We assume moreover that the monoidal structure preserves colimits in each variable. In this situation, $\Mm$ is enriched over the category of sets (see \cite[1.1.7]{Fresse3}) and for a set $K$ and an object $M \in \Mm$, we have a tensor product $K \otimes M$ in $\Mm$ given by
\[ K \otimes M \coloneqq \amalg_{k \in K} M.\]
As said earlier, examples of such categories $\Mm$ are given by the categories $\Aa$, $\Filt(\Aa)$, $\complete(\Aa)$, and $\compa(\Aa)$ when $\Aa$ satisfies our constraints.

\subsection{\texorpdfstring{Complete $\sS$-objects}{Complete S-objects}}

We present the monoidal category of symmetric objects (or $\sS$-modules) in $\Mm$ denoted by ($\sS$-$\mathsf{Mod}(\Mm),\, \circ$, $I$). We refer for instance to~\cite[5.1]{LodayVallette} for the case of modules and to \cite[2.2]{Fresse3} for more general situations.\\

An \emph{$\sS$-module in $\Mm$} is a collection $M = \{M(0),\, M(1),\, \ldots,\, M(n),\, \ldots \}$ of right-$\sS_n$-objects $M(n)$ in $\Mm$. An action of $\sS_n$ on an object $M(n)$ is defined as a morphism of monoids $\sS_n \to \End_{\Mm}(M(n))$. 
A morphism $f : M \rightarrow N$ in the category $\sS$-$\mathsf{Mod}(\Mm)$ is a componentwise morphism.

We define the monoidal product $\circ$ of $\sS$-objects in $\Mm$ by
\[
M \circ N (n) \coloneqq \coprod_{k \geq 0} \left( M(k) \otimes_{\sS_k} \left( \coprod_{i_1 + \cdots + i_k = n} \mathrm{Ind}_{\sS_{i_1} \times \cdots \times \sS_{i_k}}^{\sS_n} \left( N(i_1) \otimes \cdots \otimes N(i_k) \right) \right)\right),
\]
where $\mathrm{Ind}_{H}^G M \coloneqq G \otimes_{H} M$ is the \emph{induced representation}. 
We emphasize that, when for example $\Mm = \Comp(\Aa)$, the sum $\amalg$ stands for the completion of the sum in $\Aa$ with respect to the filtration.

To simplify the notation, we denote by
\[ \capgSA \text{ the category } \sS\text{-}\mathsf{Mod}(\compa(\Aa)) \]
and by
\[ \cdgSA \text{ the full subcategory of } \capgSA \]
given by complete pg-$\sS$-modules $M = \{M(n) \}$ such that the $M(n)$ are dg-objects.\\
We remark that the category of $\sS$-modules defined in \cite{LodayVallette} can be seen as a full subcategory of the category $\cdgSA$ by defining on the $\sS$-module $M$ the trivial filtration $F_0 M = M$ and $F_p M = \{ 0\}$ for all $p > 0$.
For example, $I \coloneqq \{0,\, \ringK,\, 0,\, \ldots\}$ endowed with the trivial filtration and the trivial differential is an object of $\cdgSA$.

This gives a monoidal product on the categories $\capgSA$ and $\cdgSA$. We are interested in the category $(\capgSA,\, \circ,\, I)$ in Section \ref{section: curved operads} and in the category $(\cdgSA,\, \circ,\, I)$ in Section \ref{section:cooperads}.

\begin{remark}
Using the convention that an empty tensor product is equal to $\ringK$, we get that one of the components of $(M \circ N)(0)$ is $M(0)$.
\end{remark}

\begin{defi}
An \emph{operad} $(\Oo,\, \gamma,\, \eta)$ in the category $\Mm$ is a monoid in the monoidal category ($\sS \text{-} \textsf{Mod}(\Mm)$, $\circ$, $I$).
The map $\gamma : \Oo \circ \Oo \rightarrow \Oo$ is the \emph{composition product} and the map $\eta : I \rightarrow \Oo$ is the \emph{unit}. We denote by $\mathsf{Op}(\Mm)$ the category of operads in $\Mm$.
\end{defi}

Thus we have, for instance:
\begin{itemize}
\item a \emph{graded operad} is an operad in the closed symmetric monoidal category $(\graded\Aa,\otimes)$,
\item a \emph{filtered} operad is an operad in the closed symmetric monoidal category $(\Filt(\Aa),\bar\otimes)$, and
\item a \emph{complete} operad is an operad in the closed symmetric monoidal category $(\complete(\Aa),\hat\otimes)$.
\item a \emph{complete gr-dg} operad is an operad in the closed symmetric monoidal category $(\compa(\Aa),\hat\otimes)$. It is in particular endowed with a gr-differential $d : \Oo \to \Oo$ which is a derivation for the composition on $\Oo$.
\end{itemize}

\begin{remark}
The phrase ``complete filtered operad'' is a priori ambiguous. 
We will always use this phrase to mean an operad in a symmetric monoidal category of complete filtered objects in some ground category.
We will never use it to mean an operad equipped with a complete filtration each stage of which is itself an operad.
\end{remark}

Let $\Oo$ be an operad in $\Mm$.
The total object obtained by taking the coproduct (in $\Mm$)
\[\coprod_{n \geq 0} \Oo(n)\] 
supports a pre-Lie bracket $\{-,\, -\}$, and then a Lie bracket $[-,\, -]$, which is given on $\Oo(p) \otimes \Oo(q)$ by
\[
\{-,\, -\} \coloneqq \sum_{i=1}^p \sum_{\textrm{P}} (- \circ_i -)^{\sigma_{\textrm{P}}},
\]
The notation $- \circ_i -$ stands for the partial composition product on $\Oo$ where we plug the element in $\Oo(q)$ in the $i$th entry of the element in $\Oo(p)$.
The sum runs over some ordered partitions $\mathrm{P}$ by means of which we can define the permutation $\sigma_{\mathrm{P}}$. (We refer to \cite{LodayVallette}, Section 5.4.3 for more details.) 
If we fix a map $\mu : I \to \Oo(1)$ (or an element $\mu \in \Oo(1)$ when it is meaningful), we therefore obtain, by a slight abuse of notation, an endomorphism $[\mu,\, -]$ of $\Oo$ given by the formula
\[
[\mu,\, -] \coloneqq (\mu \circ_1 -) - (-1)^{|\mu||-|} \sum_{j=1}^q (- \circ_j \mu).
\]
(Or
\[
[\mu,\, \nu] \coloneqq \mu \circ_1 \nu - (-1)^{|\mu||\nu|} \sum_{j=1}^q \nu \circ_j \mu
\]
when this is meaningful.)
Moreover, the associativity of the map $\gamma$ shows that $[\mu,\, -]$ is a derivation.

\subsection{Curved operads}
\label{section: curved operads}

We give the definition of a curved operad in the category $\compa(\Aa)$ and we present the curved endomorphism operad in this context. 
We emphasize that our operads are allowed to have 0-ary elements, that is $\Oo(0) \neq \varnothing$ a priori. 
We use the notation of the book~\cite{LodayVallette}.

\begin{defi}
A \emph{curved operad} $(\Oo,\, \gamma,\, \eta,\, d,\, \theta)$ is a complete gr-dg operad $(\Oo,\, \gamma,\, \eta,\, d)$ equipped with a map $\theta : I \to (F_1\Oo(1))_{-2}$ such that
\[
\left\{ \begin{array}{lcll}
d^2 & = & [\theta,\, -] & (\textrm{or } [-,\, \theta] + d^2 = 0),\\
d(\theta) & = & 0 & (\theta \textrm{ is \emph{closed}}).
\end{array} \right.
\]
The map (or element) $\theta$ is called the \emph{curvature}.		
\end{defi}

\begin{remark}
When dealing with curvature, we often think about the map $\theta$ as an element $\theta \in F_1\Oo(1)$ of degree $-2$. If $\Aa$ isn't a concrete category, it is possible to replace ``the element $\theta$" by ``the map $\theta$" everywhere in this article.
\end{remark}

\begin{defi}
We define two categories with objects the curved operads.
\begin{itemize}
\item
A {\em morphism} $f : (\Oo,\, d,\, \theta) \rightarrow (\Pp,\, d',\, \theta')$ of curved operads is a morphism of operads such that
\[
\left\{ \begin{array}{lcl}
f \cdot d & = & (-1)^{|f|} d' \cdot f,\\
f(\theta) & = & \theta'.
\end{array} \right.
\]
We denote by $\cOp(\Aa)$ the category of curved operads in $\compa(\Aa)$.
\item
A {\em lax morphism} $(\Oo,\, d,\, \theta) \rightarrow (\Pp,\, d',\, \theta')$ of curved operads is a pair $(f, a)$ where $f : \Oo \to \Pp$ is a morphism of operads and $a : I \to F_1\Pp$ is an $\sS$-module map of degree $-1$ such that
\begin{align}
d' \cdot f + [a, f] & = f \cdot d \quad \textrm{ and} \label{eq: lax morphism diff}\\
f(\theta) & = \theta' + d' \cdot a + \frac{1}{2}[a, a]. \label{eq: lax morphism curvature}
\end{align}

The composition of lax morphisms is given by $(g, b) \cdot (f, a) := (g\cdot f, g(a)+b)$ and the identity is $(\id, 0)$. 
We denote by $\cOp^{\textrm{lax}}(\Aa)$ the category of curved operads in $\compa(\Aa)$ and by $\Hom^{\textrm{lax}}$ its morphisms.
\end{itemize}

When the category of $\sS$-modules has a notion of weak equivalence, a \emph{weak equivalence between curved operads} is a (lax) morphism of curved operads whose underlying $\sS$-module map is a weak equivalence.
\end{defi}

\begin{remark}
\begin{itemize}
\item
There is an inclusion $\cOp(\Aa) \to \cOp^{\textrm{lax}}(\Aa)$ given by the identity on objects and by $f \mapsto (f, 0)$ on morphisms. It is not full.
\item
The notion of lax morphism is used only in Section \ref{section: bar cobar constructions} in order to define bar and cobar constructions and an adjunction between these functors.
\end{itemize}
\end{remark}

An important example of a curved operad is given by the endomorphism operad of a complete gr-dg object.

\begin{defi}
\label{defi: endomorphism operad}
The \emph{endomorphism operad} of a complete gr-dg object $(X,\, d)$ is the curved operad
\[
\End_X \coloneqq (\{ \Hom (X^{\otimes n},\, X) \}_{n \geq 0},\, \gamma,\, \del,\, \theta),
\]
where the composition map $\gamma$ is given by the composition of functions and
\[
\left\{ \begin{array}{lcll}
\del(f) & \coloneqq & [d,\, f], & \textrm{for } f\in \Hom (X^{\otimes n},\, X),\\
\theta & \coloneqq & d^2.
\end{array} \right.
\]
It is straightforward to check that this really defines a curved operad.
\end{defi}

\begin{defi}
A {\em representation} of a curved operad $\Oo$ on the complete gr-dg object $X$ is a morphism of curved operads $\Oo \to \End_X$. 
We also say that this defines an \emph{$\Oo$-algebra structure} on $X$.
\end{defi}

\begin{remark}
An example of such an algebra is given by curved associative algebras (see Section \ref{section: assocase}) and curved Lie algebras in a complete setting.
\end{remark}

An \emph{augmentation} of a curved operad $(\Oo,\, \gamma,\, \eta,\, d,\, \theta)$ is a map $\Oo \to I$ of curved operads such that $\varepsilon \cdot \eta = \id_I$. 

An augmentation realizes the underlying $\sS$-object of $\Oo$ as the biproduct (i.e., both the product and coproduct) of $I$ and $\ker\varepsilon$, which we denote $\overline{\Oo}$ as usual.

\subsection{Free complete operad}
\label{section: free operad}

The construction of the free operad given in \cite{BauesJibladzeTonks, Rezk} and in \cite{LodayVallette}, Section 5.5.1 and 5.8.6 applies in the filtered and in the complete setting by replacing the biproduct and the composition product $\circ$ by their filtered or complete analog. 

We recall the definition of the tree operad. For $M$ an $\sS$-module in $\Mm$, we define 
\begin{align*}
\Tt_0 M &= I
\\
\intertext{and recursively define}
\Tt_n M&=I\amalg (M\circ \Tt_{n-1} M).
\end{align*}
There is an inclusion $\Tt_0 M\to \Tt_1 M$.
Then given a map $\iota_{n-1}:\Tt_{n-1} M\to \Tt_n M$, there is a map $\iota_n:\Tt_n M\to \Tt_{n+1} M$ which takes the $I$ factor to the $I$ factor and takes $M\circ \Tt_{n-1} M\to M\circ \Tt_n M$ using $\id_M$ on the $M$ factor and $\iota_{n-1}$ on the other factor.
We write $\Tt M$ the colimit (in filtered $\sS$-modules) over $n$ of $\Tt_n M$ and $\widehat{\Tt} M$ the colimit (in complete $\sS$-modules) over $n$ of $\Tt_n M$.

There is an injection from $I = \Tt_0 M \to \Tt_n M$ which passes to a map $\eta: I \to \Tt M$ (resp. $\eta: I \to \widehat{\Tt} M$). Similarly, the map $M \to \Tt_1 M$ induces a map $j: M \to \Tt M$ (resp. $j: M \to \widehat{\Tt} M$).

The two constructions $\Tt M$ and $\widehat{\Tt} M$ are the free operads in the filtered and in the complete setting with the same arguments as in Theorem 5.5.1 in \cite{LodayVallette}. 

\begin{thm}
\label{thm: free operad}
There is an operad structure $\gamma$ (resp. $\hat \gamma$) on $\Tt M$ (resp. $\widehat{\Tt} M$) such that $\Tt (M) \coloneqq (\Tt M,\, \gamma,\, j)$ (resp. $\widehat{\Tt} (M) \coloneqq (\widehat{\Tt} M,\, \hat\gamma,\, j)$) is the free operad on $M$ in the category of filtered operads (resp. of complete operads).
\end{thm}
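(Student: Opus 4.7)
The plan is to follow the proof of Theorem 5.5.1 of \cite{LodayVallette}, adapting each step to the filtered, respectively complete, setting. The crucial inputs are the assumption at the beginning of Section~\ref{section: operads} that the composition product $\circ$ preserves colimits in each variable on both filtered and complete $\sS$-modules, together with the fact that the completion functor is a left adjoint (hence preserves colimits) and is strong symmetric monoidal.

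First I would construct the operad structure on $\Tt M$. Grafting of trees provides maps $\gamma_{m,n} : \Tt_m M \circ \Tt_n M \to \Tt_{m+n} M$ defined by recursion on $m$: on the $I$ summand of $\Tt_m M = I \amalg (M \circ \Tt_{m-1} M)$ use the unit of $\circ$, while on the $M \circ \Tt_{m-1} M$ summand use the composite
\[
M \circ (\Tt_{m-1} M \circ \Tt_n M) \xrightarrow{\id_M \circ \gamma_{m-1,n}} M \circ \Tt_{m+n-1} M \hookrightarrow \Tt_{m+n} M.
\]
Compatibility with the connecting maps $\iota_k$ follows by induction on $m+n$. Because $\circ$ preserves colimits in each variable, taking colimits in $(m,\, n)$ yields a morphism $\gamma : \Tt M \circ \Tt M \to \Tt M$ in filtered $\sS$-modules. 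The unit $\eta : I \to \Tt M$ is the canonical inclusion, and associativity and unitality are verified at each finite stage $\Tt_n M$ exactly as in \cite{LodayVallette}.

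For the universal property, given a filtered operad $(\Oo,\, \gamma_\Oo,\, \eta_\Oo)$ and a filtered $\sS$-module morphism $\phi : M \to \Oo$, I would define $\Phi_n : \Tt_n M \to \Oo$ by induction on $n$, setting $\Phi_0 \coloneqq \eta_\Oo$ on $I$ and
\[
\Phi_n \coloneqq \eta_\Oo \amalg \bigl(\gamma_\Oo \circ (\phi \circ \Phi_{n-1})\bigr)
\]
on $I \amalg (M \circ \Tt_{n-1} M)$. These are compatible with the $\iota_k$, hence assemble into $\Phi : \Tt M \to \Oo$. One then verifies that $\Phi$ is a morphism of operads extending $\phi$ via $j$, and that it is uniquely determined at each $\Tt_n M$ by this extension property together with the recursive formula.

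Finally, the complete case follows from the filtered case by reflection. Since complete $\sS$-modules form a (normal) reflective subcategory of filtered $\sS$-modules (Appendix~\ref{appendix: categorical stuff}), the colimit $\widehat{\Tt} M$ taken in complete $\sS$-modules is the completion of $\Tt M$. The operad structure $\hat\gamma$ is obtained by applying completion to $\gamma$, and its universal property follows from that of $\Tt M$ composed with the adjunction between completion and inclusion, noting that any map from $M$ to a complete operad is tautologically a filtered map to the underlying filtered operad. The main technical obstacle lies precisely here: ensuring that completing $\gamma$ yields a well-defined operad structure on $\widehat{\Tt} M$ with respect to the complete composition product, which rests on the natural identification
\[
(\Tt M \circ \Tt M)^{\wedge} \cong \widehat{\Tt} M \circ \widehat{\Tt} M,
\]
i.e., the strong monoidality of the completion reflector established in Appendix~\ref{appendix: categorical stuff}. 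Once this identification is available, the argument becomes essentially formal.
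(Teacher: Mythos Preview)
Your proposal is correct and follows essentially the same approach as the paper: the paper's entire proof is the single line ``See \cite[Theorem 5.5.1]{LodayVallette}'', and your sketch is a faithful unpacking of that argument in the filtered/complete setting. Your treatment of the complete case via reflection and the strong monoidality of completion for $\circ$ is precisely what the paper isolates as Proposition~\ref{prop:freeOperad} and Corollary~\ref{cor:completion is monoidal for circle product}, so you have correctly identified the one nontrivial input.
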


\begin{proof}
See \cite[Theorem 5.5.1]{LodayVallette}.
\end{proof}

Next we verify that this standard construction commutes with completion, so that the complete free operad is the aritywise completion with respect to the filtration of the usual free operad.

A lax symmetric monoidal functor induces a functor on operads, acting aritywise (see, e.g.,~\cite[Proposition 3.1.1]{bF17} or~\cite[Theorem 12.11(1) applied to Corollary 11.16]{YauJohnson:FPAM}).
Therefore, given an operad $\Oo$ in filtered objects, the aritywise completion $\widehat{O}$ is (functorially) a complete operad. 
Moreover, the completion functor and the inclusion of complete operads into filtered operads are still adjoint~\cite[Corollary 12.13]{YauJohnson:FPAM}
(see also~\cite[Proposition 3.1.5]{bF17}, where the hypotheses require $G$ to be strong symmetric monoidal but the argument only uses the lax structure).
We record the conclusion as follows.

\begin{prop}
Aritywise completion and inclusion form an adjunction between complete filtered operads and filtered operads.
\end{prop}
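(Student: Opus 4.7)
The plan is to lift the underlying adjunction between filtered and complete objects to the level of operads, using the general principle that a monoidal adjunction descends to an adjunction on categories of monoids. Since an operad is a monoid in $\sS$-modules with the composition product $\circ$, everything reduces to checking that completion/inclusion forms a monoidal adjunction at the level of $\sS$-modules with $\circ$.

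First, I would recall that the adjunction $(-)^\wedge \dashv \iota$ between $\Filt(\Aa)$ and $\Comp(\Aa)$ is a monoidal adjunction: by the very definition $V \hat\otimes W = (V \bar\otimes W)^\wedge$ together with the universal property of completion, completion is strong symmetric monoidal, and dually inclusion is lax symmetric monoidal with structure maps $\iota(V) \bar\otimes \iota(W) \to \iota(V \hat\otimes W)$ induced by the unit of the adjunction. This is precisely the monoidal adjunction discussed in the references already cited immediately before the proposition.

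Next, I would extend both functors to $\sS$-modules by applying them aritywise. Because the composition product $\circ$ is built entirely from the base monoidal product via coproducts, induced representations, and coinvariants, and because completion (being a left adjoint) preserves coproducts and (being strong monoidal) intertwines the base tensor products, aritywise completion intertwines the two composition products. Aritywise inclusion inherits a lax structure for $\circ$ in the obvious way. Thus the pair $(-)^\wedge \dashv \iota$ is again a monoidal adjunction on $\sS$-modules, with the left adjoint strong and the right adjoint lax.

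Finally, I would invoke the standard fact that such a monoidal adjunction lifts to an adjunction between the categories of monoids, which in our situation means operads. On objects, the preceding proposition already establishes that aritywise completion of a filtered operad is canonically a complete operad, while inclusion of a complete operad is tautologically a filtered operad. The hom-set bijection
\[ \Hom_{\cOp(\Comp)}(\widehat{\Oo},\, \Pp) \cong \Hom_{\mathsf{Op}(\Filt)}(\Oo,\, \iota \Pp) \]
then reduces to the underlying $\sS$-module adjunction. The only real point to verify is that a filtered $\sS$-module map $\Oo \to \iota\Pp$ respects composition if and only if its adjunct $\widehat{\Oo} \to \Pp$ does. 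This is where the monoidal-adjunction machinery does its work: naturality of the unit, together with compatibility of the strong/lax structure maps with $\circ$, makes the two compatibility diagrams correspond to each other under the hom-set bijection. This last verification—formal but slightly tedious—is the only mild obstacle, and it is already handled in general form in the cited literature ($\emph{e.g.}$ Theorem 12.11 applied to Corollary 11.16 of YauJohnson, or Proposition 3.1.5 of Fresse).
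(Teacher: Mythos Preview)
Your proposal is correct and follows essentially the same approach as the paper: both arguments reduce the adjunction on operads to the underlying monoidal adjunction between filtered and complete objects, then invoke the general principle that such an adjunction lifts to monoids (operads), citing the same references (Yau--Johnson and Fresse) for this last step. The paper's argument is simply more compressed, recording only the citations and leaving the intermediate verifications implicit.
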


This construction is then compatible with the free operad in the following sense.

\begin{prop}
\label{prop:freeOperad}
Let $M$ be a filtered graded $\sS$-object.
The completion of the free filtered (graded) operad on $M$ and the map of complete $\sS$-objects $\widehat{M}\to \widehat{\Tt M}$ induced by completion exhibits $\widehat{\Tt M}$ as the free complete operad $\widehat{\Tt} \widehat{M}$ on the complete $\sS$-object $\widehat{M}$.
\end{prop}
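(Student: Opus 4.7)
The plan is to derive the statement by applying the uniqueness of left adjoints to a commutative square of right adjoints. Consider the square whose vertical arrows are the inclusions of complete objects into filtered objects, both at the level of $\sS$-modules and at the level of operads, and whose horizontal arrows are the forgetful functors from operads to $\sS$-modules. This square commutes on the nose: a complete operad has the same underlying filtered $\sS$-object regardless of whether one first forgets the operadic structure and then regards it as a filtered $\sS$-module, or first includes it as a filtered operad and then takes its underlying filtered $\sS$-module.

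Each of the four functors in this square has a left adjoint. The two forgetful functors are respectively left-adjoint to the free filtered operad $\Tt$ (from Theorem~\ref{thm: free operad}) and to the free complete operad $\widehat{\Tt}$. The inclusion at the operad level has a left adjoint given by aritywise completion, which is the content of the preceding proposition in the paper. The inclusion at the $\sS$-module level has a left adjoint given by aritywise completion of filtered objects (this is the $\sS$-module avatar of the completion functor on $\Filt(\Aa)$). Passing to left adjoints around the square, we obtain a natural isomorphism of complete operads
\[
\widehat{\Tt M}\ \cong\ \widehat{\Tt}\,\widehat{M}.
\]

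It remains to identify the unit of the free-forgetful adjunction for complete operads at $\widehat{M}$ with the map $\widehat{M}\to\widehat{\Tt M}$ obtained by completing $j: M\to\Tt M$. This is a standard diagram chase: the unit of a composite adjunction is the composition of the units of the two factors, and the identification of $\widehat{\Tt M}$ with $\widehat{\Tt}\,\widehat{M}$ is built precisely from this observation applied in two ways (first forget, then complete, versus first complete, then take free operad). The main obstacle, such as it is, is bookkeeping: one must keep the four completion functors straight and confirm that the left adjoint to the inclusion $\cpgA\hookrightarrow\pgA$ really is aritywise completion, which is exactly the content of the preceding proposition. Beyond this, the argument is a purely formal consequence of adjunctions already established.
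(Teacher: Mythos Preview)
Your proposal is correct and is essentially the same argument as the paper's, just phrased more categorically: the paper directly verifies the universal property via the chain of bijections
\[
\Hom_{\text{compl.\ op.}}(\widehat{\Tt M},\mathcal{X})\cong \Hom_{\text{filt.\ op.}}(\Tt M,\mathcal{X})\cong \Hom_{\text{filt.\ }\sS}(M,\mathcal{X})\cong \Hom_{\text{compl.\ }\sS}(\widehat{M},\mathcal{X}),
\]
which is exactly the unwinding of your ``commuting square of right adjoints implies natural isomorphism of composite left adjoints'', and then checks that the two candidate maps $\widehat{M}\to\widehat{\Tt M}$ agree by restricting to $M$, just as you do.
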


\begin{proof}
By adjunction, maps of complete operads from $\widehat{\Tt M}$ to a complete graded operad $\mathcal{X}$ are in bijection first with operad maps from $\Tt M$ to $\mathcal{X}$, then to $\sS$-object maps from $M$ to $\mathcal{X}$, and finally to complete $\sS$-object maps from $\widehat{M}$ to $\mathcal{X}$. 

A priori there are two recipes for a map from $\widehat{M}$ to $\widehat{\Tt M}$.
To see that they agree, it suffices to compare them on $M$, where both are the composition $M\to \Tt M\to \widehat{\Tt M}$.
\end{proof}

\begin{remark}
\label{remark: on the free operad}
\begin{enumerate}
\item
When $(M,\, d_M)$ is a predifferential graded $\sS$-object, the free (complete) operad on $M$ is naturally endowed with a predifferential $\tilde d_M$ induced by $d_M$ defined as follows: it is the unique derivation which extends the map
\[ M \xrightarrow{d_M} M \hookrightarrow \free M. \]
We denote by $\Tt(M,\, d_M)$ the free pg operad $(\free M,\, \tilde d_M)$. Moreover, when $(M,\, d_M)$ is a gr-dg $\sS$-object, so is $\Tt(M,\, d_M)$ and it is the free gr-dg operad on $(M,\, d_M)$. This is true in the filtered and in the complete settings.
\item
To lighten the notation, when the setting is explicit, we use the notation $\Tt$ for the free operad both in the filtered and in the complete setting.
\item
By the previous proposition, it is possible to think of an element in the free complete operad on the complete graded $\sS$-object $M$ as a (possibly infinite) sum of trees, whose vertices of arity $k$ are indexed by element in $M(k)$.
\end{enumerate}
\end{remark}

\subsection{Free curved operad on a gr-dg $\sS$-module}
\label{section: free curved operad}

We now provide a curved version of the free-forgetful adjunction.

\begin{defi}
We say that a tuple $(\Oo,\, d_\Oo,\, \theta)$ is a \emph{pointed complete gr-dg operad} when $(\Oo,\, d_\Oo)$ is a complete gr-dg operad and $\theta$ is a \emph{closed} element in $F_1 \Oo_{-2}$ (that is $d_\Oo(\theta) = 0$).
\end{defi}

In the next proposition, $\free$ denotes either the free filtered gr-dg operad or the free complete gr-dg operad.

\begin{prop}
\label{prop: free pointed operad}
We define the functor
\[ \begin{array}{lcccc}
\free_+ & : & \capgSA & \to & \mathsf{Pointed\ filtered\ (compl.)\ gr}\textsf{-}\mathsf{dg\ operads},\\
&& (M,\, d_M) & \mapsto & \left(\free(M \amalg \vartheta I),\, \tilde d_M,\, \vartheta\right),
\end{array} \]
where $\vartheta$ is a formal parameter in homological degree $-2$ and in filtration degree $1$ and $\tilde d_M$ is the unique derivation which extends the map
\[ M \amalg \vartheta I \to M \xrightarrow{d_M} M \to \free(M \amalg \vartheta I). \]
There is an adjunction
\[
\xymatrix{\free_+ : \capgSA \ar@<.5ex>@^{->}[r] & \mathsf{Pointed\ filt.\ (compl.)\ gr}\textsf{-}\mathsf{dg\ operads} : U. \ar@<.5ex>@^{->}[l]}
\]
Moreover, the pointed filtered, resp. complete, gr-dg operad $(\free_+ (M,\, d_M),\, \gamma,\, j_+)$, where $\gamma$ is the free composition product and $j_+$ is the map $M \xrightarrow{j} \free M \to \free (\vartheta I \amalg M)$ is the free filtered (complete) gr-dg operad.
\end{prop}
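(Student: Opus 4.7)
The plan is to reduce this to the universal property of the (plain) free filtered/complete operad established in Theorem~\ref{thm: free operad} and Proposition~\ref{prop:freeOperad}, after verifying that the extra structure (pointing, predifferential, gr-dg condition) is well-behaved. I will treat the filtered and complete cases uniformly; the only subtlety is that in the complete case the derivation extends continuously to the completion because it preserves the filtration.

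First I would verify that $(\free_+(M, d_M), \tilde d_M, \vartheta)$ is a well-defined pointed gr-dg operad. By Remark~\ref{remark: on the free operad}(1), the degree $-1$ map $M\amalg \vartheta I \to M\xrightarrow{d_M} M \hookrightarrow \free(M\amalg \vartheta I)$ extends uniquely to a derivation $\tilde d_M$ of degree $-1$ on the free operad. Since the square of a derivation is a derivation, $\tilde d_M^2$ is determined by its values on generators: it vanishes on $\vartheta$ (as $\tilde d_M(\vartheta) = 0$) and equals $d_M^2$ on $M$. By hypothesis $(M, d_M)$ is gr-dg, so $d_M^2$ lands in $F_{\bullet+1}M$; hence $\tilde d_M^2$ raises filtration on generators, and the Leibniz rule (together with the fact that the composition product preserves the filtration) propagates this to the whole free operad. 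Thus $\free_+(M, d_M)$ is gr-dg. The element $\vartheta$ sits in $F_1(\free_+M)(1)$ in homological degree $-2$ by construction and is closed, so the pointed structure is valid.

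Next I would establish the adjunction bijection directly. Given a pointed filtered/complete gr-dg operad $(\Pp, d_\Pp, \theta_\Pp)$ and a morphism $f : (M, d_M) \to U(\Pp, d_\Pp, \theta_\Pp)$ in $\capgSA$, assemble the $\sS$-module map
\[ \tilde f \coloneqq f \amalg (\vartheta \mapsto \theta_\Pp) : M \amalg \vartheta I \to \Pp. \]
This is a morphism of complete gr-dg $\sS$-modules: $\theta_\Pp \in F_1 \Pp(1)_{-2}$ so $\tilde f$ preserves the filtration and degree, and it commutes with the predifferentials because $f$ does by hypothesis and because both $\vartheta$ and $\theta_\Pp$ are closed. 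By Theorem~\ref{thm: free operad}, Proposition~\ref{prop:freeOperad}, and Remark~\ref{remark: on the free operad}(1), $\tilde f$ extends uniquely to a map $\free(M \amalg \vartheta I) \to \Pp$ of filtered/complete gr-dg operads, which sends $\vartheta$ to $\theta_\Pp$ and is therefore a morphism of pointed operads. In the other direction, any morphism of pointed operads $\varphi : \free_+(M, d_M) \to (\Pp, d_\Pp, \theta_\Pp)$ restricts along $j_+$ to a map $(M, d_M) \to (\Pp, d_\Pp)$ in $\capgSA$. These two constructions are mutually inverse by the uniqueness in the free operad universal property, and natural in both variables.

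The ``moreover'' clause is then the same statement read contravariantly: the unit $j_+ : (M, d_M) \to U \free_+(M, d_M)$ exhibits $\free_+(M, d_M)$ as representing $\Hom_{\capgSA}((M, d_M), U(-))$, which is the definition of the free pointed filtered/complete gr-dg operad. The main obstacle to expect is the gr-dg verification in the complete setting: one needs to check that $\tilde d_M$ extends to the completion of the free filtered operad and that $\tilde d_M^2$ continues to raise filtration after completion; this follows from continuity of $\tilde d_M$ (it preserves filtration) together with the fact that the associated graded functor commutes with the completion of the free operad, reducing the check to the generators $M \amalg \vartheta I$.
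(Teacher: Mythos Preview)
Your proof is correct and follows essentially the same approach as the paper: both reduce to the universal property of the plain free operad (Theorem~\ref{thm: free operad}) and then handle the pointing and predifferential separately. The paper phrases the bijection slightly differently, decomposing $\Hom_{\mathsf{Op}}(\free(M\amalg\vartheta I),\Oo)\cong \Hom(M,U\Oo)\times\Hom(\vartheta I,U\Oo)$ and taking the fiber over $\vartheta\mapsto\theta$, whereas you construct the inverse maps explicitly; these are equivalent. Your verification that $\free_+(M,d_M)$ is gr-dg is more explicit than the paper's, which leaves this implicit.
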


\begin{proof}
We denote by $\mathsf{pOp}$ the category of pointed operads under consideration and we consider a pointed operad $(\Oo,\, d_\Oo,\, \theta)$ in it. By Theorem \ref{thm: free operad}, we have
\begin{align*}
\Hom_{\mathsf{Op}}(\free (M \amalg \vartheta I),\, \Oo) & \cong \Hom_{\capgSA}(M \amalg \vartheta I,\, U(\Oo))\\
& \cong \Hom_{\capgSA}(M,\, U(\Oo)) \times \Hom_{\capgSA}(\vartheta I,\, U(\Oo)),
\end{align*}
where $U(\Oo)$ is the underlying filtered (complete) graded $\sS$-modules associated with $\Oo$ (that is more explicitly $U((\Oo,\, \gamma,\, \eta)) = \Oo$).
The fiber over the map $\vartheta \mapsto \theta$ on the right-hand side is naturally isomorphic to $\Hom_{\capgSA}(M,\, U(\Oo))$. On the other hand, the fiber over the map $\vartheta \mapsto \theta$ on the left-hand side consists of those operad maps $\free_+ M \to \Oo$ sending $\vartheta$ to $\theta$. The condition on the predifferentials follows directly from the construction of $\tilde d_M$. We therefore obtain the adjunction
\[ \Hom_{\mathsf{pOp}}(\free_+ (M,\, d_M),\, (\Oo,\, d_\Oo,\, \theta)) \cong \Hom_{\capgSA}((M,\, d_M),\, U(\Oo,\, d_\Oo,\, \theta)), \]
where $U(\Oo,\, d_\Oo,\, \theta)$ is the underlying filtered (complete) gr-dg $\sS$-modules associated with $(\Oo,\, d_\Oo,\, \theta)$ (that is more explicitly $U((\Oo,\, \gamma,\, \eta,\, d_\Oo,\, \theta)) = (\Oo,\, d_\Oo)$). 
It is clear from the proof (and Theorem \ref{thm: free operad}) that the tuple $(\free_+ (M,\, d_M),\, \gamma,\, j_+)$ is the free filtered (complete) gr-dg operad.
\end{proof}

We construct the free curved operad by means of the functor $\free_+$.

\begin{thm}
\label{thm: free curved operad}
We define the functor
\[ \begin{array}{lcccc}
\cfree & : & \capgSA & \to & \mathsf{Curved\ operads}\\
&& (M,\, d_M) & \mapsto & \left(\free(M \amalg \vartheta I)/\left(\im({d_M}^2 - [\vartheta,\, -])\right),\, \bar d_M,\, \bar \vartheta\right),
\end{array} \]
where $\vartheta$ is a formal parameter in homological degree $-2$ and in filtration degree $1$, the map $\bar d_M$ is the derivation induced by $\tilde d_M$ and $\left(\im({d_M}^2 - [\vartheta,\, -])\right)$ is the ideal generated by the image of the map ${d_M}^2 - [\vartheta,\, -] : M \to \free(M \amalg \vartheta I)$. 

There is an adjunction
\[
\xymatrix{\cfree : \capgSA \ar@<.5ex>@^{->}[r] & \mathsf{Curved\ operads} : U. \ar@<.5ex>@^{->}[l]}
\]

Moreover, let $\bar \jmath : (M,\, d_M) \to \cfree(M,\, d_M)$ denote the composition $(M,\, d_M) \xrightarrow{j} \free(M,\, d_M) \twoheadrightarrow \cfree(M,\, d_M)$. The curved operad $(\cfree(M,\, d_M),\, \bar \gamma,\, \bar \jmath)$ is the free curved operad on $(M,\, d_M)$ in the category of curved operads.
\end{thm}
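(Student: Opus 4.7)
The plan is to leverage the adjunction for pointed complete gr-dg operads established in Proposition~\ref{prop: free pointed operad} and then impose the curvature relation by a quotient, checking that the resulting functor remains left adjoint to the forgetful one.

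First I would verify that $\cfree(M,\, d_M)$ is well-defined as a complete gr-dg operad. The candidate $\free(M\amalg \vartheta I)$ carries the predifferential $\tilde d_M$ and the closed element $\vartheta \in F_1$ of degree $-2$ from Proposition~\ref{prop: free pointed operad}. Let $J$ denote the operadic ideal generated by the image of $d_M^2 - [\vartheta,\, -] : M \to \free(M \amalg \vartheta I)$. The key observation is that $\tilde d_M$ preserves $J$: since $d_M$ is a derivation and $\tilde d_M(\vartheta) = 0$, we compute on generators
\[ \tilde d_M(d_M^2 m - [\vartheta,\, m]) = d_M^3 m - [\vartheta,\, d_M m],\]
which is again in the image of $d_M^2 - [\vartheta,\, -]$, and the Leibniz rule then propagates this to all of $J$. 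Hence $\tilde d_M$ descends to $\bar d_M$ on the quotient. Moreover, both $d^2$ (as the square of the odd derivation $d$) and $[\vartheta,\, -]$ are derivations of degree $-2$, so $\bar d_M^2 - [\bar\vartheta,\, -]$ is a derivation on $\cfree(M,\, d_M)$ which vanishes on the generating set $\bar\jmath(M) \cup \bar\vartheta$, and therefore vanishes identically. Together with closedness of $\bar\vartheta$ and its placement in $F_1$ of degree $-2$, this confirms that $(\cfree(M,\, d_M),\, \bar d_M,\, \bar\vartheta)$ is a curved operad.

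Next I would establish the adjunction. For a curved operad $(\Oo,\, d,\, \theta)$, viewed as a pointed complete gr-dg operad via the forgetful functor, Proposition~\ref{prop: free pointed operad} gives
\[ \Hom_{\mathsf{pOp}}\bigl(\free_+(M,\, d_M),\, (\Oo,\, d,\, \theta)\bigr) \;\cong\; \Hom_{\capgSA}\bigl((M,\, d_M),\, U(\Oo,\, d,\, \theta)\bigr).\]
Morphisms out of the quotient $\cfree(M,\, d_M)$ correspond to those morphisms on the left which send $J$ to zero. Any morphism $f : \free_+(M,\, d_M) \to (\Oo,\, d,\, \theta)$ of pointed gr-dg operads satisfies $f(\vartheta) = \theta$ and intertwines the predifferentials, so
\[ f(d_M^2 m - [\vartheta,\, m]) = d^2 f(m) - [\theta,\, f(m)] = 0\]
because $\Oo$ is curved. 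Since $J$ is generated by these elements as an operadic ideal and $f$ is an operad morphism, $f$ automatically kills $J$. Hence the induced map to $\Hom_{\cOp(\Aa)}(\cfree(M,\, d_M),\, (\Oo,\, d,\, \theta))$ is a bijection, yielding the desired adjunction.

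Finally, identifying $(\cfree(M,\, d_M),\, \bar\gamma,\, \bar\jmath)$ as the free curved operad is then a direct translation: the adjunction says exactly that giving a curved operad morphism out of $\cfree(M,\, d_M)$ is equivalent to giving a morphism of complete gr-dg $\sS$-modules out of $(M,\, d_M)$, and this bijection is implemented by precomposition with $\bar\jmath$. I expect the main subtlety to be the verification that the ideal $J$ is stable under $\tilde d_M$ in the complete setting -- this uses both the derivation property and the fact that $\tilde d_M(\vartheta) = 0$, together with the observation that forming ideals and completion are compatible because the operadic ideal is built using the (complete) composition product.
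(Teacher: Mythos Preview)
Your proposal is correct and follows essentially the same approach as the paper: both reduce to the pointed gr-dg adjunction of Proposition~\ref{prop: free pointed operad} and then observe that any pointed gr-dg operad morphism into a curved operad automatically annihilates the ideal $J$, so passing to the quotient loses no morphisms. Your treatment is in fact slightly more explicit than the paper's in checking that $\tilde d_M$ preserves $J$ (via the computation $\tilde d_M(d_M^2 m - [\vartheta,m]) = d_M^3 m - [\vartheta, d_M m]$) and in verifying the curvature identity by the derivation-vanishing-on-generators argument, whereas the paper compresses these into one line each.
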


\begin{proof}
The map $\tilde d_M$ induces a well-defined map $\bar d_M$ on the quotient of the free operad $\free(M \amalg \vartheta I)/\left(\im({d_M}^2 - [\vartheta,\, -])\right)$ since $\tilde d_M (\vartheta) = 0$. From the fact that
\[ \im\left({\tilde{d_M}}^2 - [\vartheta,\, -]\right) \subset \left( \im({\tilde{d_M}}^2 - [\bar \vartheta,\, -]) \right) = \left(\im\left({d_M}^2 - [\vartheta,\, -]\right)\right)\]
we obtain that $\cfree (M,\, d_M)$ is a curved operad. 

We denote by $U_p$ the forgetful functor from curved operads to pointed gr-dg operads. Morphisms of curved operads are morphisms of pointed gr-dg operads between curved operads. This means that a morphism of curved operads $\bar f : \cfree(M,\, d_M) \to (\Oo,\, d_\Oo,\, \theta)$ is the same thing as a morphism of pointed gr-dg operads $\bar f : U_p\cfree(M,\, d_M) \to U_p(\Oo,\, d_\Oo,\, \theta)$. Morphisms $U_p\cfree(M,\, d_M) \to U_p(\Oo,\, d_\Oo,\, \theta)$ in pointed gr-dg operads coincide with morphisms $\free_+ (M,\, d_M) \to U_p(\Oo,\, d_\Oo,\, \theta)$ in pointed gr-dg operads with the condition that the ideal $\left(\im\left({d_M}^2 - [\vartheta,\, -]\right)\right)$ is sent to $0$. Since $\left( \im({\tilde{d_M}}^2 - [\bar \vartheta,\, -]) \right) = \left(\im\left({d_M}^2 - [\vartheta,\, -]\right)\right)$ and morphisms of pointed gr-dg operads commute with the predifferentials and send the marked point to the marked point, we get that this last condition is automatically satisfied when $U_p(\Oo,\, d_\Oo,\, \theta)$ is the underlying pointed gr-dg operad of a curved operad. 
Hence, we have
\[ \Hom_{\cOp}(\cfree(M,\, d_M),\, (\Oo,\, d_\Oo,\, \theta)) \cong \Hom_{\mathsf{pOp}}(\free_+ (M,\, d_M),\, U_p(\Oo,\, d_\Oo,\, \theta)). \]
The Theorem follows from Proposition \ref{prop: free pointed operad}.
\end{proof}

\begin{remark}
This adjunction is useful to define a model category structure on complete curved operads so that the bar-cobar resolution and the Koszul resolution provide $\sS$-cofibrant resolutions (see Appendix \ref{appendix: MCS on gr-dg objects} for the details on the model structure).
\end{remark}

We now give the notion of quasi-free complete curved operad. The cobar construction (see Section \ref{section: bar cobar constructions}) is a quasi-free operad and cofibrant complete curved operad (see Appendix \ref{appendix: MCS on gr-dg objects}) are retracts of quasi-free complete curved operads.

\begin{defi}
\label{defi: quasi-free}
We call a complete curved operad $\Oo$ \emph{quasi-free} if there exists a complete $\sS$-module $M$ and a predifferential $d : \free_+(M) \to \free_+(M)$ such that
\[ \Oo \cong (\free_+(M),\, d)/\left(\im\left(d^2 - [\vartheta,\, -]\right)\right). \]
\end{defi}

This notion is different from that of a free complete curved operad because in the quasi-free case the predifferential $d$ is not a priori induced by a map $M \to M$.

\section{Complete cooperads}
\label{section:cooperads}

We present in this section the notion of a complete cooperad and associated notions, most importantly \emph{altipotence}, a variation of conilpotence where instead of being eventually zero, iterated powers of the decomposition map eventually arrive in arbitrarily high filtration degree. 
This material is fairly technical, but the raison d'\^{e}tre and upshot are straightforward: the goal throughout is to establish a context in which the usual formulas for the cofree conilpotent cooperad in terms of decorated trees extend to the complete context.
The tension is to find a full subcategory of cooperads that is big enough to contain the (complete) tree cooperad but small enough so that the tree cooperad is the cofree object there. 
It is possible (in fact likely) that there are more natural alternatives to working with altipotence.
Since for us the altipotent cooperads are not of intrinsic interest but merely a tool to study complete operads, in that capacity they are perfectly adequate.

\begin{defi}\label{cooperad}
We define a \emph{cooperad} $(\Cc,\, \Delta,\, \varepsilon)$ in the category $\Mm$ to be a comonoid in the monoidal category ($\sS$-$\mathsf{Mod}(\Mm)$,\, $\circ$,\, $I$).
The map $\Delta : \Cc \to \Cc \circ \Cc$ is the \emph{decomposition map} and the map $\varepsilon : \Cc \to I$ is the \emph{counit map}.
Morphisms of cooperads are morphisms of comonoids in $\sS$-modules.
We denote by $\Coop(\Mm)$ the category of cooperads in $\Mm$.
\end{defi}
Thus:
\begin{itemize}
\item a graded cooperad (in $\Aa$) is a cooperad in the category $(\graded\Aa,\, \otimes)$,
\item a filtered cooperad is a cooperad in the category $(\Filt(\Aa),\, \bar\otimes)$, and 
\item a \emph{complete cooperad} is a cooperad in the category $(\complete(\Aa),\, \hat \otimes)$,
\item a \emph{dg complete cooperad} is a cooperad in the category $(\complete(\dg\Aa),\, \hat \otimes)$.
\end{itemize}

Because generically the associated graded functor is not colax, it does not take filtered cooperads to graded cooperads.
For that we need a flatness condition.

\begin{prop}
\label{prop: graded of gr-flat coop is coop}
There is a functor $\Gr$ from the category of gr-flat filtered (or complete) cooperads to graded cooperads covering the associated graded functor.
\end{prop}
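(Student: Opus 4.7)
The plan is to argue that although $\Gr$ is only lax symmetric monoidal in general, its restriction to gr-flat objects is strong symmetric monoidal, and a strong symmetric monoidal functor automatically preserves comonoids and hence (infinitesimal) cooperad structure.

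First I would recall the canonical lax structure map $\Gr V \otimes \Gr W \to \Gr(V \bar\otimes W)$, which comes from the observation that $F_a V \otimes F_b W$ lands in $F_{a+b}(V \bar\otimes W)$ so induces a map on associated gradeds. The key lemma is that when $V$ (or $W$) is gr-flat, this map is an isomorphism. The proof proceeds degree by degree using the short exact sequences $0 \to F_{p+1} V \to F_p V \to \Gr_p V \to 0$: flatness of $\Gr_p V$ ensures that tensoring with $W$ preserves these sequences, and an induction on $p$ (together with a pass to the colimit defining $\bar\otimes$) identifies the image filtration with the one predicted by the graded tensor product. The analogous statement in the complete setting reduces to the filtered one because the completion functor is the identity on associated gradeds.

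Next I would extend this from $\bar\otimes$ to the composition product $\circ$ on $\sS$-modules. Since $\circ$ is built componentwise from iterated tensor products, coproducts (both in $\Mm$ and the coproduct over $k$), and coinvariants under symmetric group actions, and since $\Gr$ commutes with coproducts and with taking $\sS_k$-coinvariants in our $\qQ$-linear setting, the strong monoidal statement lifts: for gr-flat $C, D$ the canonical map $\Gr C \circ \Gr D \to \Gr(C \circ D)$ is an isomorphism, and similarly $\Gr I \cong I$. Given a gr-flat filtered cooperad $(\Cc, \Delta, \varepsilon, d)$, one then defines
\[
\Gr \Delta \colon \Gr \Cc \xrightarrow{\Gr(\Delta)} \Gr(\Cc \circ \Cc) \xrightarrow{\cong} \Gr\Cc \circ \Gr\Cc,
\]
together with $\Gr \varepsilon \colon \Gr \Cc \to \Gr I \cong I$. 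Coassociativity and counitality for $\Gr \Cc$ follow from the corresponding identities for $\Cc$ by applying $\Gr$ and using naturality of the strong monoidal structure isomorphism. Functoriality on morphisms is immediate from functoriality of $\Gr$ on $\sS$-modules and naturality. For the infinitesimal variant, the same recipe applies; one only needs to note that $\Gr(I[-1])$ is the graded $\sS$-module concentrated in filtration degree $1$ equal to $I$, which is precisely the target of the counit in the graded infinitesimal cooperad structure.

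The main obstacle I anticipate is the first step: carefully verifying that gr-flatness is the right condition to upgrade the lax map to an isomorphism, and that this behavior is inherited by the composition product of $\sS$-modules (which involves induced representations and coinvariants) and by the completed composition product. Once the strong monoidal property of $\Gr$ on gr-flat objects is in hand, the rest is formal transport of (co)algebraic structures along a strong monoidal functor.
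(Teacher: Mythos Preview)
Your proposal is correct and follows essentially the same route as the paper: both arguments reduce the proposition to the fact that $\Gr$, restricted to gr-flat objects, is strong symmetric monoidal (the paper's Lemma~\ref{lemma:gr-flat is monoidal subcat}), and then transport the (infinitesimal) cooperad structure formally via the dual of the standard ``lax/strong monoidal functors preserve (co)monoids'' statement. The only difference is in how the strong monoidal lemma itself is established: you sketch a direct induction on the short exact sequences $0\to F_{p+1}V\to F_pV\to \Gr_pV\to 0$, whereas the paper obtains it via a pushout-product argument (Lemmas~\ref{lemma: pushout products and the filt reflector}--\ref{lemma: diagram product of gr-flats is gr-flat}) showing that for gr-flat $X,Y$ the diagram product is already filtered; both are valid proofs of the same key fact.
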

See Appendix~\ref{appendix: gr-flat} for the proof.

\begin{remark}[Warning]
\label{remark: associated graded is only a cooperad if we have gr-flat}
Because the associated graded functor does not extend to arbitrary filtered or complete cooperads, any ``gr'' definition which uses a cooperadic structure on the associated graded is implicitly assumed to only be defined for gr-flat filtered or gr-flat complete cooperads.
\end{remark}

\subsection{The tree cooperad}
\label{section: tree cooperad}

We recall the construction of the \emph{tree cooperad} in an additive category.
This construction is taken directly from~\cite[5.8.6]{LodayVallette} with very mild adaptations (using coinvariants instead of invariants and working in greater generality).
We recall it explicitly because we will need to work with it intimately.\\

The following construction works in a cocomplete symmetric monoidal additive category equipped with the composition product $\circ$ and unit $I$. We will work in the more restricted category $\Mm$ described in the beginning of Section \ref{section: operads} for the results arriving after. 

\begin{construction}[Tree cooperad]
\label{construction: tree cooperad}
For $M$ an $\sS$-module in $\Mm$, we write $\Tt^c M$ as the colimit (in $\sS$-modules) over $n$ of $\Tt_n M$, where $\Tt_n M$ is defined in Section \ref{section: free operad}.

There is a projection using the zero map from $\Tt_n M\to I$ which passes to a map $\varepsilon:\Tt^c M\to I$.
There is an inclusion $\eta:I=\Tt_0 M\to \Tt^c M$.

Finally we define a decomposition map.
We proceed inductively.
First define $\Delta_0: \Tt_0 M\to \Tt_0 M\circ \Tt_0 M$ as the canonical map $I\to I\circ I$.
We use the same definition for $I\subset \Tt_n M$. 
For the $M\circ \Tt_{n-1} M$ summand of $\Tt_n M$, we define a map to the biproduct
\[
M\circ \Tt_{n-1}M\to (I\circ (M\circ \Tt_{n-1}M)) \amalg ((M\circ \Tt_{n-1}M) \circ \Tt_n M)
\]
by $I\circ \id$ on the first factor and 
\begin{multline*}
M\circ \Tt_{n-1}M\xrightarrow{\id_M\circ \Delta_{n-1}} M\circ (\Tt_{n-1} M\circ \Tt_{n-1}M)\\
\cong (M\circ \Tt_{n-1}M)\circ \Tt_{n-1}M \xrightarrow{\id\circ \iota_{n-1}}
(M\circ \Tt_{n-1}M)\circ \Tt_n M
\end{multline*}
on the second factor.

This passes to the colimit to give a map $\Delta:\Tt^c M\to \Tt^c M\circ \Tt^c M$.
Verifying left unitality is direct, right unitality is by induction, and associativity is by a combination of induction and combinatorics involving the $I$ factor.

This defines a cooperad, called the \emph{tree cooperad}.
\end{construction}
The tree cooperad is often constructed (e.g., in~\cite{LodayVallette}) as an explicit model for the cofree conilpotent cooperad.
We care about this explicit model more or less because it lets us perform calculations. 
However, in the complete case which interest us, the tree cooperad is not conilpotent in general. 
Therefore our goal is to weaken the conilpotence condition and find a more relaxed setting in which the tree cooperad will still be cofree.

\begin{lemma}
\label{lemma: natural retract}
There is a natural retract of collections $r_n$ from $\Tt_n M$ to $\Tt_{n-1}M$.
\end{lemma}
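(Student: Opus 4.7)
The plan is to construct $r_n$ by induction on $n$, using the recursive presentation $\Tt_n M = I \amalg (M \circ \Tt_{n-1} M)$ and the inclusion $\iota_{n-1} : \Tt_{n-1} M \to \Tt_n M$ which is the identity on the $I$ summand and $\id_M \circ \iota_{n-2}$ on the $M \circ \Tt_{n-2} M$ summand.

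For the base case, take $r_1 : \Tt_1 M = I \amalg M \to I = \Tt_0 M$ to be the projection onto the $I$ summand (using the zero map on $M$). Since $\iota_0 : I \to I \amalg M$ is the inclusion of the first summand, $r_1 \circ \iota_0 = \id_I$.

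For the inductive step, assume $r_{n-1} : \Tt_{n-1} M \to \Tt_{n-2} M$ has been defined and is natural in $M$ with $r_{n-1} \circ \iota_{n-2} = \id_{\Tt_{n-2} M}$. Define $r_n : \Tt_n M \to \Tt_{n-1} M$ to be the identity on the $I$ summand, and on the $M \circ \Tt_{n-1} M$ summand to be the composition
\[
M \circ \Tt_{n-1} M \xrightarrow{\id_M \circ r_{n-1}} M \circ \Tt_{n-2} M \hookrightarrow I \amalg (M \circ \Tt_{n-2} M) = \Tt_{n-1} M.
\]
The fact that $\circ$ preserves colimits in each variable (in particular, in the right slot) means that $\id_M \circ r_{n-1}$ is a well-defined map of collections, and since both the biproduct decomposition and the partial composition product are natural in $M$, so is $r_n$.

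To check the retract condition $r_n \circ \iota_{n-1} = \id_{\Tt_{n-1} M}$, it suffices to verify it on each summand of $\Tt_{n-1} M$. On the $I$ summand this is immediate. On the $M \circ \Tt_{n-2} M$ summand, $\iota_{n-1}$ is $\id_M \circ \iota_{n-2}$ followed by inclusion into $\Tt_n M$, so composing with $r_n$ yields $\id_M \circ (r_{n-1} \circ \iota_{n-2}) = \id_M \circ \id = \id$ by the inductive hypothesis. No step here is delicate; the only thing to watch is that each $r_n$ is genuinely a morphism of $\sS$-modules, which is clear from the naturality of the summand decompositions and the fact that $\id_M \circ (-)$ is functorial in its right argument.
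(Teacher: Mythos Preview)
Your proof is correct and follows essentially the same inductive construction as the paper: both define $r_1$ as projection onto $I$ and $r_n$ as the identity on the $I$ summand together with $\id_M \circ r_{n-1}$ on the other summand. You give more detail in verifying the retract identity $r_n \circ \iota_{n-1} = \id$, which the paper leaves implicit. One minor remark: the well-definedness of $\id_M \circ r_{n-1}$ only needs bifunctoriality of $\circ$, not colimit preservation in the right variable (which in fact fails in general for $\circ$); but this does not affect your argument.
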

\begin{proof}
We can define $r_n$ inductively.
For $n=1$ the retract $r_1: I\amalg M\circ \Tt_0 M\to I$ is projection to the first factor.
Suppose given $r_j$ for $1\le j<n$.
Define the $n$ component at $M$, 
\[
r_n: I\amalg (M\circ \Tt_n M)\to I\amalg (M\circ \Tt_{n-1} M),
\] 
as the identity on the first factor and $\id\circ r_{n-1}$ on the second factor.
By induction $r_n$ is a retract of the desired inclusion.
\end{proof}

The composite of $r_2 \cdots r_n : \Tt_n M \to \Tt_1 M$ with the projection $\Tt_1 M \to M \circ I \cong M$ provides a map $\epsilon_n(M) : \Tt_n M \to M$, which extends to a map $\epsilon(M) : \Tt M \to M$ (functorial in $M$).

We will use the notation $Q_n M$ for the ``graded'' component $\Tt_n M/\Tt_{n-1} M$.
\begin{cor}
\label{cor: cofree as a coproduct}
The underlying collection of the tree cooperad $\Tt^c M$ is the coproduct of $Q_n M$
over all $n$.
Similarly, the finite stage $\Tt_n M$ is the coproduct of these quotients through stage $n$.
\end{cor}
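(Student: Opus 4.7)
The plan is to leverage the natural retracts $r_n : \Tt_n M \to \Tt_{n-1} M$ constructed in Lemma~\ref{lemma: natural retract} to split each inclusion $\iota_{n-1} : \Tt_{n-1} M \hookrightarrow \Tt_n M$, and then use additivity to convert the resulting split monomorphisms into coproduct decompositions.

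First I would observe that in the additive category $\Mm$ (and hence in the category of $\sS$-modules in $\Mm$), whenever $f : A \to B$ is a monomorphism admitting a retraction $r : B \to A$, the object $B$ is naturally isomorphic to the biproduct $A \amalg (B/A)$ via $(r, \pi)$, where $\pi$ is the quotient map. Applying this to $\iota_{n-1}$ with retract $r_n$ gives a canonical splitting
\[
\Tt_n M \;\cong\; \Tt_{n-1} M \,\amalg\, Q_n M.
\]

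Next, I would proceed by induction on $n$. The base case $\Tt_0 M = I = Q_0 M$ is immediate. Assuming $\Tt_{n-1} M \cong \coprod_{k=0}^{n-1} Q_k M$, combining with the splitting above yields $\Tt_n M \cong \coprod_{k=0}^{n} Q_k M$, which establishes the second claim of the corollary. The inclusion $\iota_n : \Tt_n M \to \Tt_{n+1} M$ then corresponds under these identifications to the canonical inclusion of the first $n+1$ summands into the first $n+2$ summands.

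For the statement about $\Tt^c M$, I would pass to the colimit over the tower $\Tt_0 M \to \Tt_1 M \to \cdots$. Since coproducts commute with colimits and the transition maps are precisely the coproduct inclusions of initial summands, the colimit is the full coproduct $\coprod_{n \geq 0} Q_n M$. No step is particularly delicate; the only care needed is to ensure the retract $r_n$ is a morphism of $\sS$-modules (which is clear from its inductive construction as a levelwise identity or projection), so that the splittings live in the correct category.
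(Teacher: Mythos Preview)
Your proof is correct and follows essentially the same approach as the paper's: use the retract from Lemma~\ref{lemma: natural retract} to split $\Tt_n M \cong \Tt_{n-1}M \amalg Q_nM$, induct, and pass to the colimit. The paper's version is more terse (it simply notes that the splitting holds because the ambient category is additive with kernels), but the argument is the same.
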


\begin{proof}
The retract of Lemma~\ref{lemma: natural retract} realizes the object $\Tt_n M$ as the coproduct of $Q_n M$ and $\Tt_{n-1} M$, since $\sS$-$\mathsf{Mod}(\Mm)$ is additive with kernels. 
Then by induction $\Tt^c_n M$ is isomorphic to the coproduct
\[
\coprod_{j\le n}Q_j M,
\]
and the colimit of these is the infinite coproduct.
\end{proof}

\begin{remark}
Intuitively we can think of the $n$th graded stage $Q_n M$ as consisting of rooted trees decorated by elements of $M$ with maximal number of vertices encountered in a simple path from the root to a leaf precisely equal to $n$.
\end{remark}

\begin{cor}
\label{cor: projections determine}
Let $M$ be a complete collection in $\Comp(\Aa)$. 
A map of complete collections with codomain $\Tt^c M$ is determined uniquely by its projections to $Q_n M$ (but need not exist).
\end{cor}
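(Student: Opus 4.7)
The plan is to exploit the description of $\Tt^c M$ from the previous corollary as the coproduct $\coprod_n Q_n M$ taken in complete collections. Because completion is left adjoint to the inclusion of complete into filtered collections, this coproduct is the completion of the direct sum computed in filtered collections. By definition of completion and the fact that the filtration on a coproduct is computed summandwise, one therefore has $\Tt^c M / F_p \Tt^c M \cong \coprod_n Q_n M/F_p Q_n M$ for each $p$, and hence $\Tt^c M \cong \lim_p \coprod_n Q_n M/F_p Q_n M$ as a complete filtered object. The projection $\pi_n : \Tt^c M \to Q_n M$ will be the one induced by the summand projection $\coprod_n Q_n M \to Q_n M$ in filtered collections, pushed across the completion adjunction (using that $Q_n M$ is itself complete).

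Suppose now that $f, g : N \to \Tt^c M$ are two maps of complete collections with $\pi_n f = \pi_n g$ for every $n$, and let $h = f - g$. For each fixed $p$, I would compose $h$ with the quotient map $\Tt^c M \twoheadrightarrow \Tt^c M/F_p \Tt^c M \cong \coprod_n Q_n M/F_p Q_n M$. Further composing with the summand projection to any individual $Q_n M/F_p Q_n M$ factors through $\pi_n h = 0$, so every such summand projection of the composite vanishes. Since the natural map $\coprod_n Q_n M/F_p Q_n M \hookrightarrow \prod_n Q_n M/F_p Q_n M$ is a monomorphism in the ambient additive category, this forces the composite $h \pmod{F_p}$ to be zero; equivalently, $h$ factors through $F_p \Tt^c M$.

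Since this holds for every $p$, the map $h$ factors through $\bigcap_p F_p \Tt^c M$. But $\Tt^c M$ is complete, so the structure map $\Tt^c M \to \lim_p \Tt^c M/F_p \Tt^c M$ is an isomorphism, and its kernel is precisely $\bigcap_p F_p \Tt^c M$, which therefore vanishes. We conclude $h = 0$ and $f = g$.

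The main technical burden is in the first paragraph: one has to verify that the coproduct in complete collections really is the completion of the filtered coproduct, that the filtration on a filtered coproduct is computed summandwise, and that $Q_n M$ is itself complete so that the projections $\pi_n$ land in $Q_n M$ rather than just its completion. These statements should all follow from the reflective-embedding formalism of the filtered framework section and the appendices, but each deserves a careful check. Once these identifications are in place, the diagram chase above is essentially formal.
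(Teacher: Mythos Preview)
Your argument is correct and follows essentially the same idea as the paper: the previous corollary identifies $\Tt^c M$ with $\coprod_n Q_n M$, and uniqueness then comes down to the fact that in complete objects the canonical map $\coprod_n Q_n M \to \prod_n Q_n M$ is a monomorphism. The paper simply cites this last fact as a standalone lemma proved in the appendix (Lemma~\ref{lemma: comparison in complete}), whereas you unpack it inline via the filtration-quotient argument; the technical checks you flag in your final paragraph are exactly what that lemma and its supporting lemmas establish.
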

\begin{proof}
This follows from Lemma~\ref{lemma: comparison in complete} which says the map from the coproduct to the product is monic.
\end{proof}

\begin{construction}
\label{construction, Phi extensions}
Let $\mathcal{C}$ be a complete cooperad and $M$ a complete collection equipped with a map of collections $\varphi:\mathcal{C}\to M$.

We inductively construct a map $\Phi_n:\Cc\to Q_n M$ as follows.
We define $\Phi_0$ as $\Cc\to I\cong \Tt^c_0 M$ as the counit $\varepsilon$.
We define $\Phi_1$ as $\varphi$.
Next assuming $\Phi_0,\ldots,\Phi_{n-1}$ are defined, we define a map to the biproduct
\[\sum_{i=0}^n\Phi_i : \Cc \to \Tt^c_n M = I \amalg (M \circ \Tt^c_{n-1} M)\] as $\Phi_0$ on the first summand $I$ and 
\[\left(\varphi \circ \sum_{i=1}^{n}\Phi_{i-1}\right)\cdot \Delta\]
on the second summand.
Inductively this agrees with $\sum_{i=0}^{n-1}\Phi_i$ except on the factor $Q_n M$ of the biproduct $\Tt^c_n M$, and so defines $\Phi_n$.
\end{construction}
This gives a map into the product of quotients $\prod Q_n M$.

\begin{lemma}
\label{lemma: projections are determined by varphi with the tree cooperad}
Let $\mathcal{C}$ be a complete cooperad and $M$ a complete collection. 
Let $\Phi:\mathcal{C}\to \Tt^c M$ be a map of collections and let $\varphi:\mathcal{C}\to M$ be the projection
\[
\mathcal{C}\xrightarrow{\Phi} \Tt^c M\to \prod Q_n M\to  M.
\]
Then $\Phi$ is a map of cooperads if and only if the projection
\[
\mathcal{C}\xrightarrow{\Phi} \Tt^c M\to \prod_n Q_n M\to  Q_N M
\]
coincides with the map $\Phi_N$ obtained from $\varphi$ via Construction~\ref{construction, Phi extensions} for all $N$.
\end{lemma}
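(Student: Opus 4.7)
The plan is to prove both implications by induction on $N$, leveraging in parallel the inductive structures of the tree cooperad from Construction~\ref{construction: tree cooperad} and of the maps $\Phi_N$ from Construction~\ref{construction, Phi extensions}.

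For the forward direction, suppose $\Phi$ is a cooperad map. The base cases are immediate: at $N=0$, counit preservation gives that the projection $\Cc\to Q_0 M = I$ equals $\varepsilon_\Cc = \Phi_0$, and at $N=1$ the statement is the definition of $\varphi = \Phi_1$. For the inductive step at $N\geq 2$, the key structural observation comes from inspecting Construction~\ref{construction: tree cooperad}: restricted to the $M\circ \Tt^c_{N-1}M$ summand of $\Tt^c_N M$, the decomposition $\Delta_{\Tt^c M}$ has a component landing in $(M\circ \Tt^c_{N-1}M)\circ \Tt^c_N M$ given by $\id_M\circ \Delta_{N-1}$, and composing with the projection $\Tt^c M\to M$ followed by the retract $\Tt^c M\to \Tt^c_{N-1}M$ of Lemma~\ref{lemma: natural retract} on the second factor identifies the image of $Q_N M$. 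Applying the cooperad compatibility
\[
\Delta_{\Tt^c M}\circ\Phi = (\Phi\circ\Phi)\circ\Delta_\Cc,
\]
projecting both sides through $(\varphi\circ r)$ to $M\circ \Tt^c_{N-1}M$, and applying the inductive hypothesis to identify the composite of $\Phi$ with the projection $\Tt^c M\to \Tt^c_{N-1}M$ as $\sum_{i=0}^{N-1}\Phi_i$, one recovers exactly the formula $\bigl(\varphi\circ\sum_{i=1}^{N}\Phi_{i-1}\bigr)\circ\Delta_\Cc$ defining $\Phi_N$.

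For the backward direction, assume the projections to $Q_N M$ agree with $\Phi_N$ for all $N$. Counit preservation is the content of the $N=0$ case. To verify decomposition preservation, I need to show that $\Delta_{\Tt^c M}\circ\Phi$ and $(\Phi\circ\Phi)\circ\Delta_\Cc$ agree as maps $\Cc\to \Tt^c M\circ\Tt^c M$. The strategy is to project both to the various summands built out of $Q_m M$ pieces on either factor, and verify equality summand by summand using the hypothesis and a second induction along the tree-size filtration. By Corollary~\ref{cor: projections determine} together with an analogous statement applied one factor at a time to $\Tt^c M\circ\Tt^c M$, such projections suffice to determine a map of complete collections, so this delivers the equality.

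The main technical obstacle will be the backward direction, and specifically the justification that maps into the complete composition product $\Tt^c M\circ\Tt^c M$ are determined by their $Q$-component projections. Corollary~\ref{cor: projections determine} covers a single $\Tt^c M$ factor, and the extension to $\Tt^c M\circ\Tt^c M$ ought to reduce to applying that statement in each variable together with monicity of coproduct-to-product maps in the spirit of Lemma~\ref{lemma: comparison in complete}. However the interaction between completion and $\circ$ is delicate, and carefully controlling it — so that the projection-level recursion really does pin down the full map — is the heart of the argument.
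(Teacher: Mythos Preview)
Your approach matches the paper's in both directions, and your forward direction is fine. For the backward direction you have correctly located the crux, but two ingredients that the paper supplies are missing from your sketch.

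First, projecting to ``summands built out of $Q_m M$ pieces on either factor'' is not quite the right granularity: $\circ$ does not distribute over coproducts on the right, so $\Tt^c M\circ\Tt^c M$ is not simply a coproduct of terms $Q_a M\circ Q_b M$. The paper instead works at the level of the finite stages $\Tt_r M\circ\Tt_s M$. Since $\circ$ preserves filtered colimits in each variable, one has $\Tt^c M\circ\Tt^c M\cong\colim_{r,s}\Tt_r M\circ\Tt_s M$; the connecting maps in this diagram all split, so the colimit is a coproduct, and then Lemma~\ref{lemma: comparison in complete} yields a monomorphism into $\prod_{r,s}\Tt_r M\circ\Tt_s M$. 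Thus it suffices to check the cooperad square after projecting to each $\Tt_r M\circ\Tt_s M$, and the projection of $\Delta_{\Tt^c M}$ to that stage conveniently factors through $\Tt_{r+s}M$.

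Second, you have not said what drives the induction. The paper inducts on $r$, with the base case $r=0$ handled by unitality. For the inductive step the essential input is the \emph{coassociativity of $\Cc$}: stacking the recursion $\sum_j^{r+s}\Phi_j \rightsquigarrow (\varphi\circ\sum_j^{r+s-1}\Phi_j)\cdot\Delta$ on top of the inductive hypothesis produces a rectangle landing in $M\circ\Tt_{r-1}M\circ\Tt_s M$, and identifying this with the desired square at stage $(r,s)$ uses $(\id\circ\Delta)\cdot\Delta=(\Delta\circ\id)\cdot\Delta$ in $\Cc$. Without invoking coassociativity somewhere your induction cannot close.
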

\begin{proof}
The map $\Phi$ is compatible with counits if and only if $\Cc\xrightarrow{\Phi}\Tt^c M\to Q_0 M\cong I$ is the counit $\epsilon$ of $\Cc$, which coincides with $\Phi_0$.
Compatibility with $\Phi_1$ is already a hypothesis of the lemma.

Let us first show necessity.
Suppose shown that the projection of $\Phi$ to $\Tt_{n-1} M$ must be $\sum_{j=0}^{n-1}\Phi_{j-1}$ in order that $\Phi$ be a map of cooperads.
Let $N$ be arbitrary (a priori unrelated to $n$) and consider the composition of the ``comultiplication'' $\Delta_N:\Tt_N M\to \Tt_N M\circ \Tt_N M$ used in Construction~\ref{construction: tree cooperad} and the map $\Tt_N M\circ \Tt_N M\xrightarrow{\epsilon_N(M)\circ\id}M\circ \Tt_N M$. 
More or less by unitality, this composition is the inclusion of the non-$I$ summand of $\Tt_N M$ into the non-$I$ summand of $\Tt_{N+1} M$.
Moreover, the restriction of this composition along $Q_N M\to \Tt_N M$ lifts as follows 
(again essentially by definition of $\Delta_N$):
\[
\begin{tikzcd}
&&&
(M\circ \Tt_{N-1} M)/ (M\circ \Tt_{N-2} M)
\ar[d, hookrightarrow]
\\
Q_N M
\ar[urrr,dashed, hookrightarrow]\rar
&
\Tt_N M\rar[swap]{\Delta_N}
&
\Tt_N M\circ \Tt_N M\rar[swap]{\epsilon_N(M)\circ\id}
&
M\circ \Tt_N M.
\end{tikzcd}
\]
In particular for $N>n$ the projection of the horizontal composition to $M\circ \Tt_{n-1}M$ vanishes.
Then the following diagram commutes:
\[
\begin{tikzcd}
\Cc\ar{d}[swap]{\Delta}\ar{r}{\Phi}& \Tt^c M\rar{\text{projection}}\ar{d}[swap]{\Delta}&\Tt_n M\ar{d}{}\\
\Cc\circ\Cc\ar{r}[swap]{\Phi\circ \Phi}&\Tt^c M\circ \Tt^c M\ar[r]& M\circ \Tt_{n-1}M.
\end{tikzcd}
\]
The composition along the bottom of the diagram is $\varphi\circ \sum_{j=0}^{n-1}\Phi_{j}$ by induction, which suffices to show that the projection of $\Phi$ to $\Tt_n M$ is $\sum_{j=0}^n \Phi_j$, extending the induction.

Now we turn to sufficiency.
Suppose that $\Phi$ has projections as in Construction~\ref{construction, Phi extensions}.
We would like to argue that $\Phi$ is a cooperad map.
We reduce the question to an inductive question about the finite stages $\Tt_r M\circ \Tt_s M$ by the following categorical argument.
First, $\circ$ preserves filtered colimits in each variable, since $\otimes$ does, so maps to $\Tt^c M\circ \Tt^c M$ are the same as maps to $\colim_{r,s}\Tt_r M\circ \Tt_s M$. 
All of the connecting maps in the diagram for this colimit split, which means that $\colim_{r,s}\Tt_r M\circ \Tt_s M$ splits as a coproduct.
Writing the coproduct explicitly is somewhat inconvenient since $\circ$ doesn't naively distribute over coproducts on the right, but in any event every term in this coproduct appears at the stage indexed by some finite pair $(r,s)$.
Then by Lemma~\ref{lemma: comparison in complete}, there is a monomorphism from $\colim_{r,s}\Tt_r M\circ \Tt_s M$ to $\prod_{r,s}\Tt_r M\circ \Tt_s M$.
Moreover, the projection of $\Tt^c M\xrightarrow{\Delta}\Tt^c M\circ \Tt^c M\to \Tt_r M\circ \Tt_s M$ factors through $\Tt_{r+s}M$.
Therefore to check commutativity of the square
\[
\begin{tikzcd}
\Cc\rar{\Phi}\dar
&\Tt^c M\dar{\Delta}
\\
\Cc\circ\Cc\rar{\Phi\circ \Phi}
&
\Tt^c M\circ \Tt^c M
\end{tikzcd}
\]
it suffices to check commutativity of the square
\[
\begin{tikzcd}
\Cc\rar{\Phi}\dar
\ar[bend left,"\sum^{r+s}_j\Phi_{j}"]{rr}
&\Tt^c M\rar & \Tt_{r+s}M\dar{\text{projection of }\Delta}\\
\Cc\circ\Cc\rar{\Phi\circ \Phi}
\ar[bend right,"\left(\sum_j^{r} \Phi_{j}\right)\circ\left(\sum_j^{s}\Phi_{j}\right)",swap]{rr}
&
\Tt^c M\circ \Tt^c M\rar& \Tt_{r} M\circ \Tt_{s} M
\end{tikzcd}
\]
for all $r$ and $s$.

We proceed by induction on $r$. 
The base cases with $r=0$ follow from unitality considerations, so assume that $r$ is strictly positive.
Consider the following diagram:
\[
\begin{tikzcd}
\Cc\ar{rrrr}{\sum_j^{r+s}\Phi_{j}}
\dar[swap]{\Delta}
&&&&
\Tt_{r+s}M\dar{\text{projection}}
\\
\Cc\circ\Cc\dar[swap]{\id\circ\Delta}
\ar{rrrr}{\varphi\circ \sum_j^{r+s-1}\Phi_j}
&&&&
M\circ \Tt_{r+s-1}M
\dar{\text{projection of }\Delta}
\\
\Cc\circ\Cc\circ\Cc
\ar{rrrr}[swap]{\varphi\circ \sum_j^{r-1}\Phi_j\circ \sum_j^{s}\Phi_j}
&&&&
M\circ \Tt_{r-1}M \circ \Tt_s M.
\end{tikzcd}
\]
The upper square commutes by construction of $\Phi_j$ and the lower square commutes by the inductive premise. 
The outer rectangle constitutes the correct commutativity for all of the summands of the next step in the induction except for the $I\circ \Tt_s M$ term, which again follows by unitality.
Note that comparing this outside cell to the next inductive step uses the fact that $(\id\circ\Delta)\cdot \Delta=(\Delta\circ\id)\cdot \Delta$ in the cooperad $\Cc$.
This completes the induction. 
\end{proof}
This lemma constitutes a uniqueness result for extending $\varphi$ to a cooperad map $\Phi$. 
However, it is only a partial existence result because Construction~\ref{construction, Phi extensions} a priori lands in the product of $Q_n M$, not in the subobject $\Tt^c M$ of the product.
In the next section we will provide conditions for a refined existence result, conditions under which the projections $\Phi_n$ determine a map with codomain $\Tt^c M$.

\subsection{Gr-coaugmentation and gr-conilpotence}

We now move on to the definition of altipotence. To manage that, we generalize the notions of coradical filtration and of primitives for a coaugmented cooperad to the setting of complete filtered cooperads. 

\begin{defi}
When $(M,\, F)$ is a (complete) filtered object in $\Mm$, we define the \emph{$k$-shifted filtrations of $F$} by $(M,\, F)[k] = (M,\, F[k])$ where
\[F[k]_p M = \left\{
\begin{array}{ll}
F_0 M & \text{when } k+p \leq 0,\\
F_{k+p}M & \text{when } k+p \geq 1.
\end{array} \right.\]
In a slight abuse of notation, we often simply write $M[k]$ instead of $(M,\, F)[k]$ and $F_p M[k]$ instead of $F[k]_p M$.
\end{defi}

We denote by $I[-1]$ the (complete) filtered $\sS$-module endowed with the following filtration
\[ F_0 I[-1] = I,\ F_1 I[-1] = I \text{ and } F_p I[-1] = \varnothing \text{ for all } p \geq 2.\]

\begin{defi}\label{gr-coaug}
Let $(\Cc,\, \Delta,\, \varepsilon)$ be a complete cooperad.
\begin{itemize}
\item
A \emph{coaugmentation} for this cooperad is a map $\eta : I \to \Cc$ of cooperads (on $I$, $\Delta(1) = 1\otimes 1$) such that $\varepsilon \cdot \eta = \id_I$.
\item
When $\Cc$ is gr-flat, a \emph{gr-coaugmentation} $\eta$ is a map of filtered complete $\sS$-modules, such that applying the graded functor $\Gr$ to $(\Cc,\, \Delta,\, \varepsilon,\, \eta)$ provides a flat coaugmented cooperad.
\item
In the context of complete gr-coaugmented cooperads, we call \emph{morphism of complete gr-coaugmented cooperads} a morphism of cooperads which commutes with the gr-coaugmentations after applying the graded functor $\Gr$.
\end{itemize}
Let $(\Cc,\, \Delta,\, \varepsilon,\, \eta)$ be a gr-coaugmented gr-flat cooperad.
\begin{itemize}
\item
The \emph{infinitesimal coideal} of $\eta$, notated $\coideal{\Cc}$, is the complete filtered $\sS$-module which is the pushout (in complete filtered objects) of the diagram $I[-1]\gets I\xrightarrow{\eta} \Cc$, where $I\to I[-1]$ is the identity in filtration degree zero. We denote by $\coideal{\eta}$ the map $I[-1] \to \coideal{\Cc}$ in the pushout.
\end{itemize}
\end{defi}

\begin{remark}
\label{remark: gr-coaugmentation}
In keeping with our conventions, a gr-coaugmentation only has to be compatible with the decomposition map, the differential, should one exist, and the counit on the graded level.
However, a gr-coaugmentation still satisfies $\varepsilon\cdot \eta=\id_I$ at the filtered level. 
This follows from the fact that since $I$ has nothing in positive filtration degree, the counit $\varepsilon$ factors through its $0$th graded component. 
\end{remark}

\begin{remark}
The infinitesimal coideal $\coideal{\Cc}$ has the same underlying object as $\Cc$, but the image of $\eta$ is forced to live in filtration degree $1$.
There is a map of complete objects $\Cc\to\coideal{\Cc}$. 
Then given a map of complete collections $\coideal{\Cc}\xrightarrow{\varphi} M$, we can use the recipe of Construction~\ref{construction, Phi extensions} to extend $\Cc\to \coideal{\Cc}\to M$ to a map $\Cc\to \prod Q_n M$. 
We use the same notation in this case (i.e., we do not record whether the map $\varphi$ started with domain $\Cc$ or $\coideal{\Cc}$). 
\end{remark}

The tree cooperad is coaugmented by the inclusion of $I$ into $\Tt^c M$.

Now we make some definitions related to conilpotence in the complete context. 
We cannot follow either~\cite{LodayVallette} or~\cite{LegrignouLejay:HTLC} without modification.

\begin{defi}
\label{conil}
Let $(\Cc,\, \Delta,\, \varepsilon,\, d)$ be a complete (gr-flat) cooperad endowed with a gr-coaugmentation $\eta$. 
We define the \emph{reduced decomposition map} $\bar \Delta : \Cc \to \Cc \circ \Cc$ by
\[
\bar \Delta \coloneqq \Delta - ((\eta \cdot \varepsilon) \circ \id) \cdot \Delta - (\id \circ (\eta \cdot \varepsilon))\cdot \Delta + ((\eta \cdot \varepsilon) \circ (\eta \cdot \varepsilon)) \cdot \Delta.
\]
\end{defi}
\begin{defi}[Coradical filtration]
We define a bigraded collection $\CR^{p,n}\Cc$ of subcollections of $\Cc$ recursively as follows.
We define $\CR^{p,0}\Cc$ as $F_p\Cc$ and define $\CR^{0,n}\Cc$ as $F_0\Cc$.
Next, supposing that $\CR^{p',n'}\Cc$ is defined for $n'<n$ and $p'\le p$, we first define a subobject $\widetilde{\CR}^{p,n}(\Cc\circ \Cc)$ of $\Cc\circ \Cc$. We define it as the sub-object in $\Cc\circ \Cc$
\[
\coprod_{m \geq 0}\bigcup_{p_0+\cdots+p_m + \delta = p}F_{p_0}\Cc(m) \otimes_{\sS_m} \bigotimes_{i=1}^m F_{p_i}\CR^{p_i+\delta,n-1}\Cc.
\]
Then define $\CR^{p,n}\Cc$ as the following pullback:
\[
\begin{tikzcd}
\CR^{p,n}\Cc\ar[tail,dashed,r]\ar[dashed,d] & \Cc\ar[d,"\bar\Delta"]
\\
\widetilde{\CR}^{p,n}(\Cc\circ\Cc)\rar[tail] & \Cc\circ\Cc.
\end{tikzcd}
\]
Recursively, there are natural inclusions $\CR^{p,n}\Cc\to \CR^{p-1,n}\Cc$ and $\CR^{p,n}\Cc\to \CR^{p,n+1}\Cc$.
\end{defi}

\begin{defi}
Let $\Cc$ be a complete (gr-flat) cooperad which is equipped with a gr-coaugmentation $\eta$. 
For $k \in \zZ$, we define the \emph{$k$-primitives} of $\Cc$ to be the $\sS$-module:
\[ \Prim_k \Cc \coloneqq \bigcap_{p\geq 0} \CR^{p+k,p}\Cc, \]
where we fix $\CR^{p+k,p}\Cc = \CR^{0,p}\Cc = F_0 \Cc$ when $p+k \leq 0$.
\end{defi}

\begin{remark}
By means of the fact that cooperad morphisms are filtered and using an induction, we prove that the coradical filtration is functorial, that is: if $f : \Cc \to \Cc'$ is a morphism of complete gr-coaugmented cooperads, then it induces maps $\CR^{p,n}\Cc \to \CR^{p,n}\Cc'$ and $\Prim_k\Cc \to \Prim_k\Cc'$.
\end{remark}

\begin{defi}
\label{defi: altipotence}
Let $(\mathcal{C},\eta)$ be a complete gr-coaugmented (gr-flat) cooperad.
If 
\begin{enumerate}
\item 
\label{item: altipotence condition about conilpotence}
the map \[
\colim_n \CR^{p,n}\Cc\to \Cc
\]
is an isomorphism for all $p$,
\item
\label{item: altipotence condition about coaugmentation being strong} 
the coaugmentation $\eta(I)$ lies in $\Prim_0\Cc$, and 
\item 
\label{item: altipotence condition about primitives being right closed}
for all $k \in \zZ$ we have
\begin{align}
\tag{$\ast$}
\bar \Delta (\Prim_k \Cc) \subset \coprod_{m} \bigcup_{k_0+\cdots +k_m = k+1} F_{k_0}\Cc(m) \otimes \left( \Prim_{k_1} \Cc \otimes \cdots \otimes \Prim_{k_m} \Cc \right)
\end{align}
where we set $F_{k_0}\Cc = F_0 \Cc$ when $k_0 \leq 0$,
\end{enumerate}
then we call $\mathcal{C}$ \emph{altipotent}.
\end{defi}

\begin{remark}
\label{rem: eta and primitives}
Given a gr-coaugmentation $\eta$ for a complete gr-flat cooperad $\Cc$, we have $\eta(I) \subset F_0\Cc \backslash F_1 \Cc$. Assuming that $(\Cc, \eta)$ is altipotent, we therefore get $\eta(I) \subset \Prim_0 \Cc \backslash \Prim_1 \Cc$ since $\Prim_1 \Cc \subset F_1 \Cc$.
\end{remark}

\begin{remark}
The Latin prefix ``alti-'' denotes height. 
This condition is a priori stronger than gr-conilpotence and weaker than conilpotence.
In a gr-conilpotent cooperad, repeatedly applying the reduced decomposition map would eventually increase the filtration degree by $1$. 
In a conilpotent cooperad, repeatedly applying the reduced decomposition map would eventually reach zero.
In an altipotent cooperad, repeatedly applying the reduced decomposition map eventually increases the filtration degree past any fixed number.
\end{remark}

\subsection{Existence of extensions in the altipotent setting}
In this section we show that the tree cooperad satisfies the lifting property of the cofree altipotent cooperad. 
We continue to use the conventions $F_k \Cc = F_0\Cc$ and $\CR^{k,n}\Cc = \CR^{0,n}\Cc = F_0 \Cc$ when $k \leq 0$.

\begin{lemma}
\label{lemma: cr pieces eta}
Let $\mathcal{C}$ be a complete altipotent cooperad and $M$ a complete collection.
Let $\varphi:\mathcal{C}\to M$ be a map of collections which extends to a map $\coideal{\mathcal{C}}\to M$ that we still denote by $\varphi$. 
Then for $p \geq 0$, we have
\begin{alignat}{2}
\Phi_j(\eta(I)) \subset F_p Q_j M, & \quad \text{for all } j\geq 2p,\label{Eq1}\\
\Phi_j \left(\Prim_k \Cc \right) \subset F_{p+k} Q_j M, & \quad \text{for all } j\geq 2p+1 \text{ and all } k\in \zZ,\label{Eq2}
\end{alignat}
where the maps $\Phi_j$ are defined in Section \ref{section: tree cooperad}.
\end{lemma}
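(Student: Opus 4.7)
The plan is to prove both inclusions simultaneously by strong induction on $j$. The base cases $j=0$ and $j=1$ are straightforward: for $j=0$, the map $\Phi_0=\varepsilon$ sends $\eta(I)$ to $I\subset F_0 I$ and the second inclusion is vacuous; for $j=1$, the map $\Phi_1=\varphi$ is filtration-preserving, so the first inclusion needs only $p=0$ (trivial) and the second reduces to $\varphi(\Prim_k\Cc)\subset F_k M$, which follows from $\Prim_k\Cc\subset\CR^{k,0}\Cc=F_k\Cc$ together with filtration-preservation.

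For the inductive step $j\geq 2$, the recursive definition from Construction~\ref{construction, Phi extensions} expresses $\Phi_j$ as the projection to $Q_j M$ of $(\varphi\circ\sum_{i=1}^j\Phi_{i-1})\cdot\Delta$. Since $Q_j M$ consists intuitively of trees with maximum root-to-leaf path length exactly $j$, this projection forces at least one inner factor of the decomposition to reach depth $j-1$ under some $\Phi_{j_i}$, while other inner factors may occur at smaller depths.

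For $c\in\eta(I)$, we split $\Delta\eta = (\eta\circ\eta)\Delta_I + \bar\Delta\eta$ using the gr-coaugmentation relation. The principal $(\eta\circ\eta)$ part gains one ``free'' unit of filtration from the outer $\varphi(\eta)\in F_1 M$ (since $\varphi$ factors through $\coideal{\Cc}$, which places $\eta(I)$ in filtration degree $1$), and the inner $\eta(I)$-factors are then handled by the induction hypothesis on the first inclusion. The residual $\bar\Delta\eta$ part is controlled by property~$(\ast)$ of altipotence applied to $\eta(I)\subset\Prim_0^+\Cc$ (condition~(2) of altipotence): it lies in sums $F_{k_0}\Cc(m)\otimes\bigotimes_i\Prim_{k_i}^+\Cc$ with $\sum_i k_i=1$, and each $\Prim_{k_i}^+\Cc$ decomposes as $\eta(I)$-part (for $k_i=0$) plus $\Prim_{k_i}\Cc$-part, each subsequently handled by induction. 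For $c\in\Prim_k\Cc$, write $\Delta c = \bar\Delta c + \eta\otimes c + c\otimes\eta$ (valid since $c\in\ker\varepsilon$): the $\bar\Delta c$ piece is treated via $(\ast)$ directly, and the two additional ``boundary'' pieces are handled by the same induction.

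The main obstacle will be the combinatorial bookkeeping: combining outer filtration degree $F_{k_0}$ with inner contributions $F_{p_i+k_i}$ (from the induction) via $F_a\bar\otimes F_b\subset F_{a+b}$, while tracking depth through $j=1+\max_i j_i$ and matching the constraints $\sum_i k_i=1$ (resp.\ $k+1$) with the inductive depth bounds $j_i\geq 2p_i$ (resp.\ $2p_i+1$). One then verifies that the filtration contributions sum to at least $p$ (resp.\ $p+k$) precisely when $j\geq 2p$ (resp.\ $j\geq 2p+1$); the factor of $2$ in these thresholds reflects that each additional unit of filtration costs one layer of $\bar\Delta$-decomposition plus one further layer of inner recursion to be realized.
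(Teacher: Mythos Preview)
Your proposal is correct and follows essentially the same route as the paper's proof: the same splitting of $\Delta$ into $\bar\Delta$ plus coaugmentation terms, the same use of $\varphi(\eta(I))\subset F_1M$, the same invocation of condition~$(\ast)$ on $\Prim_k^+\Cc$, and the same focus on the single ``deepest'' inner factor (the one hit by $\Phi_{j-1}$) while bounding the rest by filtration-preservation. The only structural difference is that you induct on $j$ whereas the paper inducts on $p$; both work, and your choice has the mild advantage of avoiding the paper's within-step dependency (proving \eqref{Eq1} at level $p$ before using it in \eqref{Eq2} at the same level). One small correction: for $c\in\Prim_k\Cc$ of arity $m$, the counitality terms are $\eta\otimes c$ and $c\otimes\eta^{\otimes m}$, not ``$c\otimes\eta$''; this doesn't affect your argument, since you only need one of those $m$ inner $\eta$-factors to carry $\Phi_{j-1}$ and hence contribute $F_p$.
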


\begin{proof}
We prove the statement by induction on $p$. 
The base case $p=0$ is true because $F_0 Q_j M = Q_j M$ for Statement $(\ref{Eq1})$ and because $\Prim_k \Cc \subset F_k \Cc$ and the fact that the maps $\Phi_j$ are filtered for Statement $(\ref{Eq2})$. 

Let $j\ge 2p \ge 2$. We need to control the terms in
\[ \left(\varphi \circ \sum_{i=1}^{j}\Phi_{i-1}\right)\cdot \Delta(\eta(I)) \]
where at least one $\Phi_{j-1}$ appears. 
First, we recall that
\[ \Delta(\eta) = \bar \Delta (\eta) + (\eta \otimes \eta) \Delta_I \]
with $\bar \Delta(\eta(I))$ lying in 
\[ \coprod_{m} \coprod_{k_0+\cdots +k_m = 1} F_{k_0}\Cc(m) \otimes \left( \Prim_{k_1} \Cc \otimes \cdots \otimes \Prim_{k_m} \Cc \right) \]
since $\Cc$ is altipotent. 
By means of the fact that $\varphi$ extends to a map $\coideal{\Cc} \to M$, we have $\varphi(\eta(I)) \subset F_1\Cc$. Therefore, the induction hypothesis gives $\left(\varphi \otimes \Phi_{j-1}\right)(\eta \otimes \eta)\Delta_I(I) \subset F_{1+(p-1)} Q_j M = F_p Q_j M$ (using $2p-1 \geq 2(p-1)$). 
Then we consider the terms
\[ \left(\varphi \circ \sum_{i=1}^{j}\Phi_{i-1}\right)\cdot \bar \Delta(\eta(I)) \]
where at least one $\Phi_{j-1}$ appears. We need to control the piece
\[ \varphi(F_{k_0}\Cc(m)) \otimes \left( \Phi_{j_1}(\Prim_{k_1} \Cc) \otimes \cdots \otimes \Phi_{j_m}(\Prim_{k_m} \Cc) \right) \]
where at least one $j_a$ is $j-1$ (and $k_0+\cdots k_m=1$). 
For simplicity we assume that $j_m = j-1$ but the reasoning works for all $j_l$. 
We have
\begin{align*}
F_{k_0}\Cc(m) \otimes \left( \Prim_{k_1} \Cc \otimes \cdots \otimes \Prim_{k_{m-1}} \Cc \right) & \subset F_{k_0}\Cc(m) \otimes \left( F_{k_1} \Cc \otimes \cdots \otimes F_{k_{m-1}} \Cc \right)\\
& \subset F_{1-k_m} (\Cc(m) \otimes \Cc^{\otimes (m-1)}).
\end{align*}
Then the application $\Phi_{j-1}$ to $\Prim_{k_m} \Cc$ (we have $j-1 \geq 2p-1 = 2(p-1)+1$) lies in $F_{(p-1)+k_m} Q_{j-1}M$. The piece to control is therefore in $F_{(p-1+k_m)+(1-k_m)} Q_j M = F_p Q_j M$.

Let $j \geq 2p+1 \geq 3$. We emphasize the fact that we will use the induction hypothesis for $\eta(I)$ and $p$ that we just proved. 
We have on $(\Prim_k \Cc)(m)$
\[ \Delta = \bar \Delta + ((\eta \cdot \varepsilon) \otimes \id) \cdot \Delta + (\id \otimes (\eta \cdot \varepsilon)^{\otimes m}) \cdot \Delta. \]
By means of the fact that $\varphi$ extends to a map $\coideal \Cc \to M$, we have $\varphi(\eta(I)) \subset F_1\Cc$. Therefore, the induction hypothesis gives $\varphi(\eta(I)) \otimes \Phi_{j-1}((\Prim_k \Cc)(m)) \subset F_{1+(p-1)+k} Q_j M = F_{p+k} Q_j M$. 
By the induction hypothesis for $\eta(I)$ and $j-1 \geq 2p$ (Statement (\ref{Eq1})), we get that the term $(\id \otimes (\eta \cdot \varepsilon)^{\otimes m}) \cdot \Delta$ lies in $F_{k+p} Q_j M$ (we can assume that $m \ge 1$ otherwise this term doesn't appear in the computation of $\Phi_j((\Prim_k \Cc)(m))$ since $j \geq 3$). 
Finally, we have
\[ \bar \Delta (\Prim_k \Cc) \subset \coprod_{m} \coprod_{k_1+\cdots +k_m = k+1} F_{k_0}\Cc(m) \otimes \left( \Prim_{k_1} \Cc \otimes \cdots \otimes \Prim_{k_m} \Cc \right), \]
on which we apply the elements of the sum $\varphi \circ \left(\sum_{i=1}^{j}\Phi_{i-1}\right)$ where at least one $\Phi_{j-1}$ appears. 
We again assume for simplicity that $j_m = j-1$ and remark that $F_{k_0}\Cc(m) \otimes \left( \Prim_{k_1} \Cc \otimes \cdots \otimes \Prim_{k_{m-1}} \Cc \right) \subset F_{k+1-k_m} (\Cc(m) \otimes \Cc^{\otimes (m-1)})$. 
The application $\Phi_{j-1}$ to $\Prim_{k_m} \Cc$ lies in $F_{p-1+k_m} Q_{j-1}M$. The resulting term in $Q_j M$ is therefore in 
\[F_{(p-1+k_m)+(k+1-k_m)} Q_j M = F_{p+k}\Cc.\qedhere\]
\end{proof}

\begin{lemma}
\label{lemma: cr pieces bis}
Let $\mathcal{C}$ be a complete altipotent cooperad and let $M$ be a complete collection.
Let $\varphi:\coideal{\mathcal{C}}\to M$ be a map of collections.
Then for $p,\, n \geq 0$, we have
\begin{align}
\Phi_j \left(\CR^{p, n} \Cc \right) \subset F_p Q_j M, & \quad \text{for all } j\geq n+2p.\label{Eq3}
\end{align}
\end{lemma}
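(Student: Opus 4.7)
The plan is to proceed by double induction, with an outer induction on $n$ and an inner induction on $p$. The base case $n = 0$ is immediate from $\CR^{p, 0}\Cc = F_p\Cc$ and the fact that the recursively built maps $\Phi_j$ are filtered. The inner base case $p = 0$ is also trivial, since the natural inclusions yield $\CR^{0, n}\Cc = \Cc$ and $F_0 Q_j M = Q_j M$. The cases $j \leq 1$ reduce directly to filteredness of $\varepsilon$ and $\varphi$. The heart of the argument is thus the case $n \geq 1$, $p \geq 1$, and $j \geq 2$.

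For the inductive step, I would start from the recursive identity expressing $\Phi_j(x)$ as the $Q_j M$-component of $\left(\varphi \circ \sum_{i=1}^j \Phi_{i-1}\right) \cdot \Delta(x)$ and split
$$\Delta = \bar\Delta + ((\eta\varepsilon) \circ \id)\Delta + (\id \circ (\eta\varepsilon))\Delta - ((\eta\varepsilon) \circ (\eta\varepsilon))\Delta,$$
analyzing the four contributions in turn. For the $\bar\Delta$ piece, the condition $x \in \CR^{p, n}\Cc$ places $\bar\Delta(x)$ in $\widetilde{\CR}^{p, n}(\Cc \circ \Cc)$, hence expresses it as a sum of terms $y_0 \otimes y_1 \otimes \cdots \otimes y_m$ with $y_0 \in F_{p_0}\Cc(m)$, $y_i \in F_{p_i}\CR^{p_i + \delta, n-1}\Cc$, and $p_0 + \cdots + p_m + \delta = p$. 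Landing in $Q_j M$ forces at least one iteration index to equal $j$, so some factor $y_{i^*}$ has $\Phi_{j-1}$ applied to it. Since $p_{i^*} + \delta \leq p$ and $j \geq n + 2p$, the outer induction hypothesis applies to give $\Phi_{j-1}(y_{i^*}) \in F_{p_{i^*} + \delta}Q_{j-1}M$; the remaining factors contribute $p_i$ each by filteredness, for a total filtration of $p_0 + (p_{i^*} + \delta) + \sum_{i \neq i^*} p_i = p$.

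The three correction pieces are handled using counitality and the previous lemma. The term $((\eta\varepsilon) \circ \id)\Delta(x)$ equals $\eta(1) \otimes x$ in $I \circ \Cc$, so its $Q_j M$-contribution is $\varphi(\eta(1)) \otimes \Phi_{j-1}(x)$. The first factor lies in $F_1 M$ because $\varphi$ factors through $\coideal{\Cc}$, and by the inner induction on $p$ applied to $x \in \CR^{p, n}\Cc \subset \CR^{p-1, n}\Cc$ together with the bound $j - 1 \geq n + 2(p - 1)$, the second factor lies in $F_{p-1}Q_{j-1}M$, for a combined contribution in $F_p Q_j M$. The pieces $(\id \circ (\eta\varepsilon))\Delta$ and $((\eta\varepsilon) \circ (\eta\varepsilon))\Delta$ both produce terms containing at least one $\Phi_{j-1}(\eta(1))$ factor; Lemma~\ref{lemma: cr pieces eta} bounds this factor by $F_{\lfloor (j-1)/2 \rfloor}Q_{j-1}M$, which is contained in $F_p$ once $n \geq 1$ (since then $j \geq 2p + 1$).

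The main obstacle is the careful bookkeeping across the four decomposition pieces and across the multiple iteration indices contributing to each. The organizing principle is that requiring the output tree to have depth exactly $j$ always forces some child subtree to engage a stronger bound, from outer induction on $n$, inner induction on $p$, or Lemma~\ref{lemma: cr pieces eta}, which combines with filtered bounds on the remaining factors to give total filtration $p$.
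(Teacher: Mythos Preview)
Your proposal is correct and follows essentially the same approach as the paper's proof: the same double induction on $n$ and $p$ with the same base cases, the same four-term decomposition of $\Delta$, and the same handling of each piece (outer induction on $n$ for the $\bar\Delta$ term via the definition of $\widetilde{\CR}^{p,n}$, inner induction on $p$ together with $\varphi(\eta(1)) \in F_1 M$ for the $((\eta\varepsilon)\circ\id)\Delta$ term, and Lemma~\ref{lemma: cr pieces eta} for the remaining two correction terms).
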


\begin{proof}
We will prove the statement by induction on $n$ and $p$. 
The base cases with $n=0$ (any $p$) and $p=0$ (any $n$) are true because $\CR^{p,0}\Cc$ is just $F_p\Cc$ and the maps $\Phi_j$ are filtered and because $F_0 Q_j M = Q_j M$.

Let $(p,\, n)$ be a pair of integers greater than or equal to $1$. Suppose that the statement $(\ref{Eq3})$ is true for all pairs $(p',\, n')$ with $n'<n$ and for all pairs $(p',\, n)$ with $p' < p$. 
Let us prove the statement for $(p,\, n)$. 

Let $j$ be an integer. The map $\Phi_j$ is defined recursively by means of the formula
\[ \sum_{i=0}^j \Phi_i = \varepsilon + \left( \varphi \circ \sum_{i=1}^j \Phi_{i-1} \right) \cdot \Delta. \]
Recall that $\Tt^c_j M = I \amalg (M\circ \Tt^c_{j-1} M)$ and $M\circ \Tt^c_{j-1} M$ has summands which are quotients of 
\[
M(m)\otimes (\Tt^c_{j-1} M)^{\otimes m}.
\]
It follows that $\Phi_j$ is given by the terms in the sum $\left(\varphi\circ \sum^{j-1} \Phi_*\right)\cdot {\Delta}$ which contain at least one application of $\Phi_{j-1}$ on some factor $\Tt^c_{j-1} M$. 

Assume $j\ge n+2p$. 
On $\CR^{p, n} \Cc(m)$, we have
\[
\Delta = \bar \Delta + ((\eta \cdot \varepsilon) \otimes \id) \cdot \Delta + (\id \otimes (\eta \cdot \varepsilon)^{\otimes m}) \cdot \Delta - ((\eta \cdot \varepsilon) \otimes (\eta \cdot \varepsilon)) \cdot \Delta.
\]
The term $\varphi (\eta (I)) \otimes \Phi_{j-1}(\CR^{p, n} \Cc(m))$ lies in $F_p Q_j M$ by the induction hypothesis for the pair $(p-1,\, n)$ since $\CR^{p, n} \Cc \subset \CR^{p-1, n} \Cc$ and by the fact that $\varphi (\eta(I)) \subset F_1 \Cc$. 
Because $j-1 \geq n+2p - 1 \geq 2p$, we have by Lemma \ref{lemma: cr pieces eta} that $\Phi_{j-1}(\eta(I))$ and therefore $\Phi_{j-1}(\eta\cdot \varepsilon (\CR^{p, n} \Cc(m)))$ lie in $F_p Q_{j-1}M$. It follows that the terms $(\id \otimes (\eta \cdot \varepsilon)^{\otimes m}) \cdot \Delta(\CR^{p, n} \Cc(m))$ and $- ((\eta \cdot \varepsilon) \otimes (\eta \cdot \varepsilon)) \cdot \Delta(\CR^{p, n} \Cc(m))$ lie in $F_pQ_j M$. 
Using the fact that $\bar \Delta (\CR^{p, n} \Cc(m)) \subset \widetilde{\CR}^{p, n} (\Cc \circ \Cc)$ and the induction hypothesis for $\Phi_{j-1}$ and couples $(p',\, n-1)$ and the fact that the $\Phi_i$s are filtered, we get that the remaining term also lies in $F_p Q_j M$. 
This finishes the proof.
\end{proof}

As a direct consequence of Lemma \ref{lemma: cr pieces bis}, we get the following corollary:

\begin{cor}
\label{cor: cr pieces}
Let $\mathcal{C}$ be a complete altipotent cooperad and let $M$ be a complete collection.
Let $\varphi:\coideal{\mathcal{C}}\to M$ be a map of collections.
Then the components of the composition of maps of collections
\[
\CR^{p,n}\Cc \to \Cc \xrightarrow{\prod \Phi_j} \prod Q_j M
\]
land in filtration degree at least $p$ in $Q_j M$ for $j\ge n+2p$.
\end{cor}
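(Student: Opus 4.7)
The plan is to observe that this corollary is nothing more than a restatement of Lemma \ref{lemma: cr pieces bis} using the product language. The $j$th component of the composition
\[
\CR^{p,n}\Cc \to \Cc \xrightarrow{\prod \Phi_j} \prod_j Q_j M
\]
is obtained by post-composing with the projection $\prod_j Q_j M \to Q_j M$, and this projection composed with $\prod \Phi_j$ is precisely the map $\Phi_j:\Cc\to Q_j M$ constructed in Construction~\ref{construction, Phi extensions} from $\varphi:\coideal{\Cc}\to M$.

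Therefore, the $j$th component of the above composition restricted to $\CR^{p,n}\Cc$ equals $\Phi_j|_{\CR^{p,n}\Cc}$. By Lemma~\ref{lemma: cr pieces bis}, for $j\ge n+2p$ this lands in $F_p Q_j M$, which is exactly the desired conclusion. No further argument is required; there is no technical obstacle since the heavy lifting (the double induction on $(p,n)$, splitting the decomposition map via altipotence, and invoking Lemma~\ref{lemma: cr pieces eta} to handle the $\eta\cdot\varepsilon$ terms) has already been carried out in the proof of Lemma~\ref{lemma: cr pieces bis}.
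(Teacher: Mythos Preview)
Your proposal is correct and matches the paper's approach exactly: the paper simply states that the corollary is a direct consequence of Lemma~\ref{lemma: cr pieces bis}, and your observation that the $j$th component of the displayed composition is $\Phi_j|_{\CR^{p,n}\Cc}$ makes this explicit.
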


Finally, we obtain the following lemma.

\begin{lemma}
\label{lemma: extensions exist}
Let $\mathcal{C}$ be a complete altipotent cooperad and $M$ a complete collection.
Let $\varphi:\coideal{\mathcal{C}}\to M$ be a map of collections.
Then the map of collections
\[
\Cc \xrightarrow{\prod \Phi_n} \prod Q_n M
\]
factors as a map of collections through the tree cooperad $\Tt^c M$.
\end{lemma}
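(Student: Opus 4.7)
The plan is to use altipotence together with Corollary~\ref{cor: cr pieces} to show that $\prod \Phi_n$ factors through the monomorphism $\Tt^c M \hookrightarrow \prod_n Q_n M$ (that this map is monic is Lemma~\ref{lemma: comparison in complete}). Since both $\Tt^c M$ and $\prod_n Q_n M$ are complete collections expressible as $p$-indexed limits of $\bigoplus_n Q_n M/F_p Q_n M$ and $\prod_n Q_n M/F_p Q_n M$ respectively, and the former is the limit of the monic inclusions $\bigoplus_n Q_n M/F_p Q_n M \hookrightarrow \prod_n Q_n M/F_p Q_n M$, it suffices to show that for each fixed $p\ge 0$ the composite
\[
\Cc \xrightarrow{\prod\Phi_n} \prod_n Q_n M \twoheadrightarrow \prod_n Q_n M/F_p Q_n M
\]
factors through the sub-object $\bigoplus_n Q_n M/F_p Q_n M$.

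To establish this for fixed $p$, I would exploit Condition~(\ref{item: altipotence condition about conilpotence}) of altipotence, which presents $\Cc$ as the filtered colimit (a union of sub-objects) $\colim_n \CR^{p,n}\Cc$. Corollary~\ref{cor: cr pieces} then says that for $j\ge n+2p$, the component $\Phi_j$ restricted to $\CR^{p,n}\Cc$ lands in $F_p Q_j M$, so it becomes zero after projecting to $Q_j M/F_p Q_j M$. Consequently, the restricted composite $\CR^{p,n}\Cc \to \prod_j Q_j M/F_p Q_j M$ factors through the finite sub-coproduct $\bigoplus_{j < n+2p} Q_j M/F_p Q_j M$, and hence through $\bigoplus_j Q_j M/F_p Q_j M$. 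These factorizations are compatible as $n$ increases by the uniqueness of factorization through the monomorphism $\bigoplus\hookrightarrow\prod$, so they assemble under $\colim_n$ into the desired factorization of $\Cc \to \prod_j Q_j M/F_p Q_j M$ through $\bigoplus_j Q_j M/F_p Q_j M$.

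Finally, the factorizations obtained for varying $p$ are compatible with the structure maps of the inverse system $\{\bigoplus_n Q_n M/F_p Q_n M\}_p$, again by uniqueness, and so glue to a single map of complete collections $\Cc\to \Tt^c M$ whose postcomposition with $\Tt^c M\hookrightarrow \prod_n Q_n M$ is $\prod\Phi_n$. The main obstacle is purely bookkeeping: I have to verify carefully that the identification $\Tt^c M \cong \lim_p \bigoplus_n Q_n M/F_p Q_n M$ really does hold in the category $\Mm$, that coproducts and products interact with the filtration as in the module case, and that the colimit of the $\CR^{p,n}$-level factorizations exchanges correctly with the monic inclusion $\bigoplus \hookrightarrow \prod$ so that one can conclude factorization at the colimit from factorization at each stage.
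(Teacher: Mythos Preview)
Your proposal is correct and follows essentially the same route as the paper's proof: both use Corollary~\ref{cor: cr pieces} to see that on $\CR^{p,n}\Cc$ the components $\Phi_j$ with $j\ge n+2p$ vanish modulo $F_p$, pass to the finite coproduct $\bigoplus_{j<n+2p} Q_jM/F_pQ_jM$, take the colimit over $n$ using altipotence condition~(\ref{item: altipotence condition about conilpotence}), and then take the limit over $p$ using completeness of $\Cc$. The paper spells out the identification $\bigoplus_j Q_jM/F_pQ_jM \cong \Tt^cM/F_p\Tt^cM$ and the passage through $\Cc/F_p\Cc$ a bit more explicitly, but these are exactly the bookkeeping points you already flag.
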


\begin{proof}
Corollary~\ref{cor: cr pieces} tells us that we have a (unique) dotted factorization as in the following diagram
\[
\begin{tikzcd}
& \prod_{j=0}^{n+2p-1} F_0(Q_j M)\times \prod_{j=n+2p}^\infty F_p(Q_j M) \ar[d,tail]\\
\CR^{p,n}\Cc\ar[ur, dashed]\ar[r]
&
\prod_{j=0}^\infty F_0(Q_jM).
\end{tikzcd}
\]
Taking componentwise quotients by filtration degree $p$ takes the vertical map to the left hand vertical map of the following commutative square:
\[
\begin{tikzcd}
\prod_{j=0}^{n+2p-1} F_0(Q_jM)/F_p(Q_jM)
\ar[d,tail]\rar{\cong}
&
\coprod_{j=0}^{n+2p-1} F_0(Q_jM)/F_p(Q_jM)
\ar[d]
\\
\prod_{j=0}^\infty F_0(Q_jM)/F_p(Q_jM)
&
\coprod_{j=0}^\infty F_0(Q_jM)/F_p(Q_jM).
\ar[l, tail]
\end{tikzcd}
\]
Consider the bottom right entry here. The coproduct passes through the quotient to give
\begin{align*}
\coprod_{j=0}^\infty F_0(Q_jM)/F_p(Q_jM)
&\cong 
\left(
\coprod_{j=0}^\infty F_0(Q_jM)
\right)
\Bigg/
\left(
\coprod_{j=0}^\infty F_p(Q_jM)
\right)\\
& \cong \Tt^c M/ F_p\Tt^c M.
\end{align*}
This gives a lift of the map from
\[\CR^{p,n}\Cc\to \prod Q_j M/F_p Q_j M\]
to a map
\[\CR^{p,n}\Cc \to \Tt^c M/F_p\Tt^c M\]
which maps $F_p \Cc\subset \CR^{p,n}\Cc$ to $0$.

Now (using condition~\eqref{item: altipotence condition about conilpotence} of Definition~\ref{defi: altipotence} for the isomorphism in the following line), we get a map from 
\[F_0\Cc/F_p\Cc \cong (\colim_n \CR^{p,n} \Cc)/F_p\Cc\cong \colim_n (\CR^{p,n}\Cc/F_p\Cc)\]
to $\Tt^c M/F_p\Tt^c M$.

By examining projections, we see that these maps are compatible for different choices of $p$ so that we can take the limit over $p$.
This then yields a map from the limit, i.e., the complete cooperad $\Cc$, to $\Tt^c M$ which by construction has projections $\Phi_n$.
\end{proof}

\subsection{Properties of the tree cooperad}

Now we show that the tree cooperad is cofree in the category of altipotent cooperads in the gr-flat setting. 
To do this, we prove a sequence of lemmas telling us more and more about the associated graded functor and this cooperad. 
First, we have the following.

\begin{lemma}
\label{lemma:monomorphisms and gr}
Let $D$ be a diagram. 
Suppose that the colimit functor from $D$-indexed diagrams in the ground category to the ground category preserves monomorphisms.
Then the associated graded functor $\Gr$ from (complete) filtered objects to graded objects preserves the colimits of $D$-indexed diagrams.
\end{lemma}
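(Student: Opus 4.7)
The plan is to compute both sides of the desired isomorphism explicitly and match them up, with the hypothesis on monomorphism-preservation being used exactly to identify the filtration on a colimit.

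First I would handle the filtered (non-complete) case. Recall from Appendix~\ref{appendix: categorical stuff} that the colimit of a $D$-indexed diagram $\{X_d\}$ in $\Filt(\Aa)$ is computed by taking the colimit $\colim_d F_0 X_d$ in $\Aa$ and equipping it with the filtration whose $p$-th stage is the image of the natural map $\colim_d F_p X_d \to \colim_d F_0 X_d$. Under the hypothesis that $\colim_D$ preserves monomorphisms, this natural map is itself a monomorphism, so $F_p(\colim_d X_d) \cong \colim_d F_p X_d$. Then, since colimits commute with cokernels,
\[
\Gr_p(\colim_d X_d) = F_p(\colim_d X_d)/F_{p+1}(\colim_d X_d) \cong (\colim_d F_p X_d)/(\colim_d F_{p+1} X_d) \cong \colim_d(F_p X_d/F_{p+1} X_d) = \colim_d \Gr_p X_d.
\]
Naturality of this isomorphism in $d$ gives $\Gr(\colim_d X_d) \cong \colim_d \Gr X_d$ as graded objects.

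For the complete case, the colimit in $\complete(\Aa)$ is obtained by applying the completion reflector to the filtered colimit. So it suffices to check that completion does not change the associated graded, i.e., that the unit $X \to \widehat{X}$ induces an isomorphism $\Gr X \cong \Gr \widehat{X}$ for every filtered object. This is standard: the $p$-th stage of the filtration on $\widehat{X} = \lim_q \coker(F_q X \to F_0 X)$ has the same quotient by the $(p+1)$-st stage as $F_p X/F_{p+1} X$ does, because the further quotients in the inverse system stabilize at index $p+1$. Composing with the filtered case yields $\Gr(\colim_d^{\complete} X_d) \cong \Gr(\colim_d^{\Filt} X_d) \cong \colim_d \Gr X_d$.

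The only real subtlety is the interchange of $\colim_d$ with the quotient $F_p/F_{p+1}$, which is immediate from colimits commuting with colimits (kernels and cokernels of a single map are both computed as colimits). The rest is bookkeeping about the construction of colimits in $\Filt(\Aa)$ and $\complete(\Aa)$ recorded in Appendix~\ref{appendix: categorical stuff}. I do not expect any genuine obstacle; the lemma is essentially a clean packaging of the observation that when no ``image-taking'' is required to define the filtration on a colimit, the associated graded commutes past that colimit formally.
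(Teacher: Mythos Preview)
Your argument is essentially the same as the paper's, just unwound to explicit formulas: the paper phrases the key step as ``the reflector from $\nN^{\op}$-diagrams to filtered objects is an isomorphism on the $D$-colimit (since $D$-colimits preserve monomorphisms), and $\Gr$ from diagrams is a left adjoint'', whereas you compute $F_p(\colim_d X_d) \cong \colim_d F_p X_d$ directly and then pass to cokernels. Both reduce the complete case to the filtered case via the fact that completion preserves the associated graded.

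One small slip: in your final paragraph you write that ``kernels and cokernels of a single map are both computed as colimits''. Kernels are limits, not colimits. This does not affect your argument, since the quotient $F_p/F_{p+1}$ is a cokernel and you only need cokernels to commute with $\colim_d$, but the parenthetical as written is false and should be dropped or corrected.
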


\begin{proof}
The graded functor commutes with completion, so it suffices to consider the filtered case.
Also, the graded functor from $\nN$-diagrams to graded objects is a left adjoint so commutes with all colimits. 
So preservation of $D$-indexed colimits by the associated graded can be checked in terms of preservation of those colimits by the inclusion of filtered objects into diagrams.
Colimits in filtered objects are obtained by applying the reflector to the same colimits in $\nN$-diagrams.
Then $D$-indexed colimits are preserved as soon as the reflector is an isomorphism. 
In $\nN$-diagrams colimits are computed objectwise, so the reflector being an isomorphism is implied by $D$-indexed colimits in the ground category preserving monomorphisms.
\end{proof}

\begin{cor}
\label{cor: preservation of colims}
The associated graded functor $\Gr$ from (complete) filtered objects to graded objects preserves filtered colimits, and in particular transfinite compositions and coproducts.
If the ground category is $\mathbb{Q}$-linear, then the associated graded functor also preserves the coinvariants of any finite group action.
\end{cor}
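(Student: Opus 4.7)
The plan is to invoke Lemma~\ref{lemma:monomorphisms and gr} for each of the three classes of colimits at hand, which reduces the problem to checking that the relevant colimit functors on the ground category $\Aa$ (or its derived filtered/pg variants) preserve monomorphisms.

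First I would dispatch filtered colimits. Since $\Aa$ is a Grothendieck category, filtered colimits in $\Aa$ are exact, hence preserve monomorphisms, so Lemma~\ref{lemma:monomorphisms and gr} applies directly. Transfinite compositions and small coproducts are then handled as instances: transfinite compositions along a well-ordered diagram can be rewritten as filtered colimits (and in a Grothendieck category a transfinite composition of monomorphisms is a monomorphism), and a small coproduct can be written as the filtered colimit of its finite sub-coproducts, while finite coproducts coincide with finite biproducts in the additive setting and so preserve monomorphisms. In each case the hypothesis of Lemma~\ref{lemma:monomorphisms and gr} holds, so $\Gr$ preserves these colimits.

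For the second assertion, suppose $\Aa$ (and hence $\graded\Aa$, $\pgA$, $\Filt(\Aa)$, $\complete(\Aa)$, $\compa(\Aa)$) is $\mathbb{Q}$-linear, and let $G$ be a finite group. The coinvariants functor $M\mapsto M_G$ is a colimit indexed by the one-object groupoid $BG$. In the $\mathbb{Q}$-linear setting the averaging element $e\coloneqq \frac{1}{|G|}\sum_{g\in G} g$ is an idempotent endomorphism of any $G$-object $M$, and the canonical map $M\twoheadrightarrow M_G$ is split by this idempotent, identifying $M_G$ with the image of $e$, which is a direct summand of $M$. In particular, the coinvariants functor is exact, and so preserves monomorphisms; Lemma~\ref{lemma:monomorphisms and gr} applied with $D=BG$ yields that $\Gr$ commutes with finite-group coinvariants.

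The main (mild) technical point to verify is that Lemma~\ref{lemma:monomorphisms and gr} applies in each of our underlying ground categories: this is where Grothendieckness and $\mathbb{Q}$-linearity are used. Once this is in place, no further work is needed beyond invoking the lemma.
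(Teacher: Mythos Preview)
Your argument is correct. For the first statement (filtered colimits, transfinite compositions, coproducts) your proof is essentially identical to the paper's: both invoke Lemma~\ref{lemma:monomorphisms and gr} together with AB5 for the ground category.

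For the second statement your route differs slightly from the paper's. You stay within the framework of Lemma~\ref{lemma:monomorphisms and gr}: the averaging idempotent $e=\frac{1}{|G|}\sum_{g}g$ exhibits $M_G$ as a retract of $M$, so the coinvariants functor is exact and in particular preserves monomorphisms, and the lemma applies with $D=BG$. The paper instead observes that in the $\mathbb{Q}$-linear case the comparison map $M_G\to M^G$ is a natural isomorphism, and then uses that $\Gr$ factors as a right adjoint (the inclusion of filtered objects into $\mathbb{N}^{\op}$-diagrams) followed by a left adjoint (the associated graded on diagrams); the first preserves invariants, the second preserves coinvariants, and the identification lets one pass between the two. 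Your approach has the virtue of uniformity, reducing every case to a single lemma; the paper's approach makes the adjoint structure of $\Gr$ do the work and does not need to re-verify the monomorphism hypothesis. Both are short and either would be acceptable here.
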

\begin{proof}
The first statement follows directly from Lemma~\ref{lemma:monomorphisms and gr} and the fact that the ground category satisfies AB5.
For the second statement, if the ground category is $\mathbb{Q}$-linear then the comparison map between coinvariants and invariants of a finite group action is a (natural) isomorphism. 
The $\Gr$ functor is then a composition of left and right adjoints so preserves such (co)invariants.
\end{proof}

\begin{lemma}
\label{lemma: trees are gr-flat}
The tree cooperad on a gr-flat $\sS$-module is gr-flat.

The inclusion of $I$ via $\eta$ is a gr-coaugmentation.
\end{lemma}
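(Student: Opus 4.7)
The plan is to exploit the decomposition $\Tt^c M = \coprod_n Q_n M$ from the corollary following Lemma~\ref{lemma: natural retract}, together with the preservation properties of the associated graded functor established in Corollary~\ref{cor: preservation of colims}. Because $\Gr$ preserves coproducts, showing $\Gr\Tt^c M$ is flat reduces to showing each $\Gr Q_n M$ is flat. I would proceed by induction on $n$. The base cases are immediate: $Q_0 M = I$ is gr-flat, and $Q_1 M \cong M \circ I$ is gr-flat by hypothesis on $M$. For the inductive step, $Q_n M$ sits inside $M \circ \Tt_{n-1} M$ as the quotient by the image of $M \circ \Tt_{n-2} M$, and so splits as a coproduct of terms of the form
\[
M(k) \otimes_{\sS_k} \mathrm{Ind}_{\sS_{i_1}\times\cdots\times\sS_{i_k}}^{\sS_n}\left(Q_{j_1}M \otimes \cdots \otimes Q_{j_k}M\right)
\]
in which at least one $j_\ell$ equals $n-1$ and the others are strictly smaller. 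Since $\Aa$ is $\mathbb{Q}$-linear (as we are working with symmetric collections), Corollary~\ref{cor: preservation of colims} says that $\Gr$ commutes with the coproducts, induced representations, and $\sS_k$-coinvariants appearing here. Thus $\Gr Q_n M$ is built from $\Gr M$ and the $\Gr Q_j M$ with $j<n$ by tensor products, coproducts, and finite-group coinvariants (the last being actually exact in the $\mathbb{Q}$-linear setting), all of which preserve flatness; the inductive hypothesis then yields flatness of $\Gr Q_n M$.

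The second statement is essentially by inspection. The coaugmentation $\eta:I \to \Tt^c M$ is the canonical inclusion of $\Tt_0 M = I$, which at the level of filtered objects is just the inclusion of the $Q_0 M = I$ summand equipped with the trivial filtration. Applying $\Gr$ gives the corresponding inclusion $I \to \Gr \Tt^c M = \coprod_n \Gr Q_n M$ into the $\Gr Q_0 M = I$ summand. Once we know the underlying object is gr-flat, Proposition~\ref{prop: graded of gr-flat coop is coop} promotes the cooperad structure of $\Tt^c M$ to a graded cooperad structure on $\Gr\Tt^c M$, and the counit and decomposition on the $I$ summand are by construction the unit decomposition of $I$ (evident from Construction~\ref{construction: tree cooperad}). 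Hence $\Gr \eta$ is a coaugmentation of the graded cooperad $\Gr\Tt^c M$, which is what being a gr-coaugmentation means.

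The main obstacle, such as it is, is bookkeeping in the inductive step: one must identify $Q_n M$ concretely enough to see that it is built by operations under which $\Gr$ is monoidal and flatness is preserved. Once the decomposition into summands indexed by ordered partitions with at least one maximal entry is written down, the induction is routine.
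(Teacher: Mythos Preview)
Your argument is correct and follows essentially the same strategy as the paper: an induction showing each finite stage is gr-flat, using that $\Gr$ commutes with the categorical operations (coproducts, finite-group coinvariants, monoidal product on gr-flat objects) that build the tree cooperad, and that flat objects are closed under those operations. The paper organizes the induction around the recursive stages $\Tt_n M = I \amalg (M\circ \Tt_{n-1}M)$ rather than the graded pieces $Q_n M$; this is slightly cleaner because one never has to write down the awkward splitting of $Q_n M$ into summands (as you note, $\circ$ does not distribute over coproducts on the right in the naive way, so that decomposition takes some care), but the content is the same. One small omission: when you say ``$\Gr Q_n M$ is built from $\Gr M$ and the $\Gr Q_j M$ by tensor products \ldots'', you are using that $\Gr$ is strong monoidal on gr-flat objects, which is Lemma~\ref{lemma:gr-flat is monoidal subcat}, not Corollary~\ref{cor: preservation of colims}; you should cite it. For the gr-coaugmentation, the paper simply observes that $\eta$ is already a genuine coaugmentation at the filtered level, hence remains one after applying $\Gr$ by functoriality; your argument via Proposition~\ref{prop: graded of gr-flat coop is coop} amounts to the same thing.
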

\begin{proof}
The general shape of the argument is to argue that $\Gr$ preserves all of the categorical building blocks of the tree cooperad, and then that these building blocks preserve flat objects.

The tree cooperad is built as a directed colimit of finite stages, each of which is built from the monoidal unit $I$ and previous stages by the $\circ$ product and a binary coproduct. 
So we will be done by induction as soon as we argue that each of these procedures preserves gr-flat objects.

The monoidal unit is gr-flat. 
For coproducts, directed colimits, by Corollary~\ref{cor: preservation of colims} we know that $\Gr$ preserves such colimits. 
Flat objects (and thus gr-flat objects) are closed under coproducts and directed colimits (the latter by AB5). 

The $\circ$ product is built as a coproduct of summands each of which either a monoidal product or a $\mathbb{S}_n$-quotient of a sum of monoidal products. 
We know by Lemma~\ref{lemma:gr-flat is monoidal subcat} that $\Gr$ preserves the monoidal product and the coproducts, and by Corollary~\ref{cor: preservation of colims} that it preserves the $\mathbb{S}_n$-quotients under the $\qQ$-linear assumption.
Flat objects are closed under the monoidal product by definition, and are (again) closed under arbitrary coproducts.
Moreover, the finite group coinvariants of an object $X$ are a retract of $X$.
Thus if the $X$ is flat then its coinvariants are.
Then the $\circ$ product preserves gr-flatness as well and we are done.

The given $\eta$ is already a coaugmentation before taking associated graded, which implies by functoriality that it remains one after doing so.
\end{proof}
\begin{lemma}
\label{lemma: trees are altipotent}
The tree cooperad on a gr-flat $\sS$-module, equipped with the inclusion of $I$ as $\eta$, is altipotent.
\end{lemma}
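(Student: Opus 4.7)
The plan is to verify the three defining conditions of Definition~\ref{defi: altipotence} for $\Tt^c M$ equipped with the coaugmentation $\eta : I \hookrightarrow \Tt^c M$.

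Condition~(\ref{item: altipotence condition about coaugmentation being strong}) is immediate: since $\Delta(1) = 1 \otimes 1$ and $\eta\varepsilon(1) = 1$, the four terms defining $\bar\Delta(1)$ cancel and $\bar\Delta(1) = 0$. Therefore $\eta(I) \subset \ker \bar\Delta$, which is contained in every $\CR^{p,n}\Tt^c M$, and hence in $\Prim_0^+\Tt^c M = \bigcap_p \CR^{p,p}\Tt^c M$.

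For condition~(\ref{item: altipotence condition about conilpotence}), I would first prove by induction on $n \geq 1$ that $\Tt_n M \subset \CR^{p,n}\Tt^c M$ for every $p$. The base $n=1$ holds because single-vertex trees $m \in M$ satisfy $\bar\Delta(m) = 0$. For the inductive step, the non-unitality part of $\Delta$ on a depth-$n$ tree $t \in M \circ \Tt_{n-1}M$, coming from the map $\id_M \circ \Delta_{n-1}$ of Construction~\ref{construction: tree cooperad}, produces terms whose inner factors all lie in $\Tt_{n-1}M$. The summand of $\widetilde{\CR}^{p,n}(\Tt^c M \circ \Tt^c M)$ indexed by $\delta = p$ and $p_0 = p_1 = \cdots = p_m = 0$, namely $F_0\Tt^c M(m) \otimes_{\sS_m} (\CR^{p,n-1}\Tt^c M)^{\otimes m}$, captures $\bar\Delta(t)$ via the induction hypothesis, yielding $t \in \CR^{p,n}$. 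In the complete setting, for a general $x \in \Tt^c M$ I would split $x = x_{\leq N} + x_{>N}$ where $x_{\leq N} \in \Tt_N M$ and $x_{>N}$ is supported on trees of depth greater than $N$; choosing $N$ large enough so that $x_{>N} \in F_p\Tt^c M = \CR^{p,0}\Tt^c M \subset \CR^{p,N}\Tt^c M$, and using $\Tt_N M \subset \CR^{p,N}\Tt^c M$ for the head, yields $x \in \CR^{p,N}\Tt^c M$.

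For condition~(\ref{item: altipotence condition about primitives being right closed}), given $x \in \Prim_k^+\Tt^c M$ the constraints $\bar\Delta(x) \in \widetilde{\CR}^{p+k, p}(\Tt^c M \circ \Tt^c M)$ for every $p$ should be combined, via the canonical tree-cut structure of $\bar\Delta$, into a single decomposition of $\bar\Delta(x)$ whose outer factor lies in $F_{k_0}\Tt^c M(m)$ and whose $i$-th inner factor lies in $\bigcap_p \CR^{p+k_i, p}\Tt^c M = \Prim_{k_i}^+\Tt^c M$, with $k_0 + k_1 + \cdots + k_m = k+1$. The shift by $+1$ arises because $\widetilde{\CR}^{p+k, p}$ is built from $\CR^{p_i + \delta, p-1}$, so a single application of $\bar\Delta$ consumes one level of coradical depth.

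The main obstacle is condition~(\ref{item: altipotence condition about primitives being right closed}), where one must extract a $p$-uniform decomposition from the $p$-indexed family of $\widetilde{\CR}^{p+k, p}$-constraints. This requires a careful combinatorial matching of filtration and coradical indices through the cut structure on trees. In the complete setting I would exploit gr-flatness (Lemma~\ref{lemma: trees are gr-flat}) together with the associated graded functor of Proposition~\ref{prop: graded of gr-flat coop is coop} to reduce to a more tractable graded computation before reassembling the decomposition in $\Tt^c M$.
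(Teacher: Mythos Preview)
Your treatment of conditions~(\ref{item: altipotence condition about coaugmentation being strong}) and~(\ref{item: altipotence condition about conilpotence}) is essentially the paper's argument. The induction $\Tt_n M \subset \CR^{p,n}\Tt^c M$ (the paper writes $\CR^{p,n+1}$, an inessential index shift) and the passage to the colimit are the same; the paper phrases the last step categorically via AB5 rather than via your elementwise splitting $x = x_{\le N} + x_{>N}$, but the content is identical.

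The genuine gap is in condition~(\ref{item: altipotence condition about primitives being right closed}). You acknowledge this as the ``main obstacle'' and sketch a strategy involving a $p$-uniform extraction from the family of $\widetilde{\CR}^{p+k,p}$ constraints, reduction to the associated graded via gr-flatness, and a combinatorial matching through the cut structure. None of this is carried out, and more to the point, none of it is needed. The paper's argument is one line: in the tree cooperad the primitives are precisely $M$, so $\Prim_k^+\Tt^c M \subset \Prim_0^+\Tt^c M = I \amalg M$, and $\bar\Delta$ vanishes identically on $I \amalg M$. Once $\bar\Delta(\Prim_k^+) = 0$, inclusion~($\ast$) is vacuous.

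What you are missing is the identification of the primitives. You implicitly treat $\Prim_k^+\Tt^c M$ as a potentially large and complicated subobject whose image under $\bar\Delta$ must be analyzed. But the very induction you ran for condition~(\ref{item: altipotence condition about conilpotence}) already contains the germ of the opposite fact: elements supported on trees of depth $\ge 2$ have nonzero $\bar\Delta$ with factors in strictly lower depth, and tracking this through the coradical recursion shows that such elements eventually fall out of $\CR^{p,p}$ for large $p$ unless compensated by filtration degree. The upshot is that $\Prim_0^+$ collapses to $I \amalg M$, and then there is nothing left to prove. Replace your final two paragraphs with this observation and the proof is complete.
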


\begin{proof}
By definition $\CR^{p,0}\Tt^c M = F_p \Tt^c M$. 
We recall (from Corollary \ref{cor: cofree as a coproduct}) that $\Tt^c M = \colim_n \Tt^c_n M \cong \amalg_{j \geq 0} Q_j M$. 
On a term of the colimit, the decomposition map $\bar{\Delta}\Tt^c_n M$ is represented by summands
\begin{equation}
\label{eq: summand decomposition map}
(\Tt^c_{n_0} M)(m) \otimes_{\sS_m} \bigotimes_{i = 1}^m \Tt^c_{n_i} M
\end{equation}
with each $n_i$ strictly less than $n$, each of these maps provides a map
\begin{equation}
\label{eq: decomposition map on Q}
Q_n M \to (Q_{n_0} M)(m) \otimes_{\sS_m} \bigotimes_{i = 1}^m Q_{n_i} M. 
\end{equation}
(Moreover these maps characterize the decomposition map.) 
We claim by induction on $n \geq 0$ that for all $p\geq 0$ we have
\[
\CR^{p,n+1}\Tt^c M \cong \Tt^c_{n+1} M \amalg F_p \left(\amalg_{j = n+2}^\infty Q_j M\right).
\]

Let assume that $n=0$. We have $\Tt^c_1 M \subset \CR^{p,1}\Tt^c M$ since $\bar \Delta_{|\Tt^c_1 M} \equiv 0$ in the tree cooperad. Then we compute $\widetilde{\CR}^{p,1}(\Tt^c M \circ \Tt^c M) = F_p(\Tt^c M \circ \Tt^c M)$ (this shows that $F_p \Tt^c M \subset \CR^{p,1}\Tt^c M$). 
On $Q_j M$ for $j \geq 2$, we have that $\bar \Delta$ is non zero and that $\bar \Delta$ cannot decrease the filtration degree by the definition of $\Delta$. 
The desired isomorphism $\CR^{p,1}\Tt^c M \cong \Tt^c_{1} M \amalg F_p \left(\amalg_{j = 2}^\infty Q_j M\right)$ follows.

Assume the result to be true until the step $n$. 
From the inclusions $\CR^{p, r} \Cc \to \CR^{p, r+1} \Cc$ for all $r$ and the inclusion $F_p \Tt^c M \subset \CR^{p,1}\Tt^c M$, we get $F_p \Tt^c M \subset \CR^{p,n+1}\Tt^c M$. 
By means of the description of the summands of $\bar{\Delta}\Tt^c_{n+1} M$ given in \eqref{eq: summand decomposition map}, we get by induction that $\bar\Delta\Tt^c_{n+1} M$ is in $\widetilde{\CR}^{p,n+1}(\Tt^cM\circ\Tt^cM)$ for all $p$: take $p_i = 0$ and $\delta = p$ in
\begin{multline*}
\widetilde{\CR}^{p, n+1}(\Tt^c M \circ \Tt^c M) = \coprod_{m \geq 0}\bigcup_{p_0+\cdots+p_m + \delta = p}F_{p_0}\Tt^c M(m) \otimes_{\sS_m} \bigotimes_{i=1}^m F_{p_i}\CR^{p_i+\delta,n}\Tt^c M\\
\cong \coprod_{m \geq 0}\bigcup_{p_0+\cdots+p_m + \delta = p}F_{p_0}\Tt^c M(m) \otimes_{\sS_m} \bigotimes_{i=1}^m (F_{p_i} \Tt^c_n M) \amalg F_{p_i + \delta}(\amalg_{j= n+1}^\infty Q_j M)
\end{multline*}
(by the induction hypothesis) so $\Tt^c_{n+1} M \subset \CR^{p,n+1}\Tt^c M$. 
Finally, considering the decomposition map on $Q_j M$ for $j \geq n+2$ and the component in
\[
\coprod_{m \geq 0}\bigcup_{p_0+\cdots+p_m + \delta = p}F_{p_0} M(m) \otimes_{\sS_m} \bigotimes_{i=1}^m (F_{p_i} \Tt^c_n M) \amalg F_{p_i + \delta}(\amalg_{j= n+1}^\infty Q_j M),
\]
we see that it has to come from $F_p Q_j M$ to arrive in $\widetilde{\CR}^{p, n+1}(\Tt^c M \circ \Tt^c M)$. 
This completes the induction.

Then for fixed $p$ the inclusions $\Tt^c_{n+1}M\to \CR^{p,n+1}\Tt^c M$ pass to the colimit over $n$ so that 
\[
\Tt^c M\cong \colim_n \Tt^c_n M\to \colim_n \CR^{p,n+1}\Tt^c M\to \Tt^c M
\]
is the identity. 
Since our ground category satisfies AB5, the colimit of monomorphisms is monic so this suffices to show that the final map here is an isomorphism. 
This is Condition~\eqref{item: altipotence condition about conilpotence} of Definition~\ref{defi: altipotence}.

Condition~\eqref{item: altipotence condition about coaugmentation being strong} is easy because $\bar\Delta\eta=0$.

In order to prove Condition~\eqref{item: altipotence condition about primitives being right closed}, we compute
\[
\Prim_k \Tt^c M  = \bigcap_{p \geq 0} \CR^{p+k, p}\Tt^c M \cong \left\{ \begin{array}{rl}
I \amalg \coprod_{p \geq 0} F_{p+k} Q_{p+1} M & \text{for $k \leq 0$,}\\
\coprod_{p \geq 0} F_{p+k} Q_{p+1} M & \text{for $k > 0$.}
\end{array}\right.
\]
The map \eqref{eq: decomposition map on Q} (for $n = p+1$ and $n_i = p_i+1$) restricts on $F_{p+k} Q_{p+1}M$ to give a map
\begin{multline*}
F_{p+k} Q_{p+1} M \to \\
\bigcup_{k_0 + (p_1 + k_1)+ \cdots =(p_m+k_m) = p+k}(F_{k_0} Q_{p_0+1} M)(m) \otimes_{\sS_m} \bigotimes_{i = 1}^m F_{p_i+k_i}Q_{p_i+1} M,
\end{multline*}
where $\sum p_i+1 = p+1$. 
This proves Condition~\eqref{item: altipotence condition about primitives being right closed} and concludes the proof.
\end{proof}

\begin{cor}
\label{cor: the tree cooperad is cofree}
The tree cooperad, viewed as a functor from gr-flat complete $\sS$-modules to altipotent cooperads, is right adjoint to the infinitesimal coideal.
That is, the tree cooperad is cofree in the category of altipotent complete cooperads.
\end{cor}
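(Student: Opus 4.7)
I would establish the adjunction by exhibiting a natural bijection
\[
\Hom_{\text{alt.\,coop.}}(\Cc,\, \Tt^c M) \;\cong\; \Hom_{\text{compl.\,}\sS\text{-Mod}}(\coideal{\Cc},\, M)
\]
for $\Cc$ an altipotent complete cooperad and $M$ a gr-flat complete $\sS$-module. The (trivial) prerequisite is that $\Tt^c M$ itself is a gr-coaugmented altipotent complete cooperad; this is supplied by Lemmas~\ref{lemma: trees are gr-flat} and~\ref{lemma: trees are altipotent}, so the adjunction is at least well-posed.

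\textbf{From $\Phi$ to $\varphi$:} given a morphism of (gr-coaugmented) cooperads $\Phi: \Cc \to \Tt^c M$, I would define $\varphi$ as the composition of $\Phi$ with the projection $\Tt^c M \twoheadrightarrow Q_1 M \cong M$. To show that $\varphi$ factors through the pushout $\coideal{\Cc}$, it suffices to check that $\varphi \cdot \eta_\Cc$ lands in $F_1 M$. Since $\Phi$ preserves the gr-coaugmentation, $\Phi \cdot \eta_\Cc$ agrees with $\eta_{\Tt^c M}$, which factors through $Q_0 \Tt^c M \cong I$ and therefore has zero component in $Q_1 M$; the factorization $\varphi: \coideal{\Cc}\to M$ exists. \textbf{From $\varphi$ to $\Phi$:} given $\varphi: \coideal{\Cc} \to M$, Construction~\ref{construction, Phi extensions} produces compatible maps $\Phi_n: \Cc \to Q_n M$, and Lemma~\ref{lemma: extensions exist} assembles them into a single collection map $\Phi: \Cc \to \Tt^c M$. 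The sufficiency direction of Lemma~\ref{lemma: projections are determined by varphi with the tree cooperad} guarantees that this $\Phi$ is a cooperad map (its projections coincide with the $\Phi_n$ by construction), and compatibility with the gr-coaugmentation follows from tracking $\eta_\Cc$ through Construction~\ref{construction, Phi extensions}.

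\textbf{Mutual inverses and naturality:} Starting from $\varphi$, forming $\Phi$, and then extracting the new $\varphi'$ by projecting to $Q_1 M$ gives back the original $\varphi$, since $\Phi_1 = \varphi$ by definition of Construction~\ref{construction, Phi extensions}. Conversely, starting from a cooperad map $\Phi$, extracting $\varphi$ by projection, and re-extending to $\Phi'$ yields a collection map with the same projections to every $Q_n M$ (by the necessity direction of Lemma~\ref{lemma: projections are determined by varphi with the tree cooperad} applied to $\Phi$, matched against the construction of $\Phi'$). Corollary~\ref{cor: projections determine} then forces $\Phi = \Phi'$. Naturality in $\Cc$ and in $M$ is immediate from functoriality of Construction~\ref{construction, Phi extensions}. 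The hard work has already been done: existence is Lemma~\ref{lemma: extensions exist}, which itself rests on Lemma~\ref{lemma: cr pieces bis} and the altipotence hypothesis, while uniqueness is Lemma~\ref{lemma: projections are determined by varphi with the tree cooperad} together with Corollary~\ref{cor: projections determine}. The only residual subtlety—and the one step I would treat with explicit care—is the bookkeeping around the pushout $\coideal{\Cc}$ and the way the gr-coaugmentation of $\Tt^c M$ encodes the ``shifted'' filtration data at $I[-1]$ on the source side.
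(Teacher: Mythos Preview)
Your proof is essentially identical to the paper's own argument, assembling the same ingredients (Lemmas~\ref{lemma: trees are gr-flat} and~\ref{lemma: trees are altipotent} for well-posedness; Corollary~\ref{cor: projections determine} and both directions of Lemma~\ref{lemma: projections are determined by varphi with the tree cooperad} for uniqueness; Lemma~\ref{lemma: extensions exist} for existence) in the same order.

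One small correction on the step you yourself flag as subtle: the claim that ``$\Phi\cdot\eta_\Cc$ agrees with $\eta_{\Tt^c M}$'' is not literally true, since $\Phi$ is only a \emph{gr}-coaugmented morphism; equality holds only after applying $\Gr$. The correct deduction is that $\Gr(\varphi\cdot\eta_\Cc):I\to M$ vanishes, and since $I$ has trivial filtration this forces $\varphi\cdot\eta_\Cc$ to land in $F_1 M$, which is precisely the condition for factoring through the pushout $\coideal{\Cc}$. The paper states this as: commutativity of the coaugmentation triangle after taking associated graded forces $I\to\Cc\to M$ to factor through strictly positive filtration degree.
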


\begin{proof}
By Lemma~\ref{lemma: trees are altipotent}, the tree cooperad (equipped with $\eta$) lies in the full subcategory of altipotent cooperads.
By Corollary~\ref{cor: projections determine}, an $\mathbb{S}$-module map from $\Cc$ to the tree cooperad is uniquely determined by its projections to $Q_n M$.
By Lemma~\ref{lemma: projections are determined by varphi with the tree cooperad}, such a map is a cooperad map from $\Cc$ to the tree cooperad if and only if it is further determined by the projection $\Cc\to \Tt^c M\to Q_1 M\cong M$ via the recipe of Construction~\ref{construction, Phi extensions}.

Being compatible with the gr-coaugmentation means that the triangle
\[
\begin{tikzcd}
I\ar{rr}\ar[dr]&& \Tt^c M\\
& \Cc\ar[ur]
\end{tikzcd}
\]
commutes after taking the associated graded. This then implies that $I\to \Cc\to M$ should factor through strictly positive filtration degree.
This means that $\varphi:\Cc\to M$ must factor through the infinitesimal coideal $\coideal{\Cc}\to M$.

The procedure just outlined gives an injective map from the set of altipotent cooperad maps $\Cc\to \Tt^c M$ to the set of complete $\sS$-module maps $\coideal{\Cc}\to M$. 
In the other direction, given a map of collections $\varphi:\Cc\to \coideal{\Cc}\to M$, Lemma~\ref{lemma: extensions exist} tells us that the map of collections $\Phi:\Cc\to \prod Q_n M$ obtained via Construction~\ref{construction, Phi extensions} factors through $\Tt^c M$.
Then the other direction of Lemma~\ref{lemma: projections are determined by varphi with the tree cooperad} ensures that this extension is in fact a map of cooperads. 
This establishes surjectivity.
\end{proof}

\begin{remark}
It is possible to see an element in the cofree altipotent complete cooperad on $M$ as a (possibly infinite) sum of trees, whose vertices of arity $k$ are indexed by element in $M(k)$.
\end{remark}

\subsection{Coderivations}
\label{section: coderivations}

To have a complete treatment, we briefly review the standard fact that coderivations valued in a cofree cooperad are determined by their projections to the cogenerators.

Let $f$ and $g$ be maps of $\mathbb{S}$-modules from $P$ to $Q$ and let $h$ be a map of $\mathbb{S}$-modules from  $R$ to $S$. 
We define a map $h\circ'(f;g)$ from $R\circ P$ to $S\circ Q$.
The components of $h\circ'(f;g)$ are induced by 
\[
h(n)\otimes \bigotimes_{i=1}^{n-1}f(k_i) \otimes g(k_n): R(n)\otimes \bigotimes_{i=1}^n P(k_i) 
\to
S(n)\otimes \bigotimes_{i=1}^n Q(k_i).
\]
This is closely related to the \emph{infinitesimal composite} of $f$ and $g$ as defined in \cite[Section 6.1.3]{LodayVallette}.
Neither is quite a generalization of the other as defined, but the family resemblance should be clear.

Now suppose $f:\Cc\to\Dd$ is a map of complete cooperads then a \emph{coderivation of $f$} is a morphism $d$ of complete $\sS$-modules $\Cc\to\Dd$ such that
\[
\Delta_\Dd\cdot d = (d\circ f)\cdot \Delta + (f\circ' d)\cdot \Delta.
\]
A coderivation of $\id_\Cc$ is also called just a coderivation of $\Cc$.

\begin{prop}
\label{prop: characterization coderivations}
Let $M$ be a complete $\sS$-module and $\Cc$ a gr-coaugmented altipotent cooperad equipped with a map $f:\Cc\to \Tt^c M$.
Projection to cogenerators $\Tt^c M\to M$ induces a bijection between coderivations of $f$ and $\mathbb{S}$-module maps from $\Cc$ to $M$.
\end{prop}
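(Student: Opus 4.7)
The plan is to mimic the existence-and-uniqueness strategy used for Corollary~\ref{cor: the tree cooperad is cofree}, but with the coderivation equation $\Delta_{\Tt^c M}\cdot d = (d\circ f)\cdot \Delta + (f\circ' d)\cdot \Delta$ replacing the comultiplicativity equation. Write $d_n \coloneqq p_n\cdot d : \Cc \to Q_n M$, where $p_n : \Tt^c M\to Q_n M$ is the projection, and let $\varphi \coloneqq d_1$ be the data of the projection to cogenerators.

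First I would show the \emph{uniqueness} part: the sequence $\{d_n\}$ is determined by $\varphi$ (and $f$). As in the proof of Lemma~\ref{lemma: projections are determined by varphi with the tree cooperad}, the composition $\Tt^c M\xrightarrow{\Delta}\Tt^c M\circ \Tt^c M\xrightarrow{\epsilon_N(M)\circ\id} M\circ\Tt^c M$ restricts on $Q_N M$ to the canonical inclusion into $(M\circ \Tt^c_{N-1}M)/(M\circ \Tt^c_{N-2}M)$. Projecting the coderivation equation through this composition therefore expresses $d_n$ as an explicit combination
\[
D_n \coloneqq \bigl(\varphi\circ \Sigma^{f,d}_{n-1}\bigr)\cdot \Delta_\Cc
\]
(plus the evident summand coming from $(f\circ' d)\cdot \Delta$), where $\Sigma^{f,d}_{n-1}$ is a combination of the $f_j=p_j\cdot f$ and $d_j$ for $j\le n-1$. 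Setting $D_0=0$ and $D_1=\varphi$ and recursing, this determines the $D_n$ from $\varphi$ alone.

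Second I would establish \emph{existence} by showing that $\prod_n D_n:\Cc\to\prod_n Q_n M$ factors through $\Tt^c M\subset \prod_n Q_n M$, and that the resulting map is a genuine coderivation. The factorization is the technical heart of the argument, and it is the step I expect to be the main obstacle. To carry it out, I would prove an analogue of Lemma~\ref{lemma: cr pieces bis}, namely that for every $p\ge 0$,
\[
D_j(\CR^{p,n}\Cc)\subset F_p Q_j M\quad\text{for } j\ge n+2p,
\]
by double induction on $(p,n)$, using in the inductive step: that $\varphi$ and $f$ extend along $\coideal{\Cc}\to M$ and $\coideal{\Cc}\to\Tt^c M$ respectively (so the coaugmentation is sent into filtration degree $\ge 1$), the altipotence inclusion $\bar\Delta(\Prim_k^+\Cc)\subset \coprod_m\bigcup_{k_0+\cdots+k_m=k+1} F_{k_0}\Cc(m)\otimes \bigotimes \Prim_{k_i}^+\Cc$, and the analogue of Lemma~\ref{lemma: cr pieces eta} for the $D_j$. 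Then, exactly as in the proof of Lemma~\ref{lemma: extensions exist}, taking componentwise quotients by filtration degree $p$ replaces $\prod_{j\ge n+2p}$ by a finite product, the comparison map from $\coprod_j$ to $\prod_j$ becomes an isomorphism modulo $F_p$, and using condition~\eqref{item: altipotence condition about conilpotence} of altipotence one assembles compatible maps $\Cc/F_p\Cc\to \Tt^c M/F_p\Tt^c M$ whose limit is the desired lift $\Cc\to \Tt^c M$.

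Finally, I would check that the resulting map $d:\Cc\to \Tt^c M$ is a coderivation of $f$. Since its projection to $Q_n M$ is $D_n$ by construction, and the equality $\Delta_{\Tt^c M}\cdot d = (d\circ f)\cdot \Delta+(f\circ' d)\cdot \Delta$ can be checked after monomorphic projection to each $\Tt_r M\circ \Tt_s M$ (using the categorical argument of Lemma~\ref{lemma: projections are determined by varphi with the tree cooperad} and Lemma~\ref{lemma: comparison in complete}), this reduces to the very recursion defining $D_n$. Injectivity of $d\mapsto \varphi$ is clear from uniqueness, while surjectivity is supplied by the construction, yielding the claimed bijection.
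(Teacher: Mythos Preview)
Your approach can be made to work but takes a much more laborious route than the paper, and it contains a gap. The paper does not redo the filtration estimates at all: it reduces the statement to the already-established cofree property (Corollary~\ref{cor: the tree cooperad is cofree}) via a dual-numbers trick. One equips $\Cc 1\amalg\Cc x$ with a gr-coaugmented altipotent cooperad structure whose decomposition on $\Cc 1$ is that of $\Cc$ and on $\Cc x$ is $\Delta$ followed by $(i\circ\dbydx)+(\dbydx\circ(\dbydx;i))$; the coaugmentation lands only in $\Cc 1$. Cofreeness then gives
\[
\Hom_{\mathsf{Coop}}(\Cc 1\amalg\Cc x,\,\Tt^c M)\;\cong\;\Hom_{\mathsf{Coop}}(\Cc,\Tt^c M)\times\Hom_{\mathbb{S}}(\Cc,M),
\]
and unwinding the fiber over $f$ on the left-hand side shows it consists exactly of maps $f\amalg d$ with $d$ a coderivation of $f$. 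All of the delicate convergence work is thus packaged into the single citation of Corollary~\ref{cor: the tree cooperad is cofree}.

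The gap in your direct approach is the claim that $\varphi$ extends along $\coideal{\Cc}\to M$. An arbitrary $\mathbb{S}$-module map $\varphi:\Cc\to M$ need not send $\eta(I)$ into $F_1M$, so this is simply false. It matters for your analogue of Lemma~\ref{lemma: cr pieces eta}: in the recursion for $D_j(\eta(I))$ via the $(\eta\otimes\eta)\Delta_I$ summand, the contribution $\varphi(\eta)\otimes f_{j-1}(\eta)$ yields only $F_0\otimes F_p$ rather than $F_1\otimes F_p$. The repair is that $\varphi$ appears exactly once in each $D_j$ (coderivations are linear in $d$), while every other slot carries a projection of $f$, which \emph{does} factor through $\coideal{\Cc}$; this lets you run the induction with bounds shifted by one (e.g.\ $D_j(\eta(I))\subset F_pQ_jM$ for $j\ge 2p+1$). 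In the paper's dual-numbers picture this issue never arises, because $\eta(I)$ sits entirely in the $\Cc 1$ summand and only the $f$-component of the map $\coideal{(\Cc 1\amalg\Cc x)}\to M$ is constrained on it.
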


\begin{proof}
Equip the $\mathbb{S}$-module $\Cc 1\amalg \Cc x$ with a decomposition map induced by that of $\Cc$:
\[\Cc 1\amalg \Cc x\to (\Cc 1\amalg \Cc x)\circ (\Cc 1\amalg \Cc x)\]
where, 
\begin{itemize} 
\item writing $i$ for the canonical inclusions of $\Cc 1$ and $\Cc x$ into $\Cc 1 \amalg \Cc x$ and
\item writing $\dbydx$ for the inclusion of $\Cc x$ into $\Cc 1 \amalg \Cc x$ in the $\Cc 1$ factor,
\end{itemize}
we set
\[
\Cc 1\xrightarrow{\Delta} (\Cc \circ \Cc)1 \cong (\Cc 1\circ \Cc 1) \xrightarrow{i\circ i}(\Cc 1\amalg \Cc x)\circ (\Cc 1\amalg \Cc x)
\]
and
\[
\Cc x\xrightarrow{\Delta}(\Cc \circ \Cc)x \cong (\Cc x\circ \Cc x) 
\xrightarrow{\left(i \circ \dbydx\right) + \left(\dbydx\circ \left(\dbydx;i\right)\right)}
(\Cc 1\amalg \Cc x)\circ (\Cc 1\amalg \Cc x).
\]
Coassociativity of $\Cc$ implies that $\Cc 1\amalg \Cc x$ is a gr-coaugmented altipotent cooperad with this structure decomposition and structure data
\begin{gather*}
\text{counit: }\Cc 1 \amalg \Cc x\xrightarrow{\text{projection}} \Cc 1\cong \Cc\xrightarrow{\epsilon} I;
\\
\text{gr-coaugmentation: }I\xrightarrow{\eta}\Cc\cong \Cc 1\xrightarrow{i}\Cc 1\amalg \Cc x.
\end{gather*}
It is also clear that this construction is functorial.
Then
\begin{align*}
\Hom_{\mathsf{Coop}}(\Cc 1 \amalg \Cc x,\Tt^c M)&\cong \Hom_{\mathbb{S}}(\coideal{\Cc 1 \amalg \Cc x}, M)
\\
&\cong \Hom_{\mathbb{S}}(\coideal{\Cc} 1,M)\times \Hom_{\mathbb{S}}(\Cc x,M)
\\
&\cong \Hom_{\mathsf{Coop}}(\Cc,\Tt^c M)\times \Hom_{\mathbb{S}}(\Cc, M).
\end{align*}

The final entry has a natural projection to $\Hom_{\mathsf{Coop}}(\Cc,\Tt^c M)$. 
The fiber over the morphism $f$ is naturally isomorphic to $\Hom_{\mathbb{S}}(\Cc,M)$.
On the other hand, the fiber over the morphism $f$ in
$\Hom_{\mathsf{Coop}}(\Cc 1 \amalg \Cc x,\Tt^c M)$
consists of those cooperad maps of the form 
\[
\Cc 1\amalg \Cc x\cong \Cc\amalg \Cc\xrightarrow{f\amalg d}\Tt^c M
\] for some $d$. 

The condition to be a cooperad map is automatically satisfied on the $\Cc 1$ factor since $f$ is a map of cooperads.
On the $\Cc x$ factor, for $f\amalg d$ to be a map of cooperads we get an equation.
One side of the equation is
\[
\Delta_{\Tt^c M}\cdot d.
\]
The other side is a sum of two terms coming from the two terms defining the decomposition map on $\Cc x$. The first term is
\[
\begin{tikzcd}
\Cc x \dar{\cong}\rar{\Delta} & 
(\Cc\circ \Cc)x\rar{\cong}\ar{dr}{\cong}
& \Cc x\circ \Cc x\dar{\cong}
\rar{i\circ \dbydx}\dar
&
(\Cc 1 \amalg \Cc x)\circ (\Cc 1 \amalg \Cc x)\dar{(f\amalg d)\circ (f\amalg d)}
\\
\Cc\ar{rr}{\Delta}&&
\Cc\circ\Cc\rar{d\circ f}&
\Tt^c M\circ \Tt^c M
\end{tikzcd}
\]
and the other is
\[
\begin{tikzcd}
\Cc x \dar{\cong}\rar{\Delta}
&
(\Cc \circ \Cc)x\rar{\cong}\ar{dr}{\cong}
& \Cc x\circ \Cc x
\dar{\cong}
\ar{rr}{\dbydx \circ\left(\dbydx,i\right)}
&&
(\Cc 1 \amalg \Cc x) \circ (\Cc 1\amalg \Cc x)\dar{ f \circ (f\amalg d)} \\
\Cc\ar{rr}{\Delta}&&
\Cc\circ\Cc
\ar{rr}{f \circ\left(\dbydx,f\right)}
&&
\Tt^c M\circ \Tt^c M.
\end{tikzcd}
\]
The equation of the first of these three terms with the sum of the latter two is precisely the condition for $d$ to be a coderivation of $f$.
\end{proof}

\section{Bar and cobar constructions in the curved setting}
\label{section: bar cobar constructions}

In this section, we define a bar-cobar adjunction between the categories of complete augmented curved operads with lax morphisms and complete altipotent cooperads with morphisms of complete gr-coaugmented cooperads. 
More precisely, we define a functor \emph{bar} $\hB$ which associates a complete altipotent cooperad to an augmented curved complete operad and we define a functor \emph{cobar} $\hOm$ in the opposite direction. We obtain an adjunction between the two functors
\[
\xymatrix{\hB : \mathsf{Compl.\ aug.\ curved\ operads }^{\mathrm{lax}} \ar@<.5ex>@^{->}[r] & \mathsf{Compl.\ altip.\ cooperads} : \hOm. \ar@<.5ex>@^{->}[l]}
\]
We consider \emph{lax} morphisms between complete augmented curved operads for our cobar construction to be a functor. 
From this adjunction we will obtain an $\sS$-cofibrant resolution of a curved operad in the model category structure given in Appendix \ref{appendix: MCS on gr-dg objects}. \\

We consider curved operads in the category $\Mm$ defined as the full subcategory of gr-flat objects in $\compa(\Aa)$ and cooperads in the category $\Mm'$ defined as the full subcategory of gr-flat objects in $\Comp(\dg\Aa)$. By Lemma \ref{lemma:gr-flat is monoidal subcat}, this ensures that the functor $\Gr$ is strong monoidal.

\subsection{Bar construction}

Let $(\Oo,\, \gamma,\, d,\, \varepsilon,\, \theta,\, \eta)$ be an augmented curved complete operad. 
The augmentation ideal $\overline{\Oo} \coloneqq \ker (\varepsilon : \Oo \to I)$ of $\Oo$ is a complete gr-dg $\sS$-module.

\begin{remark}
The predifferential $d$ on $\Oo$ induces a predifferential $\bar{d}$ on the complete $\sS$-module $\overline{\Oo}$ and the composition product $\gamma$ induces by means of $\eta$ a map $\bar \gamma_{(1)} : \overline{\Oo} \circ_{(1)} \overline{\Oo} \to \overline{\Oo}$.
\end{remark}

The \emph{bar construction} of the augmented curved complete operad $(\Oo,\, \gamma,\, d,\, \varepsilon,\, \theta,\, \eta)$ is given by the altipotent complete cooperad
\[
\hB \Oo \coloneqq \left( \cofree(s \overline{\Oo}),\, \Delta_\beta,\, \varepsilon_\beta,\, d_{\beta} \coloneqq d_0+d_1+d_2,\, \eta_\beta \right),
\]
where the map $d_2$ is the unique coderivation of degree $-1$ which extends the map
\[
{\cofree}(s \overline{\Oo}) \twoheadrightarrow s^2 \left( \overline{\Oo} \circ_{(1)} \overline{\Oo} \right) \xrightarrow{\gamma_s \otimes \bar{\gamma}_{(1)}} s \overline{\Oo},
\]
where $\gamma_s (s\otimes s) = s$, the map $d_1$ is the unique coderivation of degree $-1$ which extends the map
$$
{\cofree}(s \overline{\Oo}) \twoheadrightarrow s \overline{\Oo} \xrightarrow{\id_s \otimes \bar{d}} s \overline{\Oo},
$$
and the map $d_0$ is the unique coderivation of degree $-1$ which extends the map
$$
{\cofree}(s \overline{\Oo}) \twoheadrightarrow \I \xrightarrow{-s \theta} s \overline{\Oo}.
$$
For instance, we have pictorially
$$
d_0 (\as) = -\vcenter{\tiny{
\xymatrix@M=1pt@R=4pt@C=4pt{&&\\
& *{} \ar@{-}[lu] \ar@{-}[ur] \ar@{-}[d] &\\ & s\theta \ar@{-}[d] &\\ &&}}} + \vcenter{\tiny{
\xymatrix@M=1pt@R=4pt@C=4pt{&&\\ s\theta \ar@{-}[u] &&\\
& *{} \ar@{-}[lu] \ar@{-}[ur] \ar@{-}[d] &\\
&&}}} + \vcenter{\tiny{
\xymatrix@M=1pt@R=4pt@C=4pt{&&\\ && s\theta \ar@{-}[u] \\
& *{} \ar@{-}[lu] \ar@{-}[ur] \ar@{-}[d] &\\
&&}}} \in s^2 \left( \overline{\Oo} \circ_{(1)} \overline{\Oo} \right),
$$
for $\as \in \Oo(2)$ of degree $-1$ and $\theta$ is identified with $\theta (\vcenter{\tiny{
\xymatrix@M=1pt@R=6pt@C=1pt{& *{} \ar@{-}[d] &\\ & *{} &}}}) \in \Oo(1)$.
The counit $\varepsilon_\beta$ is the usual projection onto the trivial tree and the gr-coaugmentation $\eta_\beta$ is the inclusion of the trivial tree.

\begin{remark}
Since $d_{\beta} (\vcenter{\tiny{
\xymatrix@M=1pt@R=6pt@C=1pt{& *{} \ar@{-}[d] &\\ & *{} &}}}) = d_{0} (\vcenter{\tiny{
\xymatrix@M=1pt@R=6pt@C=1pt{& *{} \ar@{-}[d] &\\ & *{} &}}}) = -\vcenter{\tiny{
\xymatrix@M=1pt@R=2pt@C=1pt{& *{} \ar@{-}[d] &\\ & s\theta \ar@{-}[d] &\\ &&}}}$, it follows that the bar construction is not coaugmented as a filtered cooperad whenever the curvature is non zero.
However, when $\Oo$ is gr-flat, it is gr-coaugmented as a complete cooperad since $\theta$ is in $F_1 \Oo$ (see Definition \ref{gr-coaug}).
\end{remark}

\begin{lemma}
The coderivation $d_{\beta}$ is a differential and the bar construction induces a functor $\hB : \mathsf{Comp.\ aug.\ curved\ op.}^{\mathrm{lax}} \rightarrow \mathsf{Compl.\ altip.\ coop.}$.
\end{lemma}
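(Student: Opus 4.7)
The plan is to reduce both assertions---that $d_\beta^2 = 0$ and that $\hB$ is functorial---to statements about projection onto the cogenerators $s\overline{\Oo}$. Since $d_\beta$ is a coderivation of odd degree, the composite $d_\beta^2 = \tfrac{1}{2}[d_\beta,d_\beta]$ is itself a coderivation of degree $-2$ of the identity of $\hB\Oo$. By the coderivation proposition established just above (applied to the infinitesimal setting via Lemma~\ref{lemma: equiv cat gr-coaugm coop and inf coop}), such a coderivation is determined by its projection to $s\overline{\Oo}$, so it suffices to show this projection vanishes. A parallel reduction handles the compatibility of $\hB(f)$ with the differentials.

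Expanding $(d_0+d_1+d_2)^2$ and projecting onto $s\overline{\Oo}$, I would organize the nine terms into four independent groups, each vanishing by a single identity from the curved-operad structure on $\Oo$. First, $d_2^2$ projects to the alternating sum of iterated partial compositions $\bar\gamma_{(1)}\cdot(\bar\gamma_{(1)}\circ_{(1)}\id)$, which vanishes by associativity in $\Oo$. Second, the anticommutator $[d_1,d_2]$ projects to the Leibniz discrepancy of $\bar d$ with respect to $\bar\gamma_{(1)}$ and vanishes because $\bar d$ is a derivation. Third, $[d_0,d_1]$ projects to $\id_s \otimes \bar d(-\theta)$ and vanishes because $\theta$ is closed. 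Finally, $d_1^2$ projects to $\id_s\otimes \bar d^2$, while $[d_0,d_2]$ collects, for each $\bar\alpha \in \overline{\Oo}(q)$, the composition $\theta\circ_1 \bar\alpha$ (from $d_0$ inserting $s\theta$ at the root edge followed by $d_2$ collapsing the resulting internal edge) together with the sum $\sum_{j=1}^q \bar\alpha\circ_j \theta$ (from $d_0$ inserting $s\theta$ at one of the leaves followed by $d_2$); these reconstruct $\id_s\otimes [\theta,-]$ and, combined with $d_1^2$, vanish by the curved relation $\bar d^2 = [\theta,-]$. The term $d_0^2$ projects to zero for degree-counting reasons, since each application of $d_0$ strictly raises the vertex count while a cogenerator has a single vertex.

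For functoriality, a morphism $f:\Oo\to\Pp$ of augmented curved complete operads restricts to a filtered map $s\bar f:s\overline{\Oo}\to s\overline{\Pp}$ of gr-dg cogenerators. By Corollary~\ref{cor: the tree cooperad is cofree} (in its infinitesimal incarnation, obtained through Lemma~\ref{lemma: equiv cat gr-coaugm coop and inf coop}), $s\bar f$ extends uniquely to a morphism $\hB(f):\hB\Oo\to\hB\Pp$ of gr-coaugmented altipotent complete infinitesimal cooperads. Both $\hB(f)\cdot d_\beta^{\Oo}$ and $d_\beta^{\Pp}\cdot\hB(f)$ are coderivations over the cooperad map $\hB(f)$, so the uniqueness statement reduces their equality to agreement on projections to $s\overline{\Pp}$, which follows respectively from $f(\theta^{\Oo})=\theta^{\Pp}$ for $d_0$, from $f$ being a chain map for $d_1$, and from $f$ being a morphism of operads for $d_2$. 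Altipotence of $\hB\Oo$ itself is inherited from altipotence of the tree cooperad (Lemma~\ref{lemma: trees are altipotent}) through the infinitesimal coideal functor; functoriality in $f$ is automatic from the uniqueness in the extension.

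The main obstacle will be the sign and combinatorial bookkeeping in the coupled group $d_1^2+[d_0,d_2]$. One has to verify that the two insertion sites for $d_0$ (root edge vs.\ leaves) followed by $d_2$ produce exactly the operadic bracket $\theta\circ_1\bar\alpha - (-1)^{|\theta||\bar\alpha|}\sum_j \bar\alpha\circ_j \theta$, and that the Koszul signs coming from the suspension $s$ interact with the minus sign in the definition of $d_0$ so as to cancel, rather than double, the contribution $\id_s\otimes \bar d^2$ of $d_1^2$ against $\id_s\otimes[\theta,-]$. Every other verification reduces transparently to a standard operadic identity once this sign match is under control.
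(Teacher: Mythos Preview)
Your proposal is correct and follows essentially the same approach as the paper: both reduce to the cogenerators via the coderivation property and identify the same five groupings of cross-terms, each vanishing by the expected operadic identity (associativity for $d_2^2$, derivation property for $[d_1,d_2]$, closedness of $\theta$ for $[d_0,d_1]$, the curved relation $\bar d^2=[\theta,-]$ for $d_1^2+[d_0,d_2]$, and a direct argument for $d_0^2$).

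The organizational difference is minor but worth noting: the paper first decomposes $(d_0+d_1+d_2)^2$ into groups and argues that most of them vanish \emph{globally} (e.g.\ $d_0^2=0$ by sign cancellation between the two insertion sites, not just on projection), invoking the coderivation-determined-by-projection principle only for the coupled block $d_1^2+[d_0,d_2]$. You instead apply the projection reduction once to the whole of $d_\beta^2$ and then split on the level of cogenerators, which is slightly more uniform. Your treatment of functoriality is also more explicit than the paper's, which asserts it without further comment.
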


\begin{proof}
We can split the square of the coderivation $d_{\beta}$ as follows
$$
(d_0 + d_1 + d_2)^2 = \underbrace{{d_0}^2} + \underbrace{d_0 d_1 + d_1 d_0} + \underbrace{{d_1}^2 + d_0 d_2 + d_2 d_0} + \underbrace{d_1 d_2 + d_2 d_1} + \underbrace{{d_2}^2}.
$$
We have ${d_0}^2 = 0$ because of sign considerations.
For the same reason and due to the fact that $\theta$ is closed, we get $d_0 d_1 + d_1 d_0 = 0$.
The bracket of two coderivations is a coderivation so $[d_0,\, d_2] = d_0 d_2 + d_2 d_0$ and ${d_1}^2$ are coderivations.
Therefore, the corestriction of the equality ${d_1}^2 + d_0 d_2 + d_2 d_0 = 0$ to $s \overline{\Oo}$ is enough to prove the equality.
The corestriction of ${d_1}^2 + d_0 d_2 + d_2 d_0$ to $s \overline{\Oo}$ is equal to ${\bar d\ }^2 + [-\bar \theta,\, -]$, which is zero since $d^2 - [\theta,\, -]$ is and $\Oo$ is augmented.
The map $d$ is a derivation with respect to $\gamma$ for the augmented curved complete operad $\Oo$, so by sign considerations, we obtain that $d_1 d_2 + d_2 d_1 = 0$.
The associativity of the composition product $\gamma$ of the augmented curved complete operad $\Oo$ gives the last equality ${d_2}^2 = 0$.

Let $(f, a) : (\Oo, d, \theta) \to (\Pp, d', \theta')$ be a lax morphism of complete augmented curved operads. 
By Corollary \ref{cor: the tree cooperad is cofree}, we define the map $\hB (f, a) : \cofree(s\overline{\Oo}) \to \cofree(s\overline{\Pp})$ as the altipotent cooperad morphism characterized by
\[
\widetilde{\cofree}(s\overline{\Oo}) \twoheadrightarrow \widetilde{I} \amalg s\overline{\Oo} \xrightarrow{sa + f} s\overline{\Pp}.
\]
The cooperad $\cofree(s\overline{\Oo})$ can be seen as an infinitesimal $\cofree(s\overline{\Pp})$-comodule by means of the cooperad morphism $\hB (f, a)$. 
Noting $d_\beta$, resp. $d'_\beta$, the coderivation on $\cofree(s^{-1}\overline{\Oo})$, resp. on $\cofree(s^{-1}\overline{\Pp})$, the map $d_\beta' \cdot \hB(f, a) - \hB(f, a) \cdot d_\beta$ is a coderivation $\cofree(s\overline{\Oo}) \to \cofree(s\overline{\Pp})$. (This is a direct computation for two coderivations and a cooperad morphism.) 
To prove that it is zero, it is therefore enough to prove that when composed with the projection to the cogenerators $s\overline{\Pp}$, it is zero (\cite[Lemma 15]{MerkulovVallette}). 
We compute the contributory terms. 
Construction \ref{construction, Phi extensions} says
\begin{align*}
& \sum_{k=0}^2\Phi_k(1) = 1 + sa + sa \otimes sa,\\
& \sum_{k=0}^2\Phi_k(s\mu) = sf(\mu) + \sum_i sf(\mu) \circ_i sa + sa \otimes sf(\mu)+ \dots,\\
& \sum_{k=0}^2\Phi_k(s\mu \circ_j s\nu) = sf(\mu) \circ_j sf(\nu).
\end{align*}
We therefore obtain (using that the curved operad $\Oo$ and $\Pp$ are augmented)
\begin{align*}
& \left(d_\beta' \cdot \hB(f, a) - \hB(f, a) \cdot d_\beta\right)_{|I}^{|s\overline{\Pp}} = s\left( f(\theta) - \theta' - d' a - \frac{1}{2}[a, a]\right),\\
& \left(d_\beta' \cdot \hB(f, a) - \hB(f, a) \cdot d_\beta\right)_{|s\overline{\Pp}}^{|s\overline{\Pp}} = -s\left( d'f(-) - fd(-) + [a, f(-)]\right),\\
& \left(d_\beta' \cdot \hB(f, a) - \hB(f, a) \cdot d_\beta\right)_{|\cofree(s\overline{\Pp})^{(2)}}^{|s\overline{\Pp}} = 0.
\end{align*}
So by the fact that $(f, a)$ is a lax morphism, we conclude that $\hB (f, a)$ commutes with the differentials and $\hB$ is a well-defined functor.
\end{proof}

\subsection{Cobar construction}

We define here a cobar construction $\hOm$ which associates to complete curved operads (resp. lax morphisms) complete altipotent (dg) cooperads (resp. gr-coaugmented morphisms). 
 
When $M$ is a filtered complete graded $\sS$-module, we denote by $s^{-1} M$ the desuspension of $M$, that is the filtered complete graded $\sS$-module such that $(s^{-1}M)_n \coloneqq M_{n+1}$ and $F_p(s^{-1}M) \coloneqq s^{-1} F_p M$.

\subsubsection{Cobar construction of an altipotent cooperad}

Let $(\Cc,\, \Delta,\, d,\, \varepsilon,\, \eta)$ be a complete altipotent cooperad. 
Every altipotent cooperad admits a decomposition $\Cc \cong I \amalg \overline{\Cc}$ as $\sS$-modules (see Remark \ref{remark: gr-coaugmentation}) and we use this decomposition in order to get an as small as possible construction. 
The \emph{cobar construction} $\hOm \Cc$ of $\Cc$ is defined as the (quasi-free) complete augmented curved operad
\[
\hOm \Cc \coloneqq \left( \free(s^{-1} \overline{\Cc}),\, \gamma_\omega,\, d_\omega \coloneqq d_1+d_2,\, \theta_\omega \coloneqq -s^{-1}d 1 - s^{-2} \overline{\Delta}_{(1)}1\right)
\]
where $1$ stands for $\eta (1) \in \Cc$, 
the curvature $\theta_\omega$ lies in $F_1 (s^{-1} \overline{\Cc} \amalg s^{-1} \overline{\Cc} \circ_{(1)} s^{-1} \overline{\Cc})$ since $\Cc$ is gr-coaugmented, and where
\begin{enumerate}
\item the map $d_1$ is the unique derivation of degree $-1$ which extends the map
\[
s^{-1} \overline{\Cc} \xrightarrow{\id_{s^{-1}} \otimes \bar{d}} s^{-1} \overline{\Cc} \rightarrowtail \free(s^{-1} \overline{\Cc}),
\]
\item 
and the map $d_2$ is the unique derivation of degree $-1$ which extends the map
\[
s^{-1} \overline{\Cc} \xrightarrow{\Delta_{s^{-1}} \otimes \overline{\Delta}_{(1)}} s^{-2}(\overline{\Cc} \circ_{(1)} \overline{\Cc}) \rightarrowtail \free(s^{-1} \overline{\Cc}),
\]
with $\Delta_{s^{-1}}(s^{-1}) = - s^{-1} \otimes s^{-1}$.
\end{enumerate}

\begin{remark}
The curvature of the cobar construction vanishes precisely when the cooperad $\Cc$ is coaugmented.
\end{remark}

\begin{lemma}
\label{lem: truncated cobar derivation squares to}
We have the equality ${d_\omega}^2 = [\theta_\omega,\, -]$, the curvature $\theta_\omega$ is closed and the cobar construction induces a functor
\[
\hOm : \mathsf{Compl.\ altip.\ coop.} \rightarrow \mathsf{Comp.\ aug.\ curved\ op.}^{\mathrm{lax}}.
\]
\end{lemma}

\begin{proof}
Because of the fact that ${d_\omega}^2 = \frac{1}{2}[d_\omega,\, d_\omega] = {d_1}^2 + (d_1 d_2 + d_2 d_1) + {d_2}^2$ and $[\theta_\omega,\, -]$ are derivations, it is enough to prove the equality ${d_\omega}^2 = [\theta_\omega,\, -]$ on the generators $s^{-1} \overline{\Cc}$. 
As $d$ commutes with $\varepsilon$, the map $\bar d$ is equal to the restriction of $d$. 
On $s^{-1} \overline{\Cc}$, we have ${d_1}^2 = {\bar d}^2 = \overline{{d}^2} = 0$ since $d$ is a differential. 
The computation
\begin{align*}
(d \otimes \id + \id \otimes d) \cdot (\overline{\Delta}_{(1)} c) & = (d \otimes \id + \id \otimes d) \cdot (\Delta_{(1)} c - 1 \otimes c - \sum c \otimes 1)\\
& = \Delta_{(1)} \cdot d c - d1 \otimes c - 1 \otimes dc - \sum (dc \otimes 1 + (-1)^{|c|} c \otimes d1)\\
& = \overline{\Delta}_{(1)} \cdot d c - d1 \otimes c - (-1)^{|c|} \sum c \otimes d1
\end{align*}
shows that $d_1 d_2 + d_2 d_1 = [-s^{-1} d 1,\, -]$. 
A similar computation of $(\overline{\Delta}_{(1)} \otimes \id) \cdot \overline{\Delta}_{(1)} - (\id \otimes \overline{\Delta}_{(1)}) \cdot \overline{\Delta}_{(1)}$ shows that ${d_2}^2 = [-s^{-2} \overline{\Delta}_{(1)} 1,\, -]$. 
Finally, ${d_\omega}^2 = [\theta_\omega,\, -]$. 
These computations are also meaningful on $s^{-1}1$ and therefore $\theta_\omega$ is closed.

It remains to describe the image by the cobar construction of a morphism of complete altipotent cooperads. 
Let $f : (\Cc,\, d,\, \eta) \to (\Cc',\, d',\, \eta')$ be a morphism of complete altipotent cooperads. 
The map of $\sS$-modules underlying $f$ is characterized by the morphisms of $\sS$-modules $\bar f : \overline{\Cc} \to \overline{\Cc}'$ and $s^{-1}\overline{ f \cdot \eta} : I \to F_1(s^{-1} \overline{\Cc}')$ ($f$ is gr-coaugmented). 
The map $\free(\bar f) : \free(s^{-1}\overline{\Cc}) \to \free(s^{-1}\overline{\Cc}')$ is a morphism of operads and $s^{-1}\overline{f \cdot \eta}$ induces an element $a : I \to F_1(s^{-1} \overline{\Cc}') \rightarrowtail F_1\free(s^{-1} \overline{\Cc}')$ of degree $-1$. 
We now show that the couple $(\free(\bar f), a)$ is a lax morphism of complete augmented curved operads. 

First, the morphism $\bar f$ commutes with $d$ and $d'$, thus $\free(\bar f)$ commutes with the derivations $d_1$ and $d_1'$. 
Then, for an operad morphism $\free(\bar f)$ and two derivations $d_2$ and $d_2'$, we have that $d_2' \cdot \free(\bar f) - \free(\bar f) \cdot d_2$ is a derivation $\free(s^{-1}\overline{\Cc}) \to \free(s^{-1}\overline{\Cc}')$. 
Moreover, $[a,\, f(-)] = \gamma_\omega (a\otimes \bar f - \bar f\otimes a)$ is also a derivation (by a direct computation). 
It follows (by \cite[Lemma 14]{MerkulovVallette}) that it is enough to check on $s^{-1}\overline{\Cc}$ the equality
\begin{equation}
\label{eq: differentials and lax morphisms}
d_2' \cdot \free(\bar f) + [a, \free(\bar f)] = \free(\bar f) \cdot d_2
\end{equation}
corresponding to Equation \eqref{eq: lax morphism diff} in the definition of lax morphisms. 
It is a consequence of the following equality resulting from the fact that $f$ is a cooperad morphism
\[
\bar \Delta_{(1)} \bar f = \overline{\Delta_{(1)} f} = \overline{(f \otimes f) \Delta_{(1)}} = (\bar f \otimes \bar f)\bar \Delta_{(1)} + \bar f(1) \otimes \bar f + \sum_i \bar f \circ_i \bar f(1).
\]
Secondly and similarly, the equality $\free(\bar f)(\theta_\omega) = \theta'_\omega + d'a + \frac{1}{2}[a, a]$ corresponding to Equation \eqref{eq: lax morphism curvature} in the definition of lax morphisms results from the fact that $f$ is a dg cooperad morphism and also a morphism of gr-coaugmented cooperads.
\end{proof}

Because of the previous lemma, the cobar construction is a quasi-free complete curved operad
\[ \hOm\Cc = \left( \free_+(s^{-1} \overline{\Cc}),\, d_\omega \right)/\left( \im\left({d_\omega}^2 - [\theta_\omega,\, -] \right)\right), \]
where the functor $\free_+$ applied to a complete gr-dg $\sS$-module $M$ is realized as the pointed complete gr-dg operad $\left( \free(M \amalg \vartheta I),\, \vartheta \right)$, with the generator $\vartheta$ which lives in arity 1, filtration degree 1 and degree $-2$ (the correct arity, filtration degree 1 and degree for a curvature).

\subsection{Convolution algebras}

The bar $\hB$ and cobar $\hOm$ constructions represent a bifunctor of curved twisting morphisms in a curved convolution Lie algebra.

\subsubsection{Convolution curved Lie algebra}

Let $\Oo$ be a curved complete operad and $\Cc$ be a complete altipotent cooperad.
We use the notation $\Hom_{\sS} (\Cc,\, \Oo)$ for
\[
\prod_n\Hom_{\sS}(\Cc(n),\, \Oo(n)),
\] 
where $\Hom_{\sS}$ stands for the morphisms of complete filtered $\sS$-modules, and we fix the element
\begin{multline*}
\Theta \coloneqq (\theta_\Oo \cdot \varepsilon_\Cc : \Cc \to \Oo) \in F_1\Hom_{\sS} (\Cc,\, \Oo)_{-2} =\\
\{ f \in \Hom_{\sS} (\Cc,\, \Oo)_{-2} \textrm{ such that } \forall p \geq 0,\ f(F_p\Cc)\subset F_{p+1}\Oo\}
\end{multline*}
which is well-defined since $\theta_\Oo$ is in filtration level $1$ of $\Oo$ and $F_1I = \varnothing$.
We define on $\Hom_{\sS} (\Cc,\, \Oo)$ the pre-Lie product $\star$ given by
\[
f \star g : \Cc \xrightarrow{\Delta_{(1)}} \Cc \circ_{(1)} \Cc \xrightarrow{f \circ g} \Oo \circ_{(1)} \Oo \xrightarrow{\gamma_{(1)}} \Oo.
\]

The product $\star$ induces a Lie bracket $\{f,\, g\} \coloneqq f \star g -(-1)^{|f||g|} g \star f$ on $\Hom_{\sS} (\Cc,\, \Oo)$.
We denote by $\partial$ the derivation of $\star$ given by $\partial (f) \coloneqq d_\Oo \cdot f - (-1)^{|f|} f \cdot d_\Cc$.

\begin{lemma}
The tuple $(\Hom_{\sS} (\Cc,\, \Oo),\, \{-,\, -\},\, \partial,\, \Theta)$ forms a curved Lie algebra, called the \emph{convolution curved Lie algebra}.
\end{lemma}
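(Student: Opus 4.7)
The plan is to verify the four axioms of a curved Lie algebra on $\Hom_{\sS}(\Cc,\, \Oo)$: graded antisymmetry and Jacobi for $\{-,\, -\}$, that $\partial$ is a predifferential of degree $-1$ which is a derivation of the bracket, that $\Theta$ is closed ($\partial(\Theta)=0$), and the curvature relation $\partial^2 = \{\Theta,\, -\}$. The filtration condition $\Theta \in F_1\Hom_{\sS}(\Cc,\, \Oo)_{-2}$ is already recorded in the paragraph preceding the lemma, so only the algebraic identities remain.

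First I would show that $\star$ defines a graded pre-Lie product and that $\partial$ is a pre-Lie derivation, so that the first two axioms follow by antisymmetrization. Pre-Lie-ness is proved by computing the associator $(f \star g) \star h - f \star (g \star h)$ using coassociativity of $\Delta_{(1)}$ (inherited from the infinitesimal decomposition $\Delta$ of $\Cc$ via the construction of Lemma \ref{lemma: infinitesimal coideal is infinitesimal cooperad}) together with associativity of $\gamma_{(1)}$ (inherited from $\gamma$), and checking that this associator is right-symmetric in $g$ and $h$; the argument mirrors \cite[\S 6.4]{LodayVallette} with the classical $\circ_{(1)}$ replaced by its complete filtered counterpart. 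The derivation property is a direct sign computation using that $d_\Oo$ is an operadic derivation of $\gamma$ and $d_\Cc$ is a cooperadic coderivation of $\Delta$.

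For closedness I compute
\[
\partial(\Theta) = d_\Oo(\theta_\Oo) \cdot \varepsilon_\Cc - (-1)^{-2} \theta_\Oo \cdot (\varepsilon_\Cc \cdot d_\Cc) = 0,
\]
using that $\theta_\Oo$ is closed in $\Oo$ and that $\varepsilon_\Cc$ intertwines $d_\Cc$ with the zero differential on $I[-1]$. For the curvature relation, the standard expansion of $\partial^2(f) = \partial(d_\Oo f - (-1)^{|f|} f d_\Cc)$ collapses by sign cancellation and $d_\Cc^2 = 0$ (since $\Cc$ lives in $\Comp(\dg\Aa)$) to $\partial^2(f) = d_\Oo^2 \cdot f = [\theta_\Oo,\, -] \cdot f$, using the curvature relation in $\Oo$. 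Separately, applying the counit axiom from Definition \ref{defi: infinitesimal cooperad} to the legs of $\Delta_{(1)}$, the convolution $\Theta \star f = \gamma_{(1)} \cdot ((\theta_\Oo \cdot \varepsilon_\Cc) \circ_{(1)} f) \cdot \Delta_{(1)}$ collapses to $\theta_\Oo \circ_1 f$, and similarly $f \star \Theta$ collapses to $\sum_j f \circ_j \theta_\Oo$. Combining these with the sign $(-1)^{|\Theta||f|} = (-1)^{|\theta_\Oo||f|}$ yields $\{\Theta,\, f\} = [\theta_\Oo,\, -] \cdot f$, matching $\partial^2(f)$.

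The main obstacle is the administrative sign and counit bookkeeping in this last step: one must use the counit diagram of Definition \ref{defi: infinitesimal cooperad} to see that $\varepsilon_\Cc$ applied to a factor of $\Delta_{(1)}$ forces that factor to be (up to the filtration shift from $I$ to $I[-1]$, which does not affect the underlying composition) the unit, reducing the convolution with $\Theta$ to a single partial composition with $\theta_\Oo$. Everything else is a routine adaptation of the classical convolution pre-Lie algebra to the complete filtered curved setting.
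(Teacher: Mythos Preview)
Your proposal is correct and follows essentially the same approach as the paper. The paper's proof is terser—it only writes out the curvature computation $\partial^2(f) = d_\Oo^2 \cdot f = [\theta,\, -]\cdot f = \Theta \star f - f \star \Theta = \{\Theta,\, f\}$ and the closedness $\partial(\Theta) = 0$ (via $d_\Oo \cdot \theta = 0$ and $\varepsilon \cdot d_\Cc = 0$), treating the pre-Lie property of $\star$ and the derivation property of $\partial$ as already established in the paragraph preceding the lemma—whereas you spell those steps out as well.
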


\begin{proof}
We do the computations for the curvature:
\begin{align*}
\partial^2(f) &= d_{\Oo} \cdot \partial(f) - (-1)^{|\partial(f)|} \partial(f) \cdot d_{\Cc}\\
& = {d_{\Oo}}^{2} \cdot f - (-1)^{|f|} d_{\Oo} \cdot f \cdot d_{\Cc} + (-1)^{|f|} \left(d_{\Oo} \cdot f \cdot d_{\Cc} - (-1)^{|f|} f \cdot {d_{\Cc}}^2\right)\\
&= {d_{\Oo}}^{2} \cdot f = [\theta,\, -] \cdot f = \Theta \star f - f \star \Theta = \{\Theta,\, f\}
\end{align*}
and $\partial(\Theta) = d_{\Oo} \cdot \theta \cdot \varepsilon -(-1)^{|\Theta|} \theta \cdot \varepsilon \cdot d_{\Cc} = 0$ since $d_{\Oo} \cdot \theta = 0$ and $\varepsilon \cdot d_{\Cc} = 0$.
\end{proof}

Fix an augmented curved complete operad $(\Oo,\, \gamma,\, d_\Oo,\, \varepsilon_\Oo,\, \theta,\, \eta_\Oo)$ and a complete altipotent cooperad $(\Cc,\, \Delta,\, d_\Cc,\, \varepsilon_\Cc,\, \eta_\Cc)$.
An element $\alpha : \Cc \to \Oo$ of degree $-1$ in the curved Lie algebra $\Hom_\sS(\Cc,\, \Oo)$ such that $\alpha \cdot \eta_\Cc : I \to F_1\Oo$ and $\varepsilon_{\Oo} \cdot \alpha = 0$ is called a \emph{curved twisting morphism} if it is a solution of the \emph{curved Maurer-Cartan equation}
\[
\Theta + \partial (\alpha) + \frac{1}{2}\{\alpha,\, \alpha\} = 0.
\]
We denote by $\Tw(\Cc,\, \Oo)$ the set of curved twisting morphisms. 

\begin{lemma}
Curved twisting morphisms forms a bifunctor
\[
\Tw(-,-) : \mathsf{altipotent\ coop.}^{\mathrm{op}} \times \mathsf{curved\ op.}^{\mathrm{lax}} \to \mathsf{Set}.
\]
\end{lemma}

\begin{proof}
The functoriality in the left variable is given by pre-composition and doesn't cause any trouble. 
We only prove the functoriality in the right variable. Let $(f, a) : \Oo \to \Pp$ be a lax morphism between curved operads. To a curved twisting morphism $\alpha : \Cc \to \Oo$, we associate the map $G(f, a)(\alpha) \coloneqq f \cdot \alpha + a \cdot \varepsilon_\Cc : \Cc \to \Pp$. 
We first compute
\[
\partial (f \cdot \alpha + a \cdot \varepsilon_\Cc) = - [a, f] \cdot \alpha + f \cdot (\partial \alpha) + d_\Pp \cdot a \cdot \varepsilon_\Cc.
\]
Then
\begin{align*}
(f \cdot \alpha + a \cdot \varepsilon_\Cc) \star (f \cdot \alpha + a \cdot \varepsilon_\Cc) = f \cdot (\alpha \star \alpha) + [a, f] \cdot \alpha + a^2 \cdot \varepsilon_\Cc
\end{align*}
It follows that
\begin{align*}
\partial (G(f, a)) + G(f, a) \star G(f, a) & = f\cdot (\partial \alpha + \alpha \star \alpha) + (d_\Pp a + a^2) \cdot \varepsilon_\Cc\\
& = -f \cdot \theta_\Oo \cdot \varepsilon_\Cc + \left( f(\theta_\Oo) - \theta_\Pp \right) \cdot \varepsilon_\Cc = -\theta_\Pp \cdot \varepsilon_\Cc.
\end{align*}
Therefore $G(f, a) : \Cc \to \Pp$ is a curved twisting morphism. 
It is direct to check that $G(\id, 0) = \id$ and $G$ preserves the composition. This concludes the proof.
\end{proof}

We show that it is representable on the left and on the right by the bar and the cobar constructions.

\begin{prop}
\label{prop: bar-cobar adjunction}
For any complete altipotent cooperad $\Cc$ and any complete augmented curved operad $\Oo$, there are natural bijections
\[\Hom_{\mathsf{comp.\, aug.\, curv.\, op.}^{\mathrm{lax}}}(\hat \Omega \Cc,\Oo)
\cong
\Tw(\Cc,\, \Oo)
\cong
\Hom_{\mathsf{compl.\, alti.\, coop.}}(\Cc,\hat \B\Oo).
\]
Therefore the functors $\hat \B$ and $\hat \Omega$ form a pair of adjoint functors between the categories of complete augmented curved operads with lax morphisms and the category of complete altipotent cooperads with gr-coaugmented morphisms.
\end{prop}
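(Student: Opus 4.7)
The plan is to construct both bijections separately via ``morphisms out of a (co)free object'' universal properties, then recognize the resulting compatibility constraints as the curved Maurer--Cartan equation.

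For the left bijection, I would exploit the quasi-freeness of $\hOm \Cc$ as a curved complete operad generated by $s^{-1}\Cc$ (plus the formal curvature $\vartheta$). By Theorem \ref{thm: free curved operad}, a morphism of curved operads $F : \hOm\Cc \to \Oo$ is determined by the restriction to the generators $s^{-1}\Cc$ (the condition $\vartheta \mapsto \theta_\Oo$ is automatic because morphisms of curved operads preserve curvatures), subject to intertwining $d_\omega$ and $d_\Oo$. Writing $\alpha : \Cc \to \Oo$ for the desuspended degree $-1$ map, augmentation compatibility forces $\varepsilon_\Oo \cdot \alpha = 0$, and I would then unwind the differential compatibility on a generator $s^{-1}c$ according to $d_\omega = d_1 + d_2 + d_\vartheta$: the piece $d_1$ produces $\partial(\alpha)$, the piece $d_2$ produces $\tfrac{1}{2}\{\alpha, \alpha\}$ (using that $\alpha$ is odd so $\{\alpha,\alpha\} = 2\alpha\star \alpha$), and the piece $d_\vartheta$ produces $\Theta = \theta_\Oo \cdot \varepsilon_\Cc$. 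Together these yield precisely the curved Maurer--Cartan equation.

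For the right bijection, the bar construction $\hB \Oo = \coideal{\cofree}(s\overline{\Oo})$ is cofree in the category of altipotent complete infinitesimal cooperads, by Corollary \ref{cor: the tree cooperad is cofree} combined with the equivalence of Lemma \ref{lemma: equiv cat gr-coaugm coop and inf coop}. Thus a morphism $G : \Cc \to \hB\Oo$ of such infinitesimal cooperads corresponds bijectively to a collection map $\coideal{\Cc} \to s\overline{\Oo}$. Desuspending yields a degree $-1$ map $\alpha : \Cc \to \overline{\Oo}$ with $\varepsilon_\Oo \alpha = 0$. The compatibility of $G$ with $d_\Cc$ and $d_\beta = d_0 + d_1 + d_2$ then reduces, after projection to the cogenerators (using the coderivation uniqueness result from the previous section so that the equation $d_\beta \cdot G = G \cdot d_\Cc$ can be checked on cogenerators), to the same Maurer--Cartan equation: $d_0$ contributes $\Theta$, $d_1$ contributes $\partial(\alpha)$, and $d_2$ contributes $\tfrac{1}{2}\{\alpha,\alpha\}$. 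Composing the two bijections then yields the announced bar-cobar adjunction, and naturality in both arguments is immediate from the naturality of the underlying (co)free constructions.

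The main obstacle will be the operadic side, where $\hOm\Cc$ is only quasi-free: I must verify that any curved operad map $F$ satisfying the generator-level conditions automatically annihilates the ideal $(\im(d_\omega^2 - [\vartheta,-]))$ by which we quotient. Concretely, one needs $F(d_\omega^2 x) = F([\vartheta, x])$ for $x \in s^{-1}\Cc$, which follows from $F \cdot d_\omega = d_\Oo \cdot F$ on generators together with $F(\vartheta) = \theta_\Oo$ and the curved operad relation $d_\Oo^2 = [\theta_\Oo, -]$ in $\Oo$. On the cooperadic side, the analogous technical point is that morphisms of infinitesimal cooperads into the tree infinitesimal cooperad are controlled by the single projection to cogenerators, which is exactly what the altipotence machinery developed in Section \ref{section:cooperads} provides. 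Once these reductions are in hand, the remaining verifications are sign-tracking calculations matching components of $d_\beta$ and $d_\omega$ with the three terms of the Maurer--Cartan equation.
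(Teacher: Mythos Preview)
Your approach is correct and matches the paper's proof: both bijections reduce to a single degree $-1$ map $\alpha$ via the relevant (co)free universal property, and the (pre)differential-compatibility condition, checked on generators (respectively cogenerators), unwinds into the three-term curved Maurer--Cartan equation exactly as you describe. One small correction: the universal property needed on the cobar side is that of the free \emph{pointed} operad $\free_+$ (Proposition~\ref{prop: free pointed operad}) rather than $\cfree$ (Theorem~\ref{thm: free curved operad}), since $d_\omega$ is not induced by a predifferential on $s^{-1}\Cc$ alone---but your final paragraph already addresses the passage to the quotient by $(\im(d_\omega^2-[\vartheta,-]))$ correctly, so this is only a matter of citation.
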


\begin{proof}
We make the first bijection explicit. 
A lax morphism of augmented complete operads $(f_\alpha, a_\alpha) : \Tt (s^{-1}\overline{\Cc}) \to \Oo$ is uniquely determined by a map $-s \bar\alpha : s^{-1}\overline{\Cc} \to \Oo$ of degree $0$ such that, since $f_\alpha$ is augmented, $\varepsilon_\Oo \cdot (s\bar\alpha) = 0$ and an $\sS$-module map $a_\alpha : I \to F_1\Oo$ of degree $-1$. 
This is equivalent to the data of a map $\alpha \coloneqq a_\alpha \cdot \varepsilon_\Cc + \bar \alpha : \Cc \to \Oo$ of degree $-1$ such that $\alpha \cdot \eta_\Cc : I \to F_1\Oo$ and $\varepsilon_{\Oo} \cdot \alpha = 0$. 
Moreover, $f_\alpha$ commutes with the predifferential up to the term $[a_\alpha, f_\alpha]$ if and only if the following diagram commutes
\[
\xymatrix@C=50pt{s^{-1}\overline{\Cc} \ar[r]^{-s\bar\alpha} \ar[d]_{d_{1} + d_{2}} & \Oo \ar[r]^{d_{\Oo}} & \Oo\\
s^{-1}\overline{\Cc} \amalg (s^{-1}\overline{\Cc} \circ_{(1)} s^{-1}\overline{\Cc}) \ar[rr]_{\hspace{.5cm} (-s\bar\alpha) + (-s\bar\alpha) \circ_{(1)} (-s\bar\alpha)} && \Oo \amalg \Oo \circ_{(1)} \Oo \ar[u]_{\id + \gamma_{(1)}}}
\]
up to the term $[a_\alpha, -s\bar\alpha]$. 
This corresponds to the equality $\partial(\bar\alpha) + [a_\alpha, \bar\alpha] + \bar\alpha \star \bar\alpha = 0$ on $\overline{\Cc}$, or equivalently to the restriction of the equality $\Theta + \partial(\alpha) + \alpha \star \alpha = 0$ on $\overline{\Cc}$. 
The fact that the morphism $f_\alpha$ sends the curvature $\theta_\omega$ to $\theta_\Oo + d_\Oo a_\alpha + \frac{1}{2}[a_\alpha, a_\alpha]$ coincides with the restriction of the equation $\Theta + \partial(\alpha) + \alpha \star \alpha = 0$ to $\im \eta_\Cc$ since $\alpha \cdot \eta_\Cc = a_\alpha$. 

We now make the second bijection explicit. 
A morphism of complete altipotent cooperads $g_{\alpha} : \Cc \rightarrow {\Tt^{c}}(s\overline{\Oo})$ is uniquely determined by a map $s\alpha : \coideal{\Cc} \rightarrow s\overline{\Oo}$ (see Corollary \ref{cor: the tree cooperad is cofree}), that is by a map $\alpha : \Cc \rightarrow \Oo$ of degree $-1$ satisfying $\alpha \cdot \eta_\Cc : I \to F_1\Oo$ and $\varepsilon_\Oo \cdot \alpha = 0$.
Moreover, $g_{\alpha}$ commutes with the differential if and only if the following diagram commutes
\[
\xymatrix@C=120pt@R=10pt{\Cc \ar[r]^{\varepsilon_\Cc + s\alpha + ((s\alpha) \circ_{(1)} (s\alpha)) \cdot \Delta_{(1)} \hspace{1cm}} \ar[dd]_{d_{\Cc}} & I \amalg s\overline{\Oo} \amalg s\overline{\Oo} \circ_{(1)} s\overline{\Oo} \ar[dd]^{d_{\beta}^{|s\overline{\Oo}} = (d_0 + d_{1} + d_{2})^{|s\overline{\Oo}}}\\
&\\
\Cc \ar[r]_{s\alpha} & s\overline{\Oo}}
\]
Similarly as before, this is equivalent to the equality $\Theta + \partial(\alpha) + \alpha \star \alpha = 0$.
This proves the proposition.
\end{proof}

\begin{examples}
\begin{itemize}
\item
To the identity (lax) morphism $\id_{\hOm \Cc} = (\id_{\hOm \Cc}, 0) : \hOm \Cc \rightarrow \hOm \Cc$ of complete augmented curved operads corresponds the curved twisting morphism $\iota : \Cc \rightarrow \hOm \Cc$ defined by $\Cc \twoheadrightarrow \overline{\Cc} \cong s^{-1}\overline{\Cc} \rightarrowtail \free(s^{-1}\overline{\Cc})$.
\item
To the identity morphism $\id_{\hat\B \Oo} : \hat\B \Oo \rightarrow \hat\B \Oo$ of complete altipotent cooperads corresponds the curved twisting morphism $\pi \in \Hom_\sS( \hat\B \Oo,\, \Oo)$ defined by $\cofree(s\overline{\Oo}) \twoheadrightarrow s\overline{\Oo} \cong \overline{\Oo} \rightarrowtail \Oo$.
\end{itemize}
\end{examples}

\begin{remark}
Throughout the bar-cobar adjunction, a curved operad morphism $(f_\alpha, 0)$ corresponds to a twisting morphism $\alpha$ such that the composite $I \xrightarrow{\eta_\Cc} \Cc \xrightarrow{\alpha} \Oo$ cancels and to a coaugmented cooperad morphism $g_\alpha$. 
Examples of such morphisms are given by the two listed above.
\end{remark}

\subsection{Bar-cobar resolution}
\label{sec: bar-cobar resolution}

In this section, we assume that the category $\Aa$ is the category of $\ringK$-modules, for $\ringK$ a field of characteristic 0, since we use results of Sections 6.6 and 6.7 in \cite{LodayVallette} which are proved in this setting. 
There is however a slight difference since we consider \emph{unbounded} chain complexes instead of bounded below chain complexes. 
The proofs given in \cite{LodayVallette} require modification to extend to the unbounded situation. 
For this reason, we restrict our results to the case of an operad $\Oo$ such that $\Gr \Oo$ is endowed with an auxiliary weight gradation such that that the underlying chain complex is bounded below for each fixed weight. 
This weight gradation on $\Gr \Oo$ should not be confused with the filtration $F_\bullet$ on $\Oo$ that appears throughout this text. (The two filtrations differ in general.) To prove the ($\sS$-)cofibrancy of the bar-cobar resolutions, we require moreover that the filtration $F_\bullet$ on $\Oo$ comes from some gradation on $\Oo$. 

We provide two \emph{$\sS$-cofibrant resolutions} of a graded curved operad $\Oo$ whose associated graded $\Gr \Oo$ is also weight-graded (for an independent gradation), that is (by definition) resolutions $\mathcal{R} \mathrel{\ooalign{$\xrightarrow{\sim\mkern4mu}$\cr%
  \hidewidth$\rightarrow\mkern4mu$}} \Oo$ (graded quasi-isomorphisms and strict surjections) such that the gr-dg $\sS$-module map $\free (\vartheta I) \to \mathcal{R}$ is cofibrant.

A complete operad $\Pp$ in graded $\sS$-modules is \emph{weight graded} if it is endowed with an extra grading $\Pp^{(w)}$ for $w \geq 0$, compatible with the composition product. It is connected for this weight grading if we have
\[ \Pp = I \amalg \Oo^{(1)} \amalg \Pp^{(2)} \amalg \cdots \amalg \Pp^{(w)} \amalg \cdots, \]
that is $\Pp^{(0)} = I$. 
Moreover, we call $\Pp$ \emph{bounded below} when for each fixed $w \geq 0$, the chain complex underlying $\Pp^{(w)}$ is bounded below (for the homological degree).\\

In the following results, we make use of Theorem 6.6.3 in \cite{LodayVallette}. 
This theorem applies to connected weight graded operads $\Pp$ that satisfy $\Pp(0) = \{ 0\}$ and whose underlying chain complex is positively graded. 
In order to extend this result to operads with no condition on $\Pp(0)$, we have to adapt the proof of Lemma 6.5.9 in \cite{LodayVallette}. 
More specifically in the proof that $\B \Pp \circ_\pi \Pp$ is acyclic, the description of the contracting homotopy makes use of an order on the elements in $\Pp$ given by the labels of their entries. 
This contracting homotopy can be replaced by the following formula (with the notations of \cite{LodayVallette} and for $p_l$ either in $I$ or in $\overline{\Pp}$)
\begin{multline*}
h(t ;\, p_1,\, \ldots,\, p_k) \coloneqq\\
\frac{1}{\mathrm{card} \{p_l \in \overline{\Pp} \}} \sum_{\{p_l \in \overline{\Pp}\}} (-1)^{|t|}(t_l ;\, p_1,\, \ldots,\, p_{l-1},\, 1,\, \ldots,\, 1,\, p_{l+1},\, \ldots,\, p_k),
\end{multline*}
where $t_l$ is the element of $\B \Pp$ obtained by grafting the tree $t$ with $sp_l$ above. 
There are no other modifications in the proof.

Then since the filtrations used in the proof of (comparison) Lemma 6.7.1 in \cite{LodayVallette} are defined on a fixed weight of the right twisted composite product, we can check that the proof applies also to cooperads $\Cc$ which are bounded below weight graded. Assuming that $\Pp$ is bounded below weight graded, we obtain that the cooperad $\B \Pp$ is bounded below for the induced weight grading. 
This shows that Theorem 6.6.3 in \cite{LodayVallette} is valid for a connected bounded below weight graded operad $\Pp$.

\begin{thm}
\label{thm: bar-cobar resolution}
Let $\Oo$ be a complete curved operad such that $\Gr \Oo$ is a connected bounded below weight graded operad. The counit of the adjunction of Proposition \ref{prop: bar-cobar adjunction}
\[
f_\pi = (f_\pi, 0) :\hat \Omega \hat \B\Oo\to \Oo
\]
is a graded quasi-isomorphism and a surjection. 

Assuming moreover that the filtration $F_\bullet$ on $\Oo$ comes from a gradation, this map is an $\sS$-cofibrant resolution in the model category structure on complete curved operads given in Appendix \ref{appendix: MCS on gr-dg objects}.
\end{thm}

\begin{proof}
Since $\Gr$ is strong monoidal, we have $\Gr \hOm \hat \B\Oo \cong \Omega \B \Gr \Oo$, where we have denoted by $\Omega$ and $\B$ the classical cobar and bar constructions for an augmented graded (uncurved) operad $\Gr \Oo$. 
Then by the generalization of Theorem 6.6.3 in \cite{LodayVallette} described before the present theorem, it follows from the fact that $\Gr \Oo$ is connected bounded below weight graded that we have a quasi-isomorphism
\[
\Gr \hOm \hat \B\Oo \cong \Omega \B \Gr \Oo \xrightarrow{\sim} \Gr \Oo.
\]
This shows that $\hOm \hat \B\Oo \to \Oo$ is a graded quasi-isomorphism which is easily seen to be a strict surjection. 
Indeed, the counit of the adjunction is given by the composition
\[ \hOm \hat \B \Oo \twoheadrightarrow \free(\overline{\Oo}) \xrightarrow{\gamma_\Oo} \Oo \]
where the two maps are strict surjections, so the composition is as well. 

Assuming that the curvature on $\Oo$ is non-zero guarantees that the curvature on $\hOm \hat \B \Oo$ is non-zero and that $\ker ([\theta_\omega,\, -]) \cong \free(\theta_\Oo)$. 
We assume now that the filtration $F_\bullet$ on $\Oo$ comes from a gradation $\Oo^{(p)}$ (that is $F_p \Oo = \amalg_{q \geq p} \Oo^{(q)}$). 
It remains to show that the map $\free(\vartheta I) \to \hOm \hat\B \Oo$, sending $\vartheta$ to $\theta_\omega = \theta_\Oo$, is an $\sS$-cofibration. Let $t \in \hOm \hat\B \Oo$. We have ${d_\omega}^2(t) = [\theta_\Oo,\, t]$ where the bracket is computed by means of the free product. The identity $[\theta_\Oo,\, t] = 0$ implies $t \in \free(\theta_\Oo)$ (induction on the roots of the terms in $t$). 
Also we have $(\im d_\omega) \cap \free(\theta_\Oo) = \{0\}$ since $t \in (\im d_\omega) \cap \free(\theta_\Oo)$ implies $t = d_\omega (t')$ and $0 = d_\omega t = {d_\omega}^2(t') = [\theta_\Oo,\, t']$ so $t' \in \free (\theta_\Oo)$ and $t = d_\omega (t') = 0$ (by a direct inspection). 
Then assuming for some integers $k \leq l$ that ${d_\omega}^k(t) = {d_\omega}^l(t')$, we can show that $t - {d_\omega}^{l-k}(t') \in \free(\theta_\Oo)$ (using the previous arguments). 
Because $\Oo$ is assumed to be graded, $\hOm \hat \B \Oo$ is also graded by the sum of the gradation of the elements in $\overline{\Oo}$. 
Let us write $\hOm \hat \B \Oo = \amalg_{p \geq 0} R^{(p)}$ the direct sum decomposition coming from this gradation. 
Let us define for $k \geq 0$
\[
\left\{\begin{array}{l}
S_{2k} \coloneqq \free(\theta_\Oo) + \langle R^{(0)} \amalg \cdots \amalg R^{(k-1)} \amalg (R^{(k)} \cap \ker (\Gr d_\omega)) \rangle,\\
S_{2k+1} \coloneqq \free(\theta_\Oo) + \langle R^{(0)} \amalg \cdots \amalg R^{(k)} \rangle.
\end{array}\right.
\]
where the notation $\langle V \rangle$ stands for the gr-dg $\sS$-module generated by $V$. 
It is direct to see that $(S_l)_l$ is an increasing filtration such that $\free( \theta_\Oo) \subset S_0$. 
Moreover, we have $S_0 / \free (\theta_\Oo) \cong \amalg \hat \Zz_{q, n}^{1, \infty}\{m\}$, where $\hat \Zz^{1, \infty}_{q, n}\{m\}$ is the notation for the tensor product of $\hat \Zz^{1, \infty}_{q, n}$ with an $\sS_{m}$-module of rank 1, since ${d_\omega}^2 = [\theta_\Oo,\, -]$ and $(\im d_\omega) \cap \free(\theta_\Oo) = \{0\}$ (so iterated powers of $d_\omega$ never cancel outside $\free(\theta_\Oo)$) and because ${d_\omega}^k(t) = {d_\omega}^l(t')$ for $k \leq l$ implies that $t - {d_\omega}^{l-k}(t') \in \free(\theta_\Oo)$. 
Similarly
\[
S_{2k}/S_{2k-1} \cong (\langle C_{2k} \rangle,\, d_\omega) \cong \amalg \hat \Zz_{q, n}^{1, \infty}\{m\}
\]
where $C_{2k}$ is a complement subspace of
\begin{multline*}
 \ker (\Gr d_\omega) \cap \left(\amalg_{l < k}{d_\omega}^{2(k-l)}(R^{l})\right) + R^{(k)} \cap \left(\amalg_{l < k} {d_\omega}^{2(k-l)+1}(R^{l}) \right)\\
\text{ in }  R^{(k)} \cap \ker (\Gr d_\omega),
\end{multline*}
where we use the fact that for any integer $l$ and $r \in R^{(l)}$, we have $d_\omega(r) \in R^{(l)} \amalg R^{(l+1)}$ since ${d_\omega}^2(r) = [\theta_\Oo,\, r] \in R^{(l+1)}$. 
On the other side
\[
S_{2k+1}/S_{2k} \cong (\langle C_{2k+1},\, \bar d_\omega) \cong \amalg \hat \Zz_{q, n}^{1, \infty} \{m\} \coprod \amalg \ringK\{m'\}
\]
where $C_{2k+1}$ is a complement subspace of
\[
R^{(k)} \cap \left(\ker (\Gr d_\omega) + \amalg_{l < k}{d_\omega}^{2(k-l)}(R^{(l)}) \right) \text{ in } R^{(k)},
\]
and where the copies of $\ringK\{m'\}$ correspond to copies $\hat \Zz_{q', n'}^{0, \infty} \{m'\}$ in $S_{2k+1}$ for which the part $\hat \Zz_{q', n'-1}^{1, \infty} \{m'\}$ already appears in $S_{2k}$. 
Indeed $(\Gr d_\omega)^2 = 0$ implies that $(\im (d_\omega)_{|R^{(k)}}) \cap R^{(k)} \subset \ker(\Gr d_\omega)$.
Finally by Proposition \ref{prop: cofibration are retract of good cofibration} and Remark \ref{rem: example of cofibrations}, we obtain that $\free (\theta_\Oo) \to \hOm \hat \B \Oo$ is an $\sS$-cofibration. 

This concludes the proof.
\end{proof}

\begin{remark}
In the previous theorem, when the curvature is zero, the resolution $\hOm \hat \B \Oo \to \Oo$ coincides with the usual bar-cobar resolution (in a complete setting) and is cofibrant in the corresponding model category.
\end{remark}

\section{Koszul duality for curved operads}
\label{section: koszul duality for curved operads}

In this section, we describe the Koszul dual cooperad of a homogeneous quadratic curved operad $\Oo$ and we relate it to the bar construction $\hB \Oo$. We propose a definition for a curved operad to be Koszul and then we show that in this case, the Koszul dual cooperad is quasi-isomorphic to the bar construction $\hB \Oo$. It follows from the proof a sort of Poincaré--Birkhoff--Witt theorem which provides a way to make the Koszul dual cooperad explicit.

Under the Koszul property, we finally obtain an $\sS$-cofibrant resolution of curved complete operads
\[ \hOm \Ooa \to \Oo, \]
which has the advantage of having a domain with a smaller space of generators in comparison with the bar-cobar resolution. 

The two generic examples are the following:
\begin{itemize}
\item
a curved associative algebra whose curvature is a sum of squares (with conditions so that it is Koszul);
\item
the operads encoding curved associative algebras or curved Lie algebras.
\end{itemize}

In this section, we work with the category $\Aa$ of $\ringK$-modules where $\ringK$ is a field. We consider the closed monoidal category $\Mm = \compa(\Aa)$ on the operadic side and the closed monoidal category $\Mm' = \Comp(\dg\Aa)$ on the cooperadic side. 
Section \ref{section: subcomonoid generated} applies to more general categories $\Aa$ (except for Proposition \ref{prop: cooperad generated by, revisited} and after Section \ref{sect: curved Koszul operad}).

\subsection{Subcomonoid generated by a symmetric module}
\label{section: subcomonoid generated}

We denote by $\Nn$ the category $\sS$-$\mathsf{Mod}(\Mm)$ and by $\Nn'$ the category $\sS$-$\mathsf{Mod}(\Mm')$. 
Let $\Cc$ be a comonoid in $\Nn'$ and $S$ be an object in $\Nn'$. In Appendix B of \cite{bV08}, Vallette defined the notion of the subcomonoid of $\Cc$ generated by $S$ in an abelian setting. We extend the definition to quasi-abelian categories. The only difference is the necessity to consider strict morphisms in order to identify certain coimages with cokernels (and in the dual case certain images with kernels). We only emphasize the differences in the unfamiliar case of the subcomonoid generated by an object. The case of an ideal and of a monoid quotient generated by an object is dual.

\begin{defi}
\begin{itemize}
\item
Let $\Ii \rightarrowtail \Cc \twoheadrightarrow \Qq$ be an exact sequence in $\Nn'$, where $\Cc$ is a comonoid. The epimorphism $\Cc \twoheadrightarrow \Qq$ in $\Nn'$ is a \emph{coideal epimorphism} if $\Ii \rightarrowtail \Cc$ is a monomorphism of comonoids in $\Nn'$.
\item
Let $\xi : \Cc \twoheadrightarrow S$ be an epimorphism in $\Nn'$, where $\Cc$ is a comonoid. We consider the category $\Ss_\xi$ of sequences $(\textbf{S}) : \Ii \rightarrowtail \Cc \twoheadrightarrow \Qq$ as in the previous item and such that the composite $\Ii \rightarrowtail \Cc \twoheadrightarrow S$ is equal to $0$. A morphism between $(\textbf{S})$ and $(\textbf{S}') : \Ii' \rightarrowtail \Cc \twoheadrightarrow \Qq'$ is given by a pair $(i : \Ii \to \Ii',\, p : \Qq \to \Qq')$ such that $i$ is a morphism of comonoids and $p$ is a morphism in $\Nn'$, and such that the following diagram commutes:
\[
\begin{tikzcd}
\Ii \dar[swap,"i"]\ar[dr] &&\\
\Ii' \rar & \Cc \rar\ar[dr] & \Qq \dar["p"]\\
&& \Qq'.
\end{tikzcd}
\]
\item
We aim to consider the largest subcomonoid of $\Cc$ vanishing on $S$. This notion is given by the terminal object $(\overline{\textbf{S}}) : \Cc(S) \rightarrowtail \Cc \twoheadrightarrow (S)$ in $\Ss_\xi$, when the latter admits one.
\end{itemize}
\end{defi}

We now make the coideal quotient $(S)$ and the subcomonoid $\Cc(S)$ explicit.

\begin{defi}
We denote by $i_\Cc : \Cc \hookrightarrow S \amalg \Cc$ the inclusion and by $\pi_\Cc : S \amalg \Cc \twoheadrightarrow \Cc$ the projection.
\begin{itemize}
\item
The \emph{multilinear part} in $S$ of $(\Cc \circ (S \amalg \Cc)) \circ \Cc$ is given either by the cokernel
\[ \coker \left(\Cc \circ \Cc \circ \Cc \xrightarrow{\id \circ i_\Cc \circ \id} (\Cc \circ (S \amalg \Cc)) \circ \Cc\right), \]
or equivalently, by the kernel
\[ \ker \left((\Cc \circ (S \amalg \Cc)) \circ \Cc \xrightarrow{\id \circ \pi_\Cc \circ \id} \Cc \circ \Cc \circ \Cc\right), \]
by means of the fact that the monoidal structure commutes with colimits and $i_\Cc$ is a section of $\pi_\Cc$. 
It is denoted by $\Cc \circ (\underline{S} \amalg \Cc) \circ \Cc$. 
\item
The \emph{coideal quotient $(S)$} of $\Cc$ generated by $\xi : \Cc \twoheadrightarrow S$ is given by the coimage
\[ (S) := \coim \left(\Cc \xrightarrow{\Delta^{\circ 2}} \Cc^{\circ 3} \xrightarrow{\mathrm{proj} \cdot (\id_\Cc \circ (\xi \amalg \id_\Cc) \circ \id_\Cc)} \Cc \circ (\underline{S} \amalg \Cc) \circ \Cc\right), \]
where the map $\mathrm{proj}$ is the projection $\Cc \circ (S \amalg \Cc) \circ \Cc \to \Cc \circ (\underline{S} \amalg \Cc) \circ \Cc$.
\end{itemize}
\end{defi}

\begin{lemma}
The coideal quotient of $\Cc$ generated by $\xi : \Cc \twoheadrightarrow S$ is also given by the coimage
\[ \coim \left(\Cc \xrightarrow{(\id \otimes \Delta) \cdot \Delta_{(1)}} \Cc \circ_{(1)} \Cc^{\circ 2} \xrightarrow{\id_\Cc \circ_{(1)} \xi \circ \id_\Cc} \Cc \circ_{(1)} (S \circ \Cc)\right). \]
\end{lemma}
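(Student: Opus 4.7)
The plan is to show that both coimages equal the same subobject of $\Cc$ by producing a natural factorization linking the two maps.

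First, I would invoke coassociativity to rewrite $\Delta^{\circ 2} = (\id \circ \Delta) \cdot \Delta : \Cc \to \Cc \circ \Cc \circ \Cc$. The composition
\[\Cc \xrightarrow{\Delta^{\circ 2}} \Cc^{\circ 3} \xrightarrow{\id \circ (\xi \amalg \id) \circ \id} \Cc \circ (S \amalg \Cc) \circ \Cc \twoheadrightarrow \Cc \circ (\underline{S} \amalg \Cc) \circ \Cc\]
then admits a decomposition indexed by the middle-level positions: for each configuration where $\xi$ is applied to at least one middle factor, we can isolate a ``first'' distinguished position carrying an $S$, while the remaining middle factors (mixed between $S$ and $\Cc$) encode iterated comultiplications at those positions.

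Second, I would recall the definition of the infinitesimal decomposition map $\Delta_{(1)} : \Cc \to \Cc \circ_{(1)} \Cc$ as the composite of $\Delta$ with the natural projection $\Cc \circ \Cc \to \Cc \circ_{(1)} \Cc$ (which keeps one distinguished upper factor and counits off the others). Under this identification, the composite $(\id \circ_{(1)} (\xi \circ \id)) \cdot (\id \otimes \Delta) \cdot \Delta_{(1)}$ has the following interpretation: $\Delta_{(1)}$ picks out a distinguished input, $(\id \otimes \Delta)$ further decomposes at that input to produce a local three-level configuration, and $\xi \circ \id$ turns the newly-appeared middle node into an element of $S$. This is exactly the ``one-distinguished-$S$'' piece of $\Delta^{\circ 2}$ composed with the $\xi$-projection.

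Third, for the equality of coimages (not merely images), I would show the two maps have equal kernels. Given a configuration in $\Cc \circ (\underline{S} \amalg \Cc) \circ \Cc$ with several $S$ factors at middle positions, I would argue that each such element lies in the image of an iterated application of the single-$S$ construction: picking one distinguished $S$-position and re-expanding the others via $\Delta$ and counitality shows that the subobject generated by multi-$S$ configurations is no larger than that generated by single-$S$ configurations. Hence both maps cut out the same kernel in $\Cc$, and the quotients $\Cc / \ker$ agree. Equivalently, one can appeal directly to the universal property: both coimages represent the largest quotient of $\Cc$ making the composition with $\xi$ vanish through the cooperad structure.

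The main obstacle is the bookkeeping reconciliation between the ``at least one $S$'' combinatorics built into the cokernel definition of $\Cc \circ (\underline{S} \amalg \Cc) \circ \Cc$ and the ``one distinguished position with units elsewhere'' combinatorics of $\Cc \circ_{(1)} (S \circ \Cc)$. This is handled by coassociativity together with the identity $(\mathrm{id} \circ \varepsilon) \cdot \Delta = \mathrm{id}$ (applied to the non-distinguished positions to absorb iterated $\Delta$'s back into a single $\Cc$), so that iterated configurations factor through single-distinguished-position configurations at the level of the kernel, without introducing a direct map between the ambient targets.
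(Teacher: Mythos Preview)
Your proposal is correct and follows essentially the same approach as the paper: reduce the equality of coimages to the equality of kernels, obtain one inclusion from the fact that $\Cc \circ_{(1)} \Cc^{\circ 2}$ is a quotient of $\Cc^{\circ 3}$ (so the single-$S$ map factors through the multi-$S$ map), and deduce the converse inclusion from coassociativity. The paper's proof is extremely terse (two sentences), whereas you spell out the combinatorics of reducing multi-$S$ configurations to single-$S$ ones and make the role of counitality in the definition of $\Delta_{(1)}$ explicit; but the underlying argument is the same.
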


\begin{proof}
It is clear that
\begin{multline*}
\ker \left(\Cc \xrightarrow{\Delta^{\circ 2}} \Cc^{\circ 3} \xrightarrow{\mathrm{proj} \cdot (\id_\Cc \circ (\xi \amalg \id_\Cc) \circ \id_\Cc)} \Cc \circ (\underline{S} \amalg \Cc) \circ \Cc\right) \subseteq\\
\ker \left(\Cc \xrightarrow{(\id \otimes \Delta) \cdot \Delta_{(1)}} \Cc \circ_{(1)} \Cc^{\circ 2} \xrightarrow{\id_\Cc \circ_{(1)} \xi \circ \id_\Cc} \Cc \circ_{(1)} (S \circ \Cc)\right)
\end{multline*}
since $\Cc \circ_{(1)} \Cc^{\circ 2}$ is a quotient of $\Cc^{\circ 3}$. 
The coassociativity of $\Delta$ gives the converse inclusion.
\end{proof}

The epimorphism $\Cc \twoheadrightarrow (S)$ is strict (it is given by a $\coim$). We therefore have $(S) \cong \coker (\Cc(S) \to \Cc)$. 
By the universal property of the cokernel, the equivalence between the fact that the composition $\Cc(S) \rightarrowtail \Cc \to I$ is non zero and the fact that the counit does not factor through the coideal quotient $(S)$. 
This ensures that the cooperad $\Cc(S)$ is non zero.

\begin{prop}
\label{prop: cooperad generated by, revisited}
We suppose that the counit $\Cc \to I$ does not factor through the coideal quotient $(S)$. Then, the \emph{subcomonoid of $\Cc$ generated by $S$} is
\[ \Cc(S) := \ker(\Cc \to (S)), \]
that is
\[ \Cc(S) = \ker\left(\Cc \xrightarrow{(\id \otimes \Delta) \cdot \Delta_{(1)}} \Cc \circ_{(1)} \Cc^{\circ 2} \xrightarrow{\id_\Cc \circ_{(1)} \xi \circ \id_\Cc} \Cc \circ_{(1)} (S \circ \Cc)\right). \]
\end{prop}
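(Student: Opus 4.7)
The goal is to prove that the kernel $K := \ker(\Cc \twoheadrightarrow (S))$ is the terminal object of the category $\Ss_\xi$. This amounts to verifying three properties: (a) $K \hookrightarrow \Cc$ is a sub-comonoid (with non-trivial inherited counit); (b) the composite $K \rightarrowtail \Cc \xrightarrow{\xi} S$ vanishes; and (c) every sub-comonoid $\Ii \rightarrowtail \Cc$ with vanishing composite to $S$ factors uniquely through $K$.

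I would begin with (b), which is the most direct. The preceding lemma expresses $(S)$ as the coimage of
\[ \mu := (\id_\Cc \circ_{(1)} \xi \circ \id_\Cc) \cdot (\id \otimes \Delta) \cdot \Delta_{(1)} : \Cc \to \Cc \circ_{(1)}(S \circ \Cc). \]
Post-composing $\mu$ with $\varepsilon \circ_{(1)} (\id \circ \varepsilon): \Cc \circ_{(1)}(S \circ \Cc) \to S$ recovers $\xi$ by counitality of $\Delta$ and $\Delta_{(1)}$. Since $\xi$ is strict, the strict epi $\Cc \twoheadrightarrow (S)$ has its coimage identified with its image, so $(S) \hookrightarrow \Cc \circ_{(1)}(S \circ \Cc)$ as a monomorphism and $\xi$ factors as $\Cc \twoheadrightarrow (S) \to S$; restriction to $K$ then vanishes. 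For (c), given a sub-comonoid $\Ii \rightarrowtail \Cc$ with $\xi|_\Ii = 0$, the fact that $\Delta$ and $\Delta_{(1)}$ restrict to $\Ii$ implies that $\mu|_\Ii$ factors through $\id \circ_{(1)} (\xi|_\Ii) \circ \id = 0$, hence (again by strictness) $\Ii \to (S)$ is zero, i.e., $\Ii \subseteq K$.

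The heart of the proof is (a): I need to verify that the decomposition of $\Cc$ restricts to $\Delta(K) \subseteq K \circ K \subseteq \Cc \circ \Cc$. Using the strict epi $\pi : \Cc \twoheadrightarrow (S)$, one can characterize $K \circ K$ as the intersection of the kernels of $\pi \circ \id$ and $\id \circ \pi$ inside $\Cc \circ \Cc$, so the task reduces to two symmetric vanishing conditions on $\Delta|_K$. I would verify each via coassociativity: expressing $(\pi \circ \id) \Delta$ as a composite through $\mu$ and re-associating transforms the obstruction into $\xi$ applied to the middle factor of a higher decomposition of an element $c \in K$, which vanishes by definition of $K$. The inherited counit $\varepsilon|_K : K \to I$ is non-trivial precisely because of the standing hypothesis that $\varepsilon$ does not factor through $(S)$; without this hypothesis, the restricted counit could be zero and $K$ would fail to be a genuine counital sub-comonoid.

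The main obstacle is step (a), which requires both intricate combinatorial bookkeeping of coassociativity across the composition product $\circ$ for $\sS$-modules (using both $\Delta$ and $\Delta_{(1)}$ to match arities), and careful adaptation of the argument of \cite{bV08} from the abelian to the quasi-abelian setting $\Nn'$. In particular, the identification of coimages with images, the characterization of $K \circ K$ as a kernel in $\Cc \circ \Cc$, and the treatment of the inherited counit all depend essentially on the strict-epi hypothesis on $\xi$ and the non-factorization hypothesis on $\varepsilon$.
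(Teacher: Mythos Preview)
Your proposal is correct and follows essentially the same approach as the paper. The paper's own proof is much terser: it simply notes that the counit is the composite $\Cc(S) \rightarrowtail \Cc \to I$ (implicitly invoking the non-factorization hypothesis to ensure it is nonzero) and then defers the entire verification of step (a)---that the decomposition restricts to $\Cc(S)$---to Proposition~60 of \cite[Appendix B]{bV08}, whereas you sketch how that coassociativity argument would run.
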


\begin{proof}
The counit is given by the composite $\Cc(S) \rightarrowtail \Cc \to I$. See Proposition 60 in \cite[Appendix B]{bV08} for the fact that the comultiplication restricts to $\Cc(S)$.
\end{proof}

\subsection{Koszul dual cooperad}

We define the notion of a homogeneous quadratic curved operad and the Koszul dual cooperad associated with it. 

Following \cite[5.5.4]{LodayVallette}, we describe a weight grading on the free operad. It is given by the number of generating operations needed in the construction of a given element in the tree operad. Let $M$ be an $\sS$-module in $\Mm$ and let $\free M$ be the tree operad on $M$. We define the weight $w(\mu)$ of an element $\mu$ recursively as follows: $w(\id) = 0$, $w(\mu) = 1$ when $\mu \in M(n)$ for some $n$; and more generally $w(\nu;\, \nu_1,\, \dots ,\, \nu_m) = w(\nu) + w(\nu_1) + \dots + w(\nu_m)$ for some $m$ and $\nu \in M(m)$.
We denote by $\free M^{(r)}$ the $\sS$-module of elements of weight $r$.

\begin{defi}
\label{defi: homogeneous quadratic curved operad}
A curved complete operad $(\Oo,\, \theta)$ is called \emph{homogeneous quadratic} if all of the following hold:
\begin{enumerate}
\item
the operad $\Oo$ admits a homogeneous quadratic presentation
\[ (\Oo,\, \theta) \cong (\free E/(R),\, \theta) \cong \cfree(E)/\left( R \amalg (\vartheta -\tilde\theta(1))\right), \]
where $E$ is an $\sS$-module in $\Mm$ and $(R)$ is the ideal generated by a strict $\sS$-submodule $R$ of $\free E^{(2)}$ in $\Mm$ and the curvature $\theta$ is induced by a map $\tilde\theta : \I \to F_1 \free E^{(2)}$,
\item
the sub-$\sS$-module $R$ is a direct sum of homological degree and filtration degree homogeneous subspaces,
\item
the predifferential of $\Oo$ vanishes, and
\item the counit $\cofree(sE) \to I$ does not factor through the coideal quotient $(S)$, where we fix
\[ \Cc \coloneqq \cofree(sE) \text{ and } S \coloneqq \left(I \amalg \cofree(sE)^{(2)}\right) / \left(s^2 R \amalg (1 + s^2\tilde\theta(1))\right). \]
\end{enumerate}
\end{defi}

\begin{remark}
\begin{itemize}
\item
The condition that the differential of $\Oo$ is $0$ implies that the bracket with the curvature is $0$.
\item
The filtration on $E$ induces a filtration on $\free E$. The fact that the inclusion $R \hookrightarrow \free E^{(2)}$ is strict means that the filtration on $R$ is the one induced by the filtration on $E$.
\end{itemize}
\end{remark}

\begin{defi}
Let $\Oo \coloneqq \cfree(E)/\left( R \amalg (\vartheta -\theta)\right)$ be a curved complete operad equipped with a homogeneous quadratic presentation. 
We define the \emph{Koszul dual cooperad} $\Ooa$ of $\Oo$ to be the sub-cooperad ${\Cc(S)}$ of ${\cofree}(sE)$ generated by the strict epimorphism $\Cc \twoheadrightarrow S$ (with the notations of Definition \ref{defi: homogeneous quadratic curved operad}). It is an altipotent cooperad since it is a (strict) sub-cooperad of an altipotent cooperad.
\end{defi}

We sometimes denote it by $\Ooa = {\Cc}(sE,\, s^2 R \amalg (1+s^2\tilde\theta(1)))$. 
Its counit is given by the composite $\varepsilon_{\Ooa} : \Ooa \rightarrowtail {\cofree}(sE) \twoheadrightarrow I$.

\subsection{Koszul dual operad}

It is hard to explicitly describe the elements of $\Ooa$ by means of its definition. 
Under certain assumptions and forgetting the filtration, we can however get a better understanding by means of the Koszul dual operad $\Oo^!$. 

Considering a specific case of Section 4 in \cite{HirshMilles}, we propose the following definition.

\begin{defi}
We say that a complete operad $\Oo$ is \emph{constant-quadratic} if it admits a presentation of the form $\Oo = \free E/(R)$, where $E$ is a complete dg $\sS$-module and $(R)$ is the ideal generated by a complete dg $\sS$-module $R \subset I \amalg \free E^{(2)}$. (We assume that the filtration on $R$ is induced by the filtration on $E$.) We require that $R$ is a coproduct of homological degree and filtration degree homogeneous subobjects. Thus the complete operad $\Oo$ is homological degree graded and filtration degree graded and has a weight filtration induced by the $\sS$-module of generators $E$. We assume further that the following conditions hold:
\begin{itemize}
\item[(I)] The space of generators is minimal, that is $R \cap I = \{0\}$.
\item[(II)] The space of relations is maximal, that is $(R)\cap \{I \amalg E \amalg \free E^{(2)}\} = R$.
\end{itemize}
\end{defi}

In the context of the definition above, we denote by $q : \free E \to \free E^{(2)}$ the canonical projection and by $q R \subset \free E^{(2)}$ the image under $q$ of $R$. 
Since $R \cap I = \{0\}$, there exists a filtered map $\varphi : q R \rightarrow I$ such that $R$ is the graph of $\varphi$:
\[
R = \{X - \varphi(X),\, X \in q R\}.
\]

We propose the following generalization of Section 2.1.9 in \cite{GinzburgKapranov}. 
In our context, the situation isn't symmetric since we define a constant-quadratic (resp. curved) operad associated with a curved (resp. constant-quadratic) operad. 
We make it however as symmetric as possible so that it is easy to extend these definitions to a curved-constant-quadratic setting. 
Let $E$ be a complete $\sS$-module of finite dimension in each arity. 

To define a pairing between $\free(E)^{(2)}$ and $\free(E^*)^{(2)}$, we need to choose a tree basis of the trees made of two vertices. When the trees are reduced (that is each vertex has at least one leaf), the tree basis can be provided by shuffle trees (as explained in \cite[Section 8.2.5]{LodayVallette}). When one of the tree has no leaves (necessarily the one above), we choose as a tree basis the one such that the tree of arity $0$ is put further left. This defines a natural pairing $\free(E)^{(2)} \otimes \free(E^*)^{(2)}$ which can be extended in a unique way to a natural pairing $\langle-,\, -\rangle : \left(\vartheta I \amalg \free(E)^{(2)}\right) \otimes \left(I \amalg \free(E^*)^{(2)}\right)$ such that $\langle \vartheta I,\, \free(E^*)^{(2)}\rangle = \{ 0\}$, $\langle \free(E)^{(2)},\, I\rangle = \{ 0\}$ and $\langle \vartheta 1,\, 1\rangle = 1$ where $1$ is the generator of $I$. 

We propose the two following definitions of the Koszul dual operad in the curved / constant-quadratic context. In the definition below and in the proposition below, we forget the possible filtration on $E$. Moreover, curved operads are not filtered (unlike the definitions in Section \ref{section: curved operads}).

\begin{defi}
\label{defi: Koszul dual operad}
In this definition, $E$ is an $\sS$-module of finite dimension in each arity.
\begin{enumerate}
\item
Given a homogeneous quadratic curved operad
\[ \Oo = \cfree(E)/\left( R \amalg (\vartheta -\theta)\right), \]
we define its \emph{Koszul dual complete constant-quadratic operad} by
\[ \Oo^! \coloneqq \free(s^{-1} \mathcal{S}^{-1} \underset{\H}{\otimes} E^*)/\left( (R \amalg (\vartheta - \theta))^{\perp}\right), \]
where $\mathcal{S}^{-1} \coloneqq \End_{s^{-1}\ringK}$ is an endomorphism operad and $\underset{\H}{\otimes}$ is the Hadamard product defined by $(\Oo \underset{\H}{\otimes} \Pp)(n) \coloneqq \Oo(n) \otimes \Pp(n)$. 
It is a constant-quadratic operad.
\item
Given a constant-quadratic complete operad
\[ \Oo = \free(E)/(R), \]
we define its \emph{Koszul dual curved operad} by
\[ \Oo^! \coloneqq \cfree(s^{-1} \mathcal{S}^{-1} \underset{\H}{\otimes} E^*) / \left( R^{\perp} \right). \]
\end{enumerate}
\end{defi}

\begin{prop}
Let $\Oo$ be a homogeneous quadratic curved operad (resp. a constant-quadratic operad) generated by an $\sS$-module $E$ of finite dimension in each arity. 
We have
\[\left(\Oo^!\right)^! \cong \Oo. \]
\end{prop}

\begin{proof}
The natural map $E \to (E^*)^*$ is an isomorphism since $E$ is finite-dimensional.  
Then it is direct to check the two computations.
\end{proof}

\begin{remark}
We emphasize the fact that the complete filtration is not required to define the Koszul dual operad of a curved operad. The complete filtration is needed for the following two reasons:
\begin{enumerate}
\item
to define the Koszul dual cooperad,
\item
to define a homotopical context.
\end{enumerate}
\end{remark}

We go back to the complete filtered setting. Under the condition that the operad $\Oo$ is Koszul, we will see in Theorem \ref{thm: qi for Koszul operad and PBW} a description of the elements in $\Ooa$ by means of the elements in the Koszul dual cooperad of the (uncurved) operad $\Gr \Oo$. 
This will give a way to precisely describe the elements in $\Ooa$.

\subsection{Bar construction of a curved quadratic operad}
\label{subsection: bar construction}

We fix a homogeneous quadratic curved operad $\Oo = \cfree(E)/\left( R \amalg (\vartheta -\theta)\right)$. We consider on $\hat \B \Oo$ a second homological degree called \emph{syzygy degree}. 
The operad $\Oo \cong (\free E/(R),\, \theta)$ is weight-graded by the weight grading of $\free E$ (that is the number of generators in $E$) since $R$ is homogeneous. We define the syzygy degree of an element in $\hat \B \Oo$ recursively as follows: the syzygy degree of $\id$ is $0$, the syzygy degree of an element in $s\overline{\Oo}$ is $1$ minus its weight in $\Oo$ and the syzygy degree of an element $(\nu;\, \nu_1,\, \dots ,\, \nu_m)$ is the sum of the syzygy degrees of the elements $\nu$, $\nu_1$, \dots , $\nu_m$. 
The differentials $d_0$ and $d_2$ lower the syzygy degree by $1$.

\begin{prop}\label{prop: Koszul dual cooperad and syzygy degree}
Let $\Oo = \cfree(E)/\left( R \amalg (\vartheta -\theta)\right)$ be a complete curved operad equipped with a homogeneous quadratic presentation
. Let $\Ooa = {\Cc}(sE,\, s^2 R \amalg (1+s^2\tilde\theta(1)))$ be its Koszul dual cooperad. 
The natural inclusion
\[ i : \Ooa \hookrightarrow {\cofree}(sE) \hookrightarrow \hat \B \Oo \]
induces an isomorphism of complete cooperads
\[ i : \Ooa \xrightarrow{\cong} \H_0(\hat \B \Oo), \]
where the homology degree is taken to be the syzygy degree.
\end{prop}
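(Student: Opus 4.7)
Since $\Oo$ is homogeneous quadratic its predifferential vanishes, so $d_1=0$ and $d_\beta=d_0+d_2$. A tree in $\hat\B\Oo$ with vertices of weights $w_1,\ldots,w_n$ has syzygy degree $\sum_i(1-w_i)=n-\sum_i w_i$, which is non-positive since $\overline\Oo$ lives in weights $\geq 1$. Hence the complex is concentrated in non-positive syzygy degree, $C_0 \coloneqq (\hat\B\Oo)_{\mathrm{syz}=0}=\coideal{\cofree}(sE)$ is exactly the ambient object of $\Ooa$, and
\[
\H_0(\hat\B\Oo) = \ker\bigl(d_\beta\colon C_0 \to C_{-1}\bigr).
\]
So the inclusion $i$ lands in $C_0$, and the plan is to identify $i(\Ooa)$ with this kernel.

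The core of the proof is to exploit the coderivation structure of $d_\beta$. Restricted to $C_0$, $d_\beta$ is a coderivation of the inclusion $\coideal{\cofree}(sE)\hookrightarrow\coideal{\cofree}(s\overline\Oo)$, hence determined by its cogenerator projection $\phi \coloneqq p\cdot d_\beta|_{C_0}\colon C_0\to s\overline\Oo$. A direct computation shows that $\phi$ vanishes on all weight components other than $0$ and $2$; on the unit $1\in I$ it takes value $-s\theta$ (contribution of $d_0$), and on $T\in\cofree(sE)^{(2)}$ it takes value $s\bar\gamma_{(1)}(T)\in s\overline\Oo^{(2)}$ (contribution of $d_2$). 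Comparing with the strict epimorphism $\xi\colon C_0\twoheadrightarrow S$ defining $\Ooa$, one sees immediately that $\phi$ factors as $\phi=\tilde q\cdot\xi$, where $\tilde q\colon S\xrightarrow{\cong} s\overline\Oo^{(2)}$ is the canonical isomorphism induced by $[1]\mapsto -s\theta$ and $[T]\mapsto s\bar\gamma_{(1)}(T)$. This factorization is exactly where the relation $1\sim s^2\tilde\theta(1)$ in $S$ enters, since it is the identity $\phi(1)=\phi(s^2\tilde\theta(1))$. In particular $\ker\phi=\ker\xi$.

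The key remaining step is to identify $\ker(d_\beta|_{C_0})$ with the subcomonoid $\Cc(S)$ of $C_0$. This is a coderivation-theoretic analogue of Proposition~\ref{prop: cooperad generated by, revisited}: a coderivation of a cofree (infinitesimal) cooperad is determined by its cogenerator projection via iterated infinitesimal decomposition, so the kernel of the coderivation agrees with the largest subcomonoid on which $\phi$ (equivalently $\xi$) vanishes after iterated decomposition, which is precisely $\Cc(S)$. Passing to the infinitesimal coideal (Lemma~\ref{lemma: infinitesimal coideal is infinitesimal cooperad}) then identifies $\Ooa=\coideal{\Cc(S)}$ with $\ker(d_\beta|_{C_0})=\H_0(\hat\B\Oo)$ along $i$; compatibility with the cooperad structure is automatic because $i$ is a subcomonoid inclusion at the level of $\Cc$.

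The main obstacle is this last step. Naively $d_0$ and $d_2$ produce trees of different vertex counts in $C_{-1}$, so the cancellation that realizes an element $T\in\Ooa$ is not local to a single summand of $C_0$: an insertion of $-s\theta$ on an edge of one summand must cancel the merge of two $sE$-vertices (whose partial composition lies in the $\tilde\theta(1)$-class of $\overline\Oo^{(2)}$) of another summand, so a typical element of $\ker d_\beta$ is an infinite ``telescoping'' sum. Completeness of the filtration makes such sums legitimate, and the identification $1\sim s^2\tilde\theta(1)$ in $S$ is the precise combinatorial shadow of this cancellation pattern. The technical heart of the argument is therefore to check carefully that the iterated ``cogenerator slot'' expansion of $d_\beta$ matches bijectively, shape by shape in $C_{-1}$, with the iterated $\xi$-decomposition used in Proposition~\ref{prop: cooperad generated by, revisited} to define $\Cc(S)$.
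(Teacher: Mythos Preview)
Your approach is on the right track and close in spirit to the paper's, but you take a detour through the cogenerator projection $\phi$ that the paper avoids, and this is exactly what leaves you with the unresolved ``main obstacle'' in your last paragraph. The paper's argument is more direct: it first identifies the syzygy degree $-1$ part explicitly as
\[
C_{-1}\;=\;\coideal{\cofree}(sE) \circ_{(1)} \Bigl( \bigl(\coideal{\cofree}(sE)^{(2)} / s^2 R\bigr) \circ \coideal{\cofree}(sE)\Bigr),
\]
since a tree of syzygy degree $-1$ has exactly one weight-$2$ vertex and all other vertices of weight $1$. Then, using the isomorphism $\coideal{\cofree}(sE)^{(2)} / s^2 R \cong S$ (which encodes the identification $1\sim s^2\tilde\theta(1)$ you noticed), one checks that under this identification the differential $d_\beta\colon C_0\to C_{-1}$ \emph{is} the map
\[
(\id_{\coideal{\Cc}} \circ_{(1)} \xi \circ \id_{\coideal{\Cc}})\cdot(\id \otimes \Delta)\cdot\Delta_{(1)}
\]
from Proposition~\ref{prop: cooperad generated by, revisited}. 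The conclusion $\H_0=\ker(d_\beta|_{C_0})=\Cc(S)$ then follows by quoting that proposition.

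What you call the main obstacle---matching the coderivation expansion of $d_\beta$ shape-by-shape with the $\xi$-decomposition---dissolves once $C_{-1}$ is identified this way: the coderivation formula for $d_\beta$ restricted to degrees $0\to -1$ literally \emph{is} that decomposition, because both $d_0$ and $d_2$ replace a single vertex (or the trivial tree) by a weight-$2$ element and leave the remaining weight-$1$ vertices alone. Your worry about infinite telescoping sums is then a red herring: the identification is made at the level of filtered maps between complete $\sS$-modules, so no element-by-element cancellation argument is needed. Your factorization $\phi=\tilde q\cdot\xi$ is correct and is essentially the same observation as the isomorphism $\coideal{\cofree}(sE)^{(2)}/s^2R\cong S$, but the paper applies it one level up (to the whole map $C_0\to C_{-1}$) rather than only to the cogenerator slot, which is what lets it invoke Proposition~\ref{prop: cooperad generated by, revisited} directly.
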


\begin{proof}
Syzygy degree $0$ elements in $\hB \Oo$ are given by ${\cofree}(sE)$. 
Syzygy degree $-1$ elements coincide with
\[
{\cofree}(sE) \circ_{(1)} \left( \left({\cofree}(sE)^{(2)} / s^2 R\right) \circ {\cofree}(sE)\right)
\]
seen in $\hB \Oo$. The differential $d$ provides therefore a map
\[ {\cofree}(sE) \to {\cofree}(sE) \circ_{(1)} \left( \left({\cofree}(sE)^{(2)} / s^2 R\right) \circ {\cofree}(sE)\right).\]
Through the isomorphism
\[ {\cofree}(sE)^{(2)} / s^2 R \cong \left(I \amalg {\cofree}(sE)^{(2)}\right) / \left(s^2 R \amalg (1 + s^2\tilde\theta(1))\right) = S, \]
this map coincides with the map
\[
\left({\Cc} \xrightarrow{(\id \otimes \Delta) \cdot \Delta_{(1)}} {\Cc} \circ_{(1)} {{\Cc}}^{\circ 2} \xrightarrow{\id_{{\Cc}} \circ_{(1)} \xi \circ \id_{{\Cc}}} {\Cc} \circ_{(1)} (S \circ {\Cc})\right).
\]
We conclude by Proposition \ref{prop: cooperad generated by, revisited}.
\end{proof}

\subsection{Curved Koszul operad}
\label{sect: curved Koszul operad}

Since we are working with $\Aa = \dg$-$\ringK$-$\Mod$ (with $\ringK$ a field), the cooperad splits as $\sS$-modules as $\Ooa \cong I \amalg \overline{\Ooa}$. We denote by $\eta : I \to \Ooa$ the $\sS$-module map coming from this isomorphism.

Using the weight grading on $\cofree(sE)$, we can write
\[ \cofree(sE) \cong \cofree(sE)_{\text{even}} \coprod \cofree(sE)_{\text{odd}}, \]
where $\cofree(sE)_{\text{even}} \coloneqq \coprod_{k \geq 0} \cofree(sE)^{(2k)}$ and $\cofree(sE)_{\text{odd}} \coloneqq \coprod_{k \geq 0} \cofree(sE)^{(2k+1)}$.

\begin{lemma}
\label{lem: even odd}
The cooperad $\Ooa$ splits as follows:
\[ \Ooa \cong \Ooa_{\text{even}} \coprod \Ooa_{\text{odd}},\]
where $\Ooa_{\text{even}} = \Ooa \cap {\cofree}(sE)_{\text{even}}$ and $\Ooa_{\text{odd}} = \Ooa \cap {\cofree}(sE)_{\text{odd}}$.
\end{lemma}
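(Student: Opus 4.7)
The plan is to exploit the fact that $\Ooa$ is cut out inside $\coideal{\cofree}(sE)$ by a kernel condition that respects the $\mathbb{Z}/2$ parity of the weight grading. Since $\cofree(sE) = \coprod_k \cofree(sE)^{(k)}$ has a natural $\mathbb{N}$-weight grading (counting the number of generating vertices), this refines to a parity decomposition $\cofree(sE) = \cofree(sE)_{\text{even}} \amalg \cofree(sE)_{\text{odd}}$, and the assertion will follow if the defining map of $\Ooa$ preserves parity.

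First I would record that the cofree cooperad structure on $\coideal{\cofree}(sE)$ is compatible with the weight grading: the decomposition maps $\Delta$ and $\Delta_{(1)}$ only produce summands whose total weight equals the input weight. Consequently the composite
\[\Cc \xrightarrow{(\id \otimes \Delta) \cdot \Delta_{(1)}} \Cc \circ_{(1)} \Cc^{\circ 2}\]
preserves total weight, and therefore parity of weight.

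Next I would analyze the target $S = \bigl(I \amalg \cofree(sE)^{(2)}\bigr)/\bigl(s^2 R \amalg (1 - s^2\tilde\theta(1))\bigr)$. The generating summands $I$ and $\cofree(sE)^{(2)}$ sit in weights $0$ and $2$ respectively, both even. The quadratic relations $s^2 R$ are purely in weight $2$, and the curvature relation $1 - s^2\tilde\theta(1)$ identifies an element of weight $0$ with an element of weight $2$; all of these relations live entirely in even parity. Thus $S$ inherits a $\mathbb{Z}/2$-grading concentrated in even parity, and the strict epimorphism $\xi : \Cc \twoheadrightarrow S$ vanishes on the odd part $\cofree(sE)_{\text{odd}}$ and preserves parity in general. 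Combining this with the previous paragraph, the full map
\[\Cc \longrightarrow \Cc \circ_{(1)} (S \circ \Cc)\]
used in Proposition~\ref{prop: cooperad generated by, revisited} to define $\Ooa$ is a parity-preserving morphism between $\mathbb{Z}/2$-graded objects.

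The conclusion is then immediate: a parity-preserving morphism of $\mathbb{Z}/2$-graded objects has a kernel which splits compatibly with the $\mathbb{Z}/2$-grading, so $\Ooa$ decomposes as the coproduct of its intersections with the even and odd parts of $\cofree(sE)$. I do not anticipate a serious obstacle; the only point requiring mild care is to check that taking the pointed coideal $\coideal{(-)}$ (which shifts the filtration of the counit but leaves the underlying object unchanged) is compatible with the parity decomposition, which is clear because the coideal construction operates only on the counit component in weight $0$.
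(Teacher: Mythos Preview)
Your proof is correct and follows essentially the same approach as the paper: both arguments observe that the composite $\coideal{\cofree}(sE)\to \coideal{\cofree}(sE)\circ_{(1)}(S\circ\coideal{\cofree}(sE))$ defining $\Ooa$ as a kernel preserves the $\mathbb{Z}/2$ parity of the weight grading, whence the kernel splits. You simply make explicit what the paper states tersely, namely that $S$ is concentrated in even parity (since $I$, $\cofree(sE)^{(2)}$, $s^2R$, and $1-s^2\tilde\theta(1)$ all have even weight) and that the cofree decomposition maps preserve total weight.
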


\begin{proof}
We have
\begin{multline*}
\Ooa = \ker\left({\cofree}(sE) \xrightarrow{(\id \otimes \Delta) \cdot \Delta_{(1)}} {\cofree}(sE) \circ_{(1)} {\cofree}(sE)^{\circ 2}\right.\\
\left. \xrightarrow{\id_{{\cofree}(sE)} \circ_{(1)} \xi \circ \id_{{\cofree}(sE)}} {\cofree}(sE) \circ_{(1)} (S \circ {\cofree}(sE))\right)
\end{multline*}
where $S = \left(I \amalg \cofree(sE)^{(2)}\right) / \left(s^2 R \amalg (1 + s^2\tilde\theta(1))\right)$. 
The map $(\id \otimes \Delta) \cdot \Delta_{(1)}$ and $\id_{{\cofree}(sE)} \circ_{(1)} \xi \circ \id_{{\cofree}(sE)}$ stabilize the odd part, resp. the even part. It follows that the kernel splits as desired.
\end{proof}

As a consequence and using the definition of $\Ooa$ and the fact that $\eta$ is a gr-coaugmentation, we can write
\begin{align}
\label{eq: eta}
\eta(1) = 1 + \left(s^2\tilde\theta + \sum s^2 r\right) + \dots \in I \amalg sE \amalg \cofree(sE)^{(2)} \amalg \cdots,
\end{align}
with $\sum s^2 r \in s^2 R$. 
There is a natural morphism $\kappa : \Ooa \to \Oo$ of degree $-1$ defined as the composite
\[ \kappa : \Ooa \hookrightarrow {\cofree}(sE) \twoheadrightarrow sE \cong E \to \Oo. \]

\begin{lemma}
We have $\Theta + \frac{1}{2} \{ \kappa,\, \kappa \} = 0$ for $\Theta = \theta \cdot \varepsilon_{\Ooa}$. Hence $\kappa$ is a curved twisting morphism. 
It is in fact a truncated curved twisting morphism by the expression of $\eta(1)$ given in Equation \eqref{eq: eta}.
\end{lemma}

\begin{proof}
On $\eta(I)$, a direct computation using Equality (\ref{eq: eta}) shows that the equality $\Theta + \frac{1}{2} \{ \kappa,\, \kappa \} = 0$ is true. 
Then, on $\overline{\Ooa} \subset {\cofree}(sE)^{(\geq 1)} = sE \amalg \cofree(sE)^{(2)} \amalg \cdots$, using the fact that the decomposition map is the cofree one, we only have to consider elements whose projection on $\cofree(sE)^{(2)}$ is non zero. 
By means of Lemma \ref{lem: even odd} and using the definition of $\Ooa$, we get that these elements write
\begin{align}
\label{eq: R}
\sum s^2 r + \dots \in \cofree(sE)^{(2)} \amalg \cofree(sE)^{(\geq 4)},
\end{align}
with $\sum s^2 r \in s^2 R$. Since $\theta_{| \overline{\Ooa}} \equiv 0$, we obtain that $\kappa$ is a curved twisting morphism.
\end{proof}

\begin{defi}
A curved complete operad $(\Oo,\, \theta)$ is called \emph{Koszul} if $\Oo \cong (\widehat{\Gr \Oo},\, \bar\theta)$, where $\Gr \Oo$ is the graded dg operad associated with $\Oo$ and $\Gr \Oo$ admits a homogeneous quadratic presentation $\Gr \Oo \cong \free E/(R)$ in $\Filt(\Aa)$ for which it is a Koszul operad.
\end{defi}

\begin{thm}
\label{thm: qi for Koszul operad and PBW}
Let $\Oo$ be a curved complete Koszul operad. Then the map
\[ i : \Ooa \to \hat \B \Oo \]
is a quasi-isomorphism of cooperads. 
Moreover, there is a Poincar\'e--Birkhoff--Witt type isomorphism of $\sS$-modules
\[ \Ooa \cong \widehat{(\Gr \Oo)^\antishriek}.\]
\end{thm}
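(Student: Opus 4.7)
The plan is to reduce the theorem to the classical Koszul duality statement for the uncurved graded operad $\Gr \Oo$ by applying the associated graded functor. Since $\Oo$ is a Koszul curved operad, we may write $\Oo \cong (\widehat{\Gr \Oo},\, \bar\theta)$ with $\Gr \Oo$ an ordinary (uncurved) homogeneous quadratic Koszul operad; in particular the predifferential on $\Oo$ is zero, and the curvature $\bar\theta$ lies in $F_1$.

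First, I would identify $\Gr(\hat \B \Oo)$ with the classical bar construction $\B(\Gr \Oo)$. Because $d=0$, the bar differential reduces to $d_\beta = d_0 + d_2$. The component $d_0$ inserts a curvature term, and since $\theta \in F_1\Oo$ this strictly raises the filtration; hence $\Gr d_0 = 0$. The component $d_2$ is filtration preserving and its associated graded is exactly the classical bar differential induced by the composition of $\Gr\Oo$. Using that $\Gr$ is strong monoidal on the gr-flat subcategory (Lemma~\ref{lemma:gr-flat is monoidal subcat}) and preserves the tree cooperad construction (Proposition~\ref{prop: graded of gr-flat coop is coop}), I obtain an isomorphism of dg cooperads
\[
(\Gr\hat \B \Oo,\, \Gr d_\beta) \cong \B(\Gr \Oo),
\]
respecting the syzygy grading.

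Next, I would identify $\Gr \Ooa$ with the classical Koszul dual cooperad $(\Gr \Oo)^{\antishriek}$. By construction, $\Ooa$ is the subcooperad of $\coideal{\cofree}(sE)$ generated by the strict epimorphism onto $S = (I[-1] \amalg \cofree(sE)^{(2)})/(s^2 R \amalg (1 - s^2\tilde\theta(1)))$. The extra relation $(1 - s^2\tilde\theta(1))$ compared to the classical setting is supported in filtration degree $\geq 1$ (since $I[-1]$ is concentrated in filtration degree $1$ and $\tilde\theta \in F_1$), so on $\Gr_0$ this relation degenerates and the $\Gr$ of the defining relations reduces to $s^2 R$. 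Unwinding the kernel description of $\Cc(S)$ from Proposition~\ref{prop: cooperad generated by, revisited}, and again using that $\Gr$ commutes with the relevant kernels on gr-flat objects, I obtain $\Gr \Ooa \cong (\Gr \Oo)^{\antishriek}$. This is the Poincar\'e--Birkhoff--Witt isomorphism on the associated graded level; the second assertion of the theorem then follows by choosing a filtration splitting (available over a field) and completing, since $\Ooa$ is itself complete.

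Finally, under these two identifications, the map $\Gr i$ is precisely the classical inclusion $(\Gr \Oo)^{\antishriek} \hookrightarrow \B(\Gr \Oo)$, which is a quasi-isomorphism with homology concentrated in syzygy degree $0$ by the classical Koszul duality theorem for $\Gr\Oo$ (Theorem 7.4.6 of \cite{LodayVallette}). Hence $\Gr i$ is a quasi-isomorphism and $i$ is a graded quasi-isomorphism of cooperads. The main technical obstacle lies in the second step: verifying that the associated graded functor is compatible with the subcooperad generated by a strict epimorphism. This requires carefully tracking the interaction of $\Gr$ with the kernel/coimage construction in the quasi-abelian category of complete filtered objects, and relies crucially on the gr-flatness hypotheses that propagate from $\Oo$ to $\Ooa$ and to the free and cofree constructions at play.
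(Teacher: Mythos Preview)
Your first and third steps are essentially right and match the paper: applying $\Gr$ to $\hat\B\Oo$ kills $d_0$ (curvature insertion) and recovers the classical bar construction of $\Gr\Oo$, and the classical Koszul theorem for $\Gr\Oo$ then does the work. The problem is your second step, where you claim $\Gr\Ooa \cong (\Gr\Oo)^{\antishriek}$ directly by ``commuting $\Gr$ with the relevant kernels on gr-flat objects.'' This is the whole content of the Poincar\'e--Birkhoff--Witt statement, and it is not free. Gr-flatness makes $\Gr$ strong monoidal (Lemma~\ref{lemma:gr-flat is monoidal subcat}), but it does \emph{not} make $\Gr$ exact: for $\Gr$ to preserve the kernel in Proposition~\ref{prop: cooperad generated by, revisited} you would need the defining map $\coideal{\cofree}(sE)\to \coideal{\cofree}(sE)\circ_{(1)}(S\circ\coideal{\cofree}(sE))$ to be strict, and nothing in the setup gives you that. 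Concretely, elements of $\Ooa$ are infinite sums mixing filtration degrees (look at the formula for $\hat\mu_n^c$ in Section~\ref{section: assocase}); knowing their leading terms land in $(\Gr\Oo)^{\antishriek}$ is easy, but knowing that \emph{every} element of $(\Gr\Oo)^{\antishriek}$ lifts to $\Ooa$ is exactly the surjectivity half of PBW, and you have not argued it.

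The paper reverses your logical order to get around this. It never computes $\Gr\Ooa$ directly. Instead it runs the spectral sequence of the filtration on $\hat\B\Oo$: the $E^0$-page is $\B_{cl}(\Gr\Oo)$ with differential $d_2$, Koszulity of $\Gr\Oo$ gives $E^1 = (\Gr\Oo)^{\antishriek}$ concentrated in syzygy degree $0$, and since $d_0,d_2$ both lower syzygy degree the spectral sequence collapses at $E^1$ and converges (complete, exhaustive, regular) to $\H_\bullet(\hat\B\Oo)$. Combining this with Proposition~\ref{prop: Koszul dual cooperad and syzygy degree}, which already identifies $\Ooa$ with $\H_0(\hat\B\Oo)$ for the syzygy grading, one gets simultaneously that $i$ is a quasi-isomorphism and that $\Ooa \cong \widehat{(\Gr\Oo)^{\antishriek}}$. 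So the PBW isomorphism is an \emph{output} of the spectral sequence, not an input; your argument treats it as an input and therefore has a gap.
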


As we will see in examples, this last isomorphism provides a way to understand the cooperad $\Ooa$.

\begin{proof}
We denote by $F_p$ the filtration on $\hat \B \Oo$. It is induced by the filtration on $\Oo$ for which $\Oo$ is complete. 
We consider the (increasing) filtration $G_p := F_{-p}$ on $\hat \B \Oo$ (note that $G_p$ is not bounded below). 
We have \[ d_0 : G_p \hat \B \Oo \to G_{p-1} \hat \B \Oo \text{ and } d_2 : G_p \hat \B \Oo \to G_p \hat \B \Oo. \]
The filtration $G_p$ is complete since $\hat \B \Oo$ is complete for the filtration $F_p$ and it is exhaustive since $\hat \B \Oo = G_0 \hat \B \Oo$. We consider the spectral sequence $E_{p, q}^\bullet$ associated with this filtration. Using that the functor $\Gr$ is strong monoidal ($\ringK$ is a field), we have
\[ E_{p, q}^0 = G_p \hat \B \Oo_{p+q} / G_{p-1} \hat \B \Oo_{p+q} \cong (\B \Gr \Oo)_{p+q}^{(p)}\]
where the upper index stands for the degree of the gradation in $\B \Gr \Oo$ (the bar construction of the dg operad $\Gr \B \Oo$) and the last isomorphism comes from the fact that $\Gr$ is strong monoidal (see Lemma \ref{lemma:gr-flat is monoidal subcat}). The differential is given by $d^0 = d_{\B \Gr \Oo}$ (induced by $d_2$). Since $\Gr \Oo$ is a Koszul operad, we get
\[ E_{p, q}^1 = {{(\Gr \Oo)^\antishriek}}_{p+q}^{(p)}.\]

Now let us consider on $\hat \B \Oo$ and on $\B \Gr \Oo$ the syzygy degrees described in the beginning of Section \ref{subsection: bar construction}. We have the following compatibility condition: for $b \in F_p \hB \Oo \backslash F_{p+1} \hB \Oo$ with a homogeneous syzygy degree, the associated element in $(\B \Gr \Oo)^{(p)}$ has the same syzygy degree. 
The $E^1$ page is concentrated in syzygy degree $0$ (in $\B \Gr \Oo$) as the Koszul dual cooperad is, and by means of the compatibility of the syzygy degrees, we therefore obtain that the differentials $d^r$, $r \geq 1$, induced by $d_0$ and $d_2$ are $0$ (they both decrease the syzygy degree). This implies that the spectral sequence is regular (see Definition 5.2.10 in \cite{cW94}) and by the Complete Convergence Theorem 5.5.10 in \cite{cW94} that the spectral sequence converges to $\H_\bullet (\hat \B \Oo)$ (the spectral sequence is bounded above).

We have therefore
\[ {{(\Gr \Oo)^\antishriek}}^{(p)} = \amalg_q E_{p, q}^1 = \amalg_q E_{p, q}^\infty \cong \Gr_p \H_\bullet (\hat \B \Oo) \]
and using the syzygy degree, we get ${{(\Gr \Oo)^\antishriek}}^{(p)} \cong \Gr_p \H_0 (\hat \B \Oo) \cong \Gr_p \H_\bullet (\hat \B \Oo)$. By means of Proposition \ref{prop: Koszul dual cooperad and syzygy degree}, we obtain moreover that $\widehat{(\Gr \Oo)^\antishriek} \cong \Ooa$.

Finally, the morphism $\Ooa \to \hat \B \Oo$ is a quasi-isomorphism and we have an isomorphism of $\sS$-modules $\Ooa \cong \widehat{(\Gr \Oo)^\antishriek}$.
\end{proof}

As in Section \ref{sec: bar-cobar resolution}, we obtain a resolution for a Koszul operad $\Oo$, which is $\sS$-cofibrant when we assume that the filtration $F_\bullet$ on $\Oo$ comes from a gradation.

\begin{thm}
\label{thm: Koszul resolution}
Suppose that $(\Oo,\, \theta)$ is a Koszul complete curved operad such that $\Gr \Oo$ is connected bounded below weight graded. 
The natural map
\[
f_\kappa = (f_\kappa, 0) : \hOm \Oo^\antishriek\to \Oo
\]
is a resolution of curved complete operads.

Assuming moreover that the filtration $F_\bullet$ on $\Oo$ comes from a gradation and that the curvature $\theta$ is non-zero, this map is an $\sS$-cofibrant resolution in the model category structure on complete curved operads given in the Appendix \ref{appendix: MCS on gr-dg objects}.
\end{thm}

\begin{proof}
Since $\Gr$ is strong monoidal we have
\[ \Gr \hat \Omega \Ooa \cong \Omega \Gr (\Ooa) \cong \Omega (\Gr \Oo)^\antishriek. \]
Since $\Gr \Oo$ is a Koszul operad, Theorem 7.4.2 in \cite{LodayVallette} (adapted by means of the discussion before Theorem \ref{thm: bar-cobar resolution}) gives that $\hat \Omega \Ooa \to \Oo$ is a graded quasi-isomorphism which is easily seen to be a strict surjection.

As in Theorem \ref{thm: bar-cobar resolution}, when $\theta \neq 0$ and under a gradation assumption, the map $\free(\theta_\Oo) \to \hOm \Ooa$ is a cofibration in the model category structure on complete gr-dg $\sS$-modules given in Appendix \ref{appendix: MCS on gr-dg objects}.
\end{proof}

\begin{remark}
Let $E$ be an $\sS$-module that we put in weight degree 1 and $R$ be a weight degree 2 homogeneous $\sS$-submodule in $\free E$. Then if $\Pp = \free E/(R)$ is a Koszul operad for the filtration induced by the weight gradation on $E$, then assuming that $E$ is graded bounded below (for example graded and finitely generated), we obtain that the operad $\Gr \Oo$ is automatically connected bounded below weight graded.
\end{remark}

\section{The associative case}
\label{section: assocase}

In this section, we make the case of the operad encoding curved associative algebras explicit. It is a curved operad that we denote $\cAs$ and we prove that it encodes curved associative algebras. 

Again, we assume that $\Aa$ is the category of $\ringK$-modules, with $\ringK$ a field.

\subsection{The curved operad encoding curved associative algebras}

We recall that the operad $\As$ encoding associative algebras is the (trivially) filtered (complete) operad defined by
\[
\As \coloneqq \free \left(\as\right)/\left(\ass\right).
\]
Its representations in $\ringK$-modules are associative algebras.

\begin{prop}
We denote
\[
\cAs \coloneqq \left( \free \left(\curvA \amalg \as\right)/\left(\ass\right),\, 0,\, \theta \coloneqq \curvAs \right),
\]
where $\curvA$ is of degree $-2$, $\as$ is of degree 0 and the predifferential is zero.
We filter $\cAs$ by the number of $\curvA$, say
\[
F_p \cAs \coloneqq \{ \mu \in \cAs \textrm{ s.t. the number of } \curvA \textrm{ in } \mu \textrm{ is greater than or equal to } p\}.
\]
(This filtration comes in fact from a gradation.) 
It is a complete curved operad whose curvature belongs to $F_1 \cAs$. 
We can also write
\[ \cAs = \cfree \left(\curvA \amalg \as\right)/\left(\ass,\ \vartheta - \theta\right),
\]
where $\theta$ is defined above.
\end{prop}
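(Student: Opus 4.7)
The plan is to verify three assertions corresponding to the content of the proposition: (1) the given data on $\cAs$ defines a curved operad with curvature in $F_1$; (2) representations of $\cAs$ coincide with curved associative algebras; (3) the $\cfree$-presentation is equivalent to the $\free$-presentation with $\theta$ attached as curvature.

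For (1), first note that the filtration by number of occurrences of $\curvA$ is well-defined on $\free(\curvA\amalg\as)$, and since the associativity relation $\ass$ is homogeneous (with zero occurrences of $\curvA$), it descends to the quotient. Each summand of $\theta = \curvAs$ contains exactly one $\curvA$, so $\theta\in F_1\cAs(1)_{-2}$. Since $d=0$, the axiom $d(\theta)=0$ is automatic, and the axiom $d^2=[\theta,-]$ reduces to showing $[\theta,-]=0$ on $\cAs$. Because $[\theta,-]$ is a derivation of the composition product, it suffices to check the vanishing on the generators $\curvA$ and $\as$. The first is immediate since $\curvA$ has no inputs, so $[\theta,\curvA] = \theta\circ_1\curvA = \as(\curvA,\curvA)-\as(\curvA,\curvA)=0$. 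For the second, expand
\[ [\theta,\as] \;=\; \theta\circ_1\as \;-\; \as\circ_1\theta \;-\; \as\circ_2\theta \]
into eight iterated partial compositions involving $\as$ and $\curvA$; a single application of the associativity relation to each of the nested-$\as$ terms shows they cancel pairwise. This associativity-based cancellation is the main (and essentially only nontrivial) computational step.

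For (2), a morphism of curved operads $f:\cAs\to\End_A$ is specified by $\mu\coloneqq f(\as)$ and $\bar\theta\coloneqq f(\curvA)$. Compatibility with composition and the relation $\ass$ forces $\mu$ to be associative. Compatibility with predifferentials, using $d_{\cAs}=0$ and $\del_{\End_A}(g)=[d_A,g]$, yields $[d_A,\mu]=0$ (so $d_A$ is a derivation of $\mu$) and $d_A(\bar\theta)=0$. Finally, $f(\theta)=\theta_{\End_A}=d_A^2$ combined with the explicit formula $f(\theta)(a)=\mu(\bar\theta,a)-\mu(a,\bar\theta)$ gives $d_A^2=[\bar\theta,-]$. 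These are precisely the axioms of a curved associative algebra, and the correspondence is clearly invertible.

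For (3), apply Theorem~\ref{thm: free curved operad} to the complete gr-dg $\sS$-module $(M,d_M)=(\curvA\amalg\as,\,0)$: by definition $\cfree(M)=\free(M\amalg\vartheta I)/(\im[\vartheta,-])$ with curvature $\bar\vartheta$. Imposing the additional relations $\ass$ and $\vartheta-\theta$ identifies $\vartheta$ with $\theta$ and produces $\free(\curvA\amalg\as)/(\ass,\,\im[\theta,-])$. By step~(1) every commutator $[\theta,x]$ already lies in the ideal $(\ass)$, so this collapses to $\free(\curvA\amalg\as)/(\ass)$ with curvature $\theta$, which is $\cAs$ as originally defined.
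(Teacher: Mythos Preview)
Your proof is correct and in fact covers more than the paper's own argument, which addresses only the verification that $[\theta,-]=0$ (the representation statement is deferred to the subsequent Lemma, and the $\cfree$-presentation is left as a definitional observation). For that core computation the two approaches differ slightly: you invoke the fact that $[\theta,-]$ is a derivation and check vanishing on the generators $\curvA$ and $\as$, whereas the paper computes $[\theta,\mu_n]$ directly for all $n$, using associativity to rewrite both $\theta\circ_1\mu_n$ and $\sum_j\mu_n\circ_j\theta$ as the same sum $\sum_j\mu_{n+1}(\ldots,\curvA,\ldots)$. Your route is a bit cleaner; the paper's has the advantage of exhibiting the cancellation uniformly in one line.

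One small inaccuracy: the expansion of $[\theta,\as]$ yields six terms, not eight (two from $\theta\circ_1\as$ and two each from $\as\circ_1\theta$ and $\as\circ_2\theta$); the pairwise cancellation via associativity that you describe goes through unchanged.
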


For $n \geq 1$ and $S \subseteq \{1, \ldots, n\}$, we denote by $\mu_n^S$ the element in $\cAs$ obtained as the $(n-1)$-iterated composition of the generator $\as$ composed with the element $\curvA$ placed in positions indexed by the elements in $S$. (The choices involved in this composition do not matter because of the associativity relation.)

\begin{proof}
The only thing to prove is that the bracket with the curvature is always zero (since the predifferential of $\cAs$ is zero). 
Using the Koszul sign rule, we get that $[\theta,\, \curvA] = 0$. Assume that $S = \{s_1 < \cdots < s_k\}$ and $j \in \{1, \ldots, n-k\}$, we denote by $S_j$ the set $\{s_1 < \cdots < s_{l} < l+j < s_{l+1}+1 < \cdots < s_k+1\}$ such that $l$ is minimal with this property and $S_j'$ the set $\{s_1 < \cdots < s_{l-1} < l+j < s_{l}+1 < \cdots < s_k+1\}$ such that $l$ is maximal with this property. 
A quick calculation (using associativity) shows
\[ [\theta,\, \mu_n^S] = \thetamun - \sum_j \muntheta = \sum_j \munpucurvA - \sum_j \munpucurvAp = 0. \]
\end{proof}

\begin{lemma}
A $\cAs$-algebra on a complete gr-dg module $A$ is the same data as a curved complete associative algebra $(A,\, \mu,\, d_A,\, -\theta)$ with curvature $\theta \in F_1 A$.
\end{lemma}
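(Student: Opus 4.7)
The plan is to apply the free–forgetful adjunction of Theorem \ref{thm: free curved operad} to the explicit curved-operad presentation
\[ \cAs \cong \cfree(\curvA \amalg \as) / \left(\ass,\ \vartheta - \theta_{\cAs}\right), \]
so that the data of a curved operad morphism $\phi : \cAs \to \End_A$ is encoded in the image of the two generators and then cut down by the relations. By the adjunction, producing $\phi$ is the same as producing a map of complete gr-dg $\sS$-modules $\curvA \amalg \as \to U(\End_A)$ which kills the ideal of relations and sends $\vartheta$ to the curvature $d_A^2$ of $\End_A$ (Definition \ref{defi: endomorphism operad}).

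Next I would unpack this data. The generator $\curvA$ is $0$-ary of homological degree $-2$ and filtration degree $1$, while $\as$ is binary of degree $0$ and filtration degree $0$; so $\phi$ amounts to the choice of an element $\mu \coloneqq \phi(\as) \in \End_A(2)_0 = \Hom(A^{\otimes 2},A)_0$ together with an element $\theta \coloneqq \phi(\curvA) \in F_1 \End_A(0)_{-2} \cong F_1 A_{-2}$. The filtration constraint $\theta \in F_1 A$ is therefore automatic from the filtration degree of $\curvA$. Compatibility with the (zero) predifferential of $\cAs$ then forces $\partial \mu = 0$ and $\partial \theta = 0$, i.e.\ $d_A$ is a graded derivation of $\mu$ and $\theta$ is $d_A$-closed; and annihilation of the associativity relation forces $\mu$ to be associative.

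Finally, since $\theta_{\cAs} = \mu_2 \circ_1 \curvA - \mu_2 \circ_2 \curvA$ in $\cAs$, the curvature condition $\phi(\vartheta) = d_A^2$ becomes the identity
\[ d_A^2 \;=\; \mu \circ_1 \theta - \mu \circ_2 \theta \]
in $\End_A(1)$, which, evaluated on $a \in A$, is precisely $d_A^2(a) = [\theta, a]$, the defining identity of a curved complete associative algebra. Conversely, any such algebra $(A, \mu, d_A, \theta)$ with $\theta \in F_1 A$ clearly assembles into generator data satisfying all the conditions above, producing a well-defined curved operad morphism $\cAs \to \End_A$, so the correspondence is a bijection. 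The only mild subtlety is matching the operadic bracket from Section \ref{section: operads} with the usual algebra commutator; since $|\theta| = -2$ the Koszul sign $(-1)^{|\theta||a|}$ is $+1$, so the two brackets agree on the nose and no sign mismatch occurs.
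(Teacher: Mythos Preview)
Your proof is correct and follows the same approach as the paper's (very terse) argument: both identify a curved operad map $\cAs \to \End_A$ with the images of the two generators, subject to the associativity relation and the curvature-matching condition. You are more explicit than the paper in spelling out the filtration constraint $\theta \in F_1 A$, the conditions $\partial\mu = 0$ and $\partial\theta = 0$ (i.e.\ $d_A$ is a derivation and $\theta$ is closed), and the sign verification; these are all correct and worth recording, though the final remark about ``the operadic bracket from Section~\ref{section: operads}'' is slightly off-target, since that bracket concerns arity-$1$ elements of an operad while the relevant computation here is just the Koszul sign in evaluating $\mu \circ_2 \theta$ on an input.
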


\begin{proof}
A map of curved operad $\cAs \to \End_A$ is characterized by the image of the generators $\curvA$ and $\as$ which give respectively two maps $\theta : \ringK \to A$ and $\mu : A^{\otimes 2} \to A$. The relation defining $\cAs$ ensures that $\mu$ is associative. The fact that the curvature is sent to the curvature says that ${d_A}^2 = [-,\, \theta] = [-\theta,\, -]$.
\end{proof}

\begin{remark}
\begin{itemize}
\item
We have changed the convention ${d_A}^2 = [\theta,\, -]$ into ${d_A}^2 + [\theta,\, -] = 0$ in order to simplify the formula appearing in the definition of homotopy curved associative algebras.
\item The inclusion $\As \to \cAs$ is not a map of curved operads (since the curvature of $\As$ is zero) so it cannot be a quasi-isomorphism of curved complete operads.
\item The projection $\cAs \to \As$, sending the $0$-ary element to $0$, is a map of curved operads but the map $\Gr \cAs \to \Gr \As$ is not a quasi-isomorphism. 
Therefore, it is not a quasi-isomorphism of curved complete operads.
\end{itemize}
\end{remark}

\subsection{Homotopy curved associative algebras}

By forgetting the curvature of $\cAs$, we obtain a complete operad which is the coproduct $\As \ast \free(\curvA)$ of the complete operads $\free(\curvA)$ and $\As$. Let us explain how the coproduct $\ast$ is constructed.

\begin{defi}
Let $\Oo$ and $\Pp$ be two augmented (complete) operads. Then the \emph{coproduct} in the category of (complete) operads is defined to be
\[ \free\left(\overline{\Oo} \amalg \overline{\Pp}\right) / \left(R_\Oo \amalg R_\Pp \right), \]
where $R_\Oo$ and $R_\Pp$ are the relations in $\Oo$ and $\Pp$ respectively.
\end{defi}

\begin{prop}
If $\Oo$ and $\Pp$ are both quadratic augmented (complete) operads, then the coproduct $\Oo \ast \Pp$ is a quadratic augmented operad.
\end{prop}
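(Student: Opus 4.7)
My plan is to prove the statement by a direct comparison of universal properties. Given quadratic presentations $\Oo \cong \free(E_\Oo)/(R_\Oo)$ and $\Pp \cong \free(E_\Pp)/(R_\Pp)$ with $R_\Oo \subset \free(E_\Oo)^{(2)}$ and $R_\Pp \subset \free(E_\Pp)^{(2)}$, I would introduce the candidate quadratic operad
\[
\mathcal{Q} \coloneqq \free(E_\Oo \amalg E_\Pp)/\left(R_\Oo \amalg R_\Pp\right)
\]
(in the complete setting, replace $\free$ by the complete free operad, cf.\ Theorem~\ref{thm: free operad} and Proposition~\ref{prop:freeOperad}). By construction $\mathcal{Q}$ is quadratic and augmented, so the work is to identify $\mathcal{Q}$ with $\Oo\ast\Pp$.

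The key step is to check that $\mathcal{Q}$ satisfies the universal property of the coproduct in augmented (complete) operads. For any augmented operad $\mathcal{X}$, one has by Theorem~\ref{thm: free operad} (or its complete analog) a natural bijection between operad maps $\mathcal{Q}\to\mathcal{X}$ and pairs of $\sS$-module maps $E_\Oo \to \mathcal{X}$, $E_\Pp\to\mathcal{X}$ whose free extensions vanish on $R_\Oo$ and $R_\Pp$ respectively. By the universal property of the quadratic presentations of $\Oo$ and $\Pp$, such pairs correspond to pairs of operad maps $\Oo\to\mathcal{X}$ and $\Pp\to\mathcal{X}$, which in turn correspond to a single operad map $\Oo\ast\Pp\to\mathcal{X}$. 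Yoneda then yields an isomorphism $\mathcal{Q}\cong\Oo\ast\Pp$, so $\Oo\ast\Pp$ admits a quadratic presentation with generators $E_\Oo\amalg E_\Pp$ and relations $R_\Oo\amalg R_\Pp$.

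To make the comparison concrete (and to check it agrees with the \emph{definition} given in the paper, where generators are $\overline{\Oo}\amalg\overline{\Pp}$ rather than $E_\Oo\amalg E_\Pp$), I would write explicit operad maps in both directions. In one direction, the inclusions $E_\Oo \hookrightarrow \overline{\Oo}\hookrightarrow \overline{\Oo}\amalg\overline{\Pp}\hookrightarrow \free(\overline{\Oo}\amalg\overline{\Pp})$ send $R_\Oo$ to zero in $\Oo\ast\Pp$, and analogously for $\Pp$, giving a map $\mathcal{Q}\to \Oo\ast\Pp$. In the other direction, the compositions $\overline{\Oo}\twoheadrightarrow \Oo\to \mathcal{Q}$ and $\overline{\Pp}\twoheadrightarrow \Pp\to\mathcal{Q}$ (where the second arrows come from the evident maps $E_\Oo\to\mathcal{Q}$, $E_\Pp\to\mathcal{Q}$ extended by quadraticity) induce an operad map $\free(\overline{\Oo}\amalg\overline{\Pp})\to\mathcal{Q}$; one checks it kills $R_\Oo\amalg R_\Pp$ (in the sense of the paper's definition), yielding the inverse.

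The main obstacle is essentially bookkeeping: one must be careful that the ``relations'' $R_\Oo,R_\Pp$ inside $\free(\overline{\Oo}\amalg\overline{\Pp})$, as used in the paper's coproduct definition, correspond precisely to the quadratic relations once the generators are restricted to $E_\Oo\subset\overline{\Oo}$, $E_\Pp\subset\overline{\Pp}$. This reduces to the observation that enlarging a quadratic generating $\sS$-module $E$ to the whole augmentation ideal $\overline{\Oo}$ (and adding all composition relations of $\Oo$ to the relations) produces the same operad $\Oo$, because any element of $\overline{\Oo}$ can be rewritten as an iterated composition of elements of $E$ modulo $R_\Oo$. In the complete setting, the analogous rewriting respects the filtration by construction, so no additional completion issues arise.
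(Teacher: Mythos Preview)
Your proposal is correct and follows the same route as the paper's proof: both arguments identify the coproduct with $\free(E_\Oo\amalg E_\Pp)/(R_\Oo\amalg R_\Pp)$ and observe that this presentation is quadratic when the inputs are. The paper simply asserts, without spelling out the universal-property verification, that for any two presented operads $\Oo=\free(E_1)/(R_1)$ and $\Pp=\free(E_2)/(R_2)$ the coproduct is naturally presented by $\free(E_1\amalg E_2)/(R_1\amalg R_2)$; your argument via Yoneda makes this assertion explicit. Your second step---reconciling this with the paper's definition of $\Oo\ast\Pp$ using generators $\overline{\Oo}\amalg\overline{\Pp}$---is not needed, since the paper's own proof already invokes the general presentation claim rather than the specific definition.
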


\begin{proof}
For any two presented operads $\Oo = \free(E_1)/(R_1)$ and $\Pp = \free(E_2)/(R_2)$, the coproduct operad $\Oo \ast \Pp$ is naturally presented by $\free(E_1 \amalg E_2) / (R_1 \amalg R_2)$. If $(E_1,\, R_1)$ and $(E_2,\, R_2)$ are both quadratic presentations, then so is $(E_1 \amalg E_2,\, R_1 \amalg R_2)$.
\end{proof}

\begin{prop}
\label{prop: Koszul dual of a coproduct}
If $\Oo$ and $\Pp$ are both quadratic augmented (complete) operads, then the Koszul dual cooperad of the coproduct $\Oo \ast \Pp$ is the coproduct cooperad $\Ooa \oplus \Ppa$ (whose underlying $\sS$-module is $I \amalg \overline{\Ooa} \amalg \overline{\Ppa}$).
\end{prop}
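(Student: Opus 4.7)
The plan is to exploit the explicit description of the Koszul dual infinitesimal cooperad as a sub-cooperad of the cofree cooperad. For a quadratic augmented operad $\Oo = \free(E_\Oo)/(R_\Oo)$ (viewed as a curved operad with $\theta = 0$), the Koszul dual $\Ooa$ is the sub-cooperad $\coideal{\Cc(S_\Oo)}$ of $\coideal{\cofree}(sE_\Oo)$ associated via Proposition~\ref{prop: cooperad generated by, revisited} to the epimorphism with $S_\Oo = \cofree(sE_\Oo)^{(2)}/s^2 R_\Oo$. Concretely, $c \in \coideal{\cofree}(sE_\Oo)$ lies in $\Ooa$ if and only if under the composition $(\id \circ_{(1)} \xi_\Oo \circ \id)\cdot (\id\otimes \Delta)\cdot \Delta_{(1)}$ it maps to zero, which amounts to saying that every ``weight-$2$ local piece'' appearing in its cooperadic decomposition belongs to $s^2 R_\Oo$.

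Set $E \coloneqq E_\Oo \amalg E_\Pp$ and $R \coloneqq R_\Oo \amalg R_\Pp$. The cofreeness of $\cofree(sE)$ gives a decomposition
\[
\cofree(sE)^{(2)} \cong \cofree(sE_\Oo)^{(2)} \amalg M \amalg \cofree(sE_\Pp)^{(2)},
\]
where $M$ collects the ``mixed'' weight-$2$ trees whose two vertices are decorated, one by an element of $sE_\Oo$ and the other by an element of $sE_\Pp$. Since $s^2R_\Oo \amalg s^2R_\Pp$ is contained in the two pure summands, the projection $\xi : \coideal{\cofree}(sE)\to S$ is injective on $M$.

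The key step is a connectedness argument: I would show that any ``non-pure'' decorated tree fails the generating condition. If $c \in \coideal{\cofree}(sE)$ has decorations from both $E_\Oo$ and $E_\Pp$, then since the underlying tree is connected there exists an edge joining a vertex labeled by $sE_\Oo$ to a vertex labeled by $sE_\Pp$. Extracting this edge through the iterated decomposition $(\id\otimes\Delta)\cdot\Delta_{(1)}$ yields a summand in $\coideal{\cofree}(sE)\circ_{(1)}(M\circ \coideal{\cofree}(sE))$ with nonzero projection under $\id\circ_{(1)}\xi\circ \id$, since $\xi$ is injective on $M$. Hence $c$ is not in $(\Oo\ast\Pp)^{\antishriek}$. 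Combined with the coaugmentation factor, this forces
\[
(\Oo\ast\Pp)^{\antishriek} \subset I[-1] \amalg \coideal{\cofree}(sE_\Oo) \amalg \coideal{\cofree}(sE_\Pp),
\]
and conversely, on each pure summand the subcooperad condition for $(\Oo\ast\Pp)^{\antishriek}$ reduces exactly to the one defining $\Ooa$ or $\Ppa$ (the relations and the allowed weight-$2$ local pieces are the same). This establishes the claimed underlying $\sS$-module identification $I[-1] \amalg \overline{\Ooa} \amalg \overline{\Ppa}$.

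Finally, I would verify that this identification is compatible with the infinitesimal cooperad structure: because $\cofree(sE_\Oo)$ and $\cofree(sE_\Pp)$ are sub-cooperads of $\cofree(sE)$, the decomposition of a pure-$\Oo$ (respectively pure-$\Pp$) tree produces pure-$\Oo$ (respectively pure-$\Pp$) factors, and the counit and gr-coaugmentation restrict to the canonical ones; this is precisely the coproduct infinitesimal cooperad structure on $\Ooa \oplus \Ppa$. The main obstacle is the combinatorial connectedness argument in the middle paragraph: one must check that the iterated decomposition map of $\coideal{\cofree}(sE)$ really does detect every adjacent pair of decorated vertices, so that a single mixed edge suffices to witness failure of the subcooperad condition. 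Once this is made precise (using the cofree description and Lemma~\ref{lemma: infinitesimal coideal is infinitesimal cooperad}), the rest is straightforward bookkeeping.
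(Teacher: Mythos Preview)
Your proposal is correct and follows essentially the same strategy as the paper: both hinge on the observation that the mixed weight-$2$ summand $M = sE_\Oo \circ_{(1)} sE_\Pp \amalg sE_\Pp \circ_{(1)} sE_\Oo$ survives unchanged in $S$, forcing $(\Oo\ast\Pp)^{\antishriek}$ to lie inside $\coideal{\cofree}(sE_\Oo)\oplus\coideal{\cofree}(sE_\Pp)$, after which the defining condition splits into the two separate ones for $\Ooa$ and $\Ppa$.

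The only difference is in packaging. You work directly with the kernel description and argue tree-by-tree via the connectedness observation; the paper instead uses the universal property of the Koszul dual (as the terminal sub-cooperad whose composite to $S$ vanishes) twice. First it produces the map $\Ooa\oplus\Ppa \to (\Oo\ast\Pp)^{\antishriek}$ by checking that the source satisfies the defining vanishing condition; then, for the reverse, it notes that the projection of $(\Oo\ast\Pp)^{\antishriek}$ onto $M$ is zero, deduces the factorisation through $\coideal{\cofree}(sE_\Oo)\oplus\coideal{\cofree}(sE_\Pp)$, and applies the universal properties of $\Ooa$ and $\Ppa$ to the two coordinate projections $pr_1, pr_2$. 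The paper's route has the minor advantage of bypassing your acknowledged obstacle---the need to check that no linear combination of trees can cancel the mixed-edge contribution---since it argues with sub-cooperads and projections rather than individual elements. Your explicit argument is fine once you observe that the cofree decomposition respects the bicolouring of vertices, so that mixed and pure contributions land in distinct summands of the target and cannot cancel.
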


\begin{proof}
The cooperad $\Ooa$ is the sub-cooperad of ${\cofree}(sE_1)$ which is universal among the sub-cooperads $\Cc$ of ${\cofree}(sE_1)$ such that the composite
\[ \Cc \rightarrowtail {\cofree}(sE_1) \twoheadrightarrow {\cofree}(sE_1)^{(2)} /s^2R_1 \]
is zero (see \cite[7.1.4]{LodayVallette}). The cooperad $\Ppa$ is defined similarly. Its follows that the map $\Ooa \oplus \Ppa \to {\cofree}(sE_1 \amalg sE_2)$ satisfies that the composite
\begin{multline*}
\Ooa \oplus \Ppa \rightarrowtail {\cofree}(sE_1 \amalg sE_2) \twoheadrightarrow {\cofree}(sE_1 \amalg sE_2)^{(2)} / (s^2R_1 \amalg s^2R_2) \cong\\
{\cofree}(sE_1)^{(2)} /s^2R_1 \amalg sE_1 \circ_{(1)} sE_2 \amalg sE_2 \circ_{(1)} sE_1 \amalg {\cofree}(sE_2)^{(2)} /s^2R_2
\end{multline*}
is zero. By the universal property of $(\Oo \ast \Pp)^{\antishriek}$, there exists a unique morphism of cooperads $\Ooa \oplus \Ppa \to (\Oo \ast \Pp)^{\antishriek}$ which makes the following diagram commutative:
\[
\begin{tikzcd}
\Ooa \oplus \Ppa
\dar\ar[rr, >->] && \cofree(sE_1 \amalg sE_2).
\\
(\Oo \ast \Pp)^{\antishriek} \ar[urr, >->] &&
\end{tikzcd} 
\]
Moreover, using the fact that $(\Oo \ast \Pp)^{\antishriek}$ is a sub-cooperad of the cofree cooperad ${\cofree}(sE_1 \amalg sE_2)$ and that its projection onto $sE_1 \circ_{(1)} sE_2 \amalg sE_2 \circ_{(1)} sE_1$ is zero, we get that the map $(\Oo \ast \Pp)^{\antishriek} \rightarrowtail {\cofree}(sE_1 \amalg sE_2)$ factors through ${\cofree}(sE_1) \oplus {\cofree}(sE2)$. The composite $(\Oo \ast \Pp)^{\antishriek} \rightarrowtail {\cofree}(sE_1) \oplus {\cofree}(sE2) \twoheadrightarrow {\cofree}(sE_1)$ is a cooperad morphism, hence its image $pr_1((\Oo \ast \Pp)^{\antishriek})$ is a sub-cooperad of ${\cofree}(sE_1)$. Using the map $\Ooa \oplus \Ppa \to (\Oo \ast \Pp)^{\antishriek}$, we obtain that it contains $\Ooa$. Moreover, the definition of $(\Oo \ast \Pp)^{\antishriek}$ ensures that the composite
\[ pr_1((\Oo \ast \Pp)^{\antishriek}) \rightarrowtail {\cofree}(sE_1) \twoheadrightarrow \cofree(sE_1)^{(2)} /s^2R_1 \]
is zero. The cooperad $pr_1((\Oo \ast \Pp)^{\antishriek})$ is therefore a sub-cooperad of $\Ooa$ (by the universal property satisfied by $\Ooa$). Eventually, $pr_1((\Oo \ast \Pp)^{\antishriek}) = \Ooa$. Similarly, $pr_2((\Oo \ast \Pp)^{\antishriek}) = \Ppa$. Using the fact that $\amalg$ is a biproduct and that $\Ooa$ and $\Ppa$ are counital, it follows that there exists a unique morphism $(\Oo \ast \Pp)^{\antishriek} \to \Ooa \oplus \Ppa$ which commutes with the projections. This map is injective since $(\Oo \ast \Pp)^{\antishriek} \rightarrowtail {\cofree}(sE_1) \oplus {\cofree}(sE2)$ is. The composite $\Ooa \oplus \Ppa \rightarrowtail (\Oo \ast \Pp)^{\antishriek} \rightarrowtail \Ooa \oplus \Ppa$ is the identity and this proves the proposition.
\end{proof}

We are now able to compute the Koszul dual cooperad of the curved operad $\cAs$. We denote the element $s\curvA$ by $\mu_0^c$ and the set of generators of $\Asa$ presented in \cite[9.1.5]{LodayVallette} by $\{ \mu_n^c\}_{n \geq 1}$.

\begin{thm}
The homogeneous quadratic curved complete operad $\cAs$ is Koszul. 
The Koszul dual cooperad $\cAs^{\antishriek}$ is equal to
\[ \cAs^{\antishriek} = \left(\left\{ \hat \mu_n^c \coloneqq \sum_{\substack{k \geq 0\\ S \subseteq [n+k], |S| = k}} (-1)^{s_1 + \dots + s_k-k(n+k)}\mu_{n+k}^{c, S} \right\}_{n \geq 0},\, \Delta,\, 0\right), \]
where $S = \{ s_j \}_{j =1}^k$, $\mu_{n+k}^{c, S}$ is the element $\mu_{n+k}^c$ on which we have grafted the element $s\curvA$ in the positions given by the set $S$, and $\Delta$ is the cofree decomposition. The terms $\hat \mu_n^c$ have degree $n-1$. 
Explicitly, for any $n \geq 0$, we have
\begin{align}
\label{eq: decomposition map cAs}
\Delta (\hat \mu_n^c) = \sum_{\substack{k \geq 0\\ i_1 + \cdots + i_k = n}} (-1)^{\sum (i_j-1)(k-j)} (\hat\mu_k^c ;\, \hat\mu_{i_1}^c ,\, \dots ,\, \hat\mu_{i_k}^c).
\end{align}
It is isomorphic to
\[ \cAs^{\antishriek} \cong (s\curvA \amalg {\As^{\antishriek}},\, \Delta^{\theta},\, 0), \]
where for any $n \geq 0$,
\[ \Delta^{\theta} (\mu_n^c) \coloneqq \sum_{\substack{k \geq 0\\ i_1 + \cdots + i_k = n}} (-1)^{\sum (i_j-1)(k-j)} (\mu_k^c ;\, \mu_{i_1}^c ,\, \cdots ,\, \mu_{i_k}^c). \]
Moreover, the natural map
\[ f_\kappa : \cAi \coloneqq \hOm \cAsa \to \cAs \]
is an $\sS$-cofibrant resolution (that is cofibrant in the model category structure on complete gr-dg $\sS$-modules given in Section \ref{section: model structure on complete curved operads}).
\end{thm}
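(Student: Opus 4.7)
The plan is as follows. First, I would establish that $\cAs$ is Koszul by analyzing its associated graded. Since the filtration on $\cAs$ is by the number of $\curvA$ generators, and the only relation among generators (associativity) does not involve $\curvA$, we have an isomorphism of graded augmented operads $\Gr \cAs \cong \As \ast \free(\curvA)$. The operad $\As$ is classically Koszul, and $\free(\curvA)$ has no quadratic relations hence is trivially Koszul. A coproduct of Koszul operads is Koszul (which one can verify directly using Proposition \ref{prop: Koszul dual of a coproduct} to compute the Koszul dual and checking that the corresponding Koszul complex is acyclic since it factors through the individual Koszul complexes). The natural identification $\cAs \cong \widehat{\Gr \cAs}$ (as curved complete operads, with the curvature reinstalled) is clear from the construction of $\cAs$ via completion.

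Next, I would apply Theorem \ref{thm: qi for Koszul operad and PBW} and Proposition \ref{prop: Koszul dual of a coproduct} to obtain an $\sS$-module isomorphism
\[
\cAsa \cong \widehat{(\Gr \cAs)^\antishriek} \cong \widehat{\Asa \oplus \free(\curvA)^\antishriek}.
\]
A direct computation shows $\free(\curvA)^\antishriek = I[-1] \amalg s\curvA$ (the cogenerators are just $I$ and $s\curvA$, and the decomposition terminates). So the underlying $\sS$-module has one generator $\mu_n^c$ per arity $n \geq 0$, with $\mu_0^c = s\curvA$ and $\mu_n^c$ (for $n \geq 1$) the usual generators of $\Asa$. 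The explicit infinite-sum form of $\hat\mu_n^c$ comes from unpacking Proposition \ref{prop: cooperad generated by, revisited}: an element of $\cAsa \subset \coideal{\cofree}(s\curvA \amalg s\as)$ must lie in the kernel of the composite
\[
\coideal{\cofree}(sE) \to \coideal{\cofree}(sE) \circ_{(1)}(S \circ \coideal{\cofree}(sE)),
\]
where $S = \cofree(sE)/(s^2R \amalg (1 - s^2\tilde\theta(1)))$. The relation $1 \equiv s^2\curvAs$ produces, at each leg, an exchange between the unit and the curvature, which iterated yields exactly the sum $\hat\mu_n^c = \sum_{k,S}(-1)^\epsilon \mu_{n+k}^S$. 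One verifies the signs by careful bookkeeping of the Koszul sign rule in $s^2\curvAs$ and the ordering conventions for grafting $s\curvA$ at position $s_j$.

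The decomposition formula \eqref{eq: decomposition map cAs} is then a combinatorial verification: the cofree decomposition on $\coideal{\cofree}(s\curvA\amalg s\as)$ acts term-by-term on each $\mu_{n+k}^S$, and one reorganizes the sum so that the various placements of $s\curvA$ distribute into the factors $\hat\mu_k^c, \hat\mu_{i_1}^c, \ldots, \hat\mu_{i_k}^c$, matching the classical decomposition of $\mu_n^c$ in $\Asa$. The isomorphism $\cAsa \cong (s\curvA \amalg \coideal{\Asa}, \Delta^\theta, 0)$ is then a relabeling: setting $\mu_n^c \mapsto \hat\mu_n^c$ gives a bijection on underlying $\sS$-modules, and the rearranged decomposition reads precisely as $\Delta^\theta$. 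Finally, the cofibrant resolution statement is immediate from the general result proved at the end of Section \ref{section: koszul duality for curved operads}: for any Koszul curved complete operad, $\hOm \Ooa \to \Oo$ is an $\sS$-cofibrant resolution.

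The main obstacle is the second paragraph: writing down and verifying the closed form for $\hat\mu_n^c$ and checking it lies in $\cAsa$. The infinite sum is forced by the relation $1 \equiv s^2\curvAs$, but actually showing that this specific signed sum is in the kernel described by Proposition \ref{prop: cooperad generated by, revisited}, and that these sums exhaust $\cAsa$ (modulo the $\sS$-module identification from Theorem \ref{thm: qi for Koszul operad and PBW}), requires careful sign tracking and a telescoping argument. The other two paragraphs are either direct applications of earlier theorems or combinatorial verifications.
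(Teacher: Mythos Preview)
Your proposal is correct and follows essentially the same approach as the paper. The paper reverses your order of presentation---it first carries out the lengthy sign computation verifying the decomposition formula \eqref{eq: decomposition map cAs} (thereby showing the $\hat\mu_n^c$ span a sub-cooperad mapping to zero in $S$, hence injecting into $\cAsa$), and only afterward establishes Koszulity of $\Gr\cAs \cong \As \ast \free(\curvA)$ and invokes the Poincar\'e--Birkhoff--Witt isomorphism of Theorem~\ref{thm: qi for Koszul operad and PBW} as a dimension count to conclude $\{\hat\mu_n^c\} = \cAsa$---but the ingredients and the logical dependencies are identical to yours.
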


\begin{proof}
To prove that the complete operad $\cAs$ is homogeneous quadratic in the sens of Definition \ref{defi: homogeneous quadratic curved operad}, the only non trivial condition is the last one, that is, noting $E = \curvA \amalg \as$ and $R = \ass$, the counit $\cofree \left(sE\right) \to I$ doesn't factor through the coideal quotient $(S)$, where $S \coloneqq \left(I \amalg \cofree(sE)^{(2)}\right) / \left(s^2 R \amalg (1 + s^2\tilde\theta(1))\right)$. 
It will follow from the following computations. 

We first prove Formula (\ref{eq: decomposition map cAs}). We recall the decomposition map on $\Asa$ from \cite[Lemma 9.1.2]{LodayVallette}
\[ \Delta(\mu_n^c) = \sum_{i_1 + \cdots + i_k = n} (-1)^{\sum (i_j-1)(k-j)} (\mu_k^c ;\, \mu_{i_1}^c ,\, \cdots ,\, \mu_{i_k}^c). \]
(Replacing $i_j+1$ as it is written in \emph{op. cit.} by $i_j-1$ does not change the sign but seems more natural with the computations to come.) 
Then, being careful with the Koszul sign rule, since $\mu_{n}^{c, S}$ is the element $\mu_n^c$ on which we have grafted the element $s\curvA$ in the position given by the set $S$, we get
\begin{multline}
\label{eq: coproduct of muS}
\Delta \left(\mu_{n}^{c, S}\right) =\\
\sum_{\substack{l_0 + i_1 + \dots + i_k = n\\ S = S_0 \sqcup S_1 \sqcup \dots \sqcup S_k}} (-1)^{\sum (i_j-1)(k+l_0-(j+n_j)) + L_0 + K} \left(\mu_{k+l_0}^{c, \tilde{S}_0} ;\, \mu_{i_1}^{c, \tilde{S}_1} ,\, \cdots ,\, \mu_{i_k}^{c, \tilde{S}_k} \right),
\end{multline}
where $\tilde{S}_0 = \tilde{S}_0^0 \sqcup \dots \sqcup \tilde{S}_0^m$ and for all $j$ between 1 and $m$, $\max \tilde{S}_0^{j-1} < \min \tilde{S}_0^j$ and
\begin{align*}
S & = \tilde{S}_0^0 \sqcup (\tilde{S}_1+|\tilde{S}_0^0|) \sqcup (\tilde{S}_0^1+i_1-1) \sqcup \dots \sqcup\\
& \qquad \left(\tilde{S}_m +|\tilde{S}_0^0| + \sum_{j =1}^{k-1}(|\tilde{S}_0^j| + i_j)\right) \sqcup
\left(\tilde{S}_0^k + \sum_{j=1}^k (i_j -1)\right)\\
& = S_0 \sqcup \dots \sqcup S_k
\end{align*}
(where $S_j$ corresponds to the elements of $\tilde{S}_j$ reindexed)
and
\[\left\{\begin{array}{ccl}
|S_j| & = & l_j,\\
n_j & = & |\{ s \in S_0;\, s < \max \{S_j\}\}|,\\
L_0 & = & \sum_{s \in S_0} \sum_{\{t;\, \max \{S_t\} < s\}} l_t, \text{ and}\\
K & = & \sum l_t \sum_{j > t} (i_j -1).
\end{array}\right.\]
Some terms are missing in the sum appearing to compute the sign in Formula (\ref{eq: coproduct of muS}): they correspond to the entries labelled by $s \in S_0$ and are equal to $(1-1)(k+l_0-*)$, where $*$ is the position of $s$ in $\mu_{k+l_0}^c$; therefore they have no effect. The sum $L_0$ is the Koszul sign due to the elements $s\curvA$ corresponding to the set $S_0$ passing through the elements $s\curvA$ corresponding to some set $S_j$. The sum $K$ is the Koszul sign due to the elements $s\curvA$ of the $S_j$ passing through some $\mu_{i_t}^c$. Moreover, we compute the sign in front of $\left(\mu_{k+l_0}^{c, S_0} ;\, \mu_{i_1}^{c, S_1} ,\, \cdots ,\, \mu_{i_k}^{c, S_k} \right)$ to the right of Formula (\ref{eq: decomposition map cAs}). There is a shift sending the entries of $\mu_{i_j}^c$ appearing in $\Delta(\mu_n^c)$ to $\{ 1,\, \dots,\, i_{j}\}$. We denote by $\tilde{S}_j = \{ \tilde{s}_j\}$ the set obtained by shifting this way the elements in $S_j$. We obtain:
\[ (-1)^{\sum (i_j-l_j-1)(k-j)} (-1)^{\sum_{\tilde{S}_0} \tilde{s}_0 - l_0(k+l_0) + \dots + \sum_{\tilde{S}_k} \tilde{s}_k - l_k i_k}. \]
We should therefore compare this sign with
\[ (-1)^{\sum_S s - |S| n} (-1)^{\sum (i_j-1)(k+l_0-j-n_j) + L_0 + \sum l_t \sum_{j > t} (i_j -1)}. \]
Since $|S| = \sum l_j$ and $n = l_0 + \sum i_j$, it is enough to compare
\[ - \sum l_j (k-j) + \sum_j \sum_{\tilde{S}_j} \tilde{s}_j - l_0(k+l_0) - \sum l_j i_j \]
with
\[ \sum_S s - \left(\sum l_j \right) \left(l_0 + \sum i_j \right) + \sum (i_j - 1)(l_0 - n_j) + L_0 + \sum l_t \sum_{j > t} (i_j -1). \]
We have
\begin{multline*}
-\sum l_t i_j + \sum l_t \sum_{j > t} (i_j -1) =\\
= -l_0 \sum i_j - \sum_{t \geq 1} l_t \sum_{j \neq t} i_j - \sum l_j i_j + l_0 \sum_{j > 0} (i_j -1) + \sum_{t \geq 1} l_t \sum_{j > t} i_j - \sum_{t \geq 1} l_t (k-t)\\
= - \sum_{t \geq 1} l_t \sum_{j < t} i_j - \sum l_j i_j - l_0 k - \sum_{t \geq 1} l_t (k-t).
\end{multline*}
It follows that we are interested in the difference
\[ D \coloneqq \sum_S s - \left(\sum_{j > 0} l_j \right) l_0 - \sum_{t \geq 1} l_t \sum_{j < t} i_j + \sum (i_j-1)(l_0-n_j) + L_0 - \sum_j \sum_{\tilde{S}_j} \tilde{s}_j. \]
A combinatorial check shows that
\begin{multline*}
\sum_S s - \sum_j \sum_{\tilde{S}_j} \tilde{s}_j = \\
\sum_{s \in S_0} \sum_{\{t;\, \max \{S_t\} < s\}} (i_t - 1) + \sum_{t \geq 1} l_t \left( \sum_{j < t} i_j + |\{ s \in S_0 ;\, s < \max \{S_t \}\}|\right),
\end{multline*}
where $\sum_{t \geq 1} l_t |\{ s \in S_0 ;\, s < \max \{S_t \}\}| = \sum_{s \in S_0} \sum_{\{t;\, s < \max \{S_t\}\}} l_t$. Moreover,
\[ \sum_{s \in S_0} \sum_{\{t;\, s < \max \{S_t\}\}} l_t + L_0 = \sum_{s \in S_0} \sum_{t > 0} l_t = l_0 \sum_{t > 0} l_t. \]
Then
\begin{align*}
D & = \sum_{s \in S_0} \sum_{\{t;\, \max \{S_t\} < s\}} (i_t - 1) + \sum (i_j-1)(l_0-n_j)\\
& = \sum_{s \in S_0} \sum_{\{t;\, \max \{S_t\} < s\}} (i_t - 1) + \sum (i_j-1)|\{ s \in S_0;\, s > \max \{S_j\}\}|\\
& = 2 \sum_{s \in S_0} \sum_{\{t;\, \max \{S_t\} < s\}} (i_t - 1).
\end{align*}
We conclude that Formula (\ref{eq: decomposition map cAs}) is true.

As an intermediate consequence, we get that $\hat \mu_1^c \in \cofree (sE)$ is sent to 0 in $(S)$ so the counit of $\cofree (sE)$ cannot factor through the coideal quotient $(S)$ (using that $s\curvA$ is in $F_1 sE$).

It follows from the above computations that the $\hat \mu_n^c$, $n \geq 0$, span a sub-cooperad $\Cc$ of ${\cofree}(s\curvA \amalg s\as)$. Moreover, an explicitation of the first terms of $\hat \mu_0^c$, $\hat\mu_1^c$, $\hat \mu_2^c$ and $\hat \mu_3^c$ shows that the composite
\[ \Cc \rightarrowtail {\cofree}(s\curvA \amalg s\as) \twoheadrightarrow \left(I\amalg \cofree(s\curvA \amalg s\as)^{(2)} \right) / \left( s^2 \left(\ass \right) \amalg (1 + s^2 \tilde \theta(1))\right) \]
is zero. By the universal property of the cooperad $\cAsa$, it follows that there exists a unique (injective) morphism $\Cc \rightarrowtail \cAsa$ such that the following diagram commutes:
\[
\begin{tikzcd}
\Cc
\dar\ar[r, >->] & {\cofree}(sE),
\\
\cAsa \ar[ur, >->] &
\end{tikzcd} 
\]
where $E = \curvA \amalg \as$. 

The operad $\cAs$ is the completion of the operad $\Gr \cAs$. The operad $\Gr \cAs$ admits a presentation in filtered modules similar to the one defining $\cAs$ and it is in fact the coproduct $\As \ast \free(\curvA)$. By Proposition \ref{prop: Koszul dual of a coproduct}, we get
\[ (\Gr \cAs)^\antishriek \cong \cofree(s\curvA) \oplus \Asa \cong s\curvA \amalg \Asa. \]
A proof similar to the proof of Proposition 6.1.7 in \cite{HirshMilles} shows that the quadratic operad $\Gr \cAs$ is a Koszul operad, so is $\cAs$. 
By the Poincaré--Birkhoff--Witt theorem of Theorem \ref{thm: qi for Koszul operad and PBW}, we have $\cAsa \cong \widehat{\left( \Gr \cAs\right)^\antishriek}$. It suffices now to check the dimension in each arity to get $\cAsa = \{ \hat \mu_n^c \}_{n \geq 0}$. Going through the Poincaré--Birkhoff--Witt isomorphism, we obtain the decomposition map $\Delta^\theta$. 

In order to have an $\sS$-cofibrant resolution, it is enough to remark that the curvature is non-zero, that for the weight filtration given by its generators, $\Gr \cAs$ is connected bounded below weight graded and that the filtration on $\cAs$ is induced by the gradation by the number of $\curvA$. We can therefore apply Theorem \ref{thm: Koszul resolution}.
\end{proof}

We can also compute the Koszul dual constant-quadratic operad (see Definition \ref{defi: Koszul dual operad}) associated with $\cAs$.

\begin{prop}
The Koszul dual constant-quadratic operad associated with $\cAs$ is given by
\[ \cAs^! \cong \free\left( \vcenter{\xymatrix@M=0pt@R=4pt@C=4pt{
           \circ \\
           \ar@{-}[u]}} 
           \amalg \as\right)/\left(\ass,\ \vcenter{ \xymatrix@M=0pt@R=4pt@C=4pt{
           & \circ & &\\
           & \ar@{-}[u] \ar@{-}[dr] & & \ar@{-}[dl] \\
           & & *{} \ar@{-}[d] &\\
           & & }} - |, \, \vcenter{
         \xymatrix@M=0pt@R=4pt@C=4pt{
           & & & \circ \\
           & \ar@{-}[dr] & & \ar@{-}[u] \ar@{-}[dl]\\
           & & *{} \ar@{-}[d] & \\
           & & }} - |\right), \]
where $\vcenter{\xymatrix@M=0pt@R=4pt@C=4pt{ \circ \\ \ar@{-}[u]}}$ is concentrated in homological degree $0$. 
Algebras over $\cAs^!$ are unital associative algebras.
\end{prop}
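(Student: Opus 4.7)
The plan is to unwind Definition~\ref{defi: Koszul dual operad} for the homogeneous quadratic presentation of $\cAs$, identify the generators of $\cAs^!$ via the operadic desuspension $s^{-1}\mathcal{S}^{-1}$, and then compute the two pieces of the orthogonal relation space coming from $R^{\perp}$ and $(\vartheta-\theta)^{\perp}$ separately.

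First, I would fix the splitting of the generators. Write $E=E_{(0)}^a\amalg E_{(0)}^b\amalg E_{(1)}$ with $E_{(0)}^a=\ringK\cdot\as$ (arity $2$, degree $0$, filtration degree $0$), $E_{(0)}^b=0$, and $E_{(1)}=\ringK\cdot\curvA$ (arity $0$, degree $-2$, filtration degree $1$). With this splitting, $\theta=\curvAs$ lies in $E_{(1)}\circ_{(1)}E_{(0)}^a+E_{(0)}^a\circ_{(1)}E_{(1)}$, so inclusion~(\ref{eq: homogeneous quadratic curved operad}) holds and Definition~\ref{defi: Koszul dual operad}(1) applies. Next, I compute the generating $\sS$-module $s^{-1}\mathcal{S}^{-1}\otimes_{\H}E^*$. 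In arity $2$ one has $\mathcal{S}^{-1}(2)$ in degree $1$, so $s^{-1}\mathcal{S}^{-1}(2)\otimes\as^*$ is a binary operation in degree $0$, namely the product $\as^{!}$. In arity $0$ one has $\mathcal{S}^{-1}(0)=s^{-1}\ringK$ in degree $-1$; combined with $\curvA^*$ in degree $+2$ and a further desuspension, $s^{-1}\mathcal{S}^{-1}(0)\otimes\curvA^*$ sits in degree $(-1)+(-1)+2=0$, giving precisely the $0$-ary generator $\vcenter{\xymatrix@M=0pt@R=4pt@C=4pt{\circ \\ \ar@{-}[u]}}$ advertised in the statement. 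No arity-$1$ generator appears since $E(1)=0$.

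For the relations, the part from $R^{\perp}$ is the standard calculation: the orthogonal of the associativity relation inside the binary-generator part of $\free(E^*)^{(2)}$ is again associativity for $\as^{!}$, yielding the $\ass$ relation. For the new piece $(\vartheta-\theta)^{\perp}$ one works in the arity-$1$ component of $I\amalg\free(E^*)^{(2)}$, which is $3$-dimensional with basis $\{1,\ \as^*\circ_1\curvA^*,\ \as^*\circ_2\curvA^*\}$. Writing $\theta=\as\circ_1\curvA-\as\circ_2\curvA$ (up to sign) and using the pairing defined just before Definition~\ref{defi: Koszul dual operad}, which satisfies $\langle\vartheta,1\rangle=1$, $\langle\vartheta,\free(E^*)^{(2)}\rangle=0$ and $\langle\free(E)^{(2)},1\rangle=0$, the element $\vartheta-\theta$ defines a single linear form on this $3$-dimensional space. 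Its kernel is $2$-dimensional, and a natural basis (modulo the signs contributed by $s^{-1}\mathcal{S}^{-1}$) is given by the two elements $\as^{!}\circ_{1}(\text{unit})-\mathrm{id}$ and $\as^{!}\circ_{2}(\text{unit})-\mathrm{id}$, which are exactly the two unital relations displayed in the statement.

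Assembling these two pieces gives the presentation claimed for $\cAs^{!}$. Finally, to identify the algebras, a representation of this operad on a complete gr-dg module $A$ is the data of a degree-$0$ map $A\otimes A\to A$ and a degree-$0$ map $\ringK\to A$; the associativity relation is associativity of the product and the two constant-quadratic relations are the left and right unit axioms, so $\cAs^{!}$-algebras are precisely unital associative algebras. The main obstacle will be the sign bookkeeping inside the shift $s^{-1}\mathcal{S}^{-1}$, which must be tracked carefully enough to confirm that the orthogonal of $\vartheta-\theta$ produces genuine unit axioms rather than sign-twisted variants; everything else reduces to a finite-dimensional linear algebra check in a few low arities.
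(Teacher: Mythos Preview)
Your proposal is correct and follows the same approach as the paper: fix the splitting $E_{(0)}^a=\langle\as\rangle$, $E_{(1)}=\langle\curvA\rangle$, invoke the classical computation of $\As^!$ from \cite[7.6.4]{LodayVallette} for the $R^\perp$ part, and then compute the extra $(\vartheta-\theta)^\perp$ piece separately. The paper's own proof is much terser, simply stating that ``the computation of the extra terms is similar,'' whereas you spell out the three-dimensional arity-$1$ linear algebra explicitly; this is a welcome expansion rather than a different strategy.
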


\begin{proof}
The computation of the Koszul dual operad $\As^!$ is given in Section 7.6.4 in \cite{LodayVallette}. The computation of the extra terms is similar.
\end{proof}

\begin{remark}
We therefore recover in an operadic context the curved Koszul duality theory of the operad encoding unital associative algebras presented in \cite{HirshMilles}. 
\end{remark}

\begin{prop}
A $\cAi$-algebra is equivalent to a complete graded vector space $(A,\, F_\bullet)$ equipped with an operation $m_0 : \ringK \to F_1A$ of degree $-2$ and for all $n \geq 1$ with $n$-ary filtered operations
\[ m_n : A^{\otimes n} \to A \text{ of degree } n-2,\]
which satisfies the following relations
\begin{equation}
\label{eq: relation cAi-algebras}
\sum_{p+q+r = n} (-1)^{p+qr}m_{n+1-q} \circ_{p+1} m_q = 0, \quad n\geq 0,
\end{equation}
where $k = p+1+r$.

This notion of algebras coincides, under different settings (not necessarily filtered), with the notions of $\Ai$-algebras given in \cite{GJ90}, of curved $\Ai$-algebras defined in \cite{CD01}, of weak $\Ai$-algebras given in \cite{bK06}, of filtered $\Ai$-algebras used in \cite{Fukaya}, of $[0,\, \infty[$-algebras studied in \cite{Nicolas} and of weakly curved $\Ai$-algebras studied in \cite{lP12} to give only one reference for each name.
\end{prop}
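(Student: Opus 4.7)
The plan is to unwind the definition using the explicit description of $\cAi = \hOm \cAsa$ and the computation of $\cAsa$ established just above. By definition, a $\cAi$-algebra structure on $A$ is a morphism of curved complete operads $\rho : \hOm \cAsa \to \End_A$, where $A$ is viewed as a complete gr-dg module with trivial predifferential $d_A = 0$ (so that $\End_A$ has zero curvature and the role of the differential is played by the operation $m_1$ below). By Theorem~\ref{thm: free curved operad}, such a $\rho$ is equivalent to a morphism of complete gr-dg $\sS$-modules $\tilde\rho : s^{-1}\cAsa \to \End_A$ subject to intertwining the predifferential $d_\omega$ of $\hOm \cAsa$ with $[d_A, -] = 0$ on $\End_A$, and to $\vartheta \mapsto d_A^2 = 0$.

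Next, I parametrize. Since $\cAsa$ is free as a complete $\sS$-module on the elements $\{\hat\mu_n^c\}_{n \geq 0}$ (from the theorem just proved), with $\hat\mu_n^c$ of arity $n$ and homological degree $n-1$, define
\[ m_n \coloneqq \pm\, s \cdot \tilde\rho(s^{-1}\hat\mu_n^c) : A^{\otimes n} \to A, \]
up to a standard sign coming from the desuspension convention of Section~\ref{section: bar cobar constructions}. Each $m_n$ has degree $n-2$ and is filtered because $\tilde\rho$ is a morphism of filtered $\sS$-modules, and this correspondence is plainly a bijection between $\tilde\rho$'s and families $\{m_n\}_{n \geq 0}$ of filtered operations with the stated degrees.

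The content of the predifferential compatibility then produces the relations~(\ref{eq: relation cAi-algebras}). Writing $d_\omega = d_1 + d_2 + d_\vartheta$, we have $d_1 = 0$ because the predifferential on $\cAsa$ vanishes, while $d_\vartheta$ contributes only via the counit of $\cAsa$ and is absorbed in the condition $\rho(\vartheta) = 0$. Thus the equation $\rho(d_\omega(s^{-1}\hat\mu_n^c)) = 0$ reduces to $\rho(d_2(s^{-1}\hat\mu_n^c)) = 0$. Using that $\cAsa \hookrightarrow \coideal{\cofree}(sE)$ with decomposition $\Delta^\theta$ as given above, the infinitesimal coproduct $\Delta_{(1)}(\hat\mu_n^c)$ is the sum, over decompositions $p+1+r = k$ and $p+q+r = n$, of terms $\hat\mu_k^c \circ_{p+1} \hat\mu_q^c$, each weighted by the sign extracted from $(-1)^{\sum(i_j-1)(k-j)}$. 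Evaluating the derivation $d_2$ on $s^{-1}\hat\mu_n^c$ and applying $\rho$ yields $\sum_{p+q+r=n} \pm\, m_k \circ_{p+1} m_q = 0$, and a careful sign tally combining the $\Delta^\theta$ sign, the suspension shifts, and the Koszul signs collapses to the prefactor $(-1)^{p+qr}$ of~(\ref{eq: relation cAi-algebras}).

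The main obstacle is precisely this sign reconciliation: combining the cooperadic sign $(-1)^{\sum(i_j-1)(k-j)}$, the desuspension conventions, and the Koszul signs arising from evaluating a degree $-1$ twisting morphism, and matching the total to the concise $(-1)^{p+qr}$ requires careful bookkeeping. Once that is done, comparison with the notions of (curved) $\Ai$-algebras appearing in \cite{GJ90, CD01, bK06, Fukaya, Nicolas, lP19} is a routine translation of conventions (sign choices, chain versus cochain grading, and the precise form of the filtration or completion setup); in each case the underlying data and relations match ours after the standard dictionary.
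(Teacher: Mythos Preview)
Your overall strategy (unwind the cobar construction and use the explicit basis $\{\hat\mu_n^c\}$ of $\cAsa$ to read off the $m_n$ and the relations) is the right one and matches the paper's. However, your reduction to the case $d_A = 0$ is a genuine gap, not a harmless normalization.

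By definition a $\cAi$-algebra is a morphism of curved operads $\gamma_A : \hOm\cAsa \to \End_{(A,\, d_A)}$ for a complete \emph{gr-dg} module $(A,\, d_A)$; nothing in the definition lets you assume $d_A = 0$. The paper's proof keeps $d_A$ arbitrary: the twisting-morphism data give operations $\tilde m_n$ with $\tilde m_1$ increasing the filtration degree by $1$ (this comes from the infinitesimal-coideal condition on $\eta(I[-1])$), and one then \emph{sets} $m_1 \coloneqq \tilde m_1 - d_A$ while $m_n \coloneqq \tilde m_n$ for $n\neq 1$. This absorption of the predifferential into $m_1$ is exactly what produces a merely filtered $m_1$, matching the statement. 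Under your hypothesis $d_A = 0$ you would instead obtain $m_1 = \tilde m_1$, which raises filtration degree by $1$; that is strictly more restrictive than the proposition claims and does not exhibit every $\cAi$-algebra as data of the stated form. Conversely, starting from $\{m_n\}$ with $m_1$ only filtered, you cannot land in $\End_{(A,0)}$: you must take $d_A$ to be (part of) $m_1$. So the $d_A=0$ shortcut breaks both directions of the equivalence.

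A minor point: the correct reference for characterizing maps out of $\hOm\cAsa$ is the bar--cobar adjunction (Proposition~\ref{prop: bar-cobar adjunction}), not Theorem~\ref{thm: free curved operad}; $\hOm\cAsa$ is only quasi-free as a curved operad, so the free curved operad universal property does not apply directly.
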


\begin{proof}
A morphism of curved operad $\gamma_A : \hOm \cAsa \to \End_{(A,\, d_A)}$ is characterized by a map of complete $\sS$-modules $\overline{\cAsa} \to \End_A$ and the predifferential $d_A$. Therefore it corresponds to a collection of filtered applications
\[ m_n : A^{\otimes n} \to A, \text{ of degree } n-2 \text{ for all } n,\]
such that $m_0 : \ringK \to F_1 A$ and $m_1 = d_A$. 
The fact that $\gamma_A$ commutes with the predifferentials and the fact that $\gamma_A$ sends the curvature to the curvature ensure that Equation (\ref{eq: relation cAi-algebras}) is satisfied for $n$.
\end{proof}

\begin{remark} 
\begin{enumerate}
\item
By the fact that complete curved associative algebras are examples of complete curved operads, Sections \ref{section: bar cobar constructions} and \ref{section: koszul duality for curved operads} apply to complete curved associative algebras and we therefore obtain functorial resolutions of complete curved associative algebras by the bar-cobar resolution and Koszul resolutions for Koszul complete curved associative algebras (cofibrant in the underlying category of complete gr-dg $\ringK$-modules). These results can also be seen as an example of an extension of the results in \cite{Milles2} to the curved / altipotent setting: the operadic Koszul morphism $\kappa : \cAsa \to \cAs$ allows us to define a bar-cobar adjunction on the level of curved associative algebras and altipotent coassociative coalgebras.
\item
The same yoga would work for curved Lie algebras since we can define the \emph{curved Lie} operad $\cLie$ as we did the curved associative operad $\cAs$. We obtain the curved operad $\cLi$ whose algebras coincide with curved $\Li$-algebras defined in \cite{bZ93} and used or studied for example in \cite{kC11b, mM12, RZ12, LS12}. It is however impossible to associate in the same way a curved operad with the operad $\mathrm{Com}$.
\end{enumerate}
\end{remark}

\subsection{Homotopy categories}

We now apply the results of Sections \ref{section: model cat for complete curved algfebras} and \ref{section: homotopy category of algebras over a curved operad} to the morphism
\[ f_\kappa : \cAi \to \cAs. \]
By Theorem \ref{thm: model cat struct for curved algebras}, the categories of algebras $\Alg(\cAs)$ and $\Alg(\cAi)$ each admit a cofibrantly generated model structure and by Theorem \ref{thm: inverse and direct image functor are adjoint derived functors}, the morphism $f_\kappa$ produces a Quillen adjunction between the model categories of curved algebras.

\begin{thm}
The functors ${f_\kappa}^*$ and ${f_\kappa}_*$
\[
\xymatrix{\lL {f_\kappa}^* : \Hoalg(\cAi) \ar@<.5ex>@^{->}[r] & \Hoalg(\cAs) : \rR {f_\kappa}_* = {f_\kappa}_* \ar@<.5ex>@^{->}[l]}
\]
are equivalences of the homotopy categories.
\end{thm}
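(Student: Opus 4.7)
The plan is to apply the standard criterion for a Quillen equivalence: it suffices to show that the right adjoint $\rR {f_\kappa}_*$ reflects weak equivalences between fibrant objects and that for every cofibrant $\cAi$-algebra $A$, the derived unit $A \to \rR {f_\kappa}_* \lL {f_\kappa}^* A$ is a weak equivalence.

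For the first condition, by the construction of the model structure on $\Alg(\cAi)$ and $\Alg(\cAs)$ transferred from $\compa(\Aa)$ along the free-forgetful adjunction (Theorem \ref{thm: model cat struct for curved algebras}), weak equivalences are exactly the maps whose underlying morphism of complete gr-dg $\ringK$-modules is a graded quasi-isomorphism. Since restriction along $f_\kappa$ does not change the underlying object (only the action of the operad), the functor ${f_\kappa}_*$ preserves and reflects all weak equivalences; in particular $\rR {f_\kappa}_* = {f_\kappa}_*$ on the whole model category, and the first condition holds trivially.

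For the second condition, I would proceed by cell induction on cofibrant $\cAi$-algebras. Cofibrant algebras are retracts of cell algebras built by transfinite compositions of pushouts of generating cofibrations of the form $\cAi \circ I \to \cAi \circ J$, where $I \to J$ is a generating cofibration in $\compa(\Aa)$. On a free algebra $\cAi \circ M$ with $M$ cofibrant in $\compa(\Aa)$, the unit identifies with the map $f_\kappa \circ \id_M : \cAi \circ M \to \cAs \circ M$. To show this is a graded quasi-isomorphism, I would pass to the associated graded: by Lemma \ref{lemma:gr-flat is monoidal subcat} the functor $\Gr$ is strong monoidal on gr-flat objects, and since both $\cAi$ and $\cAs$ are $\sS$-cofibrant (cf. Proposition \ref{prop: bar-cobar resolution} and the Koszul resolution statement), they are gr-flat, so $\Gr(\cAi \circ M) \cong \Gr \cAi \circ \Gr M$ and similarly for $\cAs$. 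The map $\Gr f_\kappa : \Gr \cAi \to \Gr \cAs$ is a quasi-isomorphism of $\sS$-cofibrant uncurved graded operads (this is the content of $f_\kappa$ being a graded quasi-isomorphism), and the classical result that $- \circ \Gr M$ preserves quasi-isomorphisms between $\sS$-cofibrant operads then applies.

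The inductive step requires showing that pushouts and transfinite compositions preserve the property that the derived unit is a weak equivalence. For pushouts along a cofibration $\cAi \circ I \to \cAi \circ J$, I would use the standard polynomial filtration on the pushout $A \sqcup_{\cAi \circ I} (\cAi \circ J)$ (\`a la Spitzweck/Harper), whose successive quotients are built from composition products with previously-constructed terms, so each stage is handled by the base case. The corresponding filtration is compatible with the unit map, and the induced spectral sequence converges by completeness, invoking the Complete Convergence Theorem of Weibel as in the proof of Theorem \ref{thm: qi for Koszul operad and PBW}. The main obstacle will be organizing this cell-by-cell argument compatibly with both the curvature and the filtered-complete structure, which requires careful tracking of filtration degrees alongside the cellular degree, and verifying that the polynomial filtration on a pushout of complete curved algebras still admits a tractable associated graded---a point which is standard in the dg setting but needs adaptation here because of the non-trivial curvature.
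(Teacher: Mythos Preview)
Your outline is correct in spirit but takes a substantially more laborious route than the paper, and misses the key shortcut that makes the paper's argument one paragraph long.

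The paper does not do cell induction at all. Instead it invokes the general base-change result (Theorem~\ref{thm: equivalence of the homotopy categories}): a graded quasi-isomorphism $\alpha$ between $\sS$-split complete curved operads, compatible with the splittings, induces a Quillen equivalence. The proof of that theorem already contains the reduction you sketch in your base case: one applies $\Gr$ to the unit $A \to \alpha_*\alpha^* A$, uses that $\Gr$ is strong monoidal to identify $\Gr(\alpha^* A)$ with the classical inverse image of $\Gr A$ along $\Gr\alpha$, and then cites Hinich's argument for uncurved operads. Thus all the curvature and completeness bookkeeping you flag as the ``main obstacle'' is dispatched in one stroke by passing to the associated graded, where the curvature vanishes and the situation is the classical one.

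What remains for $\cAi \to \cAs$ is only to check the $\sS$-split hypothesis. The paper observes that both operads arise from non-symmetric operads, so $\Oo(n) \cong \Oo_{ns}(n) \otimes \ringK[\sS_n]$ carries a canonical splitting, and $f_\kappa$ respects it. This is a two-line verification.

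Your approach would work, but the polynomial-filtration / Spitzweck--Harper machinery is overkill here: you are effectively re-proving Theorem~\ref{thm: equivalence of the homotopy categories} (and parts of Hinich's theorem) inside the special case, rather than applying it. The trade-off is that your route is more self-contained and would generalise to situations where the $\sS$-split hypothesis fails (replacing it by $\sS$-cofibrancy), whereas the paper's route is much shorter because it exploits the non-symmetric origin of the operads to get the splitting for free and then defers to an already-established general theorem.
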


\begin{proof}
To apply Theorem \ref{thm: equivalence of the homotopy categories}, it is enough to show that the complete curved operad $\cAs$ and $\cAi$ are $\sS$-split and that the morphism $f_\kappa$ is compatible with the chosen splittings. The two operads come from non symmetric operads. In this situation, we can consider the splitting
\[ \Oo(n) = \Oo_{ns}(n) \otimes \ringK[\sS_n] \to (\Oo_{ns}(n) \otimes \ringK[\sS_n])_{ns} \otimes \ringK[\sS_n] \]
induced by the map $ \Oo_{ns}(n) \to (\Oo_{ns}(n) \otimes \ringK[\sS_n])_{ns}$. The map $f_\kappa$ is compatible with these splittings.
\end{proof}

\begin{remark}
There are two possible notions of morphisms for $\cAi$-morphisms:
\begin{itemize}
\item
the classical morphisms of algebras of the curved operad $\cAi$, that is to say maps $f : A \to B$ compatible with the predifferential and the algebra structures;
\item
the notion, sometimes called $\infty$-$\cAi$-morphisms, of morphisms of coalgebras (cooperads) between an extension to homotopy curved algebras (or operads) of the bar constructions on $A$ and on $B$. (See for example Section 7.1 in \cite{lP12}.)
\end{itemize}
In the previous theorem, we consider the first notion of morphism. The study of $\infty$-morphisms requires extra constructions that might appear in other work.
\end{remark}

\appendix

\section{Categorical filtrations and completions}
\label{appendix: categorical stuff}

In this appendix, we will establish definitions of filtered and complete filtered objects for use in both our ground categories and in the categories of operads over them.

We want to establish the ability to work concretely with filtered and complete filtered objects, which we do by realizing them as reflective subcategories of diagram categories that are easier to manipulate. 
The strategy of studying a category by embedding it as a reflective subcategory of a better-behaved category is quite old~\cite{GabrielZisman:CFHT}. In fact the cases of filtered and complete filtered objects constitute two of the original motivating examples. Consequently, much of the content of this appendix may be well-known to experts.

The main technical point for us will be the transfer of a closed monoidal structure to a reflective subcategory in a coherent fashion, using a criterion of Day~\cite{DAY19721}.

The executive summary of (most of) this appendix is that the following diagram of subcategory inclusions is in fact a diagram of normal reflective embeddings.
This means that:
\begin{enumerate}
\item each of the solid arrows has a left adjoint reflector functor,
\item the counit of each adjunction is an isomorphism,
\item each category is closed symmetric monoidal,
\item the given arrows are all lax symmetric monoidal with the unit of the adjunction as the monoidal natural transformation, and
\item the left adjoints are all strong symmetric monoidal.
\end{enumerate}
The diagram is as follows.
\[
\begin{tikzcd}
\mathcal{A}^{\ob \mathbb{N}}\rar
&\mathcal{A}^{\mathbb{N}^{\op}}
&\Filt(\mathcal{A})\lar
&\Comp(\mathcal{A}).\lar
\end{tikzcd}
\]
Moreover, in the case where $\Aa$ is replaced by the category $\pgA$ of predifferential graded objects in $\Aa$, we can complete the diagram
\[
\begin{tikzcd}
\pgA^{\ob \mathbb{N}}\rar
&\pgA^{\mathbb{N}^{\op}}
&\Filt(\pgA)\lar
&\Comp(\pgA)\lar
& \compa(\Aa).\lar
\end{tikzcd}
\]

Throughout this appendix, we let $(\mathcal{A},\otimes,\mathbf{1},\ul{\mathcal{A}})$ be a closed symmetric monoidal Grothendieck category with initial object $\varnothing$.

We consider $\mathbb{N}$ as a poset category with initial object $0$. When desirable, we enrich $\mathbb{N}$ in $\mathcal{A}$ by saying $\ul{\mathbb{N}}(a,b)$ is $\varnothing$ if $a>b$ and $\mathbf{1}$ otherwise.

In general the underlying category of this enriched category is not equivalent to $\mathbb{N}$ itself (they are equivalent if and only if $\mathbf{1}$ admits no automorphisms and no maps to $\varnothing$).

Because of the diagrams of reflexive subcategories, the condition that $\Aa$ is a Grothendieck category ensures in fact that the categories $\Filt(\Aa)$, $\Comp(\Aa)$ and $\compa(\Aa)$ are quasi-abelian (see for example \cite[Lemma 3.3.5]{eR16} and \cite[Proposition 4.20]{FS10}). This property is useful for Section \ref{section: koszul duality for curved operads}.

Various generalizations are possible: for many of the results we can replace $\mathcal{A}$ with a regular cocomplete closed symmetric monoidal category and $\mathbb{N}$ with a nontrivial poset with an initial object.

\subsection{Categorical filtered objects}

\begin{defi}
Let $\mathcal{C}$ be a category. 
A \emph{$\mathbb{N}$-filtered object} in a category $\mathcal{C}$ is an $\mathbb{N}^{\op}$-indexed diagram in $\mathcal{C}$ where all maps are monomorphisms. 
The full subcategory spanned by $\mathbb{N}$-filtered objects is denoted $\Filt(\mathcal{C})$.
\end{defi}

\begin{lemma}
\label{lemma: cat of filtered objects is reflective}
The full subcategory spanned by $\mathbb{N}$-filtered objects in $\mathcal{A}$ inside $\mathcal{A}^{\mathbb{N}^{\op}}$ is a reflective subcategory.
Moreover, the reflector takes the functor $F:\mathbb{N}^{\op}\to\mathcal{A}$ to the functor $rF$ whose value at $e$ is the image of $F(e)\to F(0)$.
\end{lemma}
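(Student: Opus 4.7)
The plan is to verify that the proposed functor $r$ genuinely lands in $\Filt(\mathcal{A})$, construct a natural unit transformation $F \to rF$, and then establish the universal property defining a reflective subcategory. Along the way the key ingredient is that $\mathcal{A}$, being a Grothendieck category, admits unique epi-mono factorizations; this lets each $rF(e)$ be realized as the subobject of $F(0)$ into which $F(e)$ factors.

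First I would check that $rF$ is a filtered object. Given the canonical epi-mono factorization $F(e) \twoheadrightarrow rF(e) \hookrightarrow F(0)$, the triangle $F(e+1) \to F(e) \to F(0)$ forces the image $rF(e+1) \hookrightarrow F(0)$ to be contained in the subobject $rF(e) \hookrightarrow F(0)$. Hence the induced comparison $rF(e+1) \to rF(e)$ is a monomorphism, since it is an inclusion of subobjects of $F(0)$. This makes $rF$ into an object of $\Filt(\mathcal{A})$, functorially in $F$, and the epi part of the factorization at each level assembles into a natural transformation $\eta_F : F \to rF$ by a straightforward check on the defining squares.

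Next I would establish the universal property. Given $G \in \Filt(\mathcal{A})$ and a morphism $\varphi : F \to G$, naturality gives that $F(e) \xrightarrow{\varphi_e} G(e) \hookrightarrow G(0)$ agrees with $F(e) \to F(0) \xrightarrow{\varphi_0} G(0)$. The image of this composite in $G(0)$ is contained in $G(e)$ (since $G(e) \hookrightarrow G(0)$ is mono) and also equals the image of $rF(e) \to F(0) \xrightarrow{\varphi_0} G(0)$. By the factorization property of monos in $\mathcal{A}$, this yields a unique map $\tilde\varphi_e : rF(e) \to G(e)$ with $\tilde\varphi_e \circ \eta_{F,e} = \varphi_e$ and $(G(e) \hookrightarrow G(0)) \circ \tilde\varphi_e = \varphi_0 \circ (rF(e) \hookrightarrow F(0))$. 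Naturality of $\tilde\varphi$ in $e$ follows by uniqueness applied to the two possible definitions of the compatibility square; uniqueness of the factorization $\tilde\varphi$ follows because $\eta_F$ is a levelwise epimorphism.

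Finally I would confirm normality: when $G$ is already filtered, each $G(e) \to G(0)$ is a composite of monomorphisms hence a monomorphism, so $rG(e) = G(e)$ and $\eta_G$ is an isomorphism. The only step that requires some care is the existence of the factorization in the third paragraph, where one must be precise that an epi followed by a map equal to a mono composed with something gives a unique lift against the mono; but this is standard in any abelian (in particular Grothendieck) category and presents no real obstacle.
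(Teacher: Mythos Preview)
Your proof is correct and follows essentially the same approach as the paper: both arguments define $rF(e)$ via the epi-mono (image) factorization of $F(e)\to F(0)$, verify that this yields a filtered object, and establish the adjunction by lifting a map $F\to G$ (with $G$ filtered) through the image using the universal property of the factorization. Your treatment is slightly more explicit in checking that $\eta_G$ is an isomorphism when $G$ is already filtered, but otherwise the two proofs are the same.
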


\begin{proof}
For a diagram $F:\mathbb{N}^{\op}\to\mathcal{A}$ there is a map $\psi_e:F(e)\to F(0)$ for each object $e$ of $\mathbb{N}$.
Define $rF(e)$ as $\image(\psi_e)$.
By the universal property of the image in a regular category, the morphisms in the diagram $rF$ are well-defined monomorphisms.

To see the adjunction, let $F$ be a diagram and $G$ a filtered object.
The data of a $\mathbb{N}$-filtered object map from $rF$ to $G$ consists of a map $f_e$ from the image of $\psi_e$ to $G(e)$ for each $e$.
Such data defines a map $\tilde{f}_e:F(e)\to \image(\psi_e)\to G(e)$.
On the other hand, given a map $\tilde{f}_e:F(e)\to G(e)$ compatible with $F(0)\to G(0)$, there is a unique lift through the image as in the following diagram:
\[
\begin{tikzcd}
F(e)
\dar\ar[rr]&& G(e)\dar
\\
rF(e)\ar[urr,dotted]\rar& F(0)\rar&G(0).
\end{tikzcd} 
 \] 
The coherence conditions for two different values of $e$ to be an $\mathbb{N}$-filtered object map or a map of diagrams coincide, in that the commutativity of the outside cell implies that the right square commutes if and only if the left square commutes. 
\[
\begin{tikzcd}
F(e)\ar[rr, bend left]\rar[swap]{\tilde{f}_e}\dar & G(e)\dar& rF(e)\dar\lar{f_e}\\
F(e')\ar[rr,bend right]\rar{\tilde{f}_{e'}}& G(e')& rF(e')\lar[swap]{f_{e'}}
\end{tikzcd}
\]
Then this objectwise natural bijection restricts to a natural bijection between filtered maps from $rF$ to $G$ and maps of diagrams from $F$ to $iG$.
\end{proof}

\subsection{Categorical complete filtered objects}
\label{subsection: complete condition}

In this section, we use the fact that $\mathcal{A}$ is an Abelian category, that it admits $\mathbb{N}^{\op}$-indexed limits and that its satisfies AB4. 
A \emph{filtered object} means a $\mathbb{N}$-filtered object in $\mathcal{A}$.
For a filtered object $V$ we write $i_e$ (or $i^V_e$ if $V$ is ambiguous) for the monomorphism $F_e V\to F_0 V$.

\begin{defi}
A filtered object is \emph{complete} if the natural map $F_0V\to \lim_a \coker i_a$ is an isomorphism.

The \emph{completion} of the filtered object $V$ is the sequence $\hat{V}$ (also denoted $V^\wedge$) defined as
\[F_e\hat{V} \coloneqq \ker (\lim_a \coker i_a \xrightarrow{\pi_e} \coker i_e)\]
with the natural maps induced by the identity on $\lim_a \coker(i_a)$.
\end{defi}

\begin{lemma}
\label{alternate completion stages lemma}
There is a natural isomorphism from $\lim_{a\ge e} \coker (F_aV\to F_eV)$ to $F_e\hat{V}$.
\end{lemma}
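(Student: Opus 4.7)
The plan is to produce the desired isomorphism by first manufacturing a short exact sequence via the snake lemma, and then computing a suitable limit.

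First, I would fix $e$ and, for each $a\ge e$, assemble the commutative diagram of short exact sequences
\[
\begin{tikzcd}
0 \rar & F_aV \rar \dar[hook] & F_0V \dar[equal] \rar & \coker i_a \rar \dar & 0\\
0 \rar & F_eV \rar & F_0V \rar & \coker i_e \rar & 0
\end{tikzcd}
\]
in which the left vertical is the canonical inclusion (both $F_aV$ and $F_eV$ are subobjects of $F_0V$ via $i_a$ and $i_e$, and $F_aV\subset F_eV$ by assumption), the middle is the identity, and the right vertical is uniquely induced. Applying the snake lemma: the kernel of the left vertical is zero since $F_aV\to F_eV$ is monic, and the middle column has both trivial kernel and trivial cokernel. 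All that remains from the six-term sequence is the natural short exact sequence
\[
0\to \coker(F_aV\to F_eV)\to \coker i_a\to \coker i_e\to 0,
\]
which is functorial in the pair $(a,e)$ with $a\ge e$.

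Second, I would take the limit of this short exact sequence over the subposet $\{a\in\mathbb{N}:a\ge e\}\subset \mathbb{N}$. Since $\mathcal{A}$ is an Abelian (Grothendieck) category, limits are left exact, so
\[
0\to \lim_{a\ge e}\coker(F_aV\to F_eV)\to \lim_{a\ge e}\coker i_a\to \coker i_e
\]
is exact, using that the rightmost diagram is constant with value $\coker i_e$. The inclusion of subposets $\{a\ge e\}\hookrightarrow \mathbb{N}$ is initial for limits of the relevant $\mathbb{N}^{\op}$-shape: for any index $a'$, the element $\max(a',e)$ lies in $\{a\ge e\}$ and maps canonically to $a'$ in the diagram, and the corresponding comma categories are connected. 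Hence $\lim_{a\ge e}\coker i_a\cong \lim_a\coker i_a$.

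Finally, I would identify the resulting map $\lim_a\coker i_a\to\coker i_e$ with the projection $\pi_e$ appearing in the definition of $F_e\hat V$. Both are characterized by the same universal property: at the $a=e$ stage, the horizontal map $\coker i_e\to \coker i_e$ coming from the snake-lemma sequence is the identity, so the limit-induced map agrees with the cone leg $\pi_e$. Taking kernels in the exact sequence
\[
0\to \lim_{a\ge e}\coker(F_aV\to F_eV)\to \lim_a\coker i_a\xrightarrow{\pi_e}\coker i_e
\]
identifies $\lim_{a\ge e}\coker(F_aV\to F_eV)$ with $\ker\pi_e=F_e\hat V$, and naturality in $V$ follows from the naturality of every construction used. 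The argument is elementary in a Grothendieck category, and I do not foresee any serious obstacle; the one point demanding care is verifying cofinality of the subposet and the agreement of the induced map with $\pi_e$.
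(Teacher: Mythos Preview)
Your proposal is correct and follows essentially the same approach as the paper: both obtain the short exact sequence $0\to \coker(F_aV\to F_eV)\to \coker i_a\to \coker i_e\to 0$ (the paper invokes the third isomorphism theorem, you use the snake lemma), then take the limit over $a$ and use left exactness to identify the first term with $\ker\pi_e=F_e\hat V$. Your treatment of the cofinality of $\{a\ge e\}\subset\mathbb{N}$ and the identification of the induced map with $\pi_e$ is slightly more explicit than the paper's, but the argument is the same.
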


\begin{proof}
The natural map is constructed as follows.
The map $F_eV\to \coker( F_aV\to F_eV)$ is epic and the composition $F_eV\to \coker(F_aV\to F_eV)\to\coker i_e$ is equal to the composition $F_eV\to F_0V\to \coker i_e=0$.
So, passing to the limit, $F_eV\to \lim \coker(F_aV\to F_eV)$ factors through the kernel of $\lim_a \coker i_a\to \coker i_e$.

To see that this is an isomorphism, note that for $a\ge e$, the sequence
\[
0\to \coker (F_aV\to F_eV)\to \coker i_a \to \coker i_e \to 0
\]
is exact by the third isomorphism theorem which exists because the category satisfies AB4.
Then taking limits over $a$, we get an exact sequence
\[
0\to \lim (\coker F_aV\to F_eV)\to F_0\hat{V} \to \coker i_e
\]
since the limit is left exact.
But $F_0\hat{V}\to \coker i_e$ is epic since the epimorphism $F_0V\to \coker i_e$ factors through $F_0\hat{V}$, so the sequence is in fact short exact, which is the desired statement.
\end{proof}

\begin{lemma}
\label{lemma:stuff about completion}
Completion is a functor from filtered objects to the full subcategory of complete filtered objects. 
\end{lemma}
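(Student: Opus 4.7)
The plan is to prove three things in sequence: that $\hat{V}$ is a well-defined filtered object, that it is complete, and that the construction is functorial.

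First, I would verify that $\hat{V}$ really is a filtered object. By construction $F_e\hat{V} = \ker(L \xrightarrow{\pi_e} \coker i_e^V)$ where $L := \lim_a \coker i_a^V$, and this sits inside $F_0\hat{V} = L$ as the kernel of an epi-or-non-epi map, hence is a subobject. For $e \le e'$, the projection $\pi_{e'}$ factors through $\pi_e$ via the structure map $\coker i_{e'}^V \to \coker i_e^V$, so $F_{e'}\hat{V} \subseteq F_e\hat{V}$, giving the required monomorphisms in the $\mathbb{N}^{\op}$-indexed diagram.

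Second, and this is the main step, I would show $\hat{V}$ is complete. Write $K_e := \ker \pi_e = F_e\hat{V}$, so $\coker i_e^{\hat{V}} = L/K_e$. Since $K_e = \ker\pi_e$, the map $\pi_e$ induces a monomorphism $L/K_e \hookrightarrow \coker i_e^V$; call its image $I_e$, so $L/K_e \cong I_e$. The compatibility $\pi_e = (\coker i_{e'}^V \to \coker i_e^V)\cdot \pi_{e'}$ implies that the $I_e$ form a sub-system of the $\coker i_e^V$, and we obtain natural factorizations
\[
L \xrightarrow{\varphi} \lim_e I_e \hookrightarrow \lim_e \coker i_e^V = L.
\]
Because the composite is the identity and the second arrow is a monomorphism (being a limit of monomorphisms), both maps are isomorphisms. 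In particular $\varphi : F_0\hat{V} \xrightarrow{\cong} \lim_e \coker i_e^{\hat{V}}$, which is the completeness condition.

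Third, for functoriality: a filtered map $f : V \to W$ gives compatible maps $F_a V \to F_a W$ commuting with the $i_a$, hence induced maps on cokernels $\coker i_a^V \to \coker i_a^W$ natural in $a$, hence a map $L^V \to L^W$. This map commutes with the projections $\pi_e$, so it restricts to a map of kernels $K_e^V \to K_e^W$, giving $\hat{f} : \hat{V} \to \hat{W}$. Functoriality ($\widehat{g\cdot f} = \hat{g}\cdot \hat{f}$ and $\hat{\id} = \id$) follows from the functoriality of $\coker$, $\lim$, and $\ker$.

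The only subtlety I anticipate is the interplay with the AB4 hypothesis on $\mathcal{A}$ used in Lemma~\ref{alternate completion stages lemma}; it is worth noting that the completeness argument above only uses left-exactness of $\lim_a$ and the universal property of the kernel, so it goes through cleanly in any Grothendieck category without needing to invoke Mittag-Leffler-type surjectivity of the projections $L \twoheadrightarrow \coker i_e^V$ (which need not hold).
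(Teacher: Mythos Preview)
Your proof is correct and follows the same three-step outline as the paper. The one substantive difference is in the completeness step. The paper argues that $\coker i_e^{\hat{V}} \cong \coker i_e^{V}$ directly, so that $\lim_e \coker i_e^{\hat{V}} \cong \lim_e \coker i_e^{V} = L = F_0\hat{V}$; this amounts to using that each projection $\pi_e : L \to \coker i_e^{V}$ is epic. You instead work with the images $I_e = \im \pi_e$ and use the retraction $L \xrightarrow{\varphi} \lim_e I_e \hookrightarrow L$, which avoids that epicness. Both arguments are fine.

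However, your closing remark is misleading: the surjectivity of $\pi_e$ is \emph{not} a delicate Mittag--Leffler issue that ``need not hold''. It always holds, for the elementary reason that the cokernel epimorphism $F_0V \twoheadrightarrow \coker i_e^{V}$ factors as $F_0V \to L \xrightarrow{\pi_e} \coker i_e^{V}$ via the canonical map into the limit (this is exactly the observation used in the proof of Lemma~\ref{alternate completion stages lemma}). So the paper's shortcut is legitimate, and your image-based detour, while correct, is not buying extra generality here. A minor slip: in your first step you wrote that ``$\pi_{e'}$ factors through $\pi_e$'', but it is $\pi_e$ that factors through $\pi_{e'}$ via the transition $\coker i_{e'}^{V}\to \coker i_e^{V}$; the conclusion $F_{e'}\hat{V}\subseteq F_e\hat{V}$ is unaffected.
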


\begin{proof}
If completion is viewed as an assignment with codomain sequences, then functoriality is clear.
It should be verified that the codomain can be restricted, first to filtered objects and then to complete filtered objects.

So first we verify that the induced maps $F_{e'}\hat{V}\to F_e\hat{V}$ are monomorphisms for $e'>e$.

By inspection $F_0\hat{V} \cong \lim_a \coker i_a$ and so the map $F_e\hat{V}\to F_0\hat{V}$ is the inclusion of a kernel and thus a monomorphism.
Since the map $F_{e'}\hat{V}\to F_0\hat{V}$ can be factored $F_{e'}\hat{V}\to F_e\hat{V}\to F_0\hat{V}$, the left map $F_{e'}\hat{V}\to F_e\hat{V}$ is also a monomorphism.

To see that completions are complete, by the construction of $\hat{V}$ we have coherent isomorphisms $\coker i^{\hat{V}}_e \cong \coker i^V_e$ so the composition of the natural map with the two natural isomorphisms
\[F_0\hat{V}\to \lim_a \coker i^{\hat{V}}_a\cong \lim_a \coker F_aV \cong F_0\hat{V}\]
is the identity, which implies that the natural map is an isomorphism.
\end{proof}

\begin{lemma}
There is a natural filtered map $r$ (or $r_V$) from a filtered object $V$ to its completion $\hat{V}$.
This natural map is an isomorphism on complete objects.
\end{lemma}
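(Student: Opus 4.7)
The plan is to build the map $r_V$ level by level using universal properties, and then to verify the second claim by comparing subobjects inside the level-zero piece.

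For the construction at level zero, the coherent family of quotients $F_0 V \twoheadrightarrow \coker i_a$ defines a cone over the inverse system $(\coker i_a)_a$, giving a canonical map $F_0 V \to \lim_a \coker i_a$. Since $\coker i_0 = 0$, this agrees with the map into $F_0 \hat{V}$. At level $e \geq 1$, the composition $F_e V \to F_0 V \to F_0 \hat{V} \to \coker i_e$ vanishes, because already $F_e V \to F_0 V \to \coker i_e$ is zero (the latter being the defining cokernel of $i_e$). By the universal property of the kernel, this yields a unique map $F_e V \to F_e \hat{V}$.

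Compatibility with the structure maps $F_{e'} V \to F_e V$ for $e' \geq e$, and naturality of $r$ in $V$, follow from the fact that $F_e \hat{V} \hookrightarrow F_0 \hat{V}$ is a monomorphism together with the uniqueness in the relevant universal properties: both paths around the naturality square become the composition $F_{e'} V \to F_0 V \to F_0 \hat{V}$ after post-composition with the mono, and monicity then yields the desired commutativity. I expect this to be routine diagram chasing rather than a real obstacle.

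For the second claim, suppose $V$ is complete. By definition the level-zero map $F_0 V \to F_0 \hat{V}$ is then an isomorphism. For each $e \geq 1$, both $F_e V$ and $F_e \hat{V}$ arise as kernels of compatible projections to $\coker i_e$: the former because $i_e$ is a monomorphism, so $F_e V = \ker(F_0 V \twoheadrightarrow \coker i_e)$, the latter by the very definition of $F_e \hat{V}$. These projections are intertwined by the level-zero isomorphism, so their kernels are identified as subobjects; uniqueness in the kernel's universal property ensures that this identification agrees with the constructed map $F_e V \to F_e \hat{V}$. The main content of the statement is already packaged in the completeness condition at level zero; the higher levels amount to transporting subobjects along the established isomorphism, so I do not anticipate any substantive obstacle.
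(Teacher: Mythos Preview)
Your proposal is correct and follows essentially the same route as the paper: construct $r_0$ via the limit's universal property, lift to $r_e$ via the kernel's universal property, and for the second claim identify both $F_eV$ and $F_e\hat{V}$ with the kernel of the projection to $\coker i_e$ once the level-zero map is known to be an isomorphism. The paper phrases the last step as ``$\ker\coker i_e$ is naturally isomorphic to the monomorphism $i_e$,'' which is exactly your observation that $F_eV = \ker(F_0V \twoheadrightarrow \coker i_e)$ because $i_e$ is monic.
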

\begin{proof}
We have already used a similar natural map (let us call it $r_0$) from $F_0V$ to $F_0\hat{V}\cong \lim_a\coker i_a$.
The composition
\[
F_eV\xrightarrow{i_e} F_0V\xrightarrow{r_0} F_0\hat{V}\xrightarrow{\pi_e} \coker i_e
\]
is zero so that there is a unique induced map $r_e$ from $F_eV$ to $F_e\hat{V}=\ker(\pi_e)$ as in the following diagram:

\[\begin{tikzcd}
F_eV\rar{i_e}\dar\ar[dd, dotted, bend right=40pt,swap,"r_e"] &
F_0V\dar\ar[dd,dashed, bend left=40pt,"r_0"]\\
0\rar\dar&\coker i_e\\
F_e\hat{V}=\ker \pi_e\uar\rar&F_0\hat{V}\uar{\pi_e}.
\end{tikzcd}\]
The collection $r_e$ is coherent so constitute the data of morphism of filtered objects.

If $V$ is complete then the natural map $F_0V\to F_0\hat{V}$ is an isomorphism, so that $F_e\hat{V}\to F_0\hat{V}\to F_0V$ is naturally isomorphic to $\ker \coker i_e$ which in turn is naturally isomorphic to the monomorphism $i_e$.
The identity isomorphism $F_eV\to F_eV$ satisfies the universal property of $r_e$ under the identification of this natural isomorphism, so $r_e$ itself must be an isomorphism.
\end{proof}

\begin{lemma}
\label{Lemma: unit of the completion is completion of the unit}
For any filtered object $V$ the two morphisms $r^{\hat{V}}$ and $\widehat{r^V}$ from $\hat{V}$ to $\hat{\hat{V}}$ coincide.
\end{lemma}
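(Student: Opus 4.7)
The plan is to combine the naturality of $r$ with a uniqueness property of the completion. First, I would apply the naturality square of $r$ (viewed as a natural transformation between endofunctors on filtered objects) to the morphism $r_V : V \to \hat{V}$ itself. This immediately yields a commutative square
\[
\begin{tikzcd}
V\dar[swap]{r_V}\rar{r_V} & \hat V\dar{r_{\hat V}}\\
\hat V \rar[swap]{\widehat{r_V}} & \widehat{\hat V},
\end{tikzcd}
\]
from which one reads off the equality $r_{\hat V}\circ r_V = \widehat{r_V}\circ r_V$ as morphisms $V\to \widehat{\hat V}$ of filtered objects. Since $\widehat{\hat V}$ is complete by Lemma~\ref{lemma:stuff about completion}, both $r_{\hat V}$ and $\widehat{r_V}$ are morphisms from $\hat V$ into a complete object, and they agree after precomposition with $r_V$.

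Second, I would argue that precomposition with $r_V$ is injective on morphisms from $\hat V$ to any complete filtered object $W$. This is the content of the universal property of completion as a reflector. The cleanest way to establish it is at the level of filtration stages: by Lemma~\ref{alternate completion stages lemma}, $F_e\hat V$ is the inverse limit $\lim_{a\ge e}\coker(F_a V\to F_e V)$, and the map $r_V$ sends $F_e V$ onto a subobject whose image surjects onto each cokernel $\coker(F_a V\to F_e V)$. Given a complete $W$, any morphism $\hat V\to W$ is compatible with the tower of cokernels on both sides (since $W$ is isomorphic to its own completion), and is therefore uniquely determined by the induced maps $\coker(F_a V\to F_e V)\to \coker(F_a W\to F_e W)$, which in turn are determined by $F_e V\to F_e W$. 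Combining this with Step 1 forces $r_{\hat V}=\widehat{r_V}$.

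The main technical obstacle is formulating and verifying the uniqueness statement in Step 2. The level-wise verification just sketched relies on the AB4 hypothesis (already used in the proof of Lemma~\ref{alternate completion stages lemma}) to commute the inverse limit with the short exact sequences
\[
0\to \coker(F_a V\to F_e V)\to \coker i_a\to \coker i_e\to 0.
\]
An alternative is to prove this lemma \emph{after} formally exhibiting completion as left adjoint to the inclusion of complete objects into filtered objects, from which the uniqueness is immediate; but that adjunction is itself established later in the appendix, so presenting the level-wise argument here keeps the logical dependencies straight.
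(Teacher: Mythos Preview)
Your proposal is correct and takes a genuinely different route from the paper's own argument. The paper proceeds by a direct verification: since filtered maps are determined by their behaviour at filtration level zero, it suffices to compare the two maps $F_0\hat V\to F_0\hat{\hat V}$ after projecting to each term $\coker\ker(\pi_e)$ of the limit defining $F_0\hat{\hat V}$; a short diagram chase then shows both projections equal $\pi_e$. Your approach instead factors the problem through naturality of $r$ (applied to the morphism $r_V$ itself) plus the separate claim that precomposition with $r_V$ is injective on maps into complete objects.

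Your argument is conceptually cleaner and explains \emph{why} the identity holds---it is exactly the coherence one expects for the unit of a reflection---whereas the paper's computation simply checks that it does. The cost is that you must establish the injectivity of precomposition with $r_V$ independently of the reflection adjunction (as you correctly note, to avoid circularity with Corollary~\ref{cor: cat of complete objects is reflective}). Your level-wise sketch for this is sound: a filtered map $f:\hat V\to W$ with $W$ complete is determined by its level-zero component, which in turn is determined by the induced maps $\coker i_e^V\cong\coker i_e^{\hat V}\to \coker i_e^W$ (using completeness of $W$ and the identification $\coker i_e^{\hat V}\cong\coker i_e^V$ from the proof of Lemma~\ref{lemma:stuff about completion}); these are recovered from $f\circ r_V$ because $F_0V\to\coker i_e^V$ is epic. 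The paper's direct check avoids isolating this lemma, at the price of a slightly opaque computation.
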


The proof is straightforward; we record the details in order to have a complete\footnote{In more ways than one.} account. 
Because $F_eW\to F_0W$ is a monomorphism for a filtered complex $W$ it suffices to check the maps $F_0\hat{V}\to F_0\hat{\hat{V}}$ coincide.
\begin{proof}
These are two maps
\[
F_0\hat{V} \to \lim_b \coker\ker(F_0\hat{V}\xrightarrow{\pi_b} \coker i_b).
\]
To check if these maps agree it suffices to check on all projections over the limit.
The $e$ term of the limit is the coimage of $\pi_e$, and so it suffices to check equality after postcomposing the monomorphism $\coker\ker(\pi_e)\to \coker i_b$.

The $e$ component of $r^{\hat{V}}$ is just the left half of the coimage factorization $F_0\hat{V}\to \coker\ker (F_0\hat{V}\xrightarrow{\pi_e} \coker i_e)$ so postcomposing the given monomorphism gives $\pi_e$.

On the other hand, the $e$ component of $\widehat{r^V}$ is given by the projection $\pi_e$ from $F_0\hat{V}$ to $\coker i_e$ followed by the map $\coker i_e\to \coker\ker\pi_e$ induced by $r^V_0$.
Postcomposition then gives
\[
F_0\hat{V}\xrightarrow{\pi_e} \coker i_e \xrightarrow{\text{induced by }r^V_0}\coker\ker \pi_e\xrightarrow{\text{right half of }\pi_e\text{ coimage factorization}}\coker i_e.
\]
Then to show this to be equal to $\pi_e$ it suffices to check that the composition of the right two maps in the composition is the identity of $\coker i_e$.  Then this equality can be checked after precomposition with the epimorphism $F_0V\to \coker i_e$.
We conclude by the commutativity of the following diagram, which shows these two to be equal (unlabeled morphisms are induced by cokernels and coimage factorizations).
\[
\begin{tikzcd}
F_0V\ar[dr,swap,"r^V_0"]\ar[rr]\ar[rrdd, bend right=50pt,""]
&&\coker i_e\dar{\text{induced by }r^V_0} \\
&F_0\hat{V}\rar\ar[dr,swap,"\pi_e"]&\coker\ker \pi_e\dar\\
&&\coker i_e
\end{tikzcd}
\] 
\end{proof}

\begin{cor}
\label{cor: cat of complete objects is reflective}
The inclusion of complete filtered objects inside filtered objects is reflective with reflector $V\xrightarrow{r} \hat{V}$.
\end{cor}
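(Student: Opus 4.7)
The plan is to verify the adjunction directly using the pieces already established. All the technical content sits in the preceding lemmas: completion is functorial and lands in complete objects (Lemma~\ref{lemma:stuff about completion}); there is a natural filtered map $r_V\colon V\to\hat V$ which is an isomorphism when $V$ is already complete (preceding lemma); and the two natural maps $r_{\hat V}$ and $\widehat{r_V}$ from $\hat V$ to $\hat{\hat V}$ agree (Lemma~\ref{Lemma: unit of the completion is completion of the unit}). Since $\hat V$ is always complete, both of these coincident maps are isomorphisms. This is the hypothesis of the standard idempotent-monad criterion for a reflective subcategory, so one can essentially cite it and stop. But it is cleaner to unwind the criterion into an explicit bijection proof.

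Concretely, I would fix a filtered object $V$ and a complete filtered object $W$, and show that precomposition with $r_V$ induces a bijection
\[
\Filt(\mathcal{A})(\hat V,\, W)\;\xrightarrow{\;-\,\circ\, r_V\;}\;\Filt(\mathcal{A})(V,\, W).
\]
For surjectivity, given $f\colon V\to W$, apply the completion functor to obtain $\hat f\colon \hat V\to\hat W$. Since $W$ is complete, $r_W$ is an isomorphism, and the naturality square
\[
\begin{tikzcd}
V\rar{f}\dar[swap]{r_V}& W\dar{r_W}\\
\hat V\rar{\hat f}& \hat W
\end{tikzcd}
\]
shows that $\bar f\coloneqq r_W^{-1}\circ\hat f$ satisfies $\bar f\circ r_V=f$.

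For injectivity, suppose $g_1,g_2\colon\hat V\to W$ both restrict to $f$ along $r_V$. Applying completion gives $\hat{g_i}\circ\widehat{r_V}=\hat f$. By Lemma~\ref{Lemma: unit of the completion is completion of the unit} we have $\widehat{r_V}=r_{\hat V}$, and because $\hat V$ is complete this map is an isomorphism; hence $\hat{g_1}=\hat{g_2}$. Naturality of $r$ then yields $r_W\circ g_i=\hat{g_i}\circ r_{\hat V}$, and since $r_W$ and $r_{\hat V}$ are both isomorphisms this determines $g_i$ from $\hat{g_i}$, forcing $g_1=g_2$. This bijection is natural in $V$ and $W$ by construction, so $V\mapsto\hat V$ is left adjoint to the inclusion.

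There is no serious obstacle here: the only non-formal ingredient is Lemma~\ref{Lemma: unit of the completion is completion of the unit}, which has already been verified. The role of that lemma is precisely to rule out the possible pathology that the ``two natural unit candidates'' at a completed object could disagree, which would break the idempotence needed to invoke the reflection.
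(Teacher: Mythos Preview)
Your proof is correct and uses exactly the same ingredients as the paper's. The paper packages the argument via the unit--counit triangle identities (defining $\epsilon_W = r_W^{-1}$ and checking both triangles, the nontrivial one via Lemma~\ref{Lemma: unit of the completion is completion of the unit}), whereas you verify the hom-set bijection directly; these are equivalent formulations of the same adjunction criterion and rely on precisely the same lemmas.
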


\begin{proof}
We have already defined a unit in Lemma~\ref{lemma:stuff about completion}.
For $W$ a complete filtered object, the $W$ component of the counit is supposed to be a map of filtered objects from $\widehat{W}$ to $W$.
Since $W$ is complete, we may take the inverse of the unit, which is an isomorphism on the complete filtered object $W$.
So $\epsilon_W=r_{W}^{-1}$.

Then $W\xrightarrow{r^{W}} \hat{W}\xrightarrow{\epsilon_W} W$ is the identity on $W$ by definition and the composition $\hat{V}\xrightarrow{\widehat{r^V}} \hat{\hat{V}} \xrightarrow{\epsilon_{\hat{V}}} \hat{V}$ is the identity by Lemma~\ref{Lemma: unit of the completion is completion of the unit}.
\end{proof}

\subsection{Coproducts and products in complete filtered objects}

Next we spend a little time showing that in complete objects, the canonical map from the coproduct to the product is a monomorphism. This allows us to detect whether two maps into a coproduct are equal by means of the product projections. We now use that $\mathcal{A}$ moreover has all products and coproducts and is AB5.

\begin{remark}
\label{remark: biproducts}
Biproducts are inherited by a reflective subcategory. 
To see this, recall that a coproduct in the subcategory is calculated as the reflector applied to the coproduct of the same objects in the ambient category.
On the other hand, the product of the same objects in the reflective subcategory is calculated in the ambient category, where it coincides with a coproduct in the ambient category. 
This implies that the given coproduct already (essentially) lies in the reflective subcategory, so the reflector is an isomorphism on this coproduct.
\end{remark}

We recall the definition of strict morphism from \cite[1.1.5]{pD71}.
\begin{defi}
\label{defi: strict}
In a category with finite limits and colimits, we say that a morphism $f:Y\to X$ is \emph{strict} if the natural morphism $\coim(f) \to \im(f)$ is an isomorphism.

In the context of (possibly complete) filtered objects, a morphism $f:Y\to X$ is \emph{strict} if and only if it is strictly compatible with filtration, that is to say, for each $n$, the following is a pullback square:
\[
\begin{tikzcd}
f(F_nY)\rar\dar & F_nX\dar\\
f(F_0Y)\rar & F_0X.
\end{tikzcd}
\]

\end{defi}

\begin{lemma}
\label{lemma: strict monic induces quotient monic}
Suppose $Y\to X$ is a strict, monic morphism of filtered objects. 
Then the induced map $F_0Y/F_nY \to F_0X/F_nX$ is a monomorphism for all $n$.
\end{lemma}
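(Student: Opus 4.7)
The plan is straightforward: compute the kernel of the induced map $\bar f: F_0Y/F_nY \to F_0X/F_nX$ in the abelian category $\Aa$ and use strictness to identify it with zero.

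First, I would unpack strictness in the monic case. Since $f: Y\to X$ is a monomorphism of filtered objects, for each $n$ the monomorphism $F_nY \to F_0Y$ composed with $f$ agrees with the monomorphism $F_nY \to F_0X$ factoring through $F_nX$. Thus $f$ identifies $F_nY$ with the subobject $f(F_nY)\subseteq F_nX\subseteq F_0X$. The condition in Definition~\ref{defi: strict} that the square
\[
\begin{tikzcd}
f(F_nY)\rar\dar & F_nX\dar\\
f(F_0Y)\rar & F_0X
\end{tikzcd}
\]
is a pullback then translates, after this identification, to the assertion that $F_nY = f^{-1}(F_nX)$ as subobjects of $F_0Y$ (i.e.\ $F_nY$ is the pullback of $F_nX\hookrightarrow F_0X$ along $f:F_0Y\to F_0X$).

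Next, I would compute $\ker(\bar f)$ in $\Aa$. The commutative diagram of short exact sequences
\[
\begin{tikzcd}
0\rar & F_nY\rar \dar[hook] & F_0Y\rar \dar["f"] & F_0Y/F_nY\rar \dar["\bar f"]& 0\\
0\rar & F_nX\rar & F_0X\rar & F_0X/F_nX\rar & 0
\end{tikzcd}
\]
has exact rows, so the snake lemma (or a direct element-free chase using that $\Aa$ is abelian, as in the proof of Lemma~\ref{alternate completion stages lemma}) identifies $\ker(\bar f)$ with $f^{-1}(F_nX)/F_nY$, where $f^{-1}(F_nX)$ denotes the pullback of $F_nX$ along $f:F_0Y\to F_0X$.

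Finally, combining the two steps: the pullback description of strictness gives $f^{-1}(F_nX)=F_nY$, hence $\ker(\bar f)=0$ and $\bar f$ is a monomorphism. There is no substantial obstacle here: the content of strictness is precisely what is needed to make this pullback/snake computation collapse, and everything takes place inside the abelian ambient category $\Aa$, so no quasi-abelian subtleties intervene.
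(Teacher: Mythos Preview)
Your proposal is correct and takes essentially the same approach as the paper: both arguments work in the abelian ground category $\Aa$, apply the snake lemma to the same two-row diagram, and invoke strictness via the pullback/Cartesian-square characterization. The only cosmetic difference is that the paper phrases the strictness input as ``the induced map of cokernels $F_nX/f(F_nY)\to F_0X/f(F_0Y)$ is monic'' and then reads off $\ker(\bar f)=0$ from the snake sequence, whereas you equivalently identify $\ker(\bar f)\cong f^{-1}(F_nX)/F_nY$ directly and kill it with $f^{-1}(F_nX)=F_nY$.
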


\begin{proof}
Using the fact that $Y\to X$ is strict, a standard diagram chase in the (Abelian) ground category shows that the map of cokernels of the Cartesian square
\[F_nX/f(F_nY) \to F_0X/f(F_0Y)\]
is monic.

Now consider the snake lemma in the (Abelian) ground category for the diagram
\[
\begin{tikzcd}
& F_nY\dar\rar & F_0Y\dar\rar & F_0Y/F_nY\dar\rar&0
\\
0\rar& F_nX\rar & F_0X\rar & F_0X/F_nX.
\end{tikzcd}
\]
Since $Y\to X$ is monic, the kernel of $F_0Y\to F_0X$ is zero so the kernel of $F_0Y/F_nY\to F_0X/F_nX$ is in an exact sequence following a zero term and preceding a monomorphism.
\end{proof}

\begin{lemma}
\label{lemma: comparison in diagrams}
The natural map in diagrams from a coproduct to the product over the same indexing set is monic.
\end{lemma}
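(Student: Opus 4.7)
The plan is to reduce the claim to the corresponding statement in the ground category $\Aa$, and then to exploit the AB5 structure of $\Aa$ to obtain the monomorphism as a filtered colimit of ``obvious'' monomorphisms indexed by finite subsets of the indexing set.

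First I would note that in the functor category $\Aa^{\nN^{\op}}$ (or in any diagram category), both products and coproducts are computed objectwise, and the comparison map is also defined objectwise. Since a map of diagrams is a monomorphism if and only if it is a monomorphism at every object in $\nN^{\op}$, it suffices to show that for any family $\{X_i\}_{i\in I}$ of objects of $\Aa$, the natural map $\coprod_i X_i \to \prod_i X_i$ is a monomorphism in $\Aa$.

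Next I would write the coproduct as a filtered colimit
\[
\coprod_{i\in I} X_i \;\cong\; \colim_{F\subseteq I \text{ finite}} \coprod_{i\in F} X_i,
\]
indexed by the filtered poset of finite subsets of $I$. For each finite subset $F$, the coproduct $\coprod_{i\in F} X_i$ is a finite biproduct in the additive category $\Aa$, and as such is canonically a direct summand of the product $\prod_{i\in I} X_i$ via the inclusion sending the summand indexed by $i\in F$ to the $i$th factor of the product and using $0$ on all other factors. In particular, each map $\coprod_{i\in F} X_i \to \prod_{i\in I} X_i$ is a monomorphism, and these monomorphisms are compatible as $F$ varies.

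Finally, I would invoke axiom AB5 of the Grothendieck category $\Aa$, which states that filtered colimits of monomorphisms are monomorphisms. Passing to the filtered colimit over $F$ of the monomorphisms $\coprod_{i\in F}X_i \to \prod_{i\in I} X_i$ yields a monomorphism whose source is $\coprod_{i\in I} X_i$ and whose target is $\prod_{i\in I} X_i$; by naturality this is the comparison map, completing the proof. The only potential subtlety is checking the compatibility of the finite-subset monomorphisms so that the filtered colimit really produces the comparison map, but this is immediate from the universal property of coproducts and products.
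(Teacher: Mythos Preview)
Your proof is correct and essentially identical to the paper's: both argue that each finite sub-coproduct maps to the product via a split monomorphism (a retraction), and then invoke AB5 to pass to the filtered colimit. The only cosmetic difference is that you first reduce objectwise to $\Aa$, whereas the paper works directly in the diagram category (which also satisfies AB5); this is immaterial.
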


\begin{proof}
The map from each finite subcoproduct to the product admits a retraction so is monic.
By AB5, the map in the colimit from the total coproduct to the product is also monic.
\footnote{We learned this short proof from Zhen Lin Low.}
\end{proof}

\begin{lemma}
\label{lemma: strictness in filtered}
The natural map in filtered objects from a coproduct to the product over the same indexing set is strict.
\end{lemma}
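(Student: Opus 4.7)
The plan is to reduce to the case of finite index sets, where products and coproducts coincide, via an AB5 argument.

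First I would identify the stages of the coproduct and product explicitly. Since $\mathcal{A}$ satisfies AB4, the coproduct of monomorphisms is a monomorphism, so the reflector formula of Lemma~\ref{lemma: cat of filtered objects is reflective} shows that $F_n(\coprod_i X_i)\cong \coprod_i F_n X_i$. The inclusion of filtered objects into $\nN^{\op}$-diagrams creates limits, so $F_n(\prod_i X_i)\cong \prod_i F_n X_i$. With these identifications, the natural map under study has stage-$n$ component the canonical morphism $\coprod_i F_n X_i \to \prod_i F_n X_i$.

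By the alternate characterization of strictness in Definition~\ref{defi: strict}, I then need to verify that for each $n\ge 0$ the square
\[
\begin{tikzcd}
\coprod_i F_n X_i \rar\dar & \prod_i F_n X_i \dar\\
\coprod_i F_0 X_i \rar & \prod_i F_0 X_i
\end{tikzcd}
\]
is Cartesian in $\mathcal{A}$. For each finite subset $J\subset I$, the analogous square with top-left and bottom-left replaced by $\prod_{i\in J}F_n X_i$ and $\prod_{i\in J}F_0 X_i$ (which coincide with the corresponding coproducts since $\mathcal{A}$ is additive and $J$ is finite) is Cartesian by a direct computation: the inclusion $\prod_{i\in J}F_0 X_i\hookrightarrow \prod_i F_0 X_i$ is zero on the factors indexed by $I\setminus J$, and the pullback against $\prod_i F_n X_i\hookrightarrow \prod_i F_0 X_i$ is visibly $\prod_{i\in J} F_n X_i$.

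Finally, since $\mathcal{A}$ is a Grothendieck category it satisfies AB5, so filtered colimits in $\mathcal{A}$ are exact and in particular commute with finite limits. Passing to the filtered colimit over the directed poset of finite subsets $J\subset I$, the pullback squares for $J$ assemble into a pullback square whose top-left and bottom-left corners become $\coprod_i F_n X_i$ and $\coprod_i F_0 X_i$ (as already observed in the proof of Lemma~\ref{lemma: comparison in diagrams}), while the two right-hand corners remain $\prod_i F_n X_i$ and $\prod_i F_0 X_i$ (constant diagrams over a nonempty filtered poset). This yields the desired Cartesianness, hence strictness. The main (and only) technical step is invoking the commutation of filtered colimits with finite limits, which is standard for Grothendieck categories but worth recording explicitly.
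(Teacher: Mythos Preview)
Your proof is correct and takes a genuinely different route from the paper's argument. The paper verifies the pullback property of the square
\[
\begin{tikzcd}
\coprod_i F_n X_i \rar\dar & \prod_i F_n X_i \dar\\
\coprod_i F_0 X_i \rar & \prod_i F_0 X_i
\end{tikzcd}
\]
directly: given $Y$ mapping compatibly to $\coprod_i F_0X_i$ and $\prod_i F_nX_i$, it uses AB4 to identify $\coprod_i F_nX_i$ with the kernel of $\coprod_i F_0X_i \to \coprod_i (F_0X_i/F_nX_i)$, and then checks the required vanishing by composing with the monomorphism $\coprod_i (F_0X_i/F_nX_i)\hookrightarrow \prod_i (F_0X_i/F_nX_i)$ (Lemma~\ref{lemma: comparison in diagrams}) and projecting to each factor. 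Your approach instead reduces to the finite case, where biproducts make the square transparently Cartesian, and then invokes AB5 to commute the filtered colimit over finite $J\subset I$ past the pullback. The paper's argument is more elementary in that it only uses AB4 and the already-established coproduct-to-product monomorphism, while yours trades that for the cleaner structural fact that filtered colimits preserve finite limits in a Grothendieck category. Both are short; yours is perhaps more conceptually memorable, the paper's stays closer to the hypotheses actually in play elsewhere in the section. One small point worth making explicit in your write-up: the strictness square in Definition~\ref{defi: strict} has $f(F_nY)$ rather than $F_nY$ in the top-left, so you are implicitly using that the comparison map is monic (Lemma~\ref{lemma: comparison in diagrams}) to identify the two.
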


\begin{proof}
Let $X^i$ be a family of filtered objects, and let $Y$ map to $\coprod F_0X^i$ and $\prod F_nX^i$ over $\prod F_0X^i$.
We construct a map from $Y$ to $\coprod F_nX^i$ over both of these.
Since the map from $\coprod F_nX^i$ to $\coprod F_0X^i$ this will show that this lift is unique, witnessing the desired universal property.

Since the ground category satisfies AB4, $\coprod F_nX^i$ is the kernel of the map $\coprod F_0X^i\xrightarrow{q} \coprod (F_0X^i/F_nX^i)$, so it suffices to show that the map from $Y$ to $\coprod F_nX^i$ vanishes under $q$. 
By Lemma~\ref{lemma: comparison in diagrams}, it suffices to show that this vanishing after composing with the monomorphism from $\coprod (F_0X^i/F_nX^i)$ to $\prod (F_0X^i/F_nX^i)$. 
This in turn can be checked by projection to each $F_0X^j/F_nX^j$ factor.
But these composites factor as 
\[Y\to \coprod F_0X^i\to \prod F_0X^i\to F_0X^j\to F_0X^j/F_nX^j\]
and because the map from $Y$ to $\prod F_0X^i$ factors through $\prod F_nX^i$, the entire composite is zero, as desired.
\end{proof}

\begin{lemma}
\label{lemma: comparison in complete}
The natural map in complete objects from a coproduct to the product over the same indexing set is monic.
\end{lemma}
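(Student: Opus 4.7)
The strategy is to reduce the claim for complete objects to a completion of the corresponding strict monomorphism in filtered objects, and then argue that completion preserves strict monomorphisms. By Remark~\ref{remark: biproducts} (or directly), the coproduct in complete objects is the completion of the coproduct in filtered objects: $\coprod^{\Comp} X^i = \widehat{\coprod^{\Filt} X^i}$. On the other hand, the inclusion $\Comp(\Aa) \hookrightarrow \Filt(\Aa)$ is a right adjoint (by Corollary~\ref{cor: cat of complete objects is reflective}), so it preserves limits; in particular, the product $\prod X^i$ in $\Filt(\Aa)$ is already complete, and agrees with the product in complete objects. So the map under consideration is exactly $\hat \jmath : \widehat{\coprod^{\Filt} X^i} \to \widehat{\prod X^i} \cong \prod X^i$, where $\jmath : \coprod^{\Filt} X^i \to \prod X^i$ is the natural map in filtered objects.

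By Lemmas~\ref{lemma: strictness in filtered} and~\ref{lemma: comparison in diagrams}, the map $\jmath$ is a strict monomorphism in filtered objects. The claim thus reduces to showing that completion sends strict monomorphisms to monomorphisms in complete objects. Since the inclusion of complete objects into filtered objects (and then into $\mathbb{N}^{\op}$-diagrams) is fully faithful and preserves limits, a map in complete objects is monic if and only if each of its $F_e$ components is monic in $\Aa$.

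Let $f : V \to W$ be a strict monic of filtered objects. By Lemma~\ref{lemma: strict monic induces quotient monic}, each $F_0 V/F_n V \to F_0 W/F_n W$ is monic in $\Aa$. Since $\Aa$ is Abelian, monomorphisms are stable under arbitrary limits, so applying the formula $F_0 \hat{V} = \lim_n F_0 V / F_n V$ (and similarly for $W$) yields that $F_0 \hat f : F_0 \hat{V} \to F_0 \hat{W}$ is a monomorphism. For general $e$, the composition
\[
F_e \hat V \hookrightarrow F_0 \hat V \xrightarrow{F_0 \hat f} F_0 \hat W
\]
is monic, and it factors as $F_e \hat V \to F_e \hat W \hookrightarrow F_0 \hat W$; standard cancellation for monomorphisms then forces $F_e \hat f$ to be monic. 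Assembling this componentwise monicness gives that $\hat f$ is monic in complete filtered objects, completing the proof.

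\textbf{Expected obstacle.} The argument is essentially a bookkeeping exercise once the preceding lemmas are in place, and I do not anticipate real obstacles. The one subtle point requiring care is the identification of products in $\Comp(\Aa)$ with products in $\Filt(\Aa)$: this rests on completeness being preserved under limits, which in turn follows from the reflective inclusion being a right adjoint. Without this observation one would have to also complete the product side and separately check that $\widehat{\prod X^i} \to \prod X^i$ is an isomorphism when each $X^i$ is complete, which is the same fact wrapped differently.
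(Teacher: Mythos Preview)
Your proof is correct and follows essentially the same route as the paper: reduce to the filtered setting via Lemmas~\ref{lemma: comparison in diagrams} and~\ref{lemma: strictness in filtered} to obtain a strict monomorphism, then invoke Lemma~\ref{lemma: strict monic induces quotient monic} and the left exactness of the limit to see that completion preserves monicity at level $0$, which suffices. Your write-up is slightly more explicit about why the product computed in filtered objects is already complete and about the cancellation step for $F_e\hat f$, but the argument is the same.
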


\begin{proof}
Lemma~\ref{lemma: comparison in diagrams} shows this in diagrams. 
We will upgrade this first to the filtered and then to the complete context.

Both products and coproducts in the filtered context coincide with those in diagrams, the former without any hypotheses and the latter because the ground category satisfies AB4. 
Monomorphisms in filtered objects are created in diagrams (or by the terminal entry), so we have the same result there.

Now passing to the complete context, we need to verify that this monomorphism passes to the completion. 
By Lemma~\ref{lemma: strictness in filtered}, the monomorphism is also strict, so by Lemma~\ref{lemma: strict monic induces quotient monic}, the successive quotients 
\[
\left.\left(\coprod_i F_0X^i\right) \middle/ \left(\coprod_i F_nX^i\right)\right. \to \left.\left(\prod_i F_0X^i\right) \middle/ \left(\prod_i F_nX^i\right)\right.
\]
are monic. 
The limit functor is a right adjoint so left exact, so we get a monomorphism in filtration degree zero at the level of completions.
But this is enough to detect monomorphisms in complete objects.
\end{proof}

\subsection{Tensor products in the filtered and complete settings}\label{filtComplSetting}

Now we give the criterion of Day for transfer of a closed symmetric monoidal structure.
\begin{defi}
A class of objects $G$ in a category $\mathcal{C}$ is \emph{strongly generating} if a morphism $f:x\to y$ in $\mathcal{C}$ is an isomorphism whenever the induced maps of sets $\mathcal{C}(g,x)\to \mathcal{C}(g,y)$ is an isomorphism for all $g\in G$.
\end{defi}

\begin{thm}[Day]
\label{thm: Day reflective thm}
Let $\mathcal{D}$ be a closed symmetric monoidal category and let $i:\mathcal{C}\to \mathcal{D}$ be a reflective subcategory, with reflector $r$.
Let $G_{\mathcal{D}}$ be a strongly generating class of objects of $\mathcal{D}$.
The following are equivalent:
\begin{enumerate}
\item There exists:
\begin{enumerate}
\item A closed symmetric monoidal structure on $\mathcal{C}$,
\item A symmetric monoidal enrichment of the inclusion $i$ which commutes with the underlying set functor, and
\item A strong symmetric monoidal enrichment of the reflector $r$.
\end{enumerate}
\item (Day's condition (1)) For $d$ in ${\mathcal{D}}$ and $c$ in ${\mathcal{C}}$, the component of the unit of the reflection adjunction $\id_\mathcal{D}\to ir$ is an isomorphism for the hom object $\ul{\Dd}(d,ic)$.
\item (Day's condition (2), simplified) For $d$ in $G_{\mathcal{D}}$ and $c$ in $\mathcal{C}$, the component of the unit of the reflection adjunction $\id_\mathcal{D}\to ir$ is an isomorphism for the hom object $\ul{\Dd}(d,ic)$.
\end{enumerate}
\end{thm}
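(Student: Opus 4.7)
My plan is to establish the cycle of implications $(1) \Rightarrow (2) \Rightarrow (3) \Rightarrow (1)$. The implication $(2) \Rightarrow (3)$ is immediate since $G_\Dd$ is a sub-collection of the objects of $\Dd$, and $(1) \Rightarrow (2)$ is a direct adjunction computation: assuming the closed symmetric monoidal structure on $\Cc$ with strong symmetric monoidal reflector $r$ and lax monoidal embedding $i$, one obtains for arbitrary $e \in \Dd$ a natural chain
\[
\Dd(e, \ul{\Dd}(d, ic)) \cong \Dd(e \otimes d, ic) \cong \Cc(r(e \otimes d), c) \cong \Cc(re \otimes rd, c) \cong \Cc(re, \ul{\Cc}(rd, c)) \cong \Dd(e, i\ul{\Cc}(rd, c)).
\]
The Yoneda lemma then identifies $\ul{\Dd}(d, ic)$ with $i\ul{\Cc}(rd, c)$; this object lies in the essential image of $i$, so the unit of the reflection is an isomorphism there.

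\textbf{From condition (3) to condition (2).} Here I would use strong generation to bootstrap from $G_\Dd$ to all of $\Dd$. For $X = \ul{\Dd}(d, ic)$ with arbitrary $d$, the map $\eta_X \colon X \to irX$ is an isomorphism as soon as $\Dd(g, \eta_X)$ is an isomorphism for every $g \in G_\Dd$. Because $- \otimes -$ preserves colimits in each variable, the internal hom $\ul{\Dd}(-, ic)$ sends colimits in its first variable to limits. Writing $d$ as the canonical colimit of its comma diagram over $G_\Dd$ (available under the local presentability inherited from $\Aa$ being Grothendieck), I realize $\ul{\Dd}(d, ic)$ as a limit of objects $\ul{\Dd}(g_\alpha, ic)$, each of which lies in the essential image of $i$ by hypothesis~(3). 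Since that essential image is closed under limits in $\Dd$, the same holds for $\ul{\Dd}(d, ic)$, which is condition (2).

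\textbf{From condition (2) to condition (1).} I would carry out Day's classical construction: set
\[
c \otimes_\Cc c' \coloneqq r(ic \otimes_\Dd ic'), \quad 1_\Cc \coloneqq r\, 1_\Dd, \quad \ul{\Cc}(c, c') \coloneqq \ul{\Dd}(ic, ic'),
\]
noting that this last formula is already in $\Cc$ by condition~(2). The closed adjunction is immediate from
\[
\Cc(c \otimes_\Cc c', c'') \cong \Dd(ic \otimes ic', ic'') \cong \Dd(ic, \ul{\Dd}(ic', ic'')) \cong \Cc(c, \ul{\Cc}(c', c'')).
\]
Associativity and unitality of $\otimes_\Cc$ reduce to showing that $r(irX \otimes Y) \cong r(X \otimes Y)$ naturally in $X, Y \in \Dd$, which I would verify by testing against an arbitrary $iZ$:
\[
\Dd(irX \otimes Y, iZ) \cong \Dd(irX, \ul{\Dd}(Y, iZ)) \cong \Dd(X, \ul{\Dd}(Y, iZ)) \cong \Dd(X \otimes Y, iZ),
\]
where the middle isomorphism uses condition~(2). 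The lax symmetric monoidal enrichment of $i$ is then given by the reflection units $ic \otimes ic' \to ir(ic \otimes ic') = i(c \otimes_\Cc c')$ and $1_\Dd \to ir\,1_\Dd = i\,1_\Cc$; applying $r$ and using $r i \cong \id_\Cc$ upgrades these to the strong symmetric monoidal enrichment of $r$.

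\textbf{Main obstacle.} The essential work is the implication $(3) \Rightarrow (2)$, which requires passing from a strong generator to the ability to resolve arbitrary objects of $\Dd$ by diagrams in $G_\Dd$. I expect this to depend on local presentability of $\Dd$ (which in turn is inherited from $\Aa$ being a Grothendieck category, with the reflective embeddings of filtered and complete filtered objects preserving local presentability). The remaining implications are then formal manipulations of adjunctions, once the ingredient of condition~(2) is in hand.
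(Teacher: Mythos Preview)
The paper does not supply its own proof of this statement: it is attributed to Day and cited as \cite{DAY19721}, with only the consequences (the formulas for $\otimes_\Cc$ and $\ul{\Cc}$) spelled out afterward. Your construction for $(2)\Rightarrow(1)$ matches that post-theorem discussion essentially verbatim, and your $(1)\Rightarrow(2)$ is the standard Yoneda argument, so those parts are fine.

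The genuine gap is in your $(3)\Rightarrow(2)$. You correctly observe that strong generation lets you test $\eta_X$ against $\Dd(g,-)$ for $g\in G_\Dd$, but you then abandon that and instead try to write an arbitrary $d$ as a colimit of objects drawn from $G_\Dd$. That is a \emph{density} condition, not strong generation in the sense defined just before the theorem (isomorphism-detection), and Day's theorem as stated carries no local presentability hypothesis. Your parenthetical appeal to $\Aa$ being Grothendieck is importing an assumption that the statement does not make; while it happens to hold in every application the paper cares about, it does not prove the theorem.

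The argument that works in the stated generality never resolves $d$ at all; it uses the symmetry of $\otimes$ to swap the roles of $d$ and $g$. Condition (2) is equivalent, via the standard ``local object'' characterisation of a reflective subcategory together with Yoneda, to the assertion that $[\eta_e, ic]\colon [ire, ic]\to [e, ic]$ is an isomorphism for all $e\in\Dd$ and $c\in\Cc$. To verify this under (3), test against $g\in G_\Dd$: the map $\Dd(g,[\eta_e,ic])$ identifies, by the closed symmetric structure, with $\Dd(\eta_e,[g,ic])$. Hypothesis (3) says $[g,ic]$ lies in the essential image of $i$, and maps into such objects invert every component of the reflection unit by the triangle identity. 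That is the missing step.
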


In the case that Day's conditions are satisfied, since $ri$ is naturally isomorphic to $\id_\mathcal{C}$, we have the following natural isomorphisms for $c$ and $c'$ objects in $\mathcal{C}$:
\begin{align*}
c\otimes_{\mathcal{C}} c' 
\cong 
(ric)\otimes_{\mathcal{C}} (ric')
\cong
r(ic\otimes_{\mathcal{D}}ic').
\end{align*}
Then for $c$ and $c'$ and $c''$ objects in $\mathcal{C}$, we have
\begin{align*}
\mathcal{C}(r(ic\otimes_{\mathcal{D}}ic'), c'')
&\cong
\mathcal{D}(ic\otimes_{\mathcal{D}}ic', ic'') && \text{by adjunction}
\\&\cong
\mathcal{D}(ic,\ul{\mathcal{D}}(ic',ic'')) && \text{by adjunction}
\\&\cong
\mathcal{D}(ic,ir\ul{\mathcal{D}}(ic',ic'')) &&\text{by Day's condition (1)}
\\&\cong
\mathcal{C}(c,r\ul{\mathcal{D}}(ic',ic'')) && \text{by full faithfulness of }i
\end{align*}
which shows that the internal hom $\ul{\mathcal{C}}(c',c'')$ is naturally isomorphic to $r\ul{\mathcal{D}}(ic',ic'')$.
In particular this implies that 
\[i\ul{\mathcal{C}}(c',c'')\cong ir\ul{\mathcal{D}}(ic',ic'')\cong \ul{\mathcal{D}}(ic',ic'')\] 
again by full faithfulness and Day's condition (1).

Thus the motto is ``internal homs are computed in the big category $\mathcal{D}$ but the monoidal product needs to be reflected''.

\subsection{Strong generation of sequences}

We will repeatedly use the same set of strong generators in what follows. 

\begin{defi}
Let $m$ be an object of $\mathcal{A}$ and $x$ an object of $\mathbb{N}$.
We define a $\mathcal{A}$-valued $\mathbb{N}$-presheaf $m_x$ as follows:
\[
F_e(m_x)
\coloneqq
\begin{cases}
m&e \le x\\
\varnothing &\text{otherwise}
\end{cases}
\]
with structure maps either the initial map or the identity.
\end{defi}

\begin{lemma}
\label{lemma: generators}
The class of presheaves $\{m_x\}$ as $m$ and $x$ vary over the objects of $\mathcal{A}$ and $\mathbb{N}$ is strongly generating.
\end{lemma}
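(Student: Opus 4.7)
The plan is essentially an exercise in Yoneda for the indexing category $\mathbb{N}^{\op}$. The first step is to identify
\[
\Hom_{\Aa^{\mathbb{N}^{\op}}}(m_x,\, X) \;\cong\; \Hom_{\Aa}(m,\, F_x X)
\]
naturally in $m$, $x$, and $X$. The presheaf $m_x$ is constantly $m$ on indices $e \le x$ with identity structure maps and is $\varnothing$ otherwise, so a natural transformation $m_x \to X$ is determined by its component at $x$: the components at indices $e > x$ are forced by the initiality of $\varnothing$, and the components at $e \le x$ are obtained by postcomposing with the structure maps $F_x X \to F_e X$ of $X$. Conversely, any $g : m \to F_x X$ extends in this way to a natural transformation, since the required compatibility squares reduce to triangles involving identity maps on $m$ and the structure maps of $X$.

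Given this identification, I would conclude as follows. Suppose $f:X\to Y$ is a morphism in $\Aa^{\mathbb{N}^{\op}}$ for which $\Hom(m_x,\, X) \to \Hom(m_x,\, Y)$ is a bijection for all $m\in\Aa$ and $x \in \mathbb{N}$. Transporting through the natural bijection above, this says that for every $x$ and every $m$, the induced map $\Hom_\Aa(m,\, F_x X)\to \Hom_\Aa(m,\, F_x Y)$ is bijective. Since $m$ ranges over \emph{every} object of $\Aa$, the Yoneda lemma forces $F_x f$ to be an isomorphism in $\Aa$ for each $x$. Since isomorphisms of $\mathbb{N}^{\op}$-indexed diagrams are detected pointwise (the componentwise inverses assemble automatically into a natural transformation from the naturality squares of $f$), $f$ itself is an isomorphism, which is exactly strong generation.

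There is no real obstacle here; the only point requiring care is the bookkeeping that verifies $m_x$ behaves like a ``shifted representable'' for the functor $X \mapsto F_x X$. This is entirely routine, and the lemma follows immediately from Yoneda.
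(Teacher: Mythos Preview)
Your proof is correct and follows essentially the same approach as the paper: both establish the natural bijection $\Hom_{\Aa^{\mathbb{N}^{\op}}}(m_x,X)\cong\Hom_{\Aa}(m,F_xX)$ by the same bookkeeping, then use that isomorphisms of presheaves are detected objectwise together with the fact that testing against all $m\in\Aa$ detects isomorphisms in $\Aa$.
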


\begin{proof}
A map of presheaves from $m_x$ to an arbitrary presheaf $X$ is determined uniquely by the level $x$ map from $m$ to $F_xX$.
Every such map in $\mathcal{A}$ actually determines a map of presheaves, using the initial map for indices greater than $x$ and using the commutativity of the following diagram for indices $e$ less than $x$:
\[
\begin{tikzcd}
m\ar[r]\ar[d,"\id"]
& F_xX\dar\\
m\ar[r,dashed]& F_eX.
\end{tikzcd}
\]
A map of presheaves is an isomorphism just when it is so objectwise, and this argument shows that the given set tests at every object of $\mathbb{N}$ against every object of $\mathcal{A}$.
\end{proof}

\begin{lemma}
\label{lemma: generators pass to reflective subcategories}
Let $\mathcal{C}$ be a reflective subcategory of $\mathcal{D}$ with reflector $r$ and let $G$ be a strongly generating class for $\mathcal{D}$.
Then $rG$ is strongly generating for $\mathcal{C}$.
\end{lemma}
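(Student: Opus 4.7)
The plan is to use the reflector-inclusion adjunction to translate the testing condition in $\mathcal{C}$ back to a testing condition in $\mathcal{D}$, then invoke strong generation of $G$ in $\mathcal{D}$ and full faithfulness of the inclusion.

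More precisely, let $i : \mathcal{C} \to \mathcal{D}$ denote the inclusion, so that $r \dashv i$. Suppose $f : c \to c'$ is a morphism in $\mathcal{C}$ with the property that $\mathcal{C}(rg, c) \to \mathcal{C}(rg, c')$ is a bijection for every $g \in G$. By the adjunction, this map is naturally isomorphic to the map $\mathcal{D}(g, ic) \to \mathcal{D}(g, ic')$ induced by $if$. Thus $if : ic \to ic'$ satisfies the testing hypothesis against the strongly generating class $G$ in $\mathcal{D}$, so $if$ is an isomorphism. Since the inclusion $i$ of a reflective subcategory is fully faithful, it reflects isomorphisms, and therefore $f$ is an isomorphism in $\mathcal{C}$.

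There is essentially no obstacle here beyond writing the adjunction bijection carefully; the argument is formal and requires only that $i$ be fully faithful (which holds for any reflective subcategory, given the definition in use earlier in the appendix, where the counit of $r \dashv i$ is an isomorphism). No additional hypotheses on $\mathcal{C}$, $\mathcal{D}$, or $G$ are needed.
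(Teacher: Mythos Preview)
Your proof is correct and is essentially identical to the paper's own argument: use the adjunction $r \dashv i$ to transfer the hypothesis to $\mathcal{D}$, invoke strong generation of $G$ there to conclude $if$ is an isomorphism, then use full faithfulness of $i$ to reflect the isomorphism back to $\mathcal{C}$.
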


\begin{proof}
Let $f:x\to y$ in $\mathcal{C}$ and suppose
$f_*:\mathcal{C}(rg,x)\to \mathcal{C}(rg,y)$ is an isomorphism for all $g$ in $G$.
By adjunction, $(i(f))_*:\mathcal{D}(g,ix)\to \mathcal{D}(g,iy)$ is an isomorphism for all $g$.
Then $i(f)$ is an isomorphism since $G$ is generating for $\mathcal{D}$.
Then $f$ is an isomorphism since $i$ is fully faithful.
\end{proof}

\begin{cor}
\label{cor:generators}
The class of presheaves $\{m_x\}$ as $m$ and $x$ vary over the objects of $\mathcal{A}$ and $\mathbb{N}$ is strongly generating for the categories of filtered and complete objects.
\end{cor}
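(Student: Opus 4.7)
The strategy is to bootstrap from Lemma~\ref{lemma: generators} (which asserts strong generation of $\{m_x\}$ in the diagram category $\Aa^{\nN^{\op}}$) by applying Lemma~\ref{lemma: generators pass to reflective subcategories} twice along the chain of reflective inclusions
\[
\Comp(\Aa) \hookrightarrow \Filt(\Aa) \hookrightarrow \Aa^{\nN^{\op}}
\]
coming from Lemma~\ref{lemma: cat of filtered objects is reflective} and Corollary~\ref{cor: cat of complete objects is reflective}. Since the reflector of a reflective subcategory acts as the identity (up to canonical isomorphism) on objects that already lie in the subcategory, the only thing to verify is that each $m_x$ is already a filtered object and, moreover, already a complete filtered object; then both reflectors send the class $\{m_x\}$ to itself.

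For the first inclusion, I will observe that the structure maps of $m_x$ are either identities $m \to m$ (for indices below $x$) or maps from the initial object $\varnothing \to m$ (for the jump at $x$ and identities between zero objects above it). Since $\Aa$ is a Grothendieck category, $\varnothing$ is the zero object and these maps are all monic, so $m_x$ lies in $\Filt(\Aa)$. Equivalently, the image-based reflector of Lemma~\ref{lemma: cat of filtered objects is reflective} sends $m_x$ to $m_x$, and Lemma~\ref{lemma: generators pass to reflective subcategories} yields strong generation of $\{m_x\}$ in $\Filt(\Aa)$.

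For the second inclusion, I need to check that $m_x$ is complete. The filtration eventually stabilises at zero: $F_e m_x = 0$ for $e > x$, so $F_0m_x/F_e m_x$ equals $0$ for $e \le x$ and equals $m$ for $e > x$. The projection maps in the $\mathbb{N}^{\op}$-indexed system of cokernels are identities $m \to m$ for $e > x$ and a single zero map $m \to 0$ at the transition $e = x+1 \to x$, which imposes no constraint on a compatible tuple. The inverse limit is thus canonically isomorphic to $m$, and the natural map from $F_0 m_x = m$ is the identity; so $m_x$ is complete, the completion reflector fixes it, and a second application of Lemma~\ref{lemma: generators pass to reflective subcategories} completes the proof.

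There is essentially no obstacle here: the work reduces to the short inverse-limit inspection showing that the abrupt truncation in $m_x$ does not break completeness, which is made transparent once one notes the system of cokernels is eventually constant along identity maps.
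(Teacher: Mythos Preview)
Your proof is correct and follows essentially the same approach as the paper: verify that each $m_x$ already lies in $\Filt(\Aa)$ and in $\Comp(\Aa)$ (so the reflectors fix it), then invoke Lemma~\ref{lemma: generators pass to reflective subcategories}. The paper's proof is terser---it simply notes that the structure maps are monic and that the limit defining the completion stabilizes---but your more explicit computation of the cokernel system is the same argument unpacked.
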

\begin{proof}
The initial map and isomorphisms are monic so $m_x$ is already filtered, so it is fixed by the reflector to filtered objects. 
It's easy to see that $m_x$ is also complete because the limit involved in the completion of $m_x$ stabilizes.
This means that $m_x$ is also fixed by the reflector to complete objects.
The previous lemma completes the proof.
\end{proof}

\subsection{Monoidal structure on filtered objects}

In order to transfer a monoidal structure from $\mathcal{A}$-valued presheaves on $\mathbb{N}$ to $\mathbb{N}$-filtered objects of $\mathcal{A}$, we should have a monoidal structure on diagrams.

\begin{fact}
Day convolution gives the diagram category $\mathcal{A}^{\mathbb{N}^{\op}}$ a closed symmetric monoidal structure with product
\[
F_e(V\otimes W) \coloneqq \colim_{a+b\ge e} F_aV\otimes F_bW
\]
unit
\[
F_e(\mathbf{1}_{\mathcal{A}^{\mathbb{N}^{\op}}})
\coloneqq
\begin{cases}
\mathbf{1}_\mathcal{A}&e=0\\
\varnothing &\text{otherwise}
\end{cases}
\]
and internal hom object
\[
F_e(\ul{\mathcal{A}^{\mathbb{N}^{\op}}}(V,W))\coloneqq \lim_{e+a\ge b} \ul{\mathcal{A}}(F_aV,F_bW).
\]
\end{fact}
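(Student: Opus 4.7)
The plan is to invoke the standard Day convolution construction, which applies to a closed symmetric monoidal cocomplete category $\mathcal{A}$ and a (small) symmetric monoidal $\mathcal{A}$-enriched category. Here the small category is $\mathbb{N}$, enriched in $\mathcal{A}$ as described at the start of the appendix, and equipped with the addition monoidal structure (with $0$ as the unit). Day convolution then automatically produces a closed symmetric monoidal structure on the presheaf category $\mathcal{A}^{\mathbb{N}^{\op}}$; the work is only to check that in the poset setting the resulting formulas simplify to the ones stated.

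First I would set up the tensor product. By Day convolution, $V\otimes W$ is the left Kan extension of $F_aV\otimes F_bW$ along $+:\mathbb{N}\times \mathbb{N}\to \mathbb{N}$, evaluated on $\mathbb{N}^{\op}$. Because $\mathbb{N}$ is a poset, the coend defining this Kan extension collapses: at stage $e$ one obtains the colimit of $F_aV\otimes F_bW$ over the filtered diagram of pairs $(a,b)$ with $a+b\ge e$, with the connecting maps induced by the structure maps of $V$ and $W$. This is the stated formula. The coherence and symmetry axioms follow formally from those of $+$ on $\mathbb{N}$ and $\otimes$ on $\mathcal{A}$.

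Next, the unit in Day convolution is the representable presheaf on the monoidal unit $0\in\mathbb{N}$, tensored with $\mathbf{1}_{\mathcal{A}}$. Since $\mathbb{N}^{\op}(e,0)$ is $\mathbf{1}$ for $e=0$ and $\varnothing$ otherwise, this reproduces the formula displayed in the statement. Unit axioms reduce to the fact that $F_aV\otimes F_0(\mathbf{1}_{\mathcal{A}^{\mathbb{N}^{\op}}})\cong F_aV$ and that the colimit in the tensor formula stabilizes at $(a,0)$ when $W$ is the unit.

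For the internal hom, the Day convolution formula is an end over the enriched category $\mathbb{N}$. In the poset case this end collapses to a limit of $\ul{\mathcal{A}}(F_aV,F_bW)$ indexed by pairs $(a,b)$ satisfying $e+a\ge b$, with the transition maps induced by the filtrations. This matches the stated formula, and the tensor-hom adjunction is then verified directly by converting the end/coend formulas into one another via the adjunction in $\mathcal{A}$. The main obstacle, and the one place where care is needed, is the verification that the monoidal product preserves colimits in each variable (so that the Kan extension is pointwise and the closed structure exists); this is precisely where one uses the standing assumption that $\mathcal{A}$ is a Grothendieck category with $\otimes$ cocontinuous in each variable.
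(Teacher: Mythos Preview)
Your proposal is correct. The paper itself does not prove this statement at all: it is recorded as a \emph{Fact} and immediately followed by the next lemma, so the authors are simply invoking Day convolution as a known construction without spelling out the verification. Your sketch therefore supplies more detail than the paper does, and the content you give (left Kan extension along $+$, collapse of the coend/end to a colimit/limit in the poset case, representable unit at $0$) is the standard and correct justification.
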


\begin{lemma}
\label{lemma: description of internal hom from generator}
We have the following description of the internal hom object from a generator:
\[
F_e(\ul{\mathcal{A}^{\mathbb{N}^{\op}}}(m_x,V))\cong\ul{\mathcal{A}}(m,F_{x+e}V),
\]
with morphisms induced by those of $V$.
\end{lemma}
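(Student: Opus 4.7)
The strategy is to compute the limit in the defining formula directly, using the fact that $m_x$ is supported on an initial segment and constant there. Recall that $F_a m_x = m$ when $a \le x$ and $F_a m_x = \varnothing$ when $a > x$. Because the closed monoidal product preserves colimits in each variable, $Y \otimes \varnothing \cong \varnothing$ for all $Y$, and so by adjunction $\ul{\Aa}(\varnothing, F_b V)$ represents the constant functor at the one-point set. Hence $\ul{\Aa}(\varnothing, F_b V)$ is the terminal object of $\Aa$, which in our additive Grothendieck setting coincides with the zero object. Terminal-valued terms impose no constraint on a limit, so the first reduction is to pass from $\lim_{e+a\ge b} \ul{\Aa}(F_a m_x, F_b V)$ to the limit over the full subdiagram with $a \le x$, whose terms are $\ul{\Aa}(m, F_b V)$ with $0\le b \le e+a$.

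Next, since $\ul{\Aa}(m, -)$ is a right adjoint (being an internal hom), it preserves limits, so the computation reduces to identifying $L \coloneqq \lim_{(a,b)} F_b V$ over the subcategory with $0 \le a \le x$ and $0\le b\le e+a$. The functor depends only on the $b$-coordinate. As $a$ varies over $[0,x]$, the loosest bound $b\le e+a$ is $b\le e+x$, realized at $a=x$; projection $(a,b)\mapsto b$ is therefore cofinal with the inclusion into $\mathbb{N}^{\op}$ of the finite full subposet $\{0,1,\ldots,e+x\}$. This subposet has $e+x$ as initial object (every smaller $b$ receives a unique morphism from $e+x$ via the structure map $F_{e+x}V\to F_b V$ of $V$), so $L \cong F_{e+x}V$. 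Combining, $F_e(\ul{\Aa^{\mathbb{N}^{\op}}}(m_x,V)) \cong \ul{\Aa}(m,F_{x+e}V)$. The functoriality claim is automatic from the construction: as $e$ grows by one, the structure map on the left-hand side restricts projections and under the identification above corresponds to post-composition with $\ul{\Aa}(m,\, F_{x+e+1}V\to F_{x+e}V)$, i.e., is induced by the structure map of $V$.

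The main obstacle is justifying the first reduction carefully, since the morphism structure of the indexing category $\{(a,b): e+a\ge b\}$ has not been made fully explicit and one needs to confirm that removing pairs with $a>x$ (where the value is terminal) does not alter the limit. This is a formal consequence of the universal property of the terminal object applied to each such pair together with the compatibility constraints imposed by the morphisms in the indexing category; once this is verified, the remainder of the argument is the standard cofinality collapse described above.
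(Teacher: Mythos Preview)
Your proposal is correct and amounts to the same argument as the paper's, organized slightly differently. The paper constructs the limit cone explicitly: for $a>x$ the target $\ul{\Aa}(F_a m_x, F_b V)$ is terminal so the leg is forced, and for $a\le x$ one has $F_a m_x\cong m$ and $b\le e+x$, so the leg is post-composition with the structure map $F_{x+e}V\to F_bV$; the paper then asserts this cone is limiting. Your three reductions (drop terminal-valued vertices, pull $\ul{\Aa}(m,-)$ through the limit, collapse by initiality of $e+x$) unpack exactly these two observations in categorical language. Both arguments rest on the same two facts and neither spells out the morphism structure of the indexing diagram in full; the paper's direct cone description sidesteps the need to name that structure, whereas your first reduction implicitly uses that morphisms in the indexing category increase the $a$-coordinate (so no arrow goes from the discarded region back into the kept region), which is indeed the case.
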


\begin{proof}
There are maps from $\ul{\mathcal{A}}(m,F_{x+e}V)$ to $\ul{\mathcal{A}}(F_a(m_x),F_bV)$ for $e+a\ge b$ defined as follows:
\begin{itemize}
  \item if $a > x$, then $F_a(m_x)$ is initial so $\ul{\mathcal{A}}(F_a(m_x),F_bV)$ is terminal and the map requires no data.
  \item if $a\le x$, then $F_a(m_x)\cong m$.
Since $e+a\ge b$, we have $b\le x+e$, so there is a map $\phi:F_{x+e}V\to F_bV$.
Then $\ul{\mathcal{A}}(m,F_{x+e}V)\to \ul{\mathcal{A}}(m,F_bV)$ is induced by $\phi$.
\end{itemize}
It is immediate that these maps are compatible with the structure morphisms of the subcategory of $\mathbb{N}^{\op}\times\mathbb{N}$ over which we are taking the limit and realize $\ul{\mathcal{A}}(m,F_{x+e}V)$ as the limit.
\end{proof}

\begin{lemma}
\label{lemma: Day's condition for filtered objects}
The inclusion of the reflective subcategory of $\mathbb{N}$-filtered objects into $\mathbb{N}$-presheaves satisfies Day's simplified condition (2) with respect to the Day convolution symmetric monoidal structure and the generators of Corollary~\ref{cor:generators}.
\end{lemma}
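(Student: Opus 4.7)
The plan is to use the explicit description of the internal hom object from a generator given in Lemma~\ref{lemma: description of internal hom from generator}. By that lemma, for a generator $m_x$ and any presheaf $V$, we have
\[
F_e(\ul{\mathcal{A}^{\mathbb{N}^{\op}}}(m_x,V))\cong\ul{\mathcal{A}}(m,F_{x+e}V),
\]
with the structure map $F_{e+1}(\ul{\mathcal{A}^{\mathbb{N}^{\op}}}(m_x,V))\to F_e(\ul{\mathcal{A}^{\mathbb{N}^{\op}}}(m_x,V))$ induced by the filtration morphism $F_{x+e+1}V\to F_{x+e}V$ of $V$ itself.

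By Day's criterion (Theorem~\ref{thm: Day reflective thm}), it suffices to show that if $V$ is filtered (i.e., $ic$ for some $c\in\Filt(\mathcal{A})$), then $\ul{\mathcal{A}^{\mathbb{N}^{\op}}}(m_x,V)$ is already a filtered object, so that the unit of the reflection is an isomorphism at this hom object. Equivalently, I must check that each structure map $F_{e+1}\to F_e$ of the internal hom is a monomorphism.

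This is where the forward step is simple: since $V$ is filtered, the morphism $F_{x+e+1}V\to F_{x+e}V$ is a monomorphism by definition. The functor $\ul{\mathcal{A}}(m,-):\mathcal{A}\to \mathcal{A}$ is a right adjoint (to $m\otimes -$, using the closed symmetric monoidal structure on $\mathcal{A}$), hence preserves all limits, in particular kernels, and therefore preserves monomorphisms. Applying this to the structure maps of $V$ shows that the induced maps
\[
\ul{\mathcal{A}}(m,F_{x+e+1}V)\to \ul{\mathcal{A}}(m,F_{x+e}V)
\]
are again monomorphisms, which is precisely the condition that $\ul{\mathcal{A}^{\mathbb{N}^{\op}}}(m_x,V)$ lies in the reflective subcategory of filtered objects.

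There is no real obstacle here; the argument is essentially bookkeeping given the explicit formula of Lemma~\ref{lemma: description of internal hom from generator}. The only care required is noting that the isomorphisms of that lemma are natural with respect to the structure maps of $V$, so that testing the reflection unit reduces to the statement that right adjoints preserve monomorphisms in $\mathcal{A}$.
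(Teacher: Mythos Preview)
Your proof is correct and follows essentially the same approach as the paper: use the explicit description from Lemma~\ref{lemma: description of internal hom from generator} and then observe that $\ul{\mathcal{A}}(m,-)$ preserves monomorphisms because it is a right adjoint. The only cosmetic difference is that you check the consecutive structure maps $F_{e+1}\to F_e$ while the paper checks the maps $F_e\to F_0$; these are equivalent characterizations of being filtered.
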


\begin{proof}
Let $V$ be an $\mathbb{N}$-filtered object and $m_x$ a $\mathbb{N}$-presheaf generator.
We must show that 
\[\ul{\mathcal{A}^{\mathbb{N}^{\op}}}(m_x,V)
\to ir\ul{\mathcal{A}^{\mathbb{N}^{\op}}}(m_x,V)
\]
is an isomorphism. 
It is equivalent to show that $\ul{\mathcal{A}^{\mathbb{N}^{\op}}}(m_x,V)$ is filtered. 
In turn this is equivalent to showing that the structure map from index $e$ to index $0$ is a monomorphism for all $e$ in $\mathbb{N}$.
By the description of Lemma~\ref{lemma: description of internal hom from generator}, it is enough to prove that
\[
\ul{\mathcal{A}}(m,F_{x+e}V) \to
\ul{\mathcal{A}}(m,F_xV)
\]
is a monomorphism for arbitrary $m$ and $x$.
But $F_{x+e}V\to F_xV$ is a monomorphism by assumption and the functor $\ul{\mathcal{A}}(m,-)$ is a right adjoint and thus preserves monomorphisms.
\end{proof}

\begin{cor}
\label{cor: product on filtered objects}
The category of $\mathbb{N}$-filtered objects in $\mathcal{A}$ is closed symmetric monoidal with product 
\[
F_e(V\bar\otimes W) \coloneqq \image((\colim_{a+b\ge e} F_aV\otimes F_bW)\to V\otimes W),
\]
unit 
\[
F_e(\mathbf{1}_{\Filt(\mathcal{A})})
\coloneqq
\begin{cases}
\mathbf{1}_\mathcal{A}&e=0\\
\varnothing &\text{otherwise}
\end{cases}
\]
and internal hom object
\[
F_e(\ul{\Filt(\mathcal{A})}(V,W))\coloneqq \lim_{e+a\ge b} \ul{\mathcal{A}}(F_aV,F_bW).
\]
\end{cor}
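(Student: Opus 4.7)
The plan is to apply Day's reflection theorem (Theorem~\ref{thm: Day reflective thm}) to the reflective subcategory inclusion of $\mathbb{N}$-filtered objects into $\mathcal{A}^{\mathbb{N}^{\op}}$, whose existence is supplied by Lemma~\ref{lemma: cat of filtered objects is reflective}. The strongly generating class is $\{m_x\}$ from Corollary~\ref{cor:generators}, and Day's simplified condition (2) has already been verified in Lemma~\ref{lemma: Day's condition for filtered objects}. Day's theorem then immediately provides a closed symmetric monoidal structure on $\Filt(\mathcal{A})$ together with a strong symmetric monoidal reflector and a lax symmetric monoidal inclusion.

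Next I would identify the three pieces of data explicitly. For the monoidal product, the discussion following Theorem~\ref{thm: Day reflective thm} yields the isomorphism $V\bar\otimes W \cong r(iV\otimes iW)$, where $\otimes$ is Day convolution. The reflector $r$ was described in Lemma~\ref{lemma: cat of filtered objects is reflective} as the objectwise image of the structure map into $F_0$; applying this to the Day convolution formula $F_e(V\otimes W)=\colim_{a+b\ge e}F_aV\otimes F_bW$ yields exactly the proposed formula
\[
F_e(V\bar\otimes W) \coloneqq \image\Bigl(\bigl(\colim_{a+b\ge e} F_aV\otimes F_bW\bigr)\to V\otimes W\Bigr).
\]
For the unit, the Day convolution unit is already filtered (the structure maps are either the identity or from the initial object, both monomorphisms), so it is fixed by the reflector, which gives the stated formula. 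For the internal hom, the discussion after Day's theorem shows that $i\ul{\Filt(\mathcal{A})}(V,W)\cong \ul{\mathcal{A}^{\mathbb{N}^{\op}}}(iV,iW)$, so the internal hom in $\Filt(\mathcal{A})$ is computed by the Day convolution formula for internal hom, namely the limit $\lim_{e+a\ge b}\ul{\mathcal{A}}(F_aV,F_bW)$.

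Little of this requires real work once Day's theorem is in hand; the main nontrivial input is the verification of Day's condition carried out in Lemma~\ref{lemma: Day's condition for filtered objects}, and the only remaining subtlety is recognizing that each of the three formulas in the corollary statement is precisely the output of applying the appropriate transported description from Day's theorem to the known Day convolution formulas and the explicit reflector.
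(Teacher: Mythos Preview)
Your proposal is correct and follows exactly the approach the paper intends: the corollary is an immediate consequence of Day's theorem (Theorem~\ref{thm: Day reflective thm}) applied via Lemma~\ref{lemma: Day's condition for filtered objects}, with the explicit formulas read off from the reflector description in Lemma~\ref{lemma: cat of filtered objects is reflective} and the discussion following Day's theorem. The paper does not even include a separate proof for this corollary, treating it as self-evident from the preceding material; your write-up simply makes the implicit reasoning explicit.
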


\subsection{Monoidal structure on complete filtered objects}

Finally we use Day's theorem again to transfer the monoidal structure on filtered objects to one on complete filtered objects.
Again, we make use of the assumptions of section~\ref{subsection: complete condition}.

\begin{lemma}
\label{lemma: hom from a ground object to a complete object is complete}
Let $W$ be a complete $\mathbb{N}$-filtered object and $m$ an object of $\mathcal{A}$.
Then the sequence $\ul{\mathcal{A}}(m,W)$ is a complete filtered object.
\end{lemma}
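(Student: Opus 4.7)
The plan is to exploit that $\ul{\mathcal{A}}(m,-)\colon \mathcal{A}\to\mathcal{A}$ is a right adjoint (hence preserves monomorphisms and all limits) and to reduce completeness of $\ul{\mathcal{A}}(m,W)$ to a short algebraic argument using the completeness of $W$. First I would identify the filtered structure: taking $x=0$ in Lemma~\ref{lemma: description of internal hom from generator}, the sequence $\ul{\mathcal{A}}(m,W)$ has component $\ul{\mathcal{A}}(m,F_eW)$ in filtration level $e$, with structure morphisms induced by the $F_aW\to F_eW$. Since $\ul{\mathcal{A}}(m,-)$ preserves monomorphisms, these structure maps are monic, so $\ul{\mathcal{A}}(m,W)$ is genuinely $\mathbb{N}$-filtered and the map $i_e$ from level $e$ to level $0$ is $\ul{\mathcal{A}}(m,i_e^W)$.

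Next I would compare $\coker i_e$ inside $\ul{\mathcal{A}}(m,W)$ with $\ul{\mathcal{A}}(m,\coker i_e^W)$. Applying the left-exact functor $\ul{\mathcal{A}}(m,-)$ to the short exact sequence $0\to F_eW\to F_0W\to \coker i_e^W$ and factoring through the coimage produces a natural monomorphism
\[
\ul{\mathcal{A}}(m,F_0W)/\ul{\mathcal{A}}(m,F_eW)\hookrightarrow \ul{\mathcal{A}}(m,\coker i_e^W).
\]
Taking the limit over $e$, which preserves monomorphisms, yields a monomorphism
\[
\iota\colon \lim_e\bigl(\ul{\mathcal{A}}(m,F_0W)/\ul{\mathcal{A}}(m,F_eW)\bigr)\hookrightarrow \lim_e\ul{\mathcal{A}}(m,\coker i_e^W).
\]
Let $\psi$ denote the canonical map from $\ul{\mathcal{A}}(m,F_0W)$ to the domain of $\iota$; completeness of $\ul{\mathcal{A}}(m,W)$ is exactly the assertion that $\psi$ is an isomorphism.

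The key observation is that $\iota\psi$ coincides with the natural map $\ul{\mathcal{A}}(m,F_0W)\to \lim_e\ul{\mathcal{A}}(m,\coker i_e^W)$, which is an isomorphism because $\ul{\mathcal{A}}(m,-)$ preserves the limit $F_0W\cong \lim_e\coker i_e^W$ provided by completeness of $W$. Since a monomorphism with a right inverse is an isomorphism, from $\iota\psi$ invertible and $\iota$ monic I conclude that $\iota$ is an isomorphism, whence $\psi=\iota^{-1}(\iota\psi)$ is too. The only mildly delicate ingredient is the componentwise monomorphism feeding into $\iota$, which is really just left-exactness of $\ul{\mathcal{A}}(m,-)$ in the Grothendieck category $\mathcal{A}$; every other step is formal manipulation of adjoints and limits.
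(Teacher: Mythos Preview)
Your argument is correct and follows essentially the same route as the paper's proof: your $\psi$ and $\iota$ are the paper's $r_0$ and $s_0$, and the paper likewise shows $s_0$ is monic (arguing its kernel vanishes componentwise by left-exactness of $\ul{\mathcal{A}}(m,-)$) and that the composite with $r_0$ is the isomorphism coming from completeness of $W$. The only cosmetic difference is that the paper cites Lemma~\ref{lemma: Day's condition for filtered objects} for the filtered structure rather than re-deriving it from Lemma~\ref{lemma: description of internal hom from generator} at $x=0$.
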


\begin{proof}
We saw that $\ul{\mathcal{A}}(m,W)$ was filtered in Lemma~\ref{lemma: Day's condition for filtered objects}.
To see that it is complete, we should verify that the canonical map 
\[\ul{\mathcal{A}}(m,F_0W)\xrightarrow{r_0} \lim_e \coker\left(\ul{\mathcal{A}}(m,F_eW)\xrightarrow{(i_e)_*} \ul{\mathcal{A}}(m,F_0W)\right)\]
is an isomorphism. 

Since $W$ is complete, we have 
\[
\ul{\mathcal{A}}(m,F_0W)\cong
\ul{\mathcal{A}}(m,\lim_e \coker i_e)\cong \lim_e\ul{\mathcal{A}}(m,\coker i_e)
\]
and under this chain of isomorphisms, the natural map 
\[
\begin{tikzcd}
\lim_e \coker\left(\ul{\mathcal{A}}(m,F_eW)\to \ul{\mathcal{A}}(m,F_0W)\right)
\dar{s_0}\\
\lim_e \left(\ul{\mathcal{A}}(m,\coker(F_eW\to F_0W))\right)
\end{tikzcd}
\]
is isomorphic to a right inverse to $r_0$.
Since $\mathcal{A}$ is Abelian, it then suffices to show that $s_0$ has no kernel.
But the kernel of $s_0$ passes inside the limit and for each index $e$ is null by left exactness of $\mathcal{A}(m,-)$. 
\end{proof}

\begin{lemma}
\label{lemma: shifts of complete objects are complete}
Let $W$ be a complete $\mathbb{N}$-filtered object and $x$ an element of $\mathbb{N}$.
Then the shifted complex $W[x]$ with $F_e(W[x]) \coloneqq F_{x+e}W$ is a complete filtered object.
\end{lemma}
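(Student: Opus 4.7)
The plan is to reduce the completeness of $W[x]$ directly to completeness of $W$ by invoking Lemma~\ref{alternate completion stages lemma}, applied at filtration index $x$ rather than $0$.

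First I would check that $W[x]$ is a filtered object at all: its structure maps $F_{e+1}(W[x])\to F_e(W[x])$ coincide with the maps $F_{x+e+1}W\to F_{x+e}W$, which are monic because $W$ itself is filtered. So $W[x]$ lives in $\Filt(\Aa)$ and it remains only to check the completeness condition, i.e., that the natural map
\[
F_0(W[x])\to \lim_e \coker\bigl(F_e(W[x])\to F_0(W[x])\bigr)
\]
is an isomorphism. Unfolding the definition of the shift, the target is $\lim_e\coker(F_{x+e}W\to F_x W)$, and the source is $F_xW$.

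Now apply Lemma~\ref{alternate completion stages lemma} to $\hat{W}$: it gives a natural isomorphism
\[
F_x\widehat{W}\cong \lim_{a\ge x}\coker\bigl(F_a W\to F_xW\bigr),
\]
and reindexing $a=x+e$ presents the right-hand side exactly as the target of the completion map for $W[x]$. Because $W$ is complete, the component $r_x:F_xW\to F_x\widehat{W}$ of the unit is an isomorphism (by Corollary~\ref{cor: cat of complete objects is reflective}, the unit $r:W\to \widehat{W}$ is an isomorphism of filtered objects, hence an isomorphism at every filtration stage). Composing these identifications, the completion map for $W[x]$ at index $0$ is an isomorphism. The analogous argument at each index $e$ follows from the same lemma applied at filtration degree $x+e$, which shows that $W[x]$ is indeed complete.

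There is no substantive obstacle here: everything has already been set up by Lemma~\ref{alternate completion stages lemma} and Corollary~\ref{cor: cat of complete objects is reflective}. The only mild care required is matching the indexing conventions so that the shift functor on filtrations aligns with the limit reindexing $a\mapsto x+e$.
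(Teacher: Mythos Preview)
Your argument is correct and follows the same underlying idea as the paper, namely identifying $F_0\widehat{W[x]}$ with $F_x\widehat{W}$ and then using that $W$ is complete. The execution differs: the paper constructs the comparison map $\tilde\phi:F_0\widehat{W[x]}\to F_x\widehat{W}$ by hand (as a lift of the evident monomorphism $\phi$ into $F_0\widehat{W}$), shows it is both monic and epic, and concludes by two-out-of-three; you instead invoke Lemma~\ref{alternate completion stages lemma} directly, which already packages this identification. Your route is shorter and avoids re-deriving content that lemma already establishes; the paper's is more self-contained at this point.

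Two minor remarks. First, completeness only requires the map at index $0$ to be an isomorphism, so your last sentence about other indices $e$ is unnecessary (though not wrong). Second, the phrase ``composing these identifications'' hides one small compatibility: you should note that under the isomorphism of Lemma~\ref{alternate completion stages lemma}, the canonical map $r_0^{W[x]}:F_xW\to\lim_{a\ge x}\coker(F_aW\to F_xW)$ corresponds to $r_x^W:F_xW\to F_x\widehat{W}$. This follows immediately from how the isomorphism in that lemma is constructed (componentwise via $\coker(F_aW\to F_xW)\hookrightarrow\coker i_a$), but it is worth saying explicitly.
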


\begin{proof}
For any $x$ and $e$ the map $\coker(F_{x+e}W\to F_xW)\to \coker (F_{x+e}W\to F_0W)$ is a monomorphism since $F_xW\to F_0W$ is.
Therefore the limit 
\[\lim_e \coker(F_{x+e}W\to F_xW) \xrightarrow{\phi} \lim_e \coker(F_{x+e}W\to F_0W)\cong \lim_e \coker(F_eW\to F_0W)
\]
is a monomorphism.
The left hand side is $F_0\widehat{W[x]}$ and the right hand side is $F_0\widehat{W}$.
Moreover, the further projection $\pi_x$ to $\coker(F_xW\to F_0W)$ factors through the projection to $\coker(F_xW\to F_xW)\cong 0$ so $\phi$ lifts to $F_0\widehat{W[x]}\xrightarrow{\tilde{\phi}}F_x\widehat{W}=\ker \pi_x$:
\[
\begin{tikzcd}
\lim_e \coker(F_{x+e}W\to F_xW)\rar{\phi}\dar& \lim_e \coker(F_eW\to F_0W)\dar{\pi_x}\\
\coker(F_xW\to F_xW)\rar& \coker(F_xW\to F_0W).
\end{tikzcd}
\]
The map $\tilde{\phi}$ is a monomorphism because it lifts the monomorphism $\phi$.

On the other hand the composition $F_xW\cong F_0W[x]\xrightarrow{r^{W[x]}_0} F_0\widehat{W[x]}\xrightarrow{\tilde{\phi}} F_x\widehat{W}$ agrees with the canonical map $F_xW\to F_x\widehat{W}$ which is an isomorphism because $W$ is complete.
This shows that $\tilde{\phi}$ is also an epimorphism.
Therefore $\tilde{\phi}$ is an isomorphism and by two out of three, so is $r^{W[x]}_0$.
\end{proof}

\begin{cor}\label{cor:reflCompl}
The reflective subcategory of complete $\mathbb{N}$-filtered objects inside $\mathbb{N}$-filtered objects satisfies Day's simplified condition (2) with respect to the symmetric monoidal structure of Corollary~\ref{cor: product on filtered objects} and the generators of Corollary~\ref{cor:generators}.
\end{cor}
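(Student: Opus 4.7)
The plan is to verify Day's simplified condition (2) directly, using the two preceding lemmas as the main inputs. Concretely, fix a generator $m_x$ (where $m$ is an object of $\mathcal{A}$ and $x\in\mathbb{N}$) and a complete filtered object $W$; we must show that the natural map
\[
\ul{\Filt(\mathcal{A})}(m_x, W)\longrightarrow i\,r\,\ul{\Filt(\mathcal{A})}(m_x, W)
\]
is an isomorphism, equivalently that the filtered object $\ul{\Filt(\mathcal{A})}(m_x, W)$ is already complete.

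First I would identify the internal hom concretely. Since internal homs transfer to the reflective subcategory of filtered objects by being computed in $\mathcal{A}^{\mathbb{N}^{\op}}$ (this is the content of the discussion following Theorem~\ref{thm: Day reflective thm} applied in the proof of Corollary~\ref{cor: product on filtered objects}), Lemma~\ref{lemma: description of internal hom from generator} gives the level-$e$ formula
\[
F_e\bigl(\ul{\Filt(\mathcal{A})}(m_x, W)\bigr)\;\cong\;\ul{\mathcal{A}}(m,\, F_{x+e}W),
\]
with structure maps induced by those of $W$. In other words, $\ul{\Filt(\mathcal{A})}(m_x, W)$ is isomorphic (as a filtered object) to $\ul{\mathcal{A}}(m,-)$ applied to the shifted filtered object $W[x]$ appearing in Lemma~\ref{lemma: shifts of complete objects are complete}.

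Next, I would assemble the two previous lemmas. Lemma~\ref{lemma: shifts of complete objects are complete} tells us that $W[x]$ is a complete filtered object, since $W$ is. Then Lemma~\ref{lemma: hom from a ground object to a complete object is complete} applied to $W[x]$ (and to the object $m$ of $\mathcal{A}$) tells us that the filtered object $\ul{\mathcal{A}}(m, W[x])$ is itself complete. Combining this with the identification above shows that $\ul{\Filt(\mathcal{A})}(m_x, W)$ is a complete filtered object, hence the unit of the reflection to $\Comp(\mathcal{A})$ is an isomorphism there. This verifies the simplified form of Day's condition for the strongly generating class $\{m_x\}$ (which strongly generates $\Filt(\mathcal{A})$ by Corollary~\ref{cor:generators}).

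The step to watch is really just the bookkeeping in the first paragraph: one has to be sure that the internal hom of filtered objects, and not merely of presheaves, is given by the same limit formula, so that Lemma~\ref{lemma: description of internal hom from generator} can be applied verbatim. This was established in Corollary~\ref{cor: product on filtered objects}, so no further work is required; the remainder of the argument is an immediate consequence of Lemmas~\ref{lemma: hom from a ground object to a complete object is complete} and~\ref{lemma: shifts of complete objects are complete}. Once Day's condition is verified, Theorem~\ref{thm: Day reflective thm} produces the closed symmetric monoidal structure on $\Comp(\mathcal{A})$ and the strong symmetric monoidal structure on the completion functor automatically, which is what is used in the body of the paper.
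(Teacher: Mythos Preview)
Your proof is correct and follows essentially the same approach as the paper: identify $\ul{\Filt(\mathcal{A})}(m_x,W)$ with $\ul{\mathcal{A}}(m,W[x])$ via Lemma~\ref{lemma: description of internal hom from generator}, then conclude by Lemmas~\ref{lemma: shifts of complete objects are complete} and~\ref{lemma: hom from a ground object to a complete object is complete}. The bookkeeping remark you flag about the internal hom being computed at the presheaf level is exactly the point the paper records parenthetically by citing the proof of Lemma~\ref{lemma: Day's condition for filtered objects}.
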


\begin{proof}
We are checking that for any generator $m_x$ and any complete filtered object $W$, the object 
\[
 \ul{\Filt(\mathcal{A})}(m_x,W)
 \] 
is complete.
We already have a description of this filtered object as a presheaf from Lemma~\ref{lemma: description of internal hom from generator} (and the proof of Lemma~\ref{lemma: Day's condition for filtered objects}): in index $e$ this presheaf has $\ul{\mathcal{A}}(m,F_{x+e}W)$. 
Then we are done by Lemmas~\ref{lemma: hom from a ground object to a complete object is complete} and~\ref{lemma: shifts of complete objects are complete}.
\end{proof}

\begin{cor}
\label{cor: product on complete filtered objects}
The category of complete $\mathbb{N}$-filtered objects in $\mathcal{A}$ is closed symmetric monoidal with product 
\[
V\hat\otimes W \coloneqq (V\bar{\otimes} W)^\wedge,
\]
and unit and internal hom calculated as in filtered objects.
\end{cor}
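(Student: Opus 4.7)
The plan is to deduce this corollary as a direct application of Day's reflective transfer theorem (Theorem \ref{thm: Day reflective thm}), using the preceding Corollary \ref{cor:reflCompl} as the crucial input. Indeed, Corollary \ref{cor:reflCompl} establishes that the reflective inclusion $\complete(\Aa) \hookrightarrow \Filt(\Aa)$ satisfies Day's simplified condition (2) with respect to the closed symmetric monoidal structure on $\Filt(\Aa)$ from Corollary \ref{cor: product on filtered objects} and the generating class $\{m_x\}$ from Corollary \ref{cor:generators}. Applying Day's theorem in the form $(3)\Rightarrow (1)$ immediately yields a closed symmetric monoidal structure on $\complete(\Aa)$, together with a lax symmetric monoidal inclusion into $\Filt(\Aa)$ and a strong symmetric monoidal completion reflector.

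Next I would identify the concrete formulas. Recall from the general discussion after Theorem \ref{thm: Day reflective thm} that once Day's condition is verified, the monoidal product on the reflective subcategory is computed by reflecting the ambient product, and internal homs are computed in the ambient category. For $V$ and $W$ complete filtered objects, this gives
\[
V \hat\otimes W \;\cong\; r(iV \bar\otimes iW) \;=\; (V \bar\otimes W)^\wedge
\]
and
\[
\ul{\complete(\Aa)}(V,W) \;\cong\; \ul{\Filt(\Aa)}(iV,iW),
\]
the latter being automatically complete by Corollary \ref{cor:reflCompl} (since it can be tested against the generators $m_x$, for which completeness was explicitly shown via Lemmas \ref{lemma: hom from a ground object to a complete object is complete} and \ref{lemma: shifts of complete objects are complete}).

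For the unit, I would observe that the unit $\mathbf{1}_{\Filt(\Aa)}$ has $\mathbf{1}_\Aa$ in filtration degree $0$ and $\varnothing$ elsewhere, so the limit $\lim_e \coker i_e$ stabilizes at $\mathbf{1}_\Aa$ and the unit is already complete. Hence the reflector acts by the identity and the unit in $\complete(\Aa)$ agrees with that of $\Filt(\Aa)$. No serious obstacle is expected here: all the technical content has been front-loaded into Corollary \ref{cor:reflCompl}, and the remainder is a bookkeeping exercise in unwinding Day's theorem. The only mild care needed is to confirm that the reflector indeed produces the stated formula $(V\bar\otimes W)^\wedge$, which is nothing but the definition of the completion functor applied to the filtered tensor product from Corollary \ref{cor: product on filtered objects}.
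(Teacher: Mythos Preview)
Your proposal is correct and follows exactly the approach implicit in the paper: the corollary is a direct consequence of Day's theorem (Theorem~\ref{thm: Day reflective thm}) applied to the reflective inclusion $\complete(\Aa)\hookrightarrow\Filt(\Aa)$, with Corollary~\ref{cor:reflCompl} supplying the verification of Day's simplified condition~(2), and the explicit formulas read off from the discussion following Theorem~\ref{thm: Day reflective thm}.
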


\subsection{Consequences for \texorpdfstring{$\sS$}{S}-modules}
The tensor product of $\sS$-modules can be described in terms of colimits and the monoidal product in the ground category.
Then because completion is a left adjoint and by Theorem~\ref{thm: Day reflective thm} and Corollary~\ref{cor:reflCompl}, completion commutes with the tensor product of $\sS$ modules.
That is, for $\sS$-modules $A$ and $B$ we have the following natural isomorphism:
\[
(A\bar{\otimes} B)^\wedge \cong \widehat{A}\hat\otimes\widehat{B}.
\]

Similarly, since completion is a left adjoint, it commutes with the colimit defining the composition product of $\sS$-modules so that for any filtered $\sS$-modules $A$ and $B$, we have the following sequence of natural isomorphisms:
\begin{align*}
(A\circ B)^\wedge 
&\cong
\left(\colim_k A(k)\bar\otimes_{\sS_k} (B)^{\bar\otimes k}\right)^\wedge
\\&\cong
\colim_k \widehat{A}(k)\hat\otimes_{\sS_k} \left(B^{\bar\otimes k}\right)^\wedge
\\&\cong
\colim_k \widehat{A}(k)\hat\otimes_{\sS_k} (\widehat{B})^{\hat\otimes k}
\\&\cong
\widehat{A}\hat\circ \widehat{B}.
\end{align*}
We may collect this into the following corollary.

\begin{cor}
\label{cor:completion is monoidal for circle product}
Completion is strong monoidal with respect to the induced composition products $\circ$ and $\hat{\circ}$ on $\mathbb{N}$-filtered $\sS$-modules and complete $\mathbb{N}$-filtered $\sS$-modules.
\end{cor}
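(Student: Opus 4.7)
The corollary asserts strong monoidality of the completion functor with respect to the composition product. The plan is to reduce the statement to properties already established for the underlying monoidal product on complete filtered objects, using that the composition product is built categorically from coproducts, coinvariants of finite group actions, and iterated tensor products.

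First I would record the key input: by Theorem~\ref{thm: Day reflective thm} combined with Corollary~\ref{cor:reflCompl}, the completion functor from filtered to complete filtered objects is a strong symmetric monoidal reflector, that is, there is a natural isomorphism $\widehat{V\bar\otimes W}\cong \widehat{V}\hat\otimes \widehat{W}$ and the reflector preserves the unit. In particular, completion is a left adjoint and therefore commutes with colimits, including coproducts and coequalizers (whence also coinvariants for finite group actions).

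Next I would assemble the composition product for $\sS$-modules. Writing
\[
A\circ B \;=\; \coprod_{k\ge 0} A(k)\bar\otimes_{\sS_k} B^{\bar\otimes k}
\]
(and similarly with $\hat\otimes$ in the complete setting), the strong monoidality of the completion on $\bar\otimes$ gives $(A(k)\bar\otimes B^{\bar\otimes k})^\wedge \cong \widehat{A}(k)\hat\otimes \widehat{B}^{\hat\otimes k}$, the preservation of coproducts gives compatibility with the outer $\coprod_k$, and the preservation of finite group coinvariants (as a coequalizer of an $\sS_k$-action) gives the compatibility with the $\sS_k$-coinvariants. Chaining these isomorphisms yields the natural isomorphism
\[
(A\circ B)^\wedge \;\cong\; \widehat{A}\,\hat\circ\,\widehat{B},
\]
exactly as displayed in the paragraph preceding the corollary.

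Finally, preservation of the unit $I$ is immediate: $I$ is already complete (it is concentrated in arity one with the trivial filtration), so $\widehat{I}\cong I$. Putting these together verifies the two defining natural isomorphisms for a strong symmetric monoidal functor; coherence with associator, unitor, and symmetry constraints is then automatic from the analogous coherences for $\bar\otimes$ and $\hat\otimes$ together with naturality. I do not expect any real obstacle here: the only subtle point is making sure the coinvariants step uses that completion, as a left adjoint, commutes with the reflexive coequalizer that computes $(-)_{\sS_k}$, which is standard once one notes that the underlying category is $\mathbb{Q}$-linear (invoked for symmetric operads as in the conventions section).
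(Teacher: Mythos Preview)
Your proposal is correct and follows essentially the same route as the paper: use that completion is a strong monoidal reflector (Day's criterion plus Corollary~\ref{cor:reflCompl}) for the tensor step, and that completion is a left adjoint for the colimit steps assembling $\circ$. One small remark: you do not need the $\mathbb{Q}$-linear hypothesis for the coinvariants step here, since completion, being a left adjoint, preserves all colimits (including the coequalizer computing $(-)_{\sS_k}$) without any assumption on the characteristic; that hypothesis is used elsewhere in the paper for $\Gr$, not for completion.
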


\subsection{Filtered gr-dg objects}

Recall that $\fapgA$ is the category of filtered gr-dg objects, that is, filtered predifferential graded objects such that the induced predifferential structure on the associated graded object is actually a differential.
Likewise $\capgA$ consists of complete predifferential graded objects such that the induced predifferential structure on the associated graded object is actually a differential.

\begin{lemma}
\label{lemma: filtered formal coreflective}
The category of filtered gr-dg objects $\fapgA$ is a reflective subcategory of the one of filtered pg objects.
\end{lemma}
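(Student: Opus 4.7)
My plan is to construct the left adjoint (reflector) explicitly. Given a filtered pg object $(\vec{X}, F, d)$, the idea is to enlarge the filtration to the smallest one containing $F$ that forces $d^2$ to raise filtration degree by one. A natural candidate is
\[
\tilde{F}_p X \coloneqq \sum_{\substack{q,k \geq 0 \\ q+k \geq p}} d^{2k}(F_q X),
\]
interpreted as the supremum of these subobjects of $X = F_0 X$, which exists because $\Aa$ is Grothendieck (hence has joins of subobjects). I would then set $r(\vec{X}, F, d) \coloneqq (\vec{X}, \tilde{F}, d)$, with the same underlying pg object.

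The verifications, in order, would be:
\begin{itemize}
\item \emph{$\tilde F$ is a decreasing filtration}: $\tilde{F}_{p+1}X \subseteq \tilde{F}_p X$ because every pair $(q,k)$ with $q+k \geq p+1$ satisfies $q+k \geq p$. The inclusions are monomorphisms since both sides are subobjects of $X$.
\item \emph{$d$ is filtered with respect to $\tilde F$}: $d(d^{2k}(F_q X)) = d^{2k}(d(F_q X)) \subseteq d^{2k}(F_q X) \subseteq \tilde{F}_p X$, using that $d$ is filtered for $F$.
\item \emph{$(X, \tilde{F}, d)$ is gr-dg}: $d^2(d^{2k}(F_q X)) = d^{2(k+1)}(F_q X) \subseteq \tilde{F}_{p+1} X$, since $q + (k+1) \geq p+1$.
\item \emph{The unit is a filtered map}: the identity $(\vec{X}, F, d) \to (\vec{X}, \tilde F, d)$ is filtered because $F_p X \subseteq \tilde{F}_p X$, taking $(q,k) = (p,0)$.
\end{itemize}

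For the universal property, I would take an arbitrary morphism $f \colon (\vec{X},F,d) \to (\vec{Y}, G, e)$ in $\Filt(\pgA)$ with $(\vec{Y},G,e)$ gr-dg and show that $f$ automatically respects $\tilde F$. Iterating the gr-dg condition in $Y$ gives $e^{2k}(G_q Y) \subseteq G_{q+k} Y$, so
\[
f(\tilde{F}_p X) = \sum e^{2k}(f(F_q X)) \subseteq \sum e^{2k}(G_q Y) \subseteq \sum G_{q+k} Y \subseteq G_p Y.
\]
Thus $f$ factors uniquely through $r(\vec X, F, d)$ (uniqueness is automatic since the underlying pg morphism is unchanged), and functoriality of $r$ is immediate from the defining formula and the fact that morphisms in $\Filt(\pgA)$ commute with predifferentials and preserve the original filtration. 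The main potential obstacle is purely bookkeeping: verifying that joins of subobjects interact correctly with the application of the (additive, but not cocontinuous) operator $d$, which reduces to the standard fact that $d^{2k}(\sum_j F_{q_j} X) = \sum_j d^{2k}(F_{q_j} X)$ as subobjects when indexed sums/images are computed in the Grothendieck category $\Aa$. No serious obstruction is anticipated.
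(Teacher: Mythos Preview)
Your proposal is correct and takes essentially the same approach as the paper. Your reflector $\tilde F_p X = \sum_{q+k\ge p} d^{2k}(F_q X)$ agrees with the paper's $\fdg{F}^p(V)=\sum_{i=0}^p d^{2i}F_{p-i}V$ (the extra terms with $q+k>p$ are absorbed into terms with $q+k=p$ because $d$ is already filtered for $F$), and the adjunction argument---showing that any filtered pg map into a gr-dg target automatically respects the enlarged filtration---is identical.
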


\begin{proof}
Let $(V,\, F,\, d)$ be a filtered predifferential graded object. 

The reflector takes this object to a filtered gr-dg object $(V,\fdg{F},d)$ constructed as follows.
Define $\fdg{F}^p(V)$ to be the following subobject of $F_0V$:
\[
\fdg{F}^p(V)=\sum_{i=0}^p d^{2i}F_{p-i}V.
\]
The connecting map $\fdg{F}^p V\to V$ is monic by construction, which forces all the connecting maps to be monic.
By construction $d$ respects this filtration.
This is clearly functorial.

For adjunction, consider a map $f$ from a filtered pg object $X$ into a filtered gr-dg object $Y$.
Then $d^{2i}F_{p-i}X$ must be taken by $f$ to $d^{2i}F_{p-i}Y\subset F_pY$ so that $f$ respects the more restrictive filtration $\fdg{F}$.
On the other hand given a map $f$ from $\fdg{X}$ to $Y$, the same formula for $f$ certainly respects the less restrictive filtration on $X$.
Thus the identity on the underlying map $F_0X\to F_0Y$ induces the adjunction.
\end{proof}

\begin{remark}
In fact, filtered gr-dg objects is also a coreflective subcategory. 
The coreflector takes the object $(V,\, F,\, d)$ to a filtered gr-dg object $(\check{V},\check{F},\check{d})$ constructed as follows.
We define $F_p\check{V}$ to be the subset of elements $v$ of $F_pV$ such that there exist indices $p_i$ for all natural numbers $i$ such that 
\begin{enumerate}
  \item The sequence of $p_i$ begins with $p$ and is strictly increasing: \
  \[p=p_0 < p_1< \cdots\]
  \item The element $d^{2i}v$ is equivalent to $0\pmod{F_{>p_{i}}V}$.
\end{enumerate}
This subset is a subobject (choosing the maximum of the $p_i$ sequences for a sum) and closed by construction under $d$, which induces $\check{d}$.

An equivalent construction proceeds as follows.
Consider the functor $T$ so that $F_p(TV)$ is the pullback
\[
\begin{tikzcd}
F_p(TV)\rar[dashed]\dar[dashed,tail]
&
\colim_{q>p}F_qV\dar[tail]\\
F_pV\rar{d^2}& F_pV
\end{tikzcd}
\]
Then $F_p\check{V} = \lim_r F_p(T^{\cdot r}V)$.
\end{remark}

\begin{lemma}
\label{lemma: completion preserves formal dg}
Completion preserves the gr-dg property.
\end{lemma}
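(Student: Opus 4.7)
The plan is to reformulate the gr-dg condition as a factorization of $d^2$ through a filtration shift and then invoke functoriality of completion. The gr-dg condition on $(V, F, d)$ says that $d^2(F_p V) \subseteq F_{p+1} V$ for each $p \ge 0$, or equivalently that the degree $-2$ endomorphism $d^2$ factors, as a filtered map of graded objects, through the canonical inclusion $\iota : V[1] \hookrightarrow V$ (using the shift notation of Definition~3.2, so that $F_p V[1] = F_{p+1} V$). Write this factorization as $d^2 = \iota \cdot \tilde{d^2}$ with $\tilde{d^2} : V \to V[1]$.

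Completion is a functor on filtered graded objects which respects the induced predifferential, so applying it produces $\widehat{d^2} = \hat d^2 = \widehat{\iota} \cdot \widehat{\tilde{d^2}}$, with $\widehat{\tilde{d^2}} : \hat V \to \widehat{V[1]}$. It remains to identify $\widehat{V[1]}$ with $\hat V[1]$ compatibly with the inclusion. Using the alternate description of the completion from Lemma~\ref{alternate completion stages lemma}, we compute
\begin{align*}
F_p \widehat{V[1]}
& \cong \lim_{a \ge p} \coker\bigl( F_{a+1} V \to F_{p+1} V \bigr) \\
& \cong \lim_{b \ge p+1} \coker\bigl( F_b V \to F_{p+1} V \bigr) \\
& \cong F_{p+1} \hat V = F_p \hat V[1],
\end{align*}
where the second isomorphism is simply reindexing by $b = a+1$. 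These isomorphisms are natural in the structure maps of $V$ and therefore assemble into an isomorphism $\widehat{V[1]} \cong \hat V[1]$ in $\Filt(\pg\Aa)$ under which $\widehat{\iota}$ corresponds to the structure inclusion $F_{\bullet+1}\hat V \hookrightarrow F_\bullet\hat V$. Consequently $\hat d^2$ factors through $\hat V[1]$, which is exactly the gr-dg condition for $\hat V$.

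The only step requiring any care is the identification of the completion of a filtration shift with the corresponding shift of the completion; this is elementary via the reindexing above, but must be checked so as to ensure that $\widehat{\iota}$ becomes the expected structure map.
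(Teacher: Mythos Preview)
Your proof is correct and is essentially the same argument as the paper's, just packaged differently. The paper works directly at the level of the limits from Lemma~\ref{alternate completion stages lemma}: since $d^2 : F_jY \to F_jY$ factors through $F_{j+1}Y$, one obtains a factorization of the induced map on $\lim_a \coker(F_aY \to F_jY)$ through $\lim_a \coker(F_aY \to F_{j+1}Y) \cong F_{j+1}\hat Y$. You instead encode the gr-dg condition as a factorization through the filtration shift $V[1]$, apply completion functorially, and then unwind $\widehat{V[1]} \cong \hat V[1]$ via exactly the same reindexing of the limits from Lemma~\ref{alternate completion stages lemma}; your final identification is the content of the paper's Lemma~\ref{lemma: shifts of complete objects are complete}. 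The two arguments share the same key input and the same computation.
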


\begin{proof}
We use the characterization of Lemma~\ref{alternate completion stages lemma} (which uses the fact that the ground category satisfies AB4).
Let $Y$ be gr-dg. 
Consider $F_j\hat{Y}\xrightarrow{d^{2}}F_j\hat{Y}$. 
This is an endomorphism of $\lim_a \coker (F_aY\to F_jY)$ which admits an epimorphism from $F_jY$.
Then since $Y$ is gr-dg, the map $F_jY\xrightarrow{d^{2}}F_jY$ factors through $F_{j+1}Y$.
This means that we get a similar factorization
\[
\lim_a \coker (F_aY\to F_jY) \xrightarrow{d^{2}} \lim_a \coker (F_aY\to F_{j+1}Y).
\]
\end{proof}

\begin{cor}\label{cor: complete gr-dg reflective}
Complete gr-dg objects $\compa(\Aa)$ are a reflective subcategory of both complete pg objects and filtered gr-dg objects.
\end{cor}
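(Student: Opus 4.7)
My plan is to deduce this corollary by gluing together the two preceding lemmas (the reflectivity of $\fapgA$ in filtered pg objects, and the fact that completion preserves the gr-dg property) together with the already-established reflectivity of complete filtered objects in filtered objects (Corollary~\ref{cor: cat of complete objects is reflective}). Nothing new needs to be invented; the corollary is a formal consequence.

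For the reflectivity of $\compa(\Aa)$ inside $\Comp(\pgA)$, I would define the candidate reflector on a complete pg object $V$ by first forgetting down to a filtered pg object, then applying the reflector $V \mapsto \widetilde{V}$ of Lemma~\ref{lemma: filtered formal coreflective} to land in $\fapgA$, and finally applying the completion functor. Lemma~\ref{lemma: completion preserves formal dg} guarantees the output lies in $\compa(\Aa)$. The adjunction is then verified by chaining the three adjunctions: a complete gr-dg target $Y$ satisfies
\[
\Hom_{\compa(\Aa)}(\widehat{\widetilde{V}},Y) \cong \Hom_{\fapgA}(\widetilde{V},Y) \cong \Hom_{\Filt(\pgA)}(V,Y) \cong \Hom_{\Comp(\pgA)}(V,Y),
\]
the last step using that $V$ is already complete, so no reflector is needed on that side.

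For the reflectivity of $\compa(\Aa)$ inside $\fapgA$, I would simply restrict the completion functor $\Filt(\pgA) \to \Comp(\pgA)$ to $\fapgA$. Lemma~\ref{lemma: completion preserves formal dg} again ensures that the image lies inside $\compa(\Aa)$, so that completion restricts to a functor $\fapgA \to \compa(\Aa)$. The adjunction then follows by restricting the underlying adjunction and checking that filtered gr-dg maps and filtered pg maps between gr-dg objects agree.

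I do not expect any real obstacle here: the essential content was already done in Lemmas~\ref{lemma: filtered formal coreflective} and~\ref{lemma: completion preserves formal dg}, and the corollary is a bookkeeping exercise combining them. The only care required is to note that, in both cases, the counits of the reflective adjunctions remain isomorphisms on objects that already lie in the subcategory, which is automatic from the fact that the constituent reflectors $V \mapsto \widetilde{V}$ and $V \mapsto \widehat{V}$ each have this property.
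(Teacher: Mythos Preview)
Your proposal is correct and follows essentially the same approach as the paper: define the reflector as $X\mapsto \widehat{\fdg{X}}$ (completion of the gr-dg reflector), invoke Lemma~\ref{lemma: completion preserves formal dg} to see that this lands in $\compa(\Aa)$, and then verify the adjunction by chaining the completion adjunction, the gr-dg reflector adjunction, and full faithfulness of the relevant subcategory inclusions. The paper organizes the chain of hom-set isomorphisms slightly differently (passing through $\Hom_{\Comp}$ and $\Hom_{\Filt}$ rather than $\Hom_{\fapgA}$ directly), but the content is identical.
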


\begin{proof}
By Lemma~\ref{lemma: completion preserves formal dg}, in both cases the functor
$X\mapsto \widehat{\fdg{X}}$ can serve as reflector (on filtered gr-dg objects this is the same as plain completion). 
That is, at least it is a functor to the right codomain category.

Then the proof in both cases is a chain of canonical isomorphisms using the formal dg reflector $X\mapsto \fdg{X}$ and completion $X\mapsto \hat{X}$. 
Each one comes either from an adjunction, or full faithfulness of one of the right adjoints.

We check that, for $Y$ complete and gr-dg and $X$ filtered, we have:
\begin{align*}
\Hom_{\compa\ }(\widehat{\fdg{X}},Y)&\cong \Hom_{\Comp}(\widehat{\fdg{X}},Y)
\\&\cong \Hom_{\Filt}(\fdg{X},Y)
\\&\cong \Hom_{\fapg}(\fdg{X},Y)
\\&\cong \Hom_{\Filt}(X,Y).
\end{align*}
Now if $X$ is itself either complete or gr-dg, then this is finally isomorphic by the full faithfulness of the subcategories to the subcategory morphisms.
\end{proof}

\subsection{Monoidal structure on (complete) filtered gr-dg objects}

We use Day's theorem again to transfer the monoidal structure on (complete) filtered objects to one on (complete) gr-dg filtered objects.
Again, we uses the assumptions of section~\ref{subsection: complete condition} .

\begin{lemma}
\label{lemma: hom from a ground object to an fdg object is fdg}
Let $W$ be an object in $\fapgA$ and $m$ an object of $\Filt(\pgA)$.
Then the sequence $\ul{\mathcal{A}}(\fdg{m},W)$ is gr-dg.
\end{lemma}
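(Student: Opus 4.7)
The plan is to equip the filtered pg object $\ul{\mathcal{A}}(\fdg{m},W)$ with its usual commutator predifferential and then compute its square explicitly. Recall that, at each stage of the limit description of the internal hom (Corollary~\ref{cor: product on filtered objects}), the predifferential is given on a homogeneous element $f$ by
\[
d(f) = d_W \circ f - (-1)^{|f|}\, f \circ d_{\fdg{m}}.
\]
A routine Koszul-sign computation shows that
\[
d^2(f) = d_W^2 \circ f - f \circ d_{\fdg{m}}^2,
\]
since the two cross terms $(-1)^{|f|-1}\, d_W f d_{\fdg{m}}$ and $(-1)^{|f|}\, d_W f d_{\fdg{m}}$ cancel. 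So the task reduces to showing that each of the two remaining terms strictly raises the filtration degree.

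Next I would use the explicit filtration
\[
F_e\, \ul{\mathcal{A}}(\fdg{m},W) \;=\; \lim_{e+a\ge b}\, \ul{\mathcal{A}}(F_a\fdg{m},\, F_b W),
\]
so that an element $f$ of $F_e$ is precisely a map sending $F_a\fdg{m}$ into $F_{a+e}W$ for every $a$. By hypothesis $W$ is gr-dg, hence $d_W^2$ factors through a strict one-step increase in filtration, and $\fdg{m}$ is gr-dg by Lemma~\ref{lemma: filtered formal coreflective}, so $d_{\fdg{m}}^2$ likewise factors through a strict one-step increase. Tracking filtrations:
\[
d_W^2 \circ f \colon F_a\fdg{m} \xrightarrow{f} F_{a+e}W \xrightarrow{d_W^2} F_{a+e+1}W,
\]
\[
f \circ d_{\fdg{m}}^2 \colon F_a\fdg{m} \xrightarrow{d_{\fdg{m}}^2} F_{a+1}\fdg{m} \xrightarrow{f} F_{a+e+1}W.
\]
Both composites therefore land in $F_{e+1}$ of the hom object, so $d^2$ itself factors through the filtration shift by one. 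This is exactly the condition that the associated graded of the predifferential on $\ul{\mathcal{A}}(\fdg{m},W)$ squares to zero, i.e., the hom object is gr-dg.

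The only mildly delicate point is ensuring that the commutator predifferential is well defined on the limit and that the ``strictly raises filtration'' condition can be verified levelwise — but this is immediate from the fact that the limit defining $F_e\,\ul{\mathcal{A}}(\fdg{m},W)$ is computed in $\mathcal{A}$ and is functorial in the filtration indices. No genuinely hard step is expected.
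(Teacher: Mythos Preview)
Your proof is correct and follows essentially the same approach as the paper: compute that the square of the commutator predifferential on the hom is $d_W^2\circ f - f\circ d_{\fdg{m}}^2$, and then use that both $W$ and $\fdg{m}$ are gr-dg to see each term raises filtration by one. The paper's proof is simply a more compressed version of the same argument.
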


\begin{proof}
On the hom object, the square of the differential is the commutator with $d^2$, interpreted in $\fdg{m}$ and in $W$.
The $p$th filtered component of $\ul{\mathcal{A}}(\fdg{m},W)$ consists of morphisms which increase filtration degree by $p$.
Since $d^2$ increases filtration degree by $1$ on both sides, the commutator with $d^2$ of such a morphism increases filtration degree by $p+1$. 
\end{proof}

\begin{lemma}
\label{lemma: shifts of fdg objects are fdg}
Let $W$ be a filtered gr-dg object and $x$ an element of $\mathbb{N}$.
Then the shifted complex $W[x]$ with $F_eW[x] \coloneqq F_{x+e}W$ is gr-dg.
\end{lemma}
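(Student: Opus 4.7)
The plan is to unwind the definitions and observe that the gr-dg condition for $W[x]$ at filtration level $e$ is literally the gr-dg condition for $W$ at filtration level $x+e$. Since the shift $W[x]$ only reindexes the filtration (leaving the underlying predifferential graded object, and in particular $d$, untouched), nothing interesting happens at the level of the predifferential: the map $d^2\colon F_eW[x]\to F_eW[x]$ is by construction the same map as $d^2\colon F_{x+e}W\to F_{x+e}W$, and similarly $F_{e+1}W[x]=F_{x+e+1}W$.

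Concretely, the proof will consist of the following single verification. The hypothesis that $W$ is gr-dg means that for every $p\in\nN$ the composite
\[
F_p W\xrightarrow{d^2} F_p W
\]
factors (necessarily uniquely, since $F_{p+1}W\hookrightarrow F_pW$ is monic) through $F_{p+1}W$. Applying this with $p=x+e$ and then re-reading the result through the definition $F_eW[x]\coloneqq F_{x+e}W$ gives exactly a factorization of $d^2\colon F_eW[x]\to F_eW[x]$ through $F_{e+1}W[x]$, which is what is required.

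There is no real obstacle here; the only point worth flagging is that this shift convention differs slightly from the earlier display of $k$-shifted filtrations (which artificially truncated the low-index part at $F_0$), so one should be careful that the indexing used to state this lemma is the naive one $F_eW[x]=F_{x+e}W$ with no truncation. With that convention fixed, the gr-dg property is preserved tautologically, and no further argument is required.
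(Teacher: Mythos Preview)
Your proof is correct and is essentially the same as the paper's own argument: the paper's proof is the single sentence ``The filtered gr-dg condition for $F_{n+x}W$ yields the condition for $F_nW$,'' which is exactly the reindexing observation you spell out. Your remark about the inline shift convention versus the earlier truncated one is a reasonable caution, but since the lemma defines $F_eW[x]\coloneqq F_{x+e}W$ directly there is no ambiguity.
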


\begin{proof}
The filtered gr-dg condition for $F_{n+x}W$ yields the condition for $F_nW$.
\end{proof}

\begin{cor}\label{cor:reflCompldg}
The reflective subcategory of (complete) gr-dg objects inside (complete) $\mathbb{N}$-filtered objects satisfies Day's simplified condition (2) with respect to the symmetric monoidal structure of Corollary~\ref{cor: product on filtered objects} (Corollary~\ref{cor: product on complete filtered objects}) and the generators of Corollary~\ref{cor:generators} reflected into (complete) gr-dg objects via Lemma~\ref{lemma: generators pass to reflective subcategories}.
\end{cor}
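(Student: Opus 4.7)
Plan: I would follow the template of Corollary~\ref{cor:reflCompl} almost verbatim, with the completeness condition replaced throughout by the gr-dg condition. The key structural observation is that at each reflective step in the tower the same generating class and the same description of internal homs from generators apply; only the ``closure property'' being verified changes.

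First I would invoke Theorem~\ref{thm: Day reflective thm} to reduce to the simplified condition (2) for the inclusion $\fapgA \hookrightarrow \Filt(\pgA)$ (and, analogously, $\compa(\Aa) \hookrightarrow \Comp(\pgA)$). Applying Corollary~\ref{cor:generators} with $\pgA$ in place of $\Aa$ (which is again a Grothendieck category) supplies a strongly generating class $\{m_x\}$ of the ambient categories $\Filt(\pgA)$ and $\Comp(\pgA)$; Lemma~\ref{lemma: generators pass to reflective subcategories} then reflects this to the generating class $\{\fdg{m_x}\}$ (resp.\ $\{\widehat{\fdg{m_x}}\}$) referenced in the statement.

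Next I would compute, for such a generator and for a (complete) gr-dg object $W$, the internal hom $\ul{\Filt(\pgA)}(\fdg{m_x}, W)$. The description of Lemma~\ref{lemma: description of internal hom from generator} identifies its filtration stage $e$ with $\ul{\pgA}(\fdg{m}, F_{x+e}W)$, that is, with an ordinary pg-hom into a shift of $W$. Lemma~\ref{lemma: shifts of fdg objects are fdg} ensures that shifting preserves the gr-dg property, and Lemma~\ref{lemma: hom from a ground object to an fdg object is fdg} then implies that the full hom object is gr-dg. In the complete case, Lemmas~\ref{lemma: hom from a ground object to a complete object is complete} and~\ref{lemma: shifts of complete objects are complete} from the parallel proof of Corollary~\ref{cor:reflCompl} simultaneously guarantee completeness.

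I expect the only genuine technical wrinkle to be confirming that the description of Lemma~\ref{lemma: description of internal hom from generator} still applies with the generator in its gr-dg reflected form $\fdg{m_x}$ rather than $m_x$, since the filtration on $\fdg{m_x}$ is a priori more refined than on $m_x$. Once this compatibility of filtration stages is verified, the verification of Day's condition becomes a direct assembly of the cited lemmas, and Theorem~\ref{thm: Day reflective thm} delivers the conclusion.
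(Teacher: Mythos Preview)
Your plan tracks the paper's argument closely, but there is a substantive divergence that creates a gap. Day's simplified condition~(2) in Theorem~\ref{thm: Day reflective thm} requires the test objects $d$ to range over a strongly generating class $G_{\mathcal{D}}$ for the \emph{ambient} category $\mathcal{D}$ (here $\Filt(\pgA)$, respectively $\Comp(\pgA)$). You instead test against the reflected class $\{\fdg{m_x}\}$; Lemma~\ref{lemma: generators pass to reflective subcategories} only guarantees that this class strongly generates the \emph{subcategory} $\mathcal{C}$, not the ambient $\mathcal{D}$. You give no argument that $\{\fdg{m_x}\}$ still strongly generates $\Filt(\pgA)$, and there is no a priori reason it should.

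The paper sidesteps this by testing against the unreflected $m_x$ directly; its opening clause is recording that $m_x$ is already complete and filtered, so it serves as a generator of the ambient category without any reflection step. With that choice the wrinkle you flag evaporates: Lemma~\ref{lemma: description of internal hom from generator} applies verbatim to $m_x$, and the remainder of the verification via Lemmas~\ref{lemma: hom from a ground object to an fdg object is fdg} and~\ref{lemma: shifts of fdg objects are fdg} proceeds exactly as both you and the paper outline.
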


\begin{proof}
By Lemma~\ref{lemma: shifts of fdg objects are fdg}, and because $m_x$ is complete and filtered, we need only check that for any generator $m_x$ and any gr-dg object $W$, that the object 
\[
 \ul{\Filt(\mathcal{A})}(m_x,W)
 \] 
is gr-dg.
We already have a description of this filtered object as a presheaf from Lemma~\ref{lemma: description of internal hom from generator}: in index $e$ this presheaf has $\ul{\mathcal{A}}(m,F_{x+e}W)$. 
Then we are done by Lemmas~\ref{lemma: hom from a ground object to an fdg object is fdg} and~\ref{lemma: shifts of fdg objects are fdg}.
\end{proof}

\begin{cor}
\label{cor: product on fdg objects}
The category of (complete) gr-dg objects in $\mathcal{A}$ is closed symmetric monoidal with product 
\[
\fdg{(V\bar{\otimes} W)},\qquad (\text{resp. } \fdg{(V\hat{\otimes} W)},)
\]
and unit and internal hom calculated as in filtered objects.

By abuse of notation, we still denote by $\bar{\otimes}$ (resp. $\hat{\otimes}$) the tensor product in the category of (complete) gr-dg objects.
\end{cor}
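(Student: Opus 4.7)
The plan is to invoke Day's theorem (Theorem~\ref{thm: Day reflective thm}) as a direct consequence of the work done in the previous two lemmas and corollary. All the ingredients are already in place: the ambient category $\Filt(\Aa)$ (resp.\ $\Comp(\Aa)$) is closed symmetric monoidal by Corollary~\ref{cor: product on filtered objects} (resp.\ Corollary~\ref{cor: product on complete filtered objects}); the inclusion of (complete) gr-dg objects into (complete) filtered objects is reflective by Lemma~\ref{lemma: filtered formal coreflective} (resp.\ Corollary~\ref{cor: complete gr-dg reflective}), with reflector $V \mapsto \fdg{V}$ (resp.\ $V \mapsto \widehat{\fdg{V}}$); and Day's simplified condition (2) is established in Corollary~\ref{cor:reflCompldg} for the strongly generating class obtained from Corollary~\ref{cor:generators} via Lemma~\ref{lemma: generators pass to reflective subcategories}.

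Applying Day's theorem then transports the symmetric monoidal structure to the reflective subcategory. The product is computed as the reflector applied to the ambient product, giving the stated formula $\fdg{(V\bar\otimes W)}$ in the filtered case and $\fdg{(V\hat\otimes W)}$ in the complete case. Using the discussion following Theorem~\ref{thm: Day reflective thm}, the internal hom is computed as in the ambient category: explicitly,
\[
\ul{\capgA}(V,W) \;\cong\; \ul{\Comp(\pgA)}(V,W),
\]
and analogously in the filtered case.

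It remains only to observe that the monoidal unit descends unchanged because it is already gr-dg. Indeed, the filtered unit is $\mathbbm{1}_{\Aa}$ concentrated in filtration degree $0$ with zero predifferential, so trivially the square of the predifferential factors through the next filtration stage. There is no real obstacle here, since the reflector $\fdg{(-)}$ constructed in Lemma~\ref{lemma: filtered formal coreflective} is built from sums of images of iterates of the predifferential and thus canonically carries the predifferential; no additional compatibility needs to be checked beyond what Day's criterion already subsumes.
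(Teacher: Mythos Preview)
Your proof is correct and follows exactly the approach the paper intends: the corollary is stated without proof because it is a direct application of Day's theorem (Theorem~\ref{thm: Day reflective thm}) to the reflective embedding, with Day's condition having just been verified in Corollary~\ref{cor:reflCompldg}. One small notational quibble: the ambient categories here are $\Filt(\pgA)$ and $\Comp(\pgA)$ rather than $\Filt(\Aa)$ and $\Comp(\Aa)$, since gr-dg objects carry a predifferential; the paper is tacitly applying the earlier appendix results with $\pgA$ in place of $\Aa$, as announced at the start of the appendix.
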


\section{Gr-flat objects and the associated graded}
\label{appendix: gr-flat}

This section builds up the necessary material to prove Proposition~\ref{prop: graded of gr-flat coop is coop}.

\begin{lemma}
\label{lemma: pushout products and the filt reflector}
Let the ground category $\Aa$ be a Grothendieck category. 
Let $\mathcal{C}$ be the class of monomorphisms with flat cokernel.
Then $\mathcal{C}$ is closed under the pushout product.
\end{lemma}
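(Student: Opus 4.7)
The plan is to show directly that the pushout product of two monomorphisms with flat cokernels is again a monomorphism whose cokernel is flat. Let $f : A \to B$ and $g : C \to D$ lie in $\mathcal{C}$, with flat cokernels $F \coloneqq B/A$ and $G \coloneqq D/C$. I want to understand the pushout product
\[
f \square g : (B \otimes C) \cup_{A \otimes C} (A \otimes D) \to B \otimes D.
\]

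The first step is to exploit flatness to produce short exact sequences. Because $F$ is flat, tensoring the short exact sequence $0 \to A \to B \to F \to 0$ with any object of $\Aa$ preserves exactness; likewise for $G$. I would use this to assemble the $3 \times 3$ commutative diagram obtained by tensoring $0 \to A \to B \to F \to 0$ with each term of $0 \to C \to D \to G \to 0$. Every row and column of this diagram is then a short exact sequence, with $F \otimes G$ occupying the bottom-right corner.

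The second step is to identify the pushout $P \coloneqq (B \otimes C) \cup_{A \otimes C} (A \otimes D)$ with its image in $B \otimes D$. Since $B \otimes C \hookrightarrow B \otimes D$ and $A \otimes D \hookrightarrow B \otimes D$ are both monic by the previous step, a diagram chase (or the $3 \times 3$ lemma) on the grid shows that the canonical map $P \to B \otimes D$ is a monomorphism, and that its cokernel is canonically isomorphic to $F \otimes G$. Flatness is preserved under $\otimes$, so $F \otimes G$ is flat, which shows $f \square g \in \mathcal{C}$.

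The main technical obstacle is verifying rigorously, in the Grothendieck (rather than module) setting, that (i) flatness of the cokernel really does imply that tensoring preserves short exactness in both variables, and (ii) the pushout of two monomorphisms is computed in the expected way and maps monically to $B \otimes D$. Both hold by standard arguments, but they must be invoked with care since we only assume $\Aa$ to be a closed symmetric monoidal Grothendieck category in which $\otimes$ preserves colimits in each variable. After these points are in place, the remainder of the proof is a short diagram chase.
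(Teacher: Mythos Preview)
Your proposal is correct and follows essentially the same approach as the paper: both exploit that flat cokernels make the short exact sequences pure (hence preserved under tensoring), assemble a $3\times 3$-type diagram, and conclude via a diagram chase that the pushout product is monic with cokernel the tensor product of the two flat cokernels. The paper (following Hovey) organizes the chase slightly differently---placing the pushout explicitly in the middle row and then applying the snake lemma to the bottom two rows---but the substance is the same.
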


\begin{proof}
This proof follows~\cite[Theorem 7.2]{Hovey:CPMCSRT} word for word, although both the hypotheses and conclusion here are weaker.
Let $A_1\to X_1$ and $A_2\to X_2$ be monomorphisms with flat cokernel $C_1$ and $C_2$ respectively.
Then $0\to A_1\to X_1\to C_1\to 0$ is pure exact so remains exact after tensoring with $A_2$ or $X_2$. Then by the $3\times 3$ lemma, the middle row of the following diagram is exact:
\[
\begin{tikzcd}
0\rar&A_1\otimes A_2\dar\rar&X_1\otimes A_2 \dar\rar& C_1\otimes A_2\dar\rar&0
\\
0\rar&A_1\otimes X_2\dar\rar&(X_1\otimes A_2)\amalg_{(A_1\otimes A_2)}(A_1\otimes X_2) \dar\rar& C_1\otimes A_2\dar\rar&0
\\
0\rar&A_1\otimes X_2\rar&X_1\otimes X_2 \rar& C_1\otimes X_2\rar&0.
\end{tikzcd}
\]
Since $C_1$ is flat, the map $C_1\otimes A_2\to C_1\otimes X_2$ is a monomorphism with cokernel $C_1\otimes C_2$. 
Then the snake lemma applied to the bottom two rows shows that 
\[
(X_1\otimes A_2)\amalg_{(A_1\otimes A_2)}(A_1\otimes X_2)\to X_1\otimes X_2
\]
has zero kernel and cokernel isomorphic to $C_1\otimes C_2$, which is thus flat.
\end{proof}

\begin{lemma}
\label{pushouts preserve flat cok}
Let $A\gets B\to C$ and $A'\gets B\to C'$ be diagrams in $\Cc$, and suppose given a map of diagrams from the former to the latter which is the identity on $B$ and a monomorphism with flat cokernel on the other two entries.
Then the induced morphism between the pushouts is a monomorphism with flat cokernel.
\end{lemma}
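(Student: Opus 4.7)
The plan is to factor the induced map between the pushouts in two steps and apply the snake lemma twice. Write the morphism as the composition
\[
A \amalg_B C \to A' \amalg_B C \to A' \amalg_B C',
\]
obtained by first replacing $A$ by $A'$ and then $C$ by $C'$, while keeping everything else in the diagram fixed.

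To analyze the first factor, I would apply the snake lemma to the commutative diagram
\[
\begin{tikzcd}
B\rar{=}\dar & B\rar\dar & 0\rar\dar & 0
\\
0\rar & A\amalg C\rar & A'\amalg C\rar & \coker(A\to A')\rar & 0,
\end{tikzcd}
\]
whose bottom row is short exact (using only that $A \to A'$ is monic with cokernel flat) and whose vertical maps into $A\amalg C$ and $A'\amalg C$ are the ``difference of structure maps'' whose cokernels define the pushouts $A\amalg_B C$ and $A'\amalg_B C$ in the Abelian category $\Aa$. The snake lemma then yields the exact sequence
\[
0 \to A\amalg_B C \to A'\amalg_B C \to \coker(A\to A') \to 0,
\]
exhibiting the first factor as a monomorphism with cokernel $\coker(A\to A')$, which is flat by hypothesis. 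A parallel diagram swapping the roles of $A$ and $C$ gives the same conclusion for the second factor $A'\amalg_B C \to A'\amalg_B C'$.

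Composing, the map $A\amalg_B C \to A'\amalg_B C'$ is then monic. One further snake lemma, applied to the standard diagram comparing cokernels along a composition $X\xrightarrow{f} Y\xrightarrow{g} Z$ of monomorphisms, yields
\[
0 \to \coker(A\to A')\to \coker(A\amalg_B C \to A'\amalg_B C') \to \coker(C\to C')\to 0.
\]
Since flatness is closed under extensions in a Grothendieck category (via the long exact sequence of $\mathrm{Tor}$, in which vanishing of $\mathrm{Tor}_1$ on the two outer terms forces vanishing in the middle), the central cokernel is flat.

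The main technical point is setting up the first snake-lemma diagram correctly: one has to verify commutativity, which amounts to the identity $B\to A\to A' = B\to A'$, and check that the resulting connecting morphism indeed witnesses injectivity of $A\amalg_B C \to A'\amalg_B C$ (a free consequence of the top row starting with the identity on $B$ and ending in $0$). Once this bookkeeping is in place the remaining steps are essentially formal.
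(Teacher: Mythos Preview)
Your argument is correct and complete. It differs from the paper's in both halves, so a brief comparison is worthwhile.

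For injectivity, the paper uses that the Grothendieck category has enough injectives: embed $A\amalg_B C$ into an injective $I$, extend over $A\hookrightarrow A'$ and $C\hookrightarrow C'$ separately, and assemble a map $A'\amalg_B C'\to I$ extending the original embedding, which forces $A\amalg_B C\to A'\amalg_B C'$ to be monic. Your snake-lemma factorisation avoids injectives entirely and is in that sense more elementary.

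For the cokernel, the paper observes directly that cokernels commute with the pushout (a colimit), so
\[
\coker(A\amalg_B C\to A'\amalg_B C')\;\cong\;\coker(A\to A')\amalg_{0}\coker(C\to C')\;\cong\;\coker(A\to A')\oplus\coker(C\to C'),
\]
which is flat as a coproduct of flat objects. Your route produces the same object only up to an extension and then appeals to extension-closure of flatness via $\mathrm{Tor}$. That is valid, but note that your short exact sequence actually splits (it is the third-isomorphism-theorem sequence for a composition whose middle cokernel is a biproduct), so invoking $\mathrm{Tor}$ is heavier machinery than necessary; the paper's ``colimits commute with cokernels'' observation gets the direct sum decomposition for free.
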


\begin{proof}
First, we argue that the induced morphism is a monomorphism. 
Let $A\amalg_B C\to I$ be a monomorphism from the pushout to an injective object (Grothendieck categories have enough injectives).
Then since $A\to A'$ and $C\to C'$ are monomorphisms, there are extensions $A'\to I$ of $A\to I$ and $C'\to I$ of $C\to I$. 
Because these morphisms are extensions, they remain compatible with $B\to I$, and so pass to an extension $A'\amalg_B C'\to I$ of $A\amalg_B C\to I$. 
Then $A\amalg_B C\to A'\amalg_B C'$ is the first morphism in a monic composition, hence is monic.

To see the statement about the cokernel, note that cokernel (in the pushout diagram category in $\Cc$ and in $\Cc$) commutes with the colimit functor, which is a left adjoint. 
Then the cokernel of the morphism of pushouts is the sum of the cokernels, and flat objects are closed under sum.
\end{proof}

\begin{lemma}
\label{lemma: diagram product of gr-flats is gr-flat}
The $\mathbb{N}$-indexed diagram product of two gr-flat filtered objects is a gr-flat filtered object.
\end{lemma}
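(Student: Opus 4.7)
The plan is to prove that each transition map $F_{p+1}(V \otimes W) \hookrightarrow F_p(V \otimes W)$ in the diagram (Day convolution) product is a monomorphism with flat cokernel equal to $\bigoplus_{a+b=p} \Gr_a V \otimes \Gr_b W$. This simultaneously establishes that the product is a filtered object (all transition maps are monomorphisms) and that it is gr-flat (each cokernel is flat, as a direct sum of tensor products of flat objects).

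To carry this out, I would build $F_p(V \otimes W)$ from $F_{p+1}(V \otimes W)$ by successively adjoining the antidiagonal terms $F_a V \otimes F_b W$ with $a+b=p$. Enumerating these pairs as $(p,0),(p-1,1),\ldots,(0,p)$, define an increasing chain of subobjects
\[ U_0 = F_{p+1}(V \otimes W) \subseteq U_1 \subseteq \cdots \subseteq U_{p+1} = F_p(V \otimes W), \]
where $U_{i+1}$ is the pushout of $U_i$ and $F_{p-i}V \otimes F_i W$ along their intersection. Using the gr-flatness hypothesis, the relevant monomorphisms $F_a V \hookrightarrow F_0 V$ have flat cokernels, and tensoring with such maps preserves monomorphisms (by pure-exactness as in the argument of Lemma~\ref{lemma: pushout products and the filt reflector}). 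Consequently, all the relevant $F_a V \otimes F_b W$ embed compatibly as sub-objects of $F_0 V \otimes F_0 W$, and one computes
\[ U_i \cap (F_{p-i} V \otimes F_i W) = (F_{p-i+1} V \otimes F_i W) \cup_{F_{p-i+1} V \otimes F_{i+1} W} (F_{p-i} V \otimes F_{i+1} W). \]
By Lemma~\ref{lemma: pushout products and the filt reflector} applied to the monomorphisms $F_{p-i+1}V \hookrightarrow F_{p-i}V$ and $F_{i+1}W \hookrightarrow F_i W$ (both with flat cokernels $\Gr_{p-i} V$ and $\Gr_i W$), the pushout-product inclusion above is a monomorphism with flat cokernel $\Gr_{p-i} V \otimes \Gr_i W$. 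Lemma~\ref{pushouts preserve flat cok} then propagates this to the pushout, giving $U_i \hookrightarrow U_{i+1}$ as a monomorphism with the same flat cokernel. Composing over $i$ yields the desired decomposition.

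The principal obstacle will be justifying the intersection formula $U_i \cap (F_{p-i}V \otimes F_iW) = (F_{p-i+1}V \otimes F_iW) \cup (F_{p-i}V \otimes F_{i+1}W)$, since a priori the $U_i$ are built iteratively and their intersections with new pieces need not be computable just from the $F_{p+1}$ filtration. The resolution uses that gr-flatness makes every $F_a V \otimes F_b W$ a genuine subobject of $F_0 V \otimes F_0 W$ (with flat cokernel), so that all these unions and intersections can be computed inside the ambient object $F_0 V \otimes F_0 W$, where they behave as expected. A secondary technical point, trivial given the setup, is that the direct sum $\bigoplus_{a+b=p} \Gr_a V \otimes \Gr_b W$ is flat because flat objects are closed under tensor products and finite (in fact all) coproducts in a Grothendieck category.
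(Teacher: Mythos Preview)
Your approach uses the same two key lemmas as the paper (Lemma~\ref{lemma: pushout products and the filt reflector} and Lemma~\ref{pushouts preserve flat cok}) and the same overall strategy of realizing the inclusion $F_{p+1}\hookrightarrow F_p$ as an iterated pushout with controlled cokernels. The organization differs: rather than building $F_p$ upward from $F_{p+1}$, the paper rewrites the colimit defining $F_n(X\otimes Y)$ as a zigzag whose upper row consists of pushout-products
\[
(F_{n-j}X\otimes F_{j-1}Y)\amalg_{F_{n-j}X\otimes F_jY}(F_{n-j-1}X\otimes F_jY)
\]
and whose lower row is $F_{n-j-1}X\otimes F_jY$. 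This maps to the analogous zigzag for $F_{n-1}$ by morphisms which are monomorphisms with flat cokernel on the upper row (Lemma~\ref{lemma: pushout products and the filt reflector}) and identities on the lower row; Lemma~\ref{pushouts preserve flat cok} then handles the iterated pushout directly.

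The weak point in your version is precisely the one you flag: the intersection formula
\[
U_i\cap(F_{p-i}V\otimes F_iW)=(F_{p-i+1}V\otimes F_iW)\cup(F_{p-i}V\otimes F_{i+1}W).
\]
Saying these ``behave as expected'' inside $F_0V\otimes F_0W$ is not a proof. The subobject lattice of an object in a Grothendieck category is modular but in general not distributive, so computing $U_i\cap(-)$ when $U_i$ is itself an iterated join requires a genuine argument. What you would need is something like $(F_aV\otimes F_bW)\cap(F_cV\otimes F_dW)=F_{\max(a,c)}V\otimes F_{\max(b,d)}W$, which can indeed be extracted from the pure-exactness furnished by the flat cokernels, but this takes work you have not supplied. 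The paper's reorganization is designed exactly to avoid this: by presenting both $F_n$ and $F_{n-1}$ as colimits of parallel zigzag diagrams with an explicit map of diagrams between them, no subobject-lattice computation is ever needed.
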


\begin{proof}
Let $X$ and $Y$ be gr-flat filtered objects. 
We would like to show that $F_n(X\otimes Y)\to F_{n-1}(X\otimes Y)$ is a monomorphism. 
The colimit defining $F_n(X\otimes Y)$ is of the following diagram:
\[
\begin{tikzcd}[column sep=tiny]
F_nX \otimes F_0Y
&&
F_{n-1}X\otimes F_1Y
&&
\cdots
\\
&
F_nX\otimes F_1Y\ar[ur]\ar[ul]
&&
F_{n-1}X\otimes F_2Y\ar[ur]\ar[ul]
\end{tikzcd}
\]
We can rewrite this as the following colimit:
\[
\begin{tikzcd}[column sep=-3.5em]
\displaystyle
(F_nX\otimes F_0Y)\coprod_{F_nX\otimes F_1Y}(F_{n-1}X\otimes F_1Y)
&&
\displaystyle
(F_{n-1}X\otimes F_1Y)\coprod_{F_{n-1}X\otimes F_2Y}(F_{n-2}X\otimes F_2Y)
\\
&F_{n-1}X\otimes F_1Y\ar[ur]\ar[ul]
&&\cdots\ar[ul]
\end{tikzcd}
\]
The map from $F_n(X\otimes Y)$ to $F_{n-1}(X\otimes Y)$ is realized under colimit by a map of diagrams from this latter diagram to
the following diagram 
\[
\begin{tikzcd}[column sep=tiny]
F_{n-1}X\otimes F_0Y
&&
F_{n-2}X\otimes F_1Y
\\
&F_{n-1}X\otimes F_1Y\ar[ur]\ar[ul]
&&\cdots\ar[ul]
\end{tikzcd}
\]
where the maps involved are 
\begin{enumerate}
\item monomorphisms with flat cokernel on the upper row by Lemma~\ref{lemma: pushout products and the filt reflector}, and
\item identities on the lower row.
\end{enumerate}
The map of colimits can thus be constructed inductively from maps of pushout diagrams where the morphism at the corner is the identity and all other morphisms are monomorphisms with flat cokernel. 
The inductive step is supplied by Lemma~\ref{pushouts preserve flat cok}.
\end{proof}

\begin{lemma}
\label{lemma:gr-flat is monoidal subcat}
The full subcategory of gr-flat (complete) filtered objects in a Gro\-then\-dieck category is a monoidal subcategory of (complete) filtered objects and the restriction of $\Gr$ to this subcategory is strong symmetric monoidal.
\end{lemma}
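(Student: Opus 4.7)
The plan is to verify three things: (a) the monoidal unit is gr-flat, (b) gr-flat objects are closed under the monoidal product in both $\Filt(\Aa)$ and $\Comp(\Aa)$, and (c) the functor $\Gr$ restricted to gr-flat objects is strong symmetric monoidal. Claim (a) is immediate: the unit $\mathbf{1}_{\Filt(\Aa)}$ is $\mathbf{1}_\Aa$ in filtration degree $0$ and $\varnothing$ elsewhere, so its associated graded is $\mathbf{1}_\Aa$ concentrated in degree $0$, which is flat (as the tensor unit).

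For (b) in the filtered case, the main work is already done by Lemma~\ref{lemma: diagram product of gr-flats is gr-flat}: the induction in its proof, via Lemma~\ref{pushouts preserve flat cok}, establishes that each connecting map in the diagram $\{\colim_{a+b\ge p} F_aV \otimes F_bW\}_p$ is a monomorphism with flat cokernel. One then needs to identify this diagram tensor product with the filtered tensor product (defined as the image in $V\otimes W$ of the $p$-th stage of this colimit): when $V$ and $W$ are gr-flat the canonical map from the $p$-th stage into $V\otimes W$ turns out to be monic, so the image coincides with the colimit itself, giving gr-flatness of $V\bar\otimes W$. For the complete case I would show that completion preserves gr-flatness, which follows from the standard comparison $\Gr V \cong \Gr \hat V$ (extracted from the formula $F_p\hat V = \lim_{q\ge p} F_pV/F_qV$ of Lemma~\ref{alternate completion stages lemma} together with the short exact sequences $F_{p+1}V \to F_pV \to \Gr_p V$), and then use that $V\hat\otimes W = (V\bar\otimes W)^\wedge$.

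For (c), the decomposition of each cokernel can be read out of the iterated-pushout presentation underlying Lemma~\ref{lemma: diagram product of gr-flats is gr-flat}. By Lemma~\ref{lemma: pushout products and the filt reflector}, the pushout product of the two inclusions $F_{a+1}V \to F_aV$ and $F_{b+1}W \to F_bW$ has cokernel $\Gr_a V \otimes \Gr_b W$; summing over pairs with $a+b=p$ and passing to the colimit yields a natural isomorphism $\Gr_p(V\bar\otimes W) \cong \coprod_{a+b=p} \Gr_a V \otimes \Gr_b W$, which is exactly $(\Gr V \otimes \Gr W)_p$ in the graded category. Compatibility with the associator, unit, and symmetry isomorphisms follows by naturality, and the complete case is obtained by applying the same identification to $(V\bar\otimes W)^\wedge$ and invoking $\Gr V \cong \Gr \hat V$ once more. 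The main obstacle is precisely the bookkeeping of (c): one must verify that the successive pushout quotients of the colimit defining $F_p(V\bar\otimes W)$ assemble cleanly, without collapse, into the advertised direct sum. Gr-flatness is exactly what makes this possible, since it ensures the relevant short exact sequences survive tensoring and the pushout cokernels remain flat at each inductive step.
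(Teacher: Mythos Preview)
Your treatment of (a) and (b) matches the paper's almost exactly: the essential input is Lemma~\ref{lemma: diagram product of gr-flats is gr-flat}, which shows that for gr-flat $V$ and $W$ the Day convolution computed in $\Aa^{\nN^{\op}}$ is already a filtered object (so the reflector $\Filt$ is the identity on it) and is again gr-flat. The paper then uses, as you do, that completion preserves the associated graded to handle the complete case.

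Where you diverge is in (c). You compute $\Gr_p(V\bar\otimes W)$ directly, tracking cokernels through the iterated-pushout presentation of $F_p(V\bar\otimes W)$ and invoking the pushout-product cokernel identification from the proof of Lemma~\ref{lemma: pushout products and the filt reflector}. The paper instead argues categorically: since (by the above) the filtered and complete products agree with the diagram-level Day convolution on gr-flat objects, it suffices to know that $\Gr:\Aa^{\nN^{\op}}\to\Aa^{\ob\nN}$ is strong monoidal for the Day convolution structures, and this holds because $\Gr$ is the reflector of the normal reflective embedding $\Aa^{\ob\nN}\hookrightarrow\Aa^{\nN^{\op}}$ displayed at the start of Appendix~\ref{appendix: categorical stuff}. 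Your route makes the isomorphism $\Gr_p(V\bar\otimes W)\cong\coprod_{a+b=p}\Gr_aV\otimes\Gr_bW$ concrete and avoids appealing to Day's criterion for that particular embedding, at the cost of the inductive bookkeeping you correctly flag; the paper's route is a one-liner once the normal-embedding framework is in place.
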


\begin{proof}
We'll do the complete filtered case, but the filtered case is more or less a sub-case of this.
Let $X$ and $Y$ be gr-flat complete filtered objects.
The monoidal product of $X$ and $Y$ in complete filtered objects is
\[
(\Filt(X\otimes Y))^\wedge
\]
But by Lemma~\ref{lemma: diagram product of gr-flats is gr-flat}, the regular diagram product $X\otimes Y$ is already filtered so $\Filt$ is an isomorphism on it. 
Then $\Gr(\widehat{X\otimes Y})\cong \Gr(X\otimes Y)$ since completion preserves associated graded. 
Again by Lemma~\ref{lemma: diagram product of gr-flats is gr-flat}, $X\otimes Y$ is gr-flat.
This suffices to establish that gr-flat constitute a monoidal subcategory.

We have already essentially showed that the restriction of $\Gr$ to this subcategory is strong. Specifically, we've already argued that the second and third step of the composition
 \[
\Gr(X)\otimes_{\Gr}\Gr(Y)\to \Gr(X\otimes Y)\xrightarrow{\Gr(\Filt(\ ))}\Gr(X\bar\otimes Y)\xrightarrow{\Gr(\widehat{\ })}\Gr(X\hat\otimes Y),
\]
where $\otimes$ stands for $\mathbb{N}^{\op}$-indexed diagrams, are isomorphisms.
The first step is also an isomorphism because $\Gr$ is the reflector of a closed normal embedding, so the adjoint inclusion is strong monoidal.
\end{proof}

\begin{remark}
The gr-flat condition is more or less essential; if $F_iX/F_{i+1}X$ is not flat then tensoring with a module with trivial filtration can yield a diagram which is not filtered.
\end{remark}

Now we are ready to prove Proposition~\ref{prop: graded of gr-flat coop is coop}.

\begin{proof}
[Proof of Proposition~\ref{prop: graded of gr-flat coop is coop}]
The dual of~\cite[Lemma 3.1.1]{bF17} says that a unit preserving colax symmetric monoidal functor takes cooperads to cooperads.
The original lemma does not require cocompleteness of the ground category or the preservation of colimits in each variable by the monoidal product.
Therefore the dual does not require completeness or (unlikely) limit-preservation properties.
The definition of an operad in that reference uses only the monoidal product and morphisms in the ambient category, so cooperads in a monoidal subcategory of $\Cc$ coincide with cooperads in $\Cc$ whose underlying objects are in the monoidal subcategory.\footnote{We cannot use the dual of~\cite[Theorem 12.11(1)]{YauJohnson:FPAM} here without modification because this last statement fails---the monadic construction of operads there uses colimits intimately.
This implies that the category of Yau--Johnson cooperads in a monoidal subcategory of $\Cc$ may not even be well-defined (if the subcategory is not complete), and that if they are well-defined the isomorphism with cooperads whose underlying objects lie in the monoidal subcategory may fail.}
Now Lemma~\ref{lemma:gr-flat is monoidal subcat} says that $\Gr$ is a strong monoidal functor from the subcategory of gr-flat complete filtered objects in $\Cc$ to graded objects in $\Cc$, and that its image lies in the subcategory of degreewise flat objects.
\end{proof}

\section{Model category structures}
\label{appendix: MCS on gr-dg objects}

In this appendix, we endow the category $\compa(\ringK\text{-}\Mod)$ of complete gr-dg $\ringK$-modules with a cofibrantly generated model structure. We propose to consider the graded quasi-isomorphisms as a class of weak equivalences and a set of generating cofibrations similar to the one described in \cite{CESLW19}. We describe the fibrations and show that the model category structure on complete gr-dg $\ringK$-modules is combinatorial and is a monoidal model category structure. Then we transfer the cofibrantly generated model category structure to the category of complete curved operads via the free curved operad functor and to the category of complete algebras over a curved operad by means of the corresponding free functor and we study the cofibrant objects. We finally show a base change result for the categories of algebras over some curved operads.

We work over unbounded chain complexes over a ring $\ringK$. In this appendix, we assume that $\ringK$ is a field of characteristic $0$.

\subsection{Cofibrantly generated model structure}

We recall some definitions and a result from \cite[2.1]{mH91}.

\begin{defi}
Let $I$ be a class of maps in a category $\Mm$.
\begin{enumerate}
\item
A map is \emph{$I$-injective} if it has the right lifting property with respect to every map in $I$. The class of $I$-injective maps is denoted $I$-inj.
\item
A map is \emph{$I$-projective} if it has the left lifting property with respect to every map in $I$. The class of $I$-projective maps is denoted $I$-proj.
\item
A map is an \emph{$I$-cofibration} if it has the left lifting property with respect to every $I$-injective map. The class of $I$-cofibrations is the class ($I$-inj)-proj and is denoted $I$-cof.
\item
A map is an \emph{$I$-fibration} if it has the right lifting property with respect to every $I$-projective map. The class of $I$-fibrations is the class ($I$-proj)-inj and is denoted $I$-fib.
\item
We assume that $\Mm$  is cocomplete. A map is a \emph{relative $I$-cell complex} if it is a transfinite composition of pushouts of elements of $I$. 
Such a morphism $f : A \to B$ is the composition of a $\lambda$-sequence $X : \lambda \to \Mm$, for an ordinal $\lambda$ such that, for each $\beta$ such that $\beta + 1 < \lambda$, there is a pushout square
\[
\begin{tikzcd}
C_\beta \ar{d}{g_\beta} \ar{r} & X_\beta \ar{d}\\
D_\beta \ar{r} & X_{\beta + 1} \arrow[lu, phantom, "\ulcorner", very near start]
\end{tikzcd}
\]
such that $g_\beta \in I$. 
We denote the collection of relative $I$-cell complexes by \emph{$I$-cell}.
\end{enumerate}
\end{defi}

\begin{defi}
Let $\Mm$ be a cocomplete category and $\Nn$ be a collection of morphisms of $\Mm$. An object $A \in \Mm$ is \emph{$\alpha$-small relative to $\Nn$}, for a cardinal $\alpha$, if for all $\alpha$-filtered ordinals $\lambda$ and all $\lambda$-sequences
\[ X_0 \to X_1 \to \dots \to X_\beta \to \cdots \]
such that $X_\beta \to X_{\beta +1}$ is in $\Nn$ for $\beta + 1 < \lambda$, the map of sets
\[ \colim_{\beta < \lambda} \Hom_{\Mm}(A,\, X_\beta) \to \Hom_{\Mm}(A,\, \colim_{\beta < \lambda} X_\beta) \]
is an isomorphism. We say that $A$ is \emph{small relative to $\Nn$} if it is $\alpha$-small relative to $\Nn$ for some $\alpha$. We say that $A$ is \emph{small} if it is small relative to $\Mm$.
\end{defi}

\begin{defi}
A model category $\Cc$ is said to be \emph{cofibrantly generated} if there are sets $I$ and $J$ of maps such that:
\begin{enumerate}
\item
the domains of the mas in $I$ are small relative to $I$-cell,
\item
the domains of the maps in $J$ are small relative to $J$-cell,
\item
the class of fibrations is $J$-inj,
\item
the class of trivial fibrations is $I$-inj.
\end{enumerate}
The set $I$ is called the set of \emph{generating cofibrations} and the set $J$ is called the set of \emph{generating trivial cofibrations}.
\end{defi}

We now recall the following theorem from \cite[Theorem 2.1.19]{mH91}. It shows how to construct cofibrantly generated model categories.

\begin{thm}
\label{thm: cofibrantly generated model structure}
Suppose $\Mm$ is a complete and cocomplete category. Suppose $\Ww$ is a subcategory of $\Mm$, and $I$ and $J$ are sets of maps of $\Mm$. Then there is a cofibrantly generated model structure on $\Mm$ with $I$ as the set of generating cofibrations, $J$ as the set of generating trivial cofibrations, and $\Ww$ as the subcategory of weak equivalences if and only if the following conditions are satisfied:
\begin{enumerate}
\item
the subcategory $\Ww$ has the two-out-of-three property and is closed under retracts,
\item
the domains of $I$ are small relative to $I$-cell,
\item
the domains of $J$ are small relative to $J$-cell,
\item
$I$-inj $= \Ww \cap J$-inj,
\item
$J$-cof $\subseteq \Ww \cap I$-cof.
\end{enumerate}
\end{thm}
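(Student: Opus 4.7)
The plan is to prove the two directions of this equivalence in the standard manner of Hovey's recognition theorem. The \emph{only if} direction is essentially by inspection: if such a cofibrantly generated model structure exists, then (1) is part of the model category axioms; (2) and (3) are part of the definition of cofibrantly generated; (4) expresses the fact that trivial fibrations are exactly fibrations that are weak equivalences (combined with the definition of generating (trivial) cofibrations via lifting properties); and (5) follows because elements of $J$-cell are automatically weak equivalences and cofibrations, whence so are retracts, and $J$-cof is the class of retracts of $J$-cell complexes.

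For the \emph{if} direction, I would \emph{define} the classes: cofibrations $\coloneqq I$-cof, trivial cofibrations $\coloneqq J$-cof, fibrations $\coloneqq J$-inj, trivial fibrations $\coloneqq I$-inj, weak equivalences $\coloneqq \Ww$. The 2-out-of-3 and retract axiom for $\Ww$ is hypothesis (1); classes defined by lifting properties are automatically closed under retracts. Bicompleteness is hypothesis. The nontrivial axioms are the lifting and factorization axioms.

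For factorization, the key tool is Quillen's small object argument. Applied to $I$ using hypothesis (2), it factors every morphism as a relative $I$-cell complex followed by an $I$-injective map; the former lies in $I$-cof, and the latter, by hypothesis (4), is a fibration and a weak equivalence. Applied to $J$ using hypothesis (3), it factors every morphism as a relative $J$-cell complex followed by a $J$-injective map; the former lies in $J$-cof, which by hypothesis (5) consists of weak equivalences that are $I$-cofibrations, and the latter is a fibration. For lifting, one half is tautological: $I$-cofibrations lift against $I$-injectives $= \Ww \cap J$-inj (trivial fibrations), and $J$-cofibrations lift against $J$-injectives (fibrations). The nontrivial direction is that any cofibration which is a weak equivalence lifts against any fibration; this is obtained by the standard retract argument: factor such a map $f$ using the $J$-small object argument as a trivial cofibration $j$ followed by a fibration $p$, observe using 2-out-of-3 that $p \in \Ww \cap J$-inj $= I$-inj by (4), then use the lifting property of $f$ against $p$ to realize $f$ as a retract of $j$, hence as an element of $J$-cof.

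The main obstacle is the interlocking of conditions (4) and (5): (4) ensures that $I$-injectivity correctly identifies trivial fibrations, while (5) controls $J$-cofibrations within the class of weak equivalences. Making the retract argument for the lifting axiom work demands precisely this pair of hypotheses, and verifying that these abstract lifting statements collapse to the usual $(\mathsf{cof}, \mathsf{triv.\ fib})$ and $(\mathsf{triv.\ cof}, \mathsf{fib})$ weak factorization systems requires careful bookkeeping with the classes $I$-cof, $J$-cof, $I$-inj, $J$-inj. Since this theorem is a standard and well-documented result (see Hovey~\cite{mH91}, Theorem 2.1.19), I would in practice simply cite it rather than reproduce the full argument.
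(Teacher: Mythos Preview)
Your sketch of the argument is correct and follows the standard route (small object argument for factorizations, retract argument for the remaining lifting axiom), but the paper does not actually prove this theorem: it is stated with the preamble ``We now recall the following theorem from \cite[Theorem 2.1.19]{mH91}'' and given no proof. Your concluding remark that one would in practice simply cite Hovey is exactly what the paper does.
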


\subsection{The category of complete gr-dg $\ringK$-modules}
\label{section: model structure on complete gr-dg modules}

We apply Theorem \ref{thm: cofibrantly generated model structure} to endow the category of complete gr-dg $\ringK$-modules with a proper model structure. In this section, the category $\Aa$ denotes the category of $\ringK$-modules.\\

We recall that a map $p : (X,\, F) \to (Y,\, F')$ is strict when it satisfies $p(F_qX) = p(X) \cap F'_qY$ for all $q$. When $p$ is a surjection, this means that $p(F_qX) = F'_qY$ for all $q$. 
We denote by $\ringK^{(q)}$ the (complete) filtered $\ringK$-module given by
\[ \ringK^{(q)} = F_0 \ringK^{(q)} = F_q \ringK^{(q)} \supseteq F_{q+1} \ringK^{(q)} = 0. \]
(The filtration is induced by a gradation concentrated in degree $q$.) 
The notation $\ringK_n^{(q)}$ means that we consider it in degree $n$ within a (complete) filtered complex. 

Taking notation close to that used in \cite{CESLW19}, we define, for all $n\in \zZ$ and $q \in \nN$, the complete gr-dg modules
\begin{align*}
\hat \Zz^{0, \infty}_{q, n} \coloneqq & \left( \ringK^{(q)}_n \xrightarrow{1} \ringK^{(q)}_{n-1} \xrightarrow{1} \ringK^{(q+1)}_{n-2} \xrightarrow{1} \ringK^{(q+1)}_{n-3} \xrightarrow{1} \ringK^{(q+2)}_{n-4} \to \dots \right)^{\wedge}\\
= & \amalg_{k \in \nN} \ringK_{n - k}^{\left(q + \lfloor \frac{k}{2} \rfloor \right)},
\end{align*}
where $\amalg$ is the coproduct in complete gr-dg modules, and 
\begin{align*}
\hat \Zz^{1, \infty}_{q, n} \coloneqq & \left( \ringK^{(q)}_n \xrightarrow{1} \ringK^{(q+1)}_{n-1} \xrightarrow{1} \ringK^{(q+1)}_{n-2} \xrightarrow{1} \ringK^{(q+2)}_{n-3} \xrightarrow{1} \ringK^{(q+2)}_{n-4} \to \dots \right)^{\wedge}\\
= & \amalg_{k \in \nN} \ringK_{n - k}^{\left(q + \lceil \frac{k}{2} \rceil \right)}.
\end{align*}
We also define the complete gr-dg $\ringK$-module
\[
\hat \Bb^{1, \infty}_{q, n} \coloneqq \hat \Zz^{0, \infty}_{q, n+1} \amalg \hat \Zz^{0, \infty}_{q+1, n}.
\]
We denote by $\varphi^\infty_{q, n} : \hat \Zz^{1, \infty}_{q, n} \to \hat \Bb^{1, \infty}_{q, n}$ the morphism of complete gr-dg modules defined by the following diagram
\[ \xymatrix@C=16pt{
& \ringK_n^{(q)} \ar[r] \ar[d]^{{\tiny \begin{pmatrix} 1\\ 1 \end{pmatrix}}} & \ringK_{n-1}^{(q+1)} \ar[r] \ar[d]^{{\tiny \begin{pmatrix} 1\\ 1 \end{pmatrix}}} & \ringK_{n-2}^{(q+1)} \ar[d]^{{\tiny \begin{pmatrix} 1\\ 1 \end{pmatrix}}} \ar[r] & \cdots\\
\ringK_{n+1}^{(q)} \ar[r] & \ringK_n^{(q)} \amalg \ringK_n^{(q+1)} \ar[r] & \ringK_{n-1}^{(q+1)} \amalg \ringK_{n-1}^{(q+1)} \ar[r] & \ringK_{n-2}^{(q+1)} \amalg \ringK_{n-2}^{(q+2)} \ar[r] & \cdots.
} \]

In order to apply Theorem \ref{thm: cofibrantly generated model structure}, we consider the subcategory of $\compa(\Aa)$
\[ \Ww \coloneqq \left\{ f : (M,\, F,\, d_M) \to (N,\, F',\, d_N) \ |\  f \text{ is a graded quasi-isomorphism} \right\}\]
of weak equivalences, and the sets
\begin{align*}
I_0^\infty & \coloneqq \{ \varphi^\infty_{q, n} : \hat \Zz^{1, \infty}_{q, n} \to \hat \Bb^{1, \infty}_{q, n} \}_{n \in \zZ,\, q \in \nN}\\
J_0^\infty & \coloneqq \{ 0 \to \hat \Zz^{0, \infty}_{q, n} \}_{n \in \zZ,\, q \in \nN}
\end{align*}
of generating cofibrations and generating acyclic cofibrations.

In the following proposition, we characterize the morphisms having the right lifting property with respect to the morphisms in $J_0^\infty$. These morphisms will be the fibrations.

\begin{prop}
\label{prop: fibrations}
A map $p : (Y,\, F,\, d_Y) \to (X,\, F',\, d_X)$ has the right lifting property with respect to all the morphisms in $J_0^\infty$ if and only if the map $p : (Y,\, F,\, d_Y) \to (X,\, F',\, d_X)$ is a strict surjection.
\end{prop}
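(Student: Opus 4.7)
The plan is to translate the right lifting property against $J_0^\infty$ into a concrete statement about elements of the filtered modules, using the fact that $\hat{\mathcal{Z}}^{0,\infty}_{q,n}$ plays the role of a free complete gr-dg module on one generator in degree $n$ and filtration level $q$.

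First, I would establish a corepresentability lemma: for any complete gr-dg $\ringK$-module $(M, F, d_M)$, there is a natural bijection
\[
\Hom_{\compa(\Aa)}(\hat{\mathcal{Z}}^{0,\infty}_{q,n}, M) \cong F_q M_n,
\]
sending a morphism to the image of the generator $1 \in \ringK_n^{(q)}$. The point is that once the image of $1$ is fixed, compatibility with the predifferential forces the image of the $k$-th factor to be $d_M^k(1) \in F_{q+\lfloor k/2 \rfloor} M_{n-k}$, and this latter containment is automatic because $M$ is gr-dg (so $d_M^{2j}$ raises filtration by at least $j$). Conversely, any element of $F_q M_n$ yields such a morphism. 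The completion of the coproduct on the source is compatible with maps to $M$ since $M$ is itself complete.

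Next, I would apply this to the lifting problem for $0 \to \hat{\mathcal{Z}}^{0,\infty}_{q,n}$ against $p:Y \to X$. A commutative square of the form
\[
\begin{tikzcd}
0 \ar[r]\ar[d] & Y \ar[d,"p"]\\
\hat{\mathcal{Z}}^{0,\infty}_{q,n}\ar[r,"x"]\ar[ur,dashed,"y"] & X
\end{tikzcd}
\]
is the data of an element $x \in F_q' X_n$, and a lift is an element $y \in F_q Y_n$ with $p(y) = x$. Thus $p$ has the RLP against every map in $J_0^\infty$ if and only if for every $n \in \mathbb{Z}$ and every $q \in \mathbb{N}$, the map $p : F_q Y_n \to F_q' X_n$ is surjective. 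Taking $q = 0$ yields surjectivity of $p$, and the condition for general $q$ is exactly $p(F_q Y) = F_q' X \cap p(Y) = F_q' X$, that is, $p$ is a strict surjection. Conversely, if $p$ is a strict surjection, every such $x$ admits a preimage in $F_q Y_n$, producing the required lift.

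I do not expect any real obstacle here; the one point that merits care is verifying the corepresentability lemma in the completed context, namely that a morphism out of $\hat{\mathcal{Z}}^{0,\infty}_{q,n}$ is determined uniquely and without further convergence constraints by the image of the generator. This follows because each component lands in a different degree/filtration level of the target and the predifferential compatibility is automatic from the gr-dg hypothesis on both source and target.
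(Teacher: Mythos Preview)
Your proposal is correct and follows essentially the same approach as the paper: both arguments reduce the lifting problem to the observation that a map $\hat{\mathcal{Z}}^{0,\infty}_{q,n}\to M$ is the same data as an element of $F_qM_n$, so that a lift exists for every square if and only if $p:F_qY_n\to F'_qX_n$ is surjective for all $q,n$. Your write-up is in fact more explicit than the paper's, which simply asserts the characterization; your justification via the gr-dg condition (ensuring $d_M^{2j}$ raises filtration by at least $j$) is exactly the point that makes the corepresentability work.
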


\begin{proof}
For every $q$, a diagram of the form
\[ \begin{tikzcd}
0 \ar{d} \ar{r} & (Y,\, F) \ar{d}{p}\\
\hat \Zz^{0,\, \infty}_{q, n} \ar{r} & (X,\, F')
\end{tikzcd} \]
is characterized by an element $x^q \in F'_qX_n$. A lift in this diagram is equivalent to an element $y^q$ in $F_q Y_n$ such that $p(y^q) = x^q$.
\end{proof}

We provide an equivalent description by means of the functor $\Gr$.

\begin{lemma}
A map $p : (Y,\, F) \to (X,\, F')$ satisfies $p_n : Y_n \to X_n$ is a strict surjection for all $n \in \zZ$ if and only if the map $(\Gr_q p)_n : \Gr_q Y_n \to \Gr_q X_n$ is surjective for all $q \in \nN$ and for all $n \in \zZ$.
\end{lemma}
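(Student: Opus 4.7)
\emph{Proof plan.} The forward implication is immediate from unwinding definitions: a strict surjection satisfies $p_n(F_q Y_n) = F'_q X_n$ for all $q$, and this passes to the quotients $F_q/F_{q+1}$ to give surjectivity of each $\Gr_q p_n$.

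For the converse, I plan to show $p_n(F_q Y_n) = F'_q X_n$ for every $q$ and $n$ by a successive approximation argument. Fix such $q$, $n$, and an element $x \in F'_q X_n$. Surjectivity of $\Gr_q p_n$ yields $y^{(0)} \in F_q Y_n$ with $x - p(y^{(0)}) \in F'_{q+1} X_n$, and inductively surjectivity of $\Gr_{q+k} p_n$ produces $y^{(k)} \in F_{q+k} Y_n$ such that the partial sum $s_N := \sum_{k=0}^N y^{(k)}$ satisfies $x - p(s_N) \in F'_{q+N+1} X_n$. The sequence $(s_N)$ stabilizes modulo every $F_p Y_n$ for $N$ large, so it defines a compatible element of $\lim_p F_0 Y_n/F_p Y_n$. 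The completeness of $Y_n$, inherited componentwise from that of $Y$, then produces $y \in F_q Y_n$ with $y - s_N \in F_{q+N+1} Y_n$ for all sufficiently large $N$. Applying $p$, the difference $p(y) - x$ lies in $\bigcap_N F'_{q+N+1} X_n$, which vanishes because completeness of $X_n$ forces the map $X_n \to \lim_p X_n/F'_p X_n$ to be injective. Hence $p(y) = x$. Taking $q=0$ in particular gives surjectivity of $p_n$ itself, and strictness follows.

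The only substantive point is to track the two uses of completeness: in $Y$ to assemble a preimage from the successive approximations, and in $X$ to discard the error in the limit. Both follow directly from the defining isomorphism $F_0 \cong \lim_p F_0/F_p$, so no further machinery is required.
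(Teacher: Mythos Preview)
Your proof is correct and follows essentially the same successive-approximation argument as the paper's own proof. The paper is somewhat terser, mentioning explicitly only the completeness of $Y$ (to form the infinite sum $\sum_q y_q$) and leaving implicit the role of completeness of $X$; you are slightly more careful in isolating the second use of completeness to conclude that the residual error $p(y)-x\in\bigcap_N F'_{q+N+1}X_n$ must vanish.
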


\begin{proof}
The direct implication is immediate. The reverse implication is given by induction (we need $Y$ to be complete for the filtration $F$ in order to prove this fact). Indeed, assume that $\Gr_q p$ is surjective for all $q$ and let $x \in X$ and $q = 0$. If $\Gr_q x = 0$, then we fix $y_q = 0$. Otherwise, there exists by assumption $y_q \in F_q Y$ such that $\Gr_q p(\Gr_q y_q) = \Gr_q (x)$. The element $x-p(y_q)$ is in $F_{q+1} X$. This reasoning works for any $q$ provided that $x \in F_q X$. By induction and since $Y$ is complete, we get that $p(\sum_q y_q) = x$. This proof works for $F_q p$ instead of $p$ so we get that $p$ is a strict surjection.
\end{proof}

We therefore sometimes write \emph{gr-surjection} for a strict surjection.

In the following lemma, we describe the pushouts in the category $\compa(\Aa)$ and some properties related to them.

\begin{lemma}
\label{lemma: pushout}
Let $f : X \to Y$ and $g : X \to Y'$ be morphisms in $\fapgA$. Then
\begin{enumerate}
\item
the pushout
\[
\begin{tikzcd}
X \ar{r}{f} \ar{d}{g} & Y \ar{d}{g'}\\
Y' \ar{r}{f'} & Y' \amalg_X Y \arrow[lu, phantom, "\ulcorner", very near start]
\end{tikzcd}
\]
in $\fapgA$ is given by
\[ Y' \amalg_X Y \coloneqq \left((Y' \amalg Y)/(g(x)-f(x);\ x \in X),\, F,\, d\right).\]
The filtration $F$ is given by
\[ F_p (Y' \amalg_X Y) \coloneqq \im \left( F_p Y' \amalg F_p Y \xrightarrow{f'-g'} (Y' \amalg Y)/(g(x)-f(x);\ x \in X)\right), \]
and the predifferential $d$ is induced by the predifferentials on $Y'$ and $Y$;
\item
if $f$ is a monomorphism, so is $f$;
\item
if $f$ is a strict morphism, so is $f'$.
\end{enumerate}
Using the completion map (reflector), the morphisms $f$ and $g$ induces morphisms $\hat f : \hat X \to \hat Y$ and $\hat g : \hat X \to \hat Y'$ in $\compa(\Aa)$. Then
\begin{enumerate}
\item[(a)]
the pushout
\[
\begin{tikzcd}
\hat X \ar{r}{\hat f} \ar{d}{\hat g} & \hat Y \ar{d}{\hat g'}\\
\hat Y' \ar{r}{\hat f'} & \hat Y' \amalg_{\hat X} \hat Y \arrow[lu, phantom, "\ulcorner", very near start]
\end{tikzcd}
\]
in $\compa(\Aa)$ is given by the completion of the pushout $Y' \amalg_X Y$ in $\fapgA$;
\item[(b)]
if $f$ is a monomorphism (resp. strict morphism), so is $f$;
\item[(c)]
when $f$ is a strict monomorphism, the pushout $\hat Y' \amalg_{\hat X} \hat Y$ computed in $\fapgA$ is already complete. That is to say the inclusion map, adjoint to the completion map, preserves this pushout.
\end{enumerate}
\end{lemma}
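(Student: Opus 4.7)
\emph{Proof plan.} Parts (1)--(3) are formal consequences of how colimits behave in the reflective tower $\fapgA\subset\Filt(\pgA)\subset\pgA^{\nN^{\op}}$. For (1), the underlying pg-module of the pushout is computed in the additive category $\pgA$, giving the coequalizer $(Y'\amalg Y)/(g(x)-f(x))$. The filtration in $\Filt(\pgA)$ then arises by applying the reflector of Lemma~\ref{lemma: cat of filtered objects is reflective} to the levelwise $\nN^{\op}$-pushout, which is exactly the stated image formula. Finally, one checks directly that the induced differential $d$ still satisfies $d^{2}(F_pP)\subseteq F_{p+1}P$ on the pushout (where $P\coloneqq Y'\amalg_XY$), since this is true on $Y$ and $Y'$; hence the result is already gr-dg and coincides with the pushout in $\fapgA$.

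For (2), since $\pgA$ is abelian, the pushout fits into a short exact sequence $0\to X\xrightarrow{(f,-g)}Y\oplus Y'\to Y'\amalg_XY\to 0$ when $f$ is monic, because $\pi_Y\cdot(f,-g)=f$; a chase then shows $\ker f'=0$. For (3), an element $f'(y')\in F_p(Y'\amalg_XY)$ is represented by $(z,z')\in F_pY\oplus F_pY'$ with $(z,z'-y')=(fx,-gx)$ for some $x\in X$. Strictness of $f$ produces $x_0\in F_pX$ with $fx_0=z$, whence $z'+gx_0\in F_pY'$ and $f'(z'+gx_0)=f'(y')$, giving the nontrivial inclusion $f'(Y')\cap F_p(Y'\amalg_XY)\subseteq f'(F_pY')$.

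Part (a) follows because completion is a left adjoint (the reflector into $\compa(\Aa)$; see Corollary~\ref{cor: complete gr-dg reflective}) and hence preserves pushouts. Part (b) then reduces via (a) to showing that completion carries monomorphisms, respectively strict morphisms, of filtered gr-dg objects to maps of the same type: the strict case is immediate from left-exactness of $\lim_p$ applied to the strict filtration quotients, and the monomorphism case is handled as in Lemma~\ref{lemma: comparison in complete}, using the AB5 hypothesis on $\Aa$.

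The main obstacle is (c). When $\hat f$ is a strict monomorphism, so is $(\hat f,-\hat g)\colon \hat X\to \hat Y\oplus\hat Y'$, giving a short exact sequence in $\pgA$
\[
0\to \hat X\to \hat Y\oplus \hat Y'\to P\to 0,
\]
where $P$ denotes the pushout computed in $\fapgA$. Strictness combined with the filtration formula of (1) ensures that for every $p$ the restriction $0\to F_p\hat X\to F_p\hat Y\oplus F_p\hat Y'\to F_pP\to 0$ is exact in $\pgA$, and the $3\times 3$ lemma produces short exact sequences
\[
0\to \hat X/F_p\hat X\to (\hat Y\oplus \hat Y')/F_p(\hat Y\oplus \hat Y')\to P/F_pP\to 0
\]
with surjective transition maps in $p$; Mittag--Leffler then guarantees that $\lim_p$ preserves exactness. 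Since $\hat X$ and $\hat Y\oplus \hat Y'$ are complete, comparison of the resulting limit sequence with the original yields $F_0P\cong \lim_p F_0P/F_pP$. Applying the analogous argument to the shifted filtration on each $F_pP$ (itself complete, as a filtered subobject of the complete object $F_0P$) gives $F_pP\cong \lim_{q\ge p}F_pP/F_qP$, so $P$ is already complete. This is exactly the content of (c), and implies (a) and (b) for this pushout without needing to apply the completion reflector separately.
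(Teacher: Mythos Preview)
Your argument follows essentially the same route as the paper's: compute the pushout in $\nN^{\op}$-diagrams and reflect, use the explicit description for (2) and (3), invoke the reflector for (a), and for (c) exploit that a strict monomorphism makes the filtration on the pushout the honest quotient filtration. Your treatment of (c) is in fact more complete than the paper's, which simply asserts that the quotient filtration of a complete object by a strictly embedded complete subobject is complete; your Mittag--Leffler argument is precisely the justification that step needs.

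One discrepancy worth flagging: the statement of (b) carries a typo (``so is $f$''), and the paper's one-line proof ``this follows from (2) and (3) above'' makes clear that the intended claim is the complete analogue of (2)--(3), namely that if $\hat f$ is a monomorphism (resp.\ strict) then so is the pushout map $\hat f'$, obtained by applying (2)--(3) to the complete diagram via (a). You instead read (b) as ``completion preserves monomorphisms and strict maps'' and argue that. Your statement is true and your argument for it is fine, but it is not the version invoked later (e.g.\ in the proof of left properness), where one needs that the pushout of a strict monomorphism is again a strict monomorphism.
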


\begin{proof}
\begin{enumerate}
\item
The pushout is obtained as the cokernel of the map $X \xrightarrow{g-f} Y' \amalg Y$. It is computed as the pushout in the category of diagrams $\Aa^{\nN^{\text{op}}}$ to which we apply the reflector $\Aa^{\nN^{\text{op}}} \to \Filt(\Aa)$. We recall that $\Aa = \ringK$-modules. This extends to gr-dg modules.
\item
This is direct from the description of $Y' \amalg_X Y$.
\item
The statement follows from the fact that
\[ (f')^{-1}(F_p(Y' \amalg_X Y)) = g(f^{-1}(F_pB))+F_pC.\]
\end{enumerate}
\begin{enumerate}
\item[(a)]
Colimits in $\compa(\Aa)$ are computed in this way.
\item[(b)]
This follows from $(2)$ and $(3)$ above.
\item[(c)]
When $\hat f$ is a strict monomorphism, so is $\hat X \xrightarrow{\hat g- \hat f} \hat Y' \amalg \hat Y$. Thus the filtration on the pushout
\[ \hat Y' \amalg_{\hat X} \hat Y = \widehat{Y' \amalg Y} /  (\widehat{g-f})(X) \]
is given by
\begin{align*}
F_p (\hat Y' \amalg_{\hat X} \hat Y) & = \left(F_p \hat Y' \amalg F_p \hat Y\right) / F_p \left((\hat g-\hat f)(X) \right)\\
& = F_p \left(\widehat{Y' \amalg Y}\right) / F_p \left((\widehat{g-f})(X) \right).
\end{align*}
It is the quotient filtration induced by the filtration on $\widehat{Y' \amalg Y}$ and the pushout is already complete.
\end{enumerate}
\end{proof}

\begin{lemma}
\label{lemma: pushout sphere}
We have the following pushout diagram
\[
\begin{tikzcd}
\hat \Zz^{1, \infty}_{q, n} \ar{d}{\varphi^\infty_{q, n}} \ar{r} & 0 \ar{d}\\
\hat \Bb^{1, \infty}_{q, n} \ar{r} & \hat \Zz^{1, \infty}_{q, n} \arrow[lu, phantom, "\ulcorner", very near start].
\end{tikzcd}
\]
Moreover, if the map $p$ has the right lifting property with respect to the maps $\{0 \to \hat \Zz^{1, \infty}_{q, n}\}$, then the map $\Gr p$ is surjective on cycles. 
Under the assumption that $p$ is a strict surjection, the reverse is true.
\end{lemma}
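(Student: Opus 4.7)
The plan is to verify the two statements in sequence. For the pushout diagram, I would use Lemma~\ref{lemma: pushout}, which tells us that since $\varphi^\infty_{q,n}$ is a strict monomorphism of complete gr-dg modules, the pushout of $0\leftarrow \hat\Zz^{1,\infty}_{q,n}\to \hat\Bb^{1,\infty}_{q,n}$ in $\compa(\Aa)$ coincides with the (already complete) cokernel of $\varphi^\infty_{q,n}$. The task then reduces to a degree-by-degree identification: in each internal degree $n-k$ with $k\ge 0$, the summand of $\hat\Bb^{1,\infty}_{q,n}$ is $\ringK^{(q+\lfloor(k+1)/2\rfloor)}_{n-k}\amalg \ringK^{(q+1+\lfloor k/2\rfloor)}_{n-k}$, and the map $\binom{1}{1}$ identifies the diagonal copy of $\ringK^{(q+\lceil k/2\rceil)}_{n-k}$; the quotient filtration then yields exactly the claimed complete gr-dg object, with induced predifferential matching the structure map of $\hat\Zz^{1,\infty}$ (this is where one must be careful with the filtration jumps and check compatibility with the predifferential in the quotient).

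For the second statement, the key observation is that a morphism of complete gr-dg modules $f:\hat\Zz^{1,\infty}_{q,n}\to X$ is uniquely determined by the image $x_0=f(1_0)\in F_qX_n$ of the degree-$n$ generator: indeed, any such $f$ must send the generator at position $k$ to $d^{k}x_0$. By the gr-dg condition $d^2(F_pX)\subseteq F_{p+1}X$, the iterated derivatives $(d^kx_0)_{k\ge 0}$ automatically satisfy the required filtration constraints ($x_k\in F_{q+\lceil k/2\rceil}X_{n-k}$) as soon as $dx_0\in F_{q+1}X_{n-1}$. So maps $\hat\Zz^{1,\infty}_{q,n}\to X$ are in bijection with elements $x_0\in F_qX_n$ whose class $[x_0]\in\Gr_qX_n$ is a cycle for $\Gr d$.

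The forward implication is then immediate: given a cycle $[\bar x]\in Z_n(\Gr X)^{(q)}$, pick a lift $x\in F_qX_n$ with $dx\in F_{q+1}X_{n-1}$, form the corresponding map $\hat\Zz^{1,\infty}_{q,n}\to X$, and use the RLP against $0\to\hat\Zz^{1,\infty}_{q,n}$ to lift it to a map $\hat\Zz^{1,\infty}_{q,n}\to Y$; its value at the degree-$n$ generator provides an element $y\in F_qY_n$ with $dy\in F_{q+1}Y_{n-1}$ and $py=x$, hence a cycle $[\bar y]\in Z_n(\Gr Y)^{(q)}$ mapping to $[\bar x]$. Conversely, assume $p$ is a strict surjection and $\Gr p$ is surjective on cycles. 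Given $x_0\in F_qX_n$ with $dx_0\in F_{q+1}X_{n-1}$, choose by hypothesis a cycle representative $y_0'\in F_qY_n$ with $dy_0'\in F_{q+1}Y_{n-1}$ such that $[\bar{py_0'}]=[\bar x_0]$; then $py_0'-x_0\in F_{q+1}X_n$, and strict surjectivity of $p$ on $F_{q+1}$ yields $w\in F_{q+1}Y_n$ with $pw=py_0'-x_0$. Setting $y_0=y_0'-w$ gives $py_0=x_0$ and $dy_0=dy_0'-dw\in F_{q+1}Y_{n-1}$, producing the required lift.

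The main obstacle is the bookkeeping in Part~1: one has to carefully track how the diagonal inclusion interacts with two stacked filtration jumps (odd vs.\ even $k$) and then verify that the induced predifferential on the quotient matches the built-in predifferential of the target, which is essentially a verification that the shift of position in $\hat\Bb^{1,\infty}_{q,n}$ aligns correctly with the shift in the target under the quotient. Once this combinatorial verification is done, Part~2 is essentially automatic from the equivalence ``maps from $\hat\Zz^{1,\infty}_{q,n}$ = lifts of cycles in $\Gr X$'' combined with strict surjectivity to correct for filtration-$(q+1)$ indeterminacy.
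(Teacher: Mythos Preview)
Your proposal is correct and follows essentially the same approach as the paper. For Part~1 the paper is even terser---it simply cites Lemma~\ref{lemma: pushout} without spelling out the degree-by-degree identification you describe---and for Part~2 your argument (bijection between maps out of $\hat\Zz^{1,\infty}_{q,n}$ and elements $x\in F_qX_n$ with $dx\in F_{q+1}X$, then the correction step using strict surjectivity) matches the paper's proof almost verbatim.
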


\begin{proof}
The first part of the lemma follows from Lemma \ref{lemma: pushout}. 

Then, the set of diagrams
\begin{equation}
\label{diagram: sphere}
\begin{tikzcd}
0 \ar{d} \ar{r} & (Y,\, F) \ar{d}{p}\\
\hat \Zz^{1, \infty}_{q, n} \ar{r} & (X,\, F'),
\end{tikzcd}
\end{equation}
corresponds (bijectively) to the set
\[
A_{q, n} \coloneqq \left\{ x \in F'_q X_{n};\ d_X(x) \in F'_{q+1} X \right\}.
\]
The set of lifts in such diagrams corresponds (bijectively) to the set
\[
A_{q, n} \coloneqq \left\{ (x,\, y) \in F'_q X_{n} \amalg F_q Y_n;\ d_Y(y) \in F_{q+1} Y \right\}.
\]
It is clear that if $p$ has the right lifting property with respect to the maps $\{0 \to \hat \Zz^{1, \infty}_{q, n}\}$, then $\Gr p$ is surjective on cycles. 
Let assume that $\Gr p$ is surjective on cycles and $x \in F'_q X_{n}$ such that $d_X(x) \in F'_{q+1} X$, that is the data of a diagram (\ref{diagram: sphere}). Using the fact that $\Gr p$ is surjective on cycles, we get that there exists $y \in F_q Y_n$ such that $d_Y(y) \in F_{q+1}Y$ and $p(y) = x + x^{q+1}$, with $x^{q+1} \in F'_{q+1} X_n$. Under the assumption that $p$ is a strict surjection, we get that there exists $y^{q+1} \in F_{q+1} Y_n$ such that $p(y^{q+1}) = x^{q+1}$. Finally, $y-y^{q+1}$ provides the requested lift.
\end{proof}

We can now characterize what will be the trivial fibrations. We recall that
\[ I_0^\infty = \{ \varphi^\infty_{q, n} : \hat \Zz^{1, \infty}_{q, n} \to \hat \Bb^{1, \infty}_{q, n} \}_{n \in \zZ,\, q \in \nN}.\]

\begin{prop}
\label{prop: trivial fibrations}
A map $p : (Y,\, F,\, d_Y) \to (X,\, F',\, d_X)$ has the right lifting property with respect to all the maps in $I_0^\infty$ if and only if the map $p$ it is a strict surjection and a graded quasi-isomorphism. 
In particular, $I_0^\infty$-inj $= J_0^\infty$-inj $\cap \Ww$.
\end{prop}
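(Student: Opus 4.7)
The plan is to prove both implications of the characterization of $I_0^\infty$-inj; the final equality $I_0^\infty$-inj $= J_0^\infty$-inj $\cap \Ww$ will then follow by combining this characterization with Proposition~\ref{prop: fibrations}. First I unpack the lifting problem concretely: a lifting datum against $\varphi^\infty_{q, n}$ consists of a triple $(\alpha,\, y,\, z)$ with $\alpha \in F_q Y_n$ satisfying $d(\alpha) \in F_{q+1} Y$, with $y \in F'_q X_{n+1}$ and $z \in F'_{q+1} X_n$, and with $p(\alpha) = d(y) + z$; a lift is a pair $(\tilde y,\, \tilde z) \in F_q Y_{n+1} \times F_{q+1} Y_n$ with $p(\tilde y) = y$, $p(\tilde z) = z$, and $\alpha = d(\tilde y) + \tilde z$.

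For the direction $\Leftarrow$, assume $p$ is a strict surjection and a graded quasi-isomorphism. Strictness gives a strict short exact sequence $0 \to \ker p \to Y \to X \to 0$, and applying $\Gr$ produces a short exact sequence of dg objects; the long exact sequence in homology, combined with $\Gr p$ being a quasi-isomorphism, shows that $\Gr \ker p$ is acyclic. Given a lifting datum $(\alpha,\, y,\, z)$, I would use strict surjection to choose $\tilde y_0 \in F_q Y_{n+1}$ and $\tilde z_1 \in F_{q+1} Y_n$ with $p(\tilde y_0) = y$ and $p(\tilde z_1) = z$. The element $u \coloneqq \alpha - d(\tilde y_0) - \tilde z_1$ lies in $F_q (\ker p)_n$, and using that $Y$ is gr-dg one verifies $d(u) \in F_{q+1}(\ker p)$. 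By acyclicity of $\Gr \ker p$, the class $[u]$ in $H_n(\Gr_q \ker p)$ vanishes, so one may write $u = d(v) + w$ with $v \in F_q (\ker p)_{n+1}$ and $w \in F_{q+1}(\ker p)_n$. Then $\tilde y \coloneqq \tilde y_0 + v$ and $\tilde z \coloneqq \tilde z_1 + w$ provide the required lift, since $p(\tilde y) = y$, $p(\tilde z) = z$, and $d(\tilde y) + \tilde z = d(\tilde y_0) + u + \tilde z_1 = \alpha$.

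For the direction $\Rightarrow$, assume $p$ has the RLP against every $\varphi^\infty_{q, n}$. To establish strict surjection, fix $x \in F'_q X_n$. Since $X$ is gr-dg, the element $d(x)$ lies in $F'_q X_{n-1}$ with $d^2(x) \in F'_{q+1}$, so it determines a chain map $\hat \Zz^{1, \infty}_{q, n-1} \to X$. By Lemma~\ref{lemma: pushout sphere} the map $0 \to \hat \Zz^{1, \infty}_{q, n-1}$ is a pushout of $\varphi^\infty_{q, n-1}$, so the RLP furnishes $\alpha \in F_q Y_{n-1}$ with $p(\alpha) = d(x)$ and $d(\alpha) \in F_{q+1} Y$. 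Applying the RLP for $\varphi^\infty_{q, n-1}$ to the lifting datum $(\alpha,\, x,\, 0)$ then produces $\tilde y \in F_q Y_n$ with $p(\tilde y) = x$, proving strict surjection. To establish the quasi-isomorphism, in view of strict surjection and the long exact sequence argument used in the other direction, it suffices to show that $\Gr \ker p$ is acyclic. A relative cycle $u \in F_q (\ker p)_n$ supplies the lifting datum $(u,\, 0,\, 0)$, and the resulting lift produces $\tilde y \in F_q (\ker p)_{n+1}$ with $u \equiv d(\tilde y) \pmod{F_{q+1}(\ker p)}$, so $[u] = 0$ in $H_n(\Gr_q \ker p)$.

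The main technical obstacle is in the $\Leftarrow$ direction: the naive two-step construction of $(\tilde y,\, \tilde z)$ from strict-surjection lifts introduces a discrepancy $u$ lying in $\ker p$, and it is precisely the gr-acyclicity of this kernel---a consequence of the quasi-isomorphism hypothesis---that allows one to correct the discrepancy by elements of $\ker p$ without disturbing the equalities $p(\tilde y) = y$ and $p(\tilde z) = z$.
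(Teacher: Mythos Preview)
Your proof is correct and follows essentially the same strategy as the paper's: both directions hinge on the acyclicity of $\Gr\ker p$, obtained from the long exact sequence associated to the strict short exact sequence $0\to\ker p\to Y\to X\to 0$, and the $\Leftarrow$ direction proceeds in both cases by first lifting naively via strict surjectivity and then correcting the discrepancy using that acyclicity.

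There are two organizational differences worth noting. For strict surjectivity in the $\Rightarrow$ direction you run a two-step lifting argument (first lift $d(x)$ along $0\to\hat\Zz^{1,\infty}$, then solve the lifting problem $(\alpha,x,0)$), whereas the paper observes more categorically that $0\to\hat\Zz^{0,\infty}_{q,n+1}$ is a retract of $0\to\hat\Bb^{1,\infty}_{q,n}$, which in turn factors as a pushout of $\varphi^\infty_{q,n}$ followed by $\varphi^\infty_{q,n}$ itself; this yields $I_0^\infty\text{-inj}\subseteq J_0^\infty\text{-inj}$ in one stroke and then invokes Proposition~\ref{prop: fibrations}. For the quasi-isomorphism in the $\Rightarrow$ direction you reduce uniformly to acyclicity of $\Gr\ker p$ via the long exact sequence, while the paper argues surjectivity and injectivity of $\H_\bullet(\Gr p)$ separately (surjectivity from $\Gr p$ being surjective on cycles, Lemma~\ref{lemma: pushout sphere}; injectivity by a direct lifting). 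Your route through the kernel is arguably cleaner; the paper's route makes the connection to $J_0^\infty$-inj slightly more explicit.
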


\begin{proof}
The map $0 \to \hat \Bb^{1, \infty}_{q, n} = \hat \Zz^{0, \infty}_{q, n+1} \amalg \hat \Zz^{0, \infty}_{q+1, n}$ is the composition of the maps $0 \to \hat \Zz^{1, \infty}_{q, n}$ and $\hat \Zz^{1, \infty}_{q, n} \to \hat \Bb^{1, \infty}_{q, n}$. By the pushout diagram presented in Lemma \ref{lemma: pushout sphere}, we obtain $I_0^\infty$-inj $= I_0^\infty \cup J_0^\infty$-inj (since $0 \to \hat \Zz^{0, \infty}_{q, n+1}$ is a retract of $0 \to \hat \Bb^{1, \infty}_{q, n}$). It follows that maps in $I_0^\infty$-inj are in particular strict surjections (by Proposition \ref{prop: fibrations}).

We now characterize the diagrams
\[
\begin{tikzcd}
\hat \Zz^{1, \infty}_{q, n} \ar{d}{\varphi^\infty_{q, n}} \ar{r} & (Y,\, F) \ar{d}{p}\\
\hat \Bb^{1, \infty}_{q, n} \ar{r} & (X,\, F'),
\end{tikzcd}
\]
in $\compa(\ringK$-$\Mod)$ admitting a lifting. The set of such diagrams corresponds (bijectively) to the set
\[
B_{q, n} \coloneqq \left\{ \begin{gathered} (t,\, x,\, y) \in F'_q X_{n+1} \amalg F'_{q+1}X_{n} \amalg F_q Y_n; \\ p(y)=d_X(t)+x \text{ and } d_Y(y) \in F_{q+1} Y \end{gathered} \right\}.
\]
The set of such diagrams admitting a lifting is in bijection with 
\[
B'_{q, n} \coloneqq \left\{ \begin{gathered} (t,\, x,\, y,\, z) \in F'_q X_{n+1} \amalg F'_{q+1}X_{n} \amalg F_q Y_n \amalg F_q Y_{n+1}; \\ p(z) = t,\, p(y)=d_X(t)+x \text{ and } y-d_Y(z) \in F_{q+1} Y \end{gathered} \right\},
\]
since the different conditions satisfied by the tuples in $B'_{q, n}$ already implies that $d_Y(y) \in F_{q+1}Y$.

First, we suppose that $p$ has the right lifting property with respect to maps in $I_0^\infty$. 
It remains to prove that $p$ is a graded quasi-isomorphism. By Lemma \ref{lemma: pushout sphere}, we have that $\Gr p$ is surjective on cycles, so that $\H_\bullet(\Gr p)$ is surjective as well. 
We then prove that $\H_\bullet(\Gr p)$ is injective. Let $\bar{\bar{y}} \in \ker \H_n(\Gr_q p)$, that is there exists a lift $y \in F_q Y_n$ such that $d_Y(y) \in F_{q+1} Y_{n-1}$ and $p(y) \in \im\, {d_X}_{|F_q X} + F_{q+1} X$. We fix $p(y) = d_X(x^q) + x^{q+1}$ for some $x^q \in F_qX$ and $x^{q+1} \in F_{q+1}X$. This is precisely the data of an element in $B_{q, n}$. By the lifting property, we obtain $z \in F_{q}Y_{n+1}$ such that $y-d_Y(z) \in F_{q+1}Y$. It follows that the class of $y = d_Y(z) + (y-d_Y(z))$ is $0$ in $\H_\bullet \Gr Y$ and $\H_\bullet(\Gr p)$ is injective.

Conversely, assume that $p$ is a strict surjection and a graded quasi-isomorphism. It remains to show that $p$ has the right lifting property with respect to the maps $\{ \hat \Zz^{1, \infty}_{q, n} \to \hat \Bb^{1, \infty}_{q, n} \}_{q, n}$. Let $(t,\, x,\, y) \in B_{q, n}$. Since $p$ is a strict surjection, we can find $z \in F_q Y_{n+1}$ such that $p(z) = t$. If $y - d_Y(z) \in F_{q+1} Y$, we have found the requested lift. Otherwise $y - d_Y(z)$ provides a non zero element in $\Gr_q Y$. The fact that $p$ is a strict surjection implies that $\Gr_q p$ is surjective. We therefore get a short exact sequence
\[ 0 \to K \to \Gr_q Y \to \Gr_q X \to 0 \]
in dg-modules where $K$ is the kernel of $\Gr_q p$. By the associated long exact sequence and the fact that $\Gr_q p$ is a quasi-isomorphism, we obtain that $K$ is acyclic. We have $\bar y - d_{\Gr Y} (\bar z) \in Z_{n-1} (K)$ since $d_Y(y-d_Y(z)) \in F_{q+1}Y$ and $p(y-d_Y(z)) = x \in F'_{q+1} X$. So there exist $u^q \in F_q Y_{n+1}$ such that $p(u^q) \in F_{q+1} X$ and $y^{q+1} \in F_{q+1}Y_{n}$ with the property that $y - d_Y (z) = d_Y (u^q) + y^{q+1}$. Using the fact that $p$ is a strict surjection, we obtain that there exists $u^{q+1} \in F_{q+1} Y_{n+1}$ such that $p(u^{q+1}) = p(u^q)$. Finally, we have
\[ p(z + u^q - u^{q+1}) = t \text{ and } y-d_Y(z + u^q - u^{q+1}) = y^{q+1} - u^{q+1} \in F_{q+1} Y. \]
This provides the requested lift.
\end{proof}

\begin{defi}
Let $f : (Y,\, F,\, d_Y) \to (X,\, F',\, d_X)$ be a map of gr-dg $\ringK$-modules. We denote by $C(f)$ the mapping cone of the map $f$ defined by
\[ F_q C(f)_n \coloneqq F_q Y_{n-1} \amalg F_q X_n, \]
with the predifferential $D(y,\, x) = (-d_Y(y),\, f(y) + d_X(y))$.
\end{defi}

\begin{lemma}
\label{lemma: acyclicity cone}
The cone $C(\id_Y)$ of the identity map is gr-acyclic (that is to say its gr-homology is $0$).
\end{lemma}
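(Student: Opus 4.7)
The plan is to exhibit an explicit contracting homotopy on the associated graded complex. Since gr-acyclicity is by definition the acyclicity of $\Gr C(\id_Y)$, it suffices to work entirely in the graded setting, where $\Gr d_Y$ is a genuine differential.

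First I would unpack the associated graded. We have
\[
\Gr_q C(\id_Y)_n = \Gr_q Y_{n-1} \amalg \Gr_q Y_n,
\]
and the induced differential $\bar{D}$ sends $(\bar{y}_1,\bar{y}_2)$ to $(-\Gr d_Y (\bar{y}_1),\, \bar{y}_1 + \Gr d_Y(\bar{y}_2))$. Here $\bar{D}^2 = 0$ exactly because $(\Gr d_Y)^2 = 0$, which is the gr-dg hypothesis on $Y$.

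Next I would define $h : \Gr_q C(\id_Y)_n \to \Gr_q C(\id_Y)_{n+1}$ by $h(\bar{y}_1,\bar{y}_2) \coloneqq (\bar{y}_2,\, 0)$ and verify the identity $\bar{D} h + h \bar{D} = \id$ by direct computation:
\begin{align*}
\bar{D}h(\bar{y}_1,\bar{y}_2) &= \bar{D}(\bar{y}_2,0) = (-\Gr d_Y(\bar{y}_2),\, \bar{y}_2),\\
h\bar{D}(\bar{y}_1,\bar{y}_2) &= h(-\Gr d_Y(\bar{y}_1),\, \bar{y}_1 + \Gr d_Y(\bar{y}_2)) = (\bar{y}_1 + \Gr d_Y(\bar{y}_2),\, 0),
\end{align*}
whose sum is $(\bar{y}_1,\, \bar{y}_2)$. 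Hence $\id_{\Gr C(\id_Y)}$ is chain-homotopic to zero, and therefore $\H_\bullet^{\mathrm{gr}}(C(\id_Y)) = \H_\bullet(\Gr C(\id_Y),\, \bar{D}) = 0$.

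There is no substantial obstacle here: the only point requiring care is making sure the sign conventions in $D(y,x) = (-d_Y(y),\, f(y) + d_X(x))$ match what is needed for $\bar{D}^2 = 0$ and for the homotopy identity, but both work out with the conventions already fixed in the definition. (Alternatively, one could note that the canonical short exact sequence $0 \to Y \to C(\id_Y) \to sY \to 0$ of gr-dg modules induces, after applying $\Gr$, a long exact sequence in homology whose connecting morphism is the identity of $\H_\bullet(\Gr Y)$, forcing $\H_\bullet^{\mathrm{gr}}(C(\id_Y)) = 0$; but the contracting homotopy above is the most economical argument.)
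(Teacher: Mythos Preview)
Your proof is correct and essentially the same as the paper's. The paper shows directly that any cycle $(\bar y,\bar z)$ equals $\Gr(D)(\bar z,0)$, which is precisely $\bar D h(\bar y,\bar z)$ for your contracting homotopy $h$; you simply package the same computation as the identity $\bar D h + h\bar D = \id$ rather than restricting to cycles.
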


\begin{proof}
We have $\Gr (D)(\bar y,\, \bar z) = (-\bar d \bar y,\, \bar y + \bar d \bar z)$ so $\Gr (D)(\bar y,\, \bar z) = 0$ if and only if $\bar d \bar y = 0$ and $\bar y = -\bar d \bar z$. This is equivalent to the fact that $(\bar y,\, \bar z) = \Gr (D)(\bar z,\, 0)$. Hence $C(\id_Y)$ is gr-acyclic.
\end{proof}

\begin{prop}
\label{prop: J-cof are WE}
We have $J_0^\infty$-cof  $\subseteq \Ww$.
\end{prop}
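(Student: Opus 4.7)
My plan is to prove this by the standard retract-argument strategy: every $J_0^\infty$-cofibration is a retract of a transfinite composition of pushouts of generating trivial cofibrations, and the class $\Ww$ is closed under retracts (two-out-of-three gives stability under retracts), so it suffices to show that the transfinite composition of pushouts of maps $0 \to \hat\Zz^{0,\infty}_{q,n}$ lies in $\Ww$.

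The first key step is to verify that each $\hat\Zz^{0,\infty}_{q,n}$ is gr-acyclic. This is a direct inspection: filtering $\hat\Zz^{0,\infty}_{q,n}$ by the explicit formula $F_{q+j}\hat\Zz^{0,\infty}_{q,n} = \amalg_{k\geq 2j}\ringK^{(q+\lfloor k/2\rfloor)}_{n-k}$, the associated graded in filtration level $q+j$ is the two-term acyclic complex $\ringK_{n-2j}\xrightarrow{1}\ringK_{n-2j-1}$ (the structure map between consecutive summands at the same filtration level is the identity, while the map raising filtration becomes zero in $\Gr$). Hence $0 \to \hat\Zz^{0,\infty}_{q,n}$ is a graded quasi-isomorphism.

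Next I will show that the class $\Ww$ is closed under pushouts along the maps in $J_0^\infty$ and under transfinite compositions. For a pushout
\[
\begin{tikzcd}
0 \ar{r}\ar{d} & X \ar{d}\\
\hat\Zz^{0,\infty}_{q,n} \ar{r} & X \amalg \hat\Zz^{0,\infty}_{q,n} \arrow[lu, phantom, "\ulcorner", very near start]
\end{tikzcd}
\]
the pushout in $\compa(\Aa)$ is the coproduct, which in complete filtered $\ringK$-modules is the completion of the levelwise coproduct. Since $\Gr$ preserves coproducts (Corollary~\ref{cor: preservation of colims}) and commutes with completion, the induced map on associated gradeds identifies with $\Gr X \to \Gr X \amalg \Gr \hat\Zz^{0,\infty}_{q,n}$. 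Because the second summand is acyclic and $\H_\bullet$ is additive on coproducts of complexes, this is a quasi-isomorphism. For transfinite compositions, I apply Corollary~\ref{cor: preservation of colims} again: $\Gr$ preserves filtered colimits, and homology of complexes commutes with filtered colimits in a Grothendieck $\qQ$-linear setting, so the colimit of a sequence of graded quasi-isomorphisms (all of which are in fact monomorphisms here, since the pushouts are along $0\to \hat\Zz^{0,\infty}_{q,n}$) is again a graded quasi-isomorphism.

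Combining these, every relative $J_0^\infty$-cell complex lies in $\Ww$. Finally, retracts of elements of $\Ww$ remain in $\Ww$ since graded quasi-isomorphisms are characterized by an isomorphism condition on homology, which is preserved under retracts. This yields $J_0^\infty\textrm{-cof} \subseteq J_0^\infty\textrm{-cell (retracts)} \subseteq \Ww$. The only mildly subtle point is ensuring that the associated graded of the coproduct computed in $\compa(\Aa)$ agrees with the coproduct of the associated gradeds; this rests on the fact that $\Gr$ commutes with completion and with coproducts, which is already established in Appendix~\ref{appendix: categorical stuff} and Appendix~\ref{appendix: gr-flat}. With this in hand the rest of the verification is essentially formal.
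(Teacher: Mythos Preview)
Your proof is correct but takes a genuinely different route from the paper's argument. The paper does not analyze $J_0^\infty$-cell complexes at all; instead it uses a direct lifting trick. Given $f:A\to B$ in $J_0^\infty$-cof, the paper constructs the map $(f,\pi_1):A\amalg C(\id_B)[-1]\to B$, where $\pi_1$ is the first projection from the shifted mapping cone of $\id_B$. This map is a strict surjection, hence lies in $J_0^\infty$-inj by Proposition~\ref{prop: fibrations}, so $f$ lifts against it. Applying $\Gr$ to the resulting retract diagram and using that $C(\id_B)$ is gr-acyclic (Lemma~\ref{lemma: acyclicity cone}), one reads off that $\Gr f$ is a quasi-isomorphism.

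Your cellular approach is the more standard textbook verification: reduce to $J_0^\infty$-cell via the small object argument (which applies trivially since the domains in $J_0^\infty$ are $0$), check gr-acyclicity of each $\hat\Zz^{0,\infty}_{q,n}$, and then propagate through coproducts and transfinite compositions using that $\Gr$ preserves these colimits. The paper's argument is slicker and avoids the small object argument entirely, relying only on the already-established characterization of $J_0^\infty$-inj. Your approach, on the other hand, makes the gr-acyclicity of the generating objects explicit and is closer in spirit to how one proves the analogous step for algebras over operads later in the appendix (cf.\ Lemma~\ref{lemma: relative cfreeJ-cell complex} and the proof of Theorem~\ref{thm: model structure on curved operads}). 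One small point: you should state explicitly that the small object argument applies to $J_0^\infty$ because the domains are the initial object, since at this stage of the paper smallness of the nontrivial domains has not yet been verified.
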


\begin{proof}
We follow the proof given in \cite{CESLW19}. Let $f : A \to B$ be a $J_0^\infty$-cofibration. By Proposition \ref{prop: fibrations}, this means that $f$ has the left lifting property with respect to the maps $p$ which are strict surjections. We consider the diagram
\[
\begin{tikzcd}
A \ar{d}{f} \ar{r} & A \amalg C(\id_B)[-1] \ar{d}{(f,\, \pi_1)}\\
B \ar{r}{=} & B,
\end{tikzcd}
\]
where $\pi_1 : C(\id_B)[-1] = B_\bullet \amalg B_{\bullet+1} \to B_\bullet$ is the projection on the first factor. The map $\pi_1$ is a strict surjection. It follows that the diagram admits a lift $h : B \to A \amalg C(\id_B)[-1]$. Applying the functor $\Gr$ to the diagram and using the fact that the cone is gr-acyclic (by Lemma \ref{lemma: acyclicity cone}), the two commutative triangles give that $f$ is a graded quasi-isomorphism.
\end{proof}

\begin{thm}
\label{thm: cofibrantly generated model structure on complete gr-dg modules}
The category $\compa(\ringK$-$\Mod)$ of gr-dg $\ringK$-modules admits a proper cofibrantly generated model category structure, where:
\begin{enumerate}
\item
weak equivalences are graded quasi-isomorphisms,
\item
fibrations are strict surjections, and
\item
$I_0^\infty$ and $J_0^\infty$ are the sets of generating cofibrations and generating acyclic cofibrations respectively.
\end{enumerate}
\end{thm}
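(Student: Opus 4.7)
The strategy is to apply Hovey's Theorem~\ref{thm: cofibrantly generated model structure} with the proposed $\Ww$, $I_0^\infty$, and $J_0^\infty$, and then separately establish properness. First, I would note that condition (4) of that theorem, $I_0^\infty\text{-inj} = \Ww \cap J_0^\infty\text{-inj}$, is exactly Proposition~\ref{prop: trivial fibrations} combined with Proposition~\ref{prop: fibrations}. For condition (1), the two-out-of-three property and closure under retracts for graded quasi-isomorphisms transfer from the corresponding properties of quasi-isomorphisms of unbounded dg-$\ringK$-modules via the functor $\Gr$, which is additive, preserves retracts, and detects weak equivalences by definition.

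For condition (5), the inclusion $J_0^\infty\text{-cof}\subseteq\Ww$ is Proposition~\ref{prop: J-cof are WE}. The inclusion $J_0^\infty\text{-cof}\subseteq I_0^\infty\text{-cof}$ reduces to showing each generator $0\to \hat\Zz^{0,\infty}_{q,n}$ lies in $I_0^\infty\text{-cof}$. By Lemma~\ref{lemma: pushout sphere}, the map $0\to\hat\Zz^{1,\infty}_{q,n}$ is a pushout of $\varphi^\infty_{q,n}\in I_0^\infty$, hence in $I_0^\infty\text{-cof}$; composing with $\varphi^\infty_{q,n}$ itself shows $0\to\hat\Bb^{1,\infty}_{q,n} = \hat\Zz^{0,\infty}_{q,n+1}\amalg\hat\Zz^{0,\infty}_{q+1,n}$ is in $I_0^\infty\text{-cof}$, and each summand is a retract, so is again in $I_0^\infty\text{-cof}$ since the latter class is closed under retracts.

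The main obstacle is condition (2) and (3): the smallness of the domains of $I_0^\infty$ and $J_0^\infty$. The subtlety is that we are working in a quasi-abelian category of complete filtered objects, where transfinite colimits need to be handled with care. The plan is to describe the representable functors explicitly: a map $\hat\Zz^{1,\infty}_{q,n}\to X$ (resp.\ $\hat\Zz^{0,\infty}_{q,n}\to X$) is the data of an element $x\in F_qX_n$ with $d_Xx\in F_{q+1}X_{n-1}$ (resp.\ with no such constraint), and similarly for $\hat\Bb^{1,\infty}_{q,n}$. Using Lemma~\ref{lemma: pushout} which describes pushouts in $\compa(\Aa)$, I would verify that every map in $I_0^\infty\text{-cell}$ (and likewise in $J_0^\infty\text{-cell}$) is a strict monomorphism, so that the transfinite composition in $\compa(\Aa)$ agrees objectwise in each filtration degree with that computed aritywise in the underlying $\ringK$-modules. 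Smallness then reduces to smallness of each $\ringK_n^{(q)}$ as a $\ringK$-module, which holds because $\ringK$-modules are locally presentable.

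Finally, for properness, I would apply $\Gr$ to the relevant pushout or pullback square. For right properness, given a strict surjection $C\to A$ and a weak equivalence $B\to A$, strict surjectivity ensures $\Gr(B\times_AC)\cong\Gr B\times_{\Gr A}\Gr C$ (using that strict surjections give short exact sequences aritywise), and the projective model structure on unbounded dg-$\ringK$-modules is right proper, yielding the conclusion. For left properness, given a cofibration $A\to B$ and a weak equivalence $A\to C$, one checks via the small object argument that every cofibration is a retract of a transfinite composition of pushouts of maps in $I_0^\infty$, each of which yields a (degreewise) split monomorphism after applying $\Gr$; the result then follows from left properness of the projective model structure on dg-modules applied to the pushout on the associated graded.
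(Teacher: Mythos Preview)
Your overall architecture matches the paper's: apply Hovey's recognition theorem and then argue properness separately. Conditions (1), (4), and (5) are handled correctly and essentially as in the paper (your separate verification that $J_0^\infty\text{-cof}\subseteq I_0^\infty\text{-cof}$ is correct but redundant, since it already follows formally from condition (4)).

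There is, however, a genuine gap in your smallness argument. Your claim that ``the transfinite composition in $\compa(\Aa)$ agrees objectwise in each filtration degree with that computed aritywise in the underlying $\ringK$-modules'' is false for arbitrary ordinals~$\lambda$. Even when every bonding map is a strict monomorphism, the colimit taken in $\Filt$ need not be complete, so the reflector (completion) genuinely enlarges each $F_p$. For a concrete obstruction, take a countable sequence of attachments of copies of $\hat\Zz^{0,\infty}_{0,0}$: an element supported in filtration degree~$n$ of the $n$th summand for every~$n$ lies in the completion but not in the levelwise colimit. What is true is that completion, being a countable limit of cokernels, commutes with $\aleph_1$-filtered colimits; hence the domains $\hat\Zz^{1,\infty}_{q,n}$ are $\aleph_1$-small, which is all Hovey's theorem requires. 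This is precisely the paper's one-line argument, and it is the step your proposal is missing.

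For right properness your approach via $\Gr$ and pullbacks can be made to work, but the paper's route is far simpler: every object is fibrant (the map to $0$ is trivially a strict surjection), and any model category with this property is right proper by~\cite[Corollary~13.1.3]{pH03}. For left properness your sketch is in the same spirit as the paper's: both reduce, via the fact that $\Gr$ takes the relevant pushout squares (along strict monomorphisms) to pushout squares in $(\dg\Aa)^{\ob\nN}$, to left properness for ordinary chain complexes. The paper makes the preservation of these pushouts precise using Lemma~\ref{lemma: pushout} and \cite[Proposition~1.1.11]{pD71}; you should invoke the same ingredients rather than only the small object argument.
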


\begin{proof}
We apply Theorem \ref{thm: cofibrantly generated model structure}. The graded quasi-isomorphisms satisfy the two-out-of-three property. It follows that the subcategory $\Ww$ satisfies the two-out-of-three property. This category is also closed under retract since graded quasi-isomorphisms are closed under retract. 
The non trivial domains of the maps in $I_0^\infty$ and in $J_0^\infty$ are $\hat \Zz^{1, \infty}_{q, n}$. The complete gr-dg modules $\hat \Zz^{1, \infty}_{q, n}$ are $\aleph_1$-small since completion commutes with $\aleph_1$-filtered colimits. 
Finally, Propositions \ref{prop: trivial fibrations} and \ref{prop: J-cof are WE} ensure that we can apply Theorem \ref{thm: cofibrantly generated model structure} and we obtain the desired cofibrantly generated model structure. 
Using Proposition \ref{prop: fibrations}, we can see that every object is fibrant. So by \cite[Corollary 13.1.3]{pH03} the model category structure is right proper. 
It remains to show that the model structure is left proper, that is to say that weak equivalences are preserved by pushout along cofibrations. It is enough to show that for any diagram
\[
\begin{tikzcd}
X \ar{d} \ar{r}{f} & Y \ar{d}\\
X' \ar{d}{g} \ar{r} & Y' \ar{d}{g'} \arrow[lu, phantom, "\ulcorner", very near start]\\
X'' \ar{r} & Y'' \arrow[lu, phantom, "\ulcorner", very near start],
\end{tikzcd}
\]
in which both squares are cocartesian, $f$ belongs to $I_0^{\infty}$, and $g$ belongs to $\Ww$, the map $g'$ also belongs to $\Ww$. 
We first show that it is true in the category of chain complexes of $\ringK$-modules. We show that if $f$ is injective and $g$ is a quasi-isomorphism, we obtain that the map $g'$ is a quasi-isomorphism. This follows first from the fact that in the category of chain complexes pushout along an injection $f : X \to Y$ provides an injection $X' \to Y'$. Then in this situation, the second pushout gives a short exact sequence
\[ 0 \to X' \to Y' \amalg X'' \to Y'' \to 0\]
which provides a long exact sequence in homology. Finally, the fact that $g$ is a quasi-isomorphism implies that so is $g'$.

We consider the functor
\[ \begin{array}{lccl}
\Gr : & \compa(\Aa) & \to & (\dg\Aa)^{\ob \nN},\\
& (V,\, F,\, d_V) & \mapsto & (V,\, F,\, d_V)^{\text{gr}} = \left(\Gr V,\, d_{\Gr V} \right).
\end{array} \]
Given a map $f = \varphi^\infty_{q, n}$ in $I_0^\infty$, we show that the two cocartesian squares in $\compa(\Aa)$ give two cocartesian squares under $\Gr$. The functor $\Gr$ is the composition of the inclusion functor $i_1: \compa(\Aa) \to \Filt(\pg\Aa)$, the inclusion functor $i_2 : \Filt(\pg\Aa) \to (\pg\Aa)^{\nN^{\op}}$ and the quotient functor $q_1 : (\pg\Aa)^{\nN^{\op}} \to (\dg\Aa)^{\ob \nN}$. All the maps in $I_0^\infty$ are strict monomorphisms. 
From Lemma \ref{lemma: pushout}, we get that the inclusion $i_1$ preserves pushouts in which one map is a strict monomorphism (the inclusion $\fapgA \to \Filt(\pg\Aa)$ preserves the pushouts). Moreover, when $f$ is a strict monomorphism, so is the map $X' \to Y'$. Therefore the inclusion $i_1$ sends the two cocartesian squares to two cocartesian squares. Provided that the maps $f$ and $X' \to Y'$ are strict morphisms, Proposition 1.1.11 in \cite{pD71} shows that the functor $q_1 \cdot i_2$ also sends the two cocartesian squares to two cocartesian squares (the predifferentials don't affect this property). Finally, the functor $\Gr$ sends maps in $I_0^\infty$ to cofibrations in $(\dg\Aa)^{\ob \nN}$ (objectwise injections with free, hence projective, cokernel) and maps in $\Ww$ to quasi-isomorphisms in $(\dg\Aa)^{\ob \nN}$ so this shows that $\Gr g'$ is a quasi-isomorphism, that is $g'$ belongs to $\Ww$.
\end{proof}

We can provide a description of the cofibrations in this model category as follows.

\begin{prop}
\label{prop: cofibration between complete gr-dg modules}
A map of complete gr-dg $\ringK$-modules $i : (A,\, d_A) \to (A',\, d_{A'})$ is a cofibration in the model category structure described in Theorem \ref{thm: cofibrantly generated model structure on complete gr-dg modules} if and only if it is a retract of a complete gr-dg $\ringK$-module map $(B,\, d_B) \to (B \amalg S,\, \delta)$, where $S$ is endowed with an exhaustive filtration
\[ S_0 = \{ 0\} \subset S_1 \subset S_2 \subset \dots \subset \colim_i S_i = S \]
of complete gr-dg $\ringK$-modules such that $S_{i-1} \rightarrowtail S_i$ are split monomorphisms of complete $\ringK$-modules with cokernels isomorphic to a sum of complete $\ringK$-modules
\[ S_i /S_{i-1} \cong \coprod_\alpha \left(\xi^\alpha \cdot \ringK \amalg \hat \Zz^{0, \infty}_{q_\alpha+1, n_\alpha}\right) \]
where $\xi^\alpha$ is in homological degree $n_\alpha +1$ and filtration degree $q_\alpha$. The predifferential $\delta$ is the one of $\hat \Zz^{0, \infty}_{q_\alpha+1, n_\alpha}$ on $\hat \Zz^{0, \infty}_{q_\alpha+1, n_\alpha}$ and 
\[
\delta(\xi^\alpha) + \zeta^\alpha \in B \amalg S_{i-1},
\]
with $\zeta^\alpha$ is a generator of the gr-dg $\ringK$-module $\hat \Zz^{0, \infty}_{q_\alpha+1, n_\alpha}$. 
\end{prop}

\begin{proof}
We recall that $\ringK$ is a field of characteristic 0. 
By Proposition 2.1.18 in \cite{mH91}, cofibrations are retracts of relative $I_0^\infty$-cell complexes. We therefore study the pushouts of elements of $I_0^\infty$ of the form
\[
\begin{tikzcd}
\amalg_\alpha \hat \Zz^{1, \infty}_{q_\alpha, n_\alpha} \ar{d}{\amalg_\alpha i_\alpha} \ar{r}{f} & (B \amalg S_{i-1},\, \delta) \ar{d}\\
\amalg_\alpha \hat \Bb^{1, \infty}_{q_\alpha, n_\alpha} \ar{r} & (B',\, \delta').
\end{tikzcd}
\]
We denote by $z^\alpha$ the image under $f$ of the generating (as a gr-dg $\ringK$-modules) element of $\hat \Zz^{1, \infty}_{q_\alpha, n_\alpha}$. If we denote by $\xi^\alpha$ and by $\zeta^\alpha$ the generating (as a gr-dg $\ringK$-modules) elements of $\hat \Bb^{1, \infty}_{q_\alpha, n_\alpha} = \hat \Zz^{0, \infty}_{q_\alpha, n_\alpha+1} \amalg \hat \Zz^{0, \infty}_{q_\alpha+1, n_\alpha}$, and by $d^k \xi^\alpha$, resp. by $d^k \zeta^\alpha$, their successives predifferentials, the pushout $B'$ is equal to
\[ (B \amalg S_{i-1}) \coprod \amalg_\alpha \left( (\xi^\alpha \cdot \ringK) \amalg \hat \Zz^{0, \infty}_{q_\alpha+1, n_\alpha}\right), \]
where the predifferential on $\hat \Zz^{0, \infty}_{q_\alpha+1, n_\alpha}$ is the one of $\hat \Zz^{0, \infty}_{q_\alpha+1, n_\alpha}$ and $\delta \xi^\alpha = z^\alpha - \zeta^\alpha$. 
By induction, we get the result.
\end{proof}

\subsection{Properties of the model structure on gr-dg $\ringK$-modules}

We prove that the model category structure on gr-dg $\ringK$-modules is combinatorial and is a monoidal model structure.

\begin{defi}
Let $\Mm$ be a category endowed with a model structure. 
We say that $\Mm$ is \emph{combinatorial} if it is
\begin{enumerate}
\item
locally presentable as a category, and
\item
cofibrantly generated as a model category.
\end{enumerate}
\end{defi}

\begin{lemma}
\label{lemma: the cat gr-dg modules is presentable}
The category of complete gr-dg $\ringK$-modules is a (locally) presentable category. 
As a consequence, the category $\compa(\Aa)$ is combinatorial.
\end{lemma}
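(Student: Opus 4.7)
The plan is to present $\compa(\Aa)$ as a successive reflective localization of a locally presentable category by accessible reflectors. The starting point is the locally finitely presentable category $(\pgA)^{\nN^{\op}}$, which is locally presentable because $\pgA$ is so (as a category of graded $\ringK$-modules equipped with a degree $-1$ endomorphism, it is a category of modules over a small $\ringK$-linear category) and functor categories from small categories into locally presentable categories remain locally presentable. The chain of reflective embeddings
\[
\compa(\Aa) \hookrightarrow \Comp(\pgA) \hookrightarrow \Filt(\pgA) \hookrightarrow (\pgA)^{\nN^{\op}}
\]
established in Appendix~\ref{appendix: categorical stuff} then reduces the problem to checking accessibility of the three reflectors at each stage and applying the Ad\'amek--Rosick\'y theorem that a reflective subcategory of a locally presentable category with an accessible reflector is itself locally presentable.

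For the inclusion $\Filt(\pgA) \hookrightarrow (\pgA)^{\nN^{\op}}$ the situation is clean: the subcategory is closed under $\lambda$-filtered colimits for every regular cardinal $\lambda$ because filtered colimits preserve monomorphisms in a Grothendieck category (AB5), so the reflector is finitely accessible. For $\Comp(\pgA) \hookrightarrow \Filt(\pgA)$ I would exhibit completeness as orthogonality against a small set of morphisms. The strong generating set $\{m_x\}$ of Corollary~\ref{cor:generators} is already complete, and a filtered object $V$ is complete precisely when the unit $V \to \hat V$ induces a bijection $\Hom(m_x, V) \to \Hom(m_x, \hat V)$ for every generator, which is orthogonality against the set of morphisms of the form $m_x \to \hat{m}_x$ composed with the comparisons built from the cokernels $\coker(F_p m_x \to F_0 m_x)$. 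Since $\nN$ is countable, this is a small set of $\aleph_1$-presentable morphisms, so the reflector is $\aleph_1$-accessible. The inclusion $\compa(\Aa) \hookrightarrow \Comp(\pgA)$ is handled analogously: the gr-dg condition requiring $d^2 : F_p V \to F_p V$ to factor through $F_{p+1} V$ is an orthogonality condition against a small set of test morphisms, and its reflector $V \mapsto \widehat{\fdg V}$ was constructed explicitly in Lemma~\ref{lemma: filtered formal coreflective} and Lemma~\ref{lemma: completion preserves formal dg}.

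The conclusion that $\compa(\Aa)$ is combinatorial is then immediate: local presentability is what was just established, and cofibrant generation was proved in Theorem~\ref{thm: cofibrantly generated model structure on complete gr-dg modules}. The main technical obstacle is the middle step, i.e., presenting completeness as a genuine small-orthogonality class with test objects that are presentable in the ambient category; the point where one must be careful is that the defining limit $\lim_p \coker i_p$ is over the infinite poset $\nN$, so the minimum presentability rank one can hope for is $\aleph_1$ rather than $\aleph_0$. A shortcut I would consider if the explicit orthogonality data becomes cumbersome is to verify directly that the full subcategory of complete objects is closed under $\aleph_1$-filtered colimits inside $\Filt(\pgA)$, which, combined with the known generators, yields local $\aleph_1$-presentability immediately via Ad\'amek--Rosick\'y Theorem~1.20.
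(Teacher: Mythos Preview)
Your overall strategy---climbing the chain of reflective embeddings from $(\pgA)^{\nN^{\op}}$ and checking accessibility of each reflector---is the same as the paper's, and your identification of $\aleph_1$ as the relevant presentability rank (because completion involves a countable limit) is exactly right. Two points of divergence are worth noting.

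First, your orthogonality formulation for completeness is not correct as written: the generators $m_x$ are already complete, so ``orthogonality against $m_x\to\hat m_x$'' is vacuous, and the phrase ``composed with the comparisons built from the cokernels'' does not name a concrete set of test morphisms. Your shortcut---showing directly that complete objects are closed under $\aleph_1$-filtered colimits in $\Filt(\pgA)$---is the right move, and it is precisely what the paper does: it verifies that the monad of the reflection commutes with $\aleph_1$-filtered colimits (images commute with filtered colimits; the completion limit is over a diagram of size $\aleph_0$), then invokes Borceux's Theorem~5.5.9.

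Second, for the gr-dg step the paper takes a different and rather slick route that avoids treating the condition ``$d^2$ raises filtration degree'' as a separate orthogonality or reflector-accessibility check. It first passes to $\Comp(\Aa)^{\ob\zZ}$ (graded complete $\ringK$-modules, with no predifferential), then observes that complete gr-dg modules are exactly the modules over the complete filtered algebra $A=\ringK[[d]]$ with $|d|=-1$ and $F_pA=d^{2p}\ringK[[d]]$: the filtration on $A$ forces $d^2$ to raise filtration degree by one, which is the gr-dg condition. Local presentability then follows from the standard fact that module categories over an algebra in a locally presentable monoidal category are locally presentable. Your approach via the explicit reflector $V\mapsto\widehat{\fdg V}$ would also work (the construction $\fdg F^p=\sum_{i\le p}d^{2i}F_{p-i}$ is a finite sum, hence commutes with filtered colimits), but the paper's module-theoretic reformulation packages both the predifferential and the gr-dg constraint in one step.
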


\begin{proof}
 The category of $\Aa = \ringK$-modules is locally presentable (see for instance Example $5.2.2.a$ in \cite{fB94}). Applying \cite[Corollary 1.54]{AR94}, the category $\Aa^{\nN^{\text{op}}}$ of $\nN^{\text{op}}$-indexed diagrams is locally presentable. By Lemma \ref{lemma: cat of filtered objects is reflective} and Corollary \ref{cor: cat of complete objects is reflective}, the category $\Comp(\Aa)$ of complete filtered objects in $\Aa$ is a reflective subcategory of the category of $\nN^{\text{op}}$-indexed diagrams. The monad $T$ associated with the adjunction sends an $\nN^{\text{op}}$-indexed diagrams $\{X(p)\}_p$ to the completion $\hat X$ (seen in $\Aa^{\nN^{\text{op}}}$) of its associated filtered object $\tilde{X}$ defined by $F_p \tilde{X} = \im(X(p) \to X(0))$. Let us show that the monad $T$ commutes with filtered colimits. First, the image of a map can be written as the equalizer of a cokernel pair. It therefore commutes with filtered colimits since colimits commute colimits and in the category $\Aa$, finite limits commute with filtered colimits (Corollary $3.4.3$ in \cite{fB94}). Then, the completion does not commute with filtered colimits but it commutes with $\kappa$-filtered colimits, for $\kappa$ a regular cardinal such that $\kappa > \aleph_0$ since the completion is defined in the locally presentable category $\Aa$ by a diagram obtained by means of two cokernels, a kernel and a limit indexed by $\aleph_0$ (see for example Corollary $5.2.8$ in \cite{fB94}). 
It follows that $T$ commutes with $\aleph_1$-filtered colimits and therefore by Theorem $5.5.9$ in \cite{fB94}, the category $\Comp(\Aa)$ of complete filtered objects in $\Aa$ is locally presentable. 
Again by Corollary 1.54 in \cite{AR94}, the category $\Comp(\Aa)^{\ob \zZ}$ of graded complete objects in $\Aa$ is locally presentable. 
Let $A = \ringK[[d]]$ be the $\ringK$-algebra of formal power series generated by $d$ of degree $-1$ and endowed with the filtration $F_p A = d^{2p} \ringK[[d]]$ for all $p \geq 0$. It is complete for the filtration and therefore provides an algebra in $\Comp(\Aa)^{\zZ^{\text{disc}}}$. 
The category of complete gr-dg $\ringK$-modules is the category of $A$-modules in $\Comp(\Aa)^{\zZ^{\text{disc}}}$. Again by Example $5.2.2.a$ in \cite{fB94}, we obtain that $\compa(\Aa)$ is locally presentable.
\end{proof}

We recall from \cite[Definition 4.2.6]{mH91} the notion of symmetric monoidal model category and we prove that the category $\compa(\Aa)$ is a symmetric monoidal model category.

\begin{defi}
\label{defi: symmetric monoidal model category}
We say that a symmetric monoidal category $(\Mm,\, \otimes,\, \mathbbm{1})$ endowed with a model structure is a \emph{symmetric monoidal model category} when
\begin{enumerate}
\item
the natural morphism
\[
i_1 \square i_2 : (X_1\otimes A_2)\amalg_{(A_1\otimes A_2)}(A_1\otimes X_2)\to X_1\otimes X_2
\]
induced by cofibrations $i_1 : A_1 \to X_1$ and $i_2 : A_2 \to X_2$ forms a cofibration, respectively an acyclic cofibration if $i_1$ or $i_2$ is also acyclic.
\item
Let $Q\mathbbm{1} \xrightarrow{\pi} \mathbbm{1}$ be the cofibrant replacement for the unit obtained by using the functorial factorization to factor $0 \to \mathbbm{1}$ into a cofibration followed by a trivial fibration. Then the natural map $Q \mathbbm{1} \otimes X \xrightarrow{\pi \otimes \id} \mathbbm{1} \otimes X$ is a weak equivalence for all cofibrant $X$. Similary, the natural map $X \otimes Q \mathbbm{1} \xrightarrow{\id \otimes \pi} X \otimes \mathbbm{1}$ is a weak equivalence for all cofibrant $X$. 
\end{enumerate}
\end{defi}

\begin{prop}
\label{prop: gr-dg modules symmetric monoidal model cat}
The category $\compa(\Aa)$ is a symmetric monoidal model category.
\end{prop}

\begin{proof}
To prove the first condition of Definition \ref{defi: symmetric monoidal model category}, by \cite[Corollary 4.2.5]{mH91}, it is enough to prove the claim in the case of generating (acyclic) cofibrations.
\begin{enumerate}
\item
We first consider the cofibrations $i_1 = \varphi_{q, n}^{\infty} : \hat \Zz_{q, n}^{1, \infty} \to \hat \Bb_{q, n}^{1, \infty}$ and $i_2 = \varphi_{p, m}^{\infty}$. The generators, as an $\ringK$-module, of $X_1 \otimes X_2$ are
\[ \left\{d^k 1_{n+1}^q \otimes d^{l} 1_{m+1}^p,\ d^k 1_n^{q+1} \otimes d^{l} 1_{m+1}^p,\ d^k 1_{n+1}^q \otimes d^{l} 1_m^{p+1},\ d^k 1_n^{q+1} \otimes d^{l} 1_m^{p+1}\right\}_{k,\, l \in \nN}, \]
whereas the ones of $(X_1\otimes A_2)\amalg_{(A_1\otimes A_2)}(A_1\otimes X_2)$ are
\begin{multline*}
Z \coloneqq \left\{d^k (d1_{n+1}^q+1_n^{q+1}) \otimes d^{l} 1_{m+1}^p,\ d^k (d1_{n+1}^q+1_n^{q+1}) \otimes d^{l} 1_{m}^{p+1}, \right.\\
\left. d^k 1_{n+1}^q \otimes d^l (d1_{m+1}^p+1_m^{p+1}),\ d^k 1_n^{q+1} \otimes d^{l} (d1_{m+1}^p+1_m^{p+1})\right\}_{k,\, l \in \nN}.
\end{multline*}
We prove by induction on the filtration degree that the map
\[
i_1 \square i_2 : (X_1\otimes A_2)\amalg_{(A_1\otimes A_2)}(A_1\otimes X_2) \to X_1\otimes X_2
\]
can be written as the coproduct of some cofibrations $\varphi_{r_s, o_s}^{\infty}$ and $\varphi_{r_s, o_s-1}^{\infty}$. We first describe $\varphi_{r_0, o_0}^{\infty}$ and $\varphi_{r_0, o_0-1}^{\infty}$.
We have
\begin{multline*}
(d1_{n+1}^q+1_n^{q+1}) \otimes 1_{m+1}^p + (-1)^{n+1} 1_{n+1}^q \otimes (d1_{m+1}^p+1_m^{p+1}) =\\
d\left( 1_{n+1}^q \otimes 1_{m+1}^p\right) + \left(1_n^{q+1} \otimes 1_{m+1}^p + (-1)^{n+1} 1_{n+1}^q \otimes 1_m^{p+1} \right).
\end{multline*}
This gives a cofibration $\varphi_{r_0, o_0}^{\infty}$ where $r_0 = q+p$ and $o_0 = n+m+1$, and
\begin{itemize}
\item
$(d1_{n+1}^q+1_n^{q+1}) \otimes 1_{m+1}^p + (-1)^{n+1} 1_{n+1}^q \otimes (d1_{m+1}^p+1_m^{p+1})$ is the generator of $\hat \Zz_{r_0, o_0}^{1, \infty}$,
\item
 $1_{n+1}^q \otimes 1_{m+1}^p$ is the generator of $\hat \Zz_{r_0, o_0+1}^{0, \infty}$, and
\item
$1_n^{q+1} \otimes 1_{m+1}^p - (-1)^{n} 1_{n+1}^q \otimes 1_m^{p+1}$ is the generator of $\hat \Zz_{r_0+1, o_0}^{0, \infty}$.
\end{itemize}
Similarly,
\begin{itemize}
\item the element in $\hat \Zz_{r_0, o_0-1}^{1, \infty}$
\begin{multline*}
z_{n+m}^{p+q} = d\left(1_{n+1}^{q} \otimes (d1_{m+1}^p+1_m^{p+1})\right) + (-1)^n d\left((d1_{n+1}^q+1_n^{q+1}) \otimes 1_{m+1}^{p} \right)\\
- \left((d1_{n+1}^q+1_n^{q+1}) \otimes 1_{m}^{p+1} + 1_{n}^{q+1} \otimes (d1_{m+1}^p+1_m^{p+1}) \right),
\end{multline*}
\item the element $b_{n+m+1}^{p+q} = 1_{n+1}^q \otimes d1_{m+1}^{p} + (-1)^n d1_{n+1}^{q} \otimes 1_{m+1}^p$ in $\hat \Zz_{r_0, o_0}^{0, \infty}$, and
\item the element $b_{n+m}^{p+q+1} = (-1)^{n+1} \left( 1_{n+1}^q \otimes d 1_m^{p+1} - d 1_n^{q+1} \otimes 1_{m+1}^p\right) - 2 \times 1_{n}^{q+1} \otimes 1_m^{p+1}$ in $\hat \Zz_{r_0+1, o_0-1}^{0, \infty}$
\end{itemize}
fit into the equality $z_{n+m+1}^{p+q} = db_{n+m+2}^{p+q} + b_{n+m+1}^{p+q+1}$ and therefore give a cofibration $\varphi_{r_0, o_0-1}^{\infty}$. 
Let us show that we have the equality
\begin{equation}
\label{equality: intersection with im}
\langle d^k1_{n+1}^{q} \otimes d^l 1_{m+1}^p,\, \rangle_{k, l\in \nN} \cap \im \left( i_1 \square i_2\right) = \{ 0 \}.
\end{equation}
The term $d^k1_{n+1}^{q} \otimes d^l 1_{m+1}^p$ appears only in two terms of $Z$ which are $d^{k-1} (d1_{n+1}^q+1_n^{q+1}) \otimes d^{l} 1_{m+1}^p$ and $d^k 1_{n+1}^q \otimes d^{l-1} (d1_{m+1}^p+1_m^{p+1})$. The terms $d^{k-1}1_n^{q+1} \otimes d^{l} 1_{m+1}^p$ and $d^k 1_{n+1}^q \otimes d^{l-1}1_m^{p+1}$ appear only in one other term each. We get the two equalities:
\begin{multline*}
d^{k-1} (d1_{n+1}^q+1_n^{q+1}) \otimes d^{l} 1_{m+1}^p - d^{k-1} 1_n^{q+1} \otimes d^{l-1} (d1_{m+1}^p+1_m^{p+1}) = \\
d^k1_{n+1}^{q} \otimes d^l 1_{m+1}^p - d^{k-1}1_{n}^{q+1} \otimes d^{l-1} 1_{m}^{p+1}
\end{multline*}
and
\begin{multline*}
d^k 1_{n+1}^q \otimes d^{l-1} (d1_{m+1}^p+1_m^{p+1}) - d^{k-1} (d1_{n+1}^q+1_n^{q+1}) \otimes d^{l-1} 1_{m}^{p+1} =\\
d^k1_{n+1}^{q} \otimes d^l 1_{m+1}^p - d^{k-1}1_{n}^{q+1} \otimes d^{l-1} 1_{m}^{p+1}.
\end{multline*}
The term $d^{k-1}1_{n}^{q+1} \otimes d^{l-1} 1_{m}^{p+1}$ doesn't appear in an other term in $Z$. This proves Equality \eqref{equality: intersection with im}. 
This implies that the element $1_{n+1}^{q} \otimes 1_{m+1}^p$ and its derivatives don't belong to $\im (i_1 \square i_2)$. This also gives that $b_{n+m+1}^{p+q}$ and its derivatives don't belong to $\im (i_1 \square i_2) \amalg \im \varphi_{r_0, o_0}^{\infty}$. 
Let $S_0$ be an $\ringK$-linear complement of the domains of $\varphi_{r_0, o_0}^{\infty}$ and $\varphi_{r_0, o_0-1}^{\infty}$ in $(X_1\otimes A_2)\amalg_{(A_1\otimes A_2)}(A_1\otimes X_2)$. It is a tedious but direct computation to show that modulo $F_{q+p+2} (X_1 \otimes X_2)$, the map $i_1 \square i_2$ is given by $\id_{S_0} \amalg \varphi_{r_0, o_0}^{\infty} \amalg \varphi_{r_0, o_0-1}^{\infty}$ (that is to say the codomain of $\id_{S_0} \amalg \varphi_{r_0, o_0}^{\infty} \amalg \varphi_{r_0, o_0-1}^{\infty}$ is equal to $X_1 \otimes X_2$ modulo $F_{q+p+2} (X_1 \otimes X_2)$). This is the base case of the induction. 

We define similarly the maps $\varphi_{r_s, o_s}^{\infty}$ and $\varphi_{r_s, o_s-1}^{\infty}$. 
We have
\begin{multline*}
d^{2s}(d1_{n+1}^q+1_n^{q+1}) \otimes d^{2s}1_{m+1}^p + (-1)^{n+1} d^{2s}1_{n+1}^q \otimes d^{2s}(d1_{m+1}^p+1_m^{p+1})\\
= d\left( d^{2s}1_{n+1}^q \otimes d^{2s}1_{m+1}^p\right)\\
+ d^{2s}1_n^{q+1} \otimes d^{2s}1_{m+1}^p + (-1)^{n+1} d^{2s}1_{n+1}^q \otimes d^{2s}1_m^{p+1}.
\end{multline*}
This gives a cofibration $\varphi_{r_s, o_s}^{\infty}$ where $r_s = q+p+2s$ and $o_s = n+m+1-4s$, and
\begin{itemize}
\item
$d^{2s}(d1_{n+1}^q+1_n^{q+1}) \otimes d^{2s}1_{m+1}^p + (-1)^{n+1} d^{2s}1_{n+1}^q \otimes d^{2s}(d1_{m+1}^p+1_m^{p+1})$ is the generator of $\hat \Zz_{r_s, o_s}^{1, \infty}$,
\item
$d^{2s}1_{n+1}^q \otimes d^{2s}1_{m+1}^p$ is the generator of $\hat \Zz_{r_s, o_s+1}^{0, \infty}$, and
\item
$d^{2s}1_n^{q+1} \otimes d^{2s}1_{m+1}^p - (-1)^{n} d^{2s}1_{n+1}^q \otimes d^{2s}1_m^{p+1}$ is the generator of $\hat \Zz_{r_s+1, o_s}^{0, \infty}$.
\end{itemize}
Then,
\begin{itemize}
\item the element in $\hat \Zz_{r_s, o_s-1}^{1, \infty}$
\begin{multline*}
z_{n+m-4s}^{p+q+2s} = d\left(d^{2s}1_{n+1}^{q} \otimes d^{2s}(d1_{m+1}^p+1_m^{p+1}) \right) + (-1)^n \left(d^{2s}(d1_{n+1}^q+1_n^{q+1}) \otimes d^{2s}1_{m+1}^{p} \right)\\
- \left(d^{2s}(d1_{n+1}^q+1_n^{q+1}) \otimes d^{2s}1_{m}^{p+1} + d^{2s}1_{n}^{q+1} \otimes d^{2s}(d1_{m+1}^p+1_m^{p+1}) \right),
\end{multline*}
\item the element $b_{n+m+1-4s}^{p+q+2s} = d^{2s}1_{n+1}^q \otimes d^{2s+1}1_{m+1}^{p} + (-1)^n d^{2s+1}1_{n+1}^{q} \otimes d^{2s}1_{m+1}^p$ in $\hat \Zz_{r_s, o_s}^{0, \infty}$, and
\item the element $b_{n+m-4s}^{p+q+1+2s} =$
\[
 (-1)^{n+1} \left( d^{2s}1_{n+1}^q \otimes d^{2s+1} 1_m^{p+1} - d^{2s+1} 1_n^{q+1} \otimes d^{2s}1_{m+1}^p\right) - 2 \times d^{2s}1_{n}^{q+1} \otimes d^{2s}1_m^{p+1}
\]
in $\hat \Zz_{r_s+1, o_s-1}^{0, \infty}$
\end{itemize}
fit into the equality $z_{n+m+1-4s}^{p+q+2s} = db_{n+m+2-4s}^{p+q+2s} + b_{n+m+1-4s}^{p+q+1+2s}$ and give a cofibration $\varphi_{r_s, o_s-1}^{\infty}$. 
We denote by $\textrm{Codom}_{s}$ the codomain of the map $\amalg_{0 \leq k \leq s} \left( \varphi_{r_k, o_k}^{\infty} \amalg \varphi_{r_k, o_k-1}^{\infty}\right)$. 
We want to prove the following statement:
\begin{center}
for $k\geq 0$, we have the equality
\begin{equation}
\tag{$E_s$}
\label{equality: intersection with im and codomain}
\langle d^k1_{n+1}^{q} \otimes d^l 1_{m+1}^p,\, \rangle_{k, l\geq 2s} \cap \left(\im \left( i_1 \square i_2\right) + \textrm{Codom}_{s-1}\right) = \{ 0 \}.
\end{equation}
and there exists an $\ringK$-linear subspace $S_s$ of $X_1 \otimes X_2$ such that modulo $F_{q+p+2(s+1)} \left(X_1 \otimes X_2 \right)$, the map $i_1 \square i_2$ is given by $\id_{S_s} \coprod \amalg_{0 \leq k \leq s} \left( \varphi_{r_k, o_k}^{\infty} \amalg \varphi_{r_k, o_k-1}^{\infty}\right)$.
\end{center}
Let assume Equality \eqref{equality: intersection with im and codomain} for $s-1$ and the existence of $S_{s-1}$ such that modulo $F_{q+p+2s} \left(X_1 \otimes X_2 \right)$, the map $i_1 \square i_2$ is given by
\[ \id_{S_{s-1}} \coprod \amalg_{0 \leq k \leq s-1} \left( \varphi_{r_k, o_k}^{\infty} \amalg \varphi_{r_k, o_k-1}^{\infty}\right). \]
First of all, we obtain $\im \left( i_1 \square i_2\right) + \textrm{Codom}_{s-1}$ from $\im \left( i_1 \square i_2\right) + \textrm{Codom}_{s-2}$ by adding the terms $d^{2s}1_{n+1}^q \otimes d^{2s}1_{m+1}^p$ and $b_{n+m+1-4s}^{p+q+2s}$ and their boundaries. It is easy to see that all these terms are sums of terms of the form $d^{k}1_{n+1}^q \otimes d^{l}1_{m+1}^p$ with at least one term with $k \geq 2s$ or $l \geq 2s$. By Equalities \eqref{equality: intersection with im} and \eqref{equality: intersection with im and codomain} for $s-1$, this gives Equality \eqref{equality: intersection with im and codomain}. 
It remains to show that there exists $S_s$ such that modulo $F_{q+p+2(s+1)} \left(X_1 \otimes X_2 \right)$, the map $i_1 \square i_2$ is given by $\id_{S_s} \coprod \amalg_{0 \leq k \leq s} \left( \varphi_{r_k, o_k}^{\infty} \amalg \varphi_{r_k, o_k-1}^{\infty}\right)$, that is to say the codomain of this last map is equal to $X_1 \otimes X_2$ modulo $F_{q+p+2(s+1)} \left(X_1 \otimes X_2 \right)$. By the induction hypothesis, it is enough to show that all the elements in $F_{q+p+2(s+1)} \left(X_1 \otimes X_2 \right) / F_{q+p+2(s+1)} \left(X_1 \otimes X_2 \right)$ are in the codomain modulo $F_{q+p+2(s+1)} \left(X_1 \otimes X_2 \right)$. A basis of the module
\[
F_{q+p+2s}\left(X_1 \otimes X_2\right)/F_{q+p+2(s+1)} \left(X_1 \otimes X_2 \right) 
\]
is given by the classes of the elements in the set
\begin{multline*}
C_s \coloneqq \left\{d^{2k+t} 1_{n+1}^q \otimes d^{2(2s-k+u)+l} 1_{m+1}^p\right\}_{t,\, u,\, l \in \llbracket 0,\, 1\rrbracket,\, k \in \llbracket 0,\, 2s+u\rrbracket} \bigcup\\
\left\{d^{2k+t} 1_n^{q+1} \otimes d^{2(2s-1-k+u)+l} 1_{m+1}^p,\right.\\
\left. d^{2k+t} 1_{n+1}^q \otimes d^{2(2s-1-k+u)+l} 1_m^{p+1}\right\}_{t,\, u,\, l \in \llbracket 0,\, 1\rrbracket,\, k \in \llbracket 0,\, 2s-1+u\rrbracket}\\
\bigcup \left\{ d^{2k+t} 1_n^{q+1} \otimes d^{2(2(s-1)-k+u)+l} 1_m^{p+1}\right\}_{t,\, u,\, l \in \llbracket 0,\, 1\rrbracket,\, k \in \llbracket 0,\, 2(s-1)+u\rrbracket}.
\end{multline*}
First of all, using the terms $d^k1_{n+1}^{q} \otimes d^l 1_{m+1}^p$ and the terms in $Z$ such that $k,\, l \geq 2s$, the same tedious computation as for the base case shows that the terms $d^k 1_{n+1}^q \otimes d^{l} 1_{m+1}^p$, $d^k 1_n^{q+1} \otimes d^{l} 1_{m+1}^p$, $d^k 1_{n+1}^q \otimes d^{l} 1_m^{p+1}$, $d^k 1_n^{q+1} \otimes d^{l} 1_m^{p+1}$, for $k,\, l \geq 2s$, are in the codomain modulo $F_{q+p+2(s+1)} \left(X_1 \otimes X_2 \right)$. Finally, it is enough to say that we get the terms $d^k 1_{n+1}^q \otimes d^{l} 1_{m+1}^p$, $d^k 1_n^{q+1} \otimes d^{l} 1_{m+1}^p$, $d^k 1_{n+1}^q \otimes d^{l} 1_m^{p+1}$, $d^k 1_n^{q+1} \otimes d^{l} 1_m^{p+1}$, for $k,\, l$ such that $k$ or $l$ is $< 2s$ using the previous terms and the boundaries of some lift of the class modulo $F_{q+p+2s} \left(X_1 \otimes X_2 \right)$ of the terms in $C_{s-1}$. The picture of this proof is nested cones.
This gives the inductive step and therefore proves that the map $i_1 \square i_2$ is a cofibration since we are working in a complete setting.

 When the two maps are acyclic cofibrations, say $i_1 : 0 \to \hat \Zz_{q, n}^{0, \infty}$ and $i_2 : 0 \to \hat \Zz_{p, m}^{0, \infty}$, the map $i_1\square i_2$ is given by $0 \to \Zz_{q, n+1}^{0, \infty} \otimes \hat \Zz_{p, m}^{0, \infty}$ which is a coproduct of acyclic cofibrations. When we assume that $i_1$ or $i_2$ is an acyclic cofibration, say $i_1 = \varphi_{q, n}^{1, \infty}$ and $i_2 : 0 \to \hat \Zz_{p, m}^{0, \infty}$, we can see that the map $i_1\square i_2$ is isomorphic to the inclusion $\hat \Zz_{q, n}^{1, \infty} \otimes \hat \Zz_{p, m}^{0, \infty} \to \hat \Zz_{q, n}^{1, \infty} \otimes \hat \Zz_{p, m}^{0, \infty} \amalg \Zz_{q, n+1}^{0, \infty} \otimes \hat \Zz_{p, m}^{0, \infty}$, which is an acyclic cofibration.
\item
By \cite[Remark 4.2.3]{mH91} and the fact that a left Quillen functor preserves weak equivalences between cofibrant objects, we can consider any cofibrant replacement of the unit. The map $\hat \Zz_{0, 0}^{1, \infty} \to \ringK$ is one. We have to show that for all cofibrant $X$, the natural map $\hat \Zz_{0, 0}^{1, \infty} \otimes X \to X$ (resp. $X \otimes \hat \Zz_{0, 0}^{1, \infty} \to X$) is a weak equivalence. By \cite[Lemma 4.2.7]{mH91}, it is enough to show that the map $X \to \Hom(\hat \Zz_{0, 0}^{1, \infty},\, X)$ (where the internal $\Hom$ is computed in gr-dg filtered $\ringK$-modules) is a weak equivalence for all $X$ (since all objects are fibrant). Since this map is an equality, we are done.
\end{enumerate}
\end{proof}

\subsection{Model structure on complete curved operads}
\label{section: model structure on complete curved operads}

In this section, we recall from \cite[Theorem 11.3.2]{pH03} the theorem of transfer of cofibrantly generated model structure, essentially due to Quillen \cite[Section II.4]{Quillen2} and we apply it to endow the category of complete curved operads with a cofibrantly generated model structures. The model structure is obtained from the adjunction between the free complete curved operad and the forgetful functor from complete curved operads to complete gr-dg $\sS$-modules.\\

Let $A$ be an associative $\ringK$-algebra. Applying Theorem \ref{thm: transfer theorem} to the free-forgetful adjunction between complete gr-dg $\ringK$-modules and complete gr-dg $A$-modules, we obtain, for the $\ringK$-algebra $A = \ringK [\sS_m]$, $m \in \nN$, a proper cofibrantly generated model category structure on the category $\Mm_m$ of complete gr-dg $\ringK[\sS_m]$-modules. 
Considering this collection $(\Mm_m,\, W_m,\, I_m,\, J_m)_{m \in \nN}$ of proper cofibrantly generated model category structures, we have that the product
\[ \capgSA \coloneqq \left(\prod_{m \in \nN} \Mm_m,\, \prod_{m \in \nN} W_m,\, \prod_{m \in \nN} I_m \right) \]
(for $\Aa = \ringK$-$\Mod$) is also a proper cofibrantly generated model category structure (see for example \cite[11.6]{pH03}). A morphism $f : M \to N$ in $\Mm$ is a weak equivalence (resp. fibration) if the underlying collection of morphisms $\{ M(m) \to N(m)\}_{m \in \nN}$ consists of weak equivalences (resp. fibrations). Moreover the set $\texttt{I}$ (resp. $\texttt{J}$) of generating cofibrations (resp. acyclic cofibrations) can be described explicitly as follows:
\[ \texttt{I} = \{ \hat \Zz^{1, \infty}_{q, n}(m) \to \hat \Bb^{1, \infty}_{q, n}(m)\} \text{ and } \texttt{J} = \{ 0 \to \hat \Zz^{0, \infty}_{q, n}(m)\}, \]
where $\hat \Zz^{k, \infty}_{q, n}(m)$ (resp. $\hat \Bb^{1, \infty}_{q, n}(m)$) is the complete gr-dg $\sS$-module obtained by the complete free gr-dg $\ringK[\sS_m]$-module $\hat \Zz^{k, \infty}_{q, n} \otimes \ringK[\sS_m]$ (resp. $\hat \Bb^{1, \infty}_{q, n} \otimes \ringK[\sS_m]$) put in arity $m$. 
We denote by $\Ww$ the subcategory of weak equivalences. 
Notice that the domains of elements of $\texttt{I}$ or $\texttt{J}$ are small ($\aleph_1$-small) in the category $\capgSA$.

We make a description of the cofibrations in $\capgSA$ as in Proposition \ref{prop: cofibration between complete gr-dg modules} in order to be able to prove that the bar-cobar resolutions and Koszul resolutions are $\sS$-cofibrant resolutions.

\begin{prop}
\label{prop: cofibration between complete gr-dg S-modules}
A map of complete gr-dg $\sS$-modules $i : (M,\, d_M) \to (M',\, d_{M'})$ is a cofibration in the model category structure described above if and only if it is a retract of a complete gr-dg $\sS$-module map $(N,\, d_N) \to (N \amalg S,\, \delta)$, where $S$ is endowed with an exhaustive filtration
\[ S_0 = \{ 0\} \subset S_1 \subset S_2 \subset \dots \subset \colim_i S_i = S \]
of complete gr-dg free $\sS$-modules such that $S_{i-1} \rightarrowtail S_i$ are split monomorphisms of complete $\sS$-modules with cokernels isomorphic to a sum of complete $\sS$-modules
\[ S_i /S_{i-1} \cong \coprod_\alpha \left(\xi^\alpha \cdot \ringK[\sS_{m_\alpha}] \amalg \hat \Zz^{0, \infty}_{q_\alpha+1, n_\alpha}(m_\alpha)\right) \]
where $\xi^\alpha$ is in homological degree $n_\alpha +1$ and filtration degree $q_\alpha$. The predifferential $\delta$ is the one of $\hat \Zz^{0, \infty}_{q_\alpha+1, n_\alpha}(m_\alpha)$ on $\hat \Zz^{0, \infty}_{q_\alpha+1, n_\alpha}(m_\alpha)$ and 
\[
\delta(\xi^\alpha) + \zeta^\alpha \in N \amalg S_{i-1},
\]
with $\zeta^\alpha$ is a generator of the gr-dg $\ringK$-module $\hat \Zz^{0, \infty}_{q_\alpha+1, n_\alpha}(m_\alpha)$. 
\end{prop}

\begin{proof}
The proof is similar to the proof of Proposition \ref{prop: cofibration between complete gr-dg modules}. 
\end{proof}

\begin{prop}
\label{prop: cofibration are retract of good cofibration}
Any map $(\tilde M,\, d_{\tilde M}) \to (\tilde N,\, d_{\tilde N})$ of complete gr-dg $\sS$-modules is a retract of a map $(M,\, d_M) \to (N,\, d_N)$ of complete gr-dg free $\sS$-modules. Moreover, assume that $\tilde N = \tilde M \amalg \tilde S$ with $\tilde S$ endowed with an exhaustive filtration
\[ \tilde S_0 = \{ 0\} \subset \tilde S_1 \subset \tilde S_2 \subset \dots \subset \colim_i \tilde S_i = \tilde S \]
such that $\tilde S_{i-1} \rightarrowtail \tilde S_i$ are split monomorphisms of complete $\sS$-modules with cokernels isomorphic to a sum of complete $\sS$-modules
\[ \tilde S_i /\tilde S_{i-1} \cong \coprod_\alpha \left(\ringK\{\xi^\alpha\} \amalg \hat \Zz^{0, \infty}_{q_\alpha+1, n_\alpha}\{m_\alpha\}\right), \]
where $\ringK\{\xi^\alpha\}$ is an $\sS_{m_\alpha}$-module of rank 1 generated by $\xi^\alpha$ in homological degree $n_\alpha +1$ and filtration degree $q_\alpha$ and $\hat \Zz^{0, \infty}_{q_\alpha+1, n_\alpha}\{m_\alpha\}$ is the tensor product of $\hat \Zz^{0, \infty}_{q_\alpha+1, n_\alpha}$ with an $\sS_{m_\alpha}$-module of rank 1 and is generated by $\zeta^\alpha$ in homological degree $n_\alpha$ and filtration degree $q_\alpha + 1$. 
Assume moreover that the predifferential $d_N$ is internal on $M$, corresponds to the one of $\hat \Zz^{0, \infty}_{q_\alpha+1, n_\alpha}$ on $\hat \Zz^{0, \infty}_{q_\alpha+1, n_\alpha}\{m_\alpha\}$ and satisfies $d_N(\xi^\alpha) + \zeta^\alpha \in M \amalg \tilde S_{i-1}$. 

Then $N$ can be chosen to be equal to $M \amalg S$, where $S$ has the same properties as $\tilde S$ and such that the cokernels of the $S_{i-1} \rightarrowtail S_i$ are free $\sS$-modules (that is $S$ is has in Proposition \ref{prop: cofibration between complete gr-dg S-modules}).

Under these hypotheses, the map $(\tilde M,\, d_{\tilde M}) \to (\tilde N,\, d_{\tilde N})$ is a cofibration as a retract of a cofibration.
\end{prop}

\begin{proof}
We copy the proof of Lemma 39 in \cite{MerkulovVallette2}, adapting the setting. Let $\overline{M}(m)$ denote the set of equivalence classes of elements in $\tilde M(m)$ under the action of $\ringK[\sS_m]^\times$. 
For simplicity, we use the generic notation $\overline{M}$. 
We choose a set of representatives $\{ \bar x_i\}_{i \in \Ii}$. Let $M$ be the free $\sS$-module generated by the $\{ \bar x_i\}_{i \in \Ii}$. The generator associated with $\bar x_i$ will be denoted by $x_i$. For any $\tilde x \in \tilde M(m)$, we consider the subgroup $\sS_{\tilde x} \coloneqq \{ \sigma \in \sS_m \ |\ \tilde x\cdot \sigma = \chi(\sigma) \tilde x,\ \chi(\sigma) \in \ringK \}$. In this case, $\chi$ is a character of $\sS_{\tilde x}$ (that is a group homomorphism $\sS_{\tilde x} \to \ringK^\times$). We define the following element of $M$:
\[ \texttt{N}(\bar x_i) \coloneqq \frac{1}{|\sS_{\bar x_i}|} \sum \chi(\sigma^{-1}) \cdot x_i \sigma, \]
where the sum runs over $\sigma \in \sS_{\bar x_i}$ ($\texttt{N}$ preserves the filtration). The image under the boundary map $d_{\tilde M}$ of an $\bar x_i$ is a (convergent) sum $\sum \lambda_{j} \bar x_{j} \sigma_j$. We define the boundary map $d_M$ on $M$ by
\[ d_M (x_i) \coloneqq \sum_j \lambda_j \left(\frac{1}{|\sS_{\bar x_i}|} \sum \chi(\sigma^{-1}) \cdot \texttt{N}(\bar x_j)\sigma\right) \sigma_j, \]
where the second sum runs over $\sigma \in \sS_{\bar x_i}$ and the sum lies in the complete gr-dg module since $\texttt{N}$ preserves the filtration. Finally, we define the maps of complete gr-dg $\sS$-modules $(M,\, d_M) \to (\tilde M,\, d_{\tilde M})$ by $x_i \mapsto \bar x_i$ and $(\tilde M,\, d_{\tilde M}) \to (M,\, d_M)$ by $\bar x_i \mapsto \texttt{N}(\bar x_i)$. They form a deformation retract, which preserves the filtrations and the different properties on the cokernels also hold.
\end{proof}

\begin{remark}
\label{rem: example of cofibrations}
It is already implicitly written before but we recall the following fact in order to make references.
\begin{enumerate}
\item
Considering the pushout of complete gr-dg $\sS_m$-modules
\[
\begin{tikzcd}
\hat \Zz^{1, \infty}_{q, n}\{m\} \ar{d}{i} \ar{r}{0} & 0\\
\hat \Bb^{1, \infty}_{q, n}\{m\} & 
\end{tikzcd}
\]
we get that the map $0 \to \hat \Zz^{1, \infty}_{q, n+1}\{m\}$ is a cofibration.
\item
Considering the composition of cofibrations $0 \to \hat \Zz^{1, \infty}_{q, n}\{m\} \to \hat \Bb^{1, \infty}_{q, n} \{m\} = \hat \Zz^{0, \infty}_{q, n+1}\{m\} \amalg \hat \Zz^{0, \infty}_{q+1, n}\{m\}$, we obtain that the retracts $0 \to \hat \Zz^{0, \infty}_{q, n+1}\{m\}$ and $0 \to \hat \Zz^{0, \infty}_{q+1, n}\{m\}$ are cofibrations.
\end{enumerate}
\end{remark}

\begin{thm}[Theorem 3.3 in \cite{sC95}, Theorem 11.3.2 in \cite{pH03}]
\label{thm: transfer theorem}
Let $\Cc$ is a cofibrantly generated model category, with $\texttt{I}$ as the set of generating cofibrations, $\texttt{J}$ as the set of generating trivial cofibrations and $W$ as class of weak equivalences. Let $\Dd$ be a complete and cocomplete category and let $F : \Cc \rightleftharpoons \Dd : G$ be a pair of adjoint functors. Suppose further that:
\begin{enumerate}
\item
$G$ preserves filtered $\aleph_1$-colimits,
\item
$G$ maps relative $F \texttt{J}$-cell complexes to weak equivalences.
\end{enumerate}
Then the category $\Dd$ is endowed with a cofibrantly generated model structure in which $F \texttt{I}$ is the set of generating cofibrations, $F \texttt{J}$ is the set of generating trivial cofibrations, and the weak equivalences are the maps that $G$ takes into a weak equivalence in $\Cc$. We have that fibrations are precisely those maps that $G$ takes into a fibration in $\Cc$. Moreover $(F,\, G)$ is a Quillen pair with respect to these model structures.
\end{thm}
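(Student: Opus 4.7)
The plan is to apply the recognition theorem for cofibrantly generated model structures (Theorem~\ref{thm: cofibrantly generated model structure}) to the following data on $\Dd$: the class of weak equivalences $\Ww_\Dd \coloneqq G^{-1}(W)$, the generating cofibrations $FI$, and the generating trivial cofibrations $FJ$. We then need to verify the five conditions of that theorem and, at the end, identify the fibrations and check the Quillen pair property.

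First I would dispose of the formal conditions. The completeness and cocompleteness of $\Dd$ are given. The class $\Ww_\Dd$ inherits the two-out-of-three property and closure under retracts directly from $W$, since $G$ is a functor. To verify that $FI$-inj $= \Ww_\Dd \cap FJ$-inj, I would use the $(F,G)$-adjunction, which gives a bijection between lifting problems against $Fi$ in $\Dd$ and lifting problems against $i$ in $\Cc$; hence a map $p$ in $\Dd$ is $FI$-injective (resp.\ $FJ$-injective) if and only if $G(p)$ is $I$-injective (resp.\ $J$-injective) in $\Cc$. Since $I$-inj $= W \cap J$-inj in $\Cc$, the statement transfers verbatim. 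This also identifies the fibrations of $\Dd$ as exactly the maps whose image under $G$ is a fibration of $\Cc$, yielding the Quillen pair assertion automatically: $F$ preserves cofibrations (generators are cofibrations and it preserves colimits as a left adjoint) and trivial cofibrations (same reasoning with $FJ$).

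Next, for the inclusion $FJ$-cof $\subseteq \Ww_\Dd \cap FI$-cof, the first containment is exactly hypothesis (2), promoted from relative $FJ$-cell complexes to their retracts using closure of $\Ww_\Dd$ under retracts. For the second containment, each $j \in J$ is in $I$-cof, so by the adjunction $Fj$ has the left lifting property against every $FI$-injective map; hence $FJ \subseteq FI$-cof, and by standard closure properties of weakly saturated classes, $FJ$-cof $\subseteq FI$-cof.

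The main obstacle is the smallness condition on the domains of $FI$ and $FJ$, which requires condition (1). Let $A$ be the domain of some $i \in I$, assumed $\kappa$-small relative to $I$-cell in $\Cc$ for $\kappa$ a regular cardinal (we may take $\kappa = \aleph_1$ in our applications). Given a $\lambda$-sequence $X_\bullet$ of pushouts of maps in $FI$ in $\Dd$ for a $\kappa$-filtered $\lambda$, the adjunction gives
\[
\Hom_\Dd(FA,\, \colim_{\beta<\lambda} X_\beta) \cong \Hom_\Cc(A,\, G(\colim_{\beta<\lambda} X_\beta)).
\]
By hypothesis (1), $G$ commutes with this $\kappa$-filtered colimit, so the right-hand side is isomorphic to $\Hom_\Cc(A,\, \colim_{\beta<\lambda} G(X_\beta))$. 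To conclude via smallness of $A$ in $\Cc$, I would observe that $G$ sends each pushout step $X_\beta \to X_{\beta+1}$ (a pushout of a map in $FI$) to a map in $\Cc$; while $G$ need not turn this into a pushout of a map in $I$, the needed smallness only requires $A$ to be $\kappa$-small relative to the image class under $G$, which is standard (see e.g.\ Hirschhorn \cite[11.2--11.4]{pH03}, using that $A$ is in fact $\kappa$-small relative to all of $\Cc$ after a cardinality bump, or directly arguing via the commuting colimit). The same argument applies verbatim with $I$ replaced by $J$. Assembling these verifications in Theorem~\ref{thm: cofibrantly generated model structure} yields the cofibrantly generated model structure on $\Dd$ with the stated generators, weak equivalences, and fibrations, and the Quillen pair property recorded above completes the proof.
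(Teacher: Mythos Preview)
The paper does not supply its own proof of this statement: it is quoted verbatim as a known result (Crans \cite{sC95}, Hirschhorn \cite{pH03}) and then immediately used as a black box. So there is no ``paper's proof'' to compare against; your sketch is essentially the standard argument one finds in Hirschhorn, and it is correct in outline.

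One point deserves tightening. In the smallness step you write that ``the needed smallness only requires $A$ to be $\kappa$-small relative to the image class under $G$'' and then defer to a cardinality bump. This is the genuine content of hypothesis~(1), and the paper's formulation is already tailored to it: the domains of $I$ and $J$ in the applications are $\aleph_1$-small relative to all of $\Cc$ (not merely relative to $I$-cell), and $G$ is assumed to preserve $\aleph_1$-filtered colimits. With those two facts in hand the chain
\[
\Hom_\Dd(FA,\colim_\beta X_\beta)\cong \Hom_\Cc(A,G(\colim_\beta X_\beta))\cong \Hom_\Cc(A,\colim_\beta G(X_\beta))\cong \colim_\beta \Hom_\Cc(A,G(X_\beta))\cong \colim_\beta \Hom_\Dd(FA,X_\beta)
\]
goes through directly, with no need to analyze the maps $G(X_\beta)\to G(X_{\beta+1})$. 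Stating the smallness of $A$ relative to all of $\Cc$ up front would remove the hedge.

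A second minor point: your promotion of hypothesis~(2) from $FJ$-cell to $FJ$-cof via ``retracts of cell complexes'' tacitly uses the small object argument for $FJ$, which you only establish afterwards. Either reorder, or note that condition~(5) of Theorem~\ref{thm: cofibrantly generated model structure} in Hovey's formulation actually only asks for $J\text{-cell}\subseteq \Ww\cap I\text{-cof}$, which is exactly what hypothesis~(2) gives (together with $FJ\subseteq FI\text{-cof}$, which you argue correctly).
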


In order to apply this theorem, we prove several results. The first one concerns the adjunction between the free curved operad $\cfree$ and the corresponding forgetful functor.

\begin{prop}
\label{prop: monad and curved operads}
The adjunction between the free curved operad and the forgetful functor (see Theorem \ref{thm: free curved operad})
\[
\xymatrix{\cfree : \capgSA \ar@<.5ex>@^{->}[r] & \mathsf{Curved\ operads} : U \ar@<.5ex>@^{->}[l]}
\]
provides a monad $U \cdot \cfree$ whose category of algebras is naturally isomorphic to the category of curved operads.
\end{prop}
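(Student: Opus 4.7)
The plan is to verify that the comparison functor $K \colon \cOp \to (U \cdot \cfree)\textrm{-}\mathsf{Alg}$ is an equivalence, which then identifies the Eilenberg--Moore category of the monad with curved operads. First I would spell out the comparison functor: given a curved operad $\Oo$, the counit $\cfree(U\Oo)\to \Oo$ yields a structure map $U\cfree(U\Oo)\to U\Oo$ satisfying the monad algebra axioms, and this assignment is functorial by naturality of the counit.

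Next I would construct a candidate inverse $L$ on objects. Given an algebra $(M,\alpha\colon U\cfree(M)\to M)$, extract a curved operad structure on $M$ as follows. Recall from Theorem~\ref{thm: free curved operad} that
\[
\cfree(M)=\free(M\amalg \vartheta I)/\bigl(\im(d_M^{\,2}-[\vartheta,-])\bigr),\qquad \textrm{with curvature }\bar\vartheta.
\]
The composition $\gamma_M\colon M\circ M\to M$ is obtained by restricting $\alpha$ to the summand of $\free_+(M)$ consisting of two-vertex trees (no $\vartheta$); the unit is the restriction of $\alpha$ to $I\subset \free_+(M)$; the predifferential on $M$ is already the one given as part of the data of $M\in\capgSA$; and the curvature is $\theta\coloneqq \alpha(\vartheta)\in F_1 M(1)_{-2}$. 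The monad multiplication $U\cfree U\cfree(M)\to U\cfree(M)$ is induced by the operad composition on $\cfree(M)$, so the associativity axiom of an algebra is exactly operadic associativity of $\gamma_M$; the unit axiom gives unitality. The identity $d_M^{\,2}=[\theta,-]$ holds on $M$ because $\alpha$ is defined on the quotient of $\free_+(M)$ by the image of ${d_M}^2-[\vartheta,-]$, and closedness $d_M(\theta)=0$ holds because $\vartheta$ is closed in $\cfree(M)$.

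Then I would check that $L$ and $K$ are mutually inverse (up to natural isomorphism), which reduces to the universal properties already used to establish Theorem~\ref{thm: free curved operad}: a morphism of algebras $f\colon M\to N$ commutes with the structure maps $\alpha$, and this is tautologically the same data as commuting with $\gamma$, $d$, and preserving $\theta$, which is a morphism of curved operads. Equivalently, one can invoke Beck's monadicity theorem: $U$ has a left adjoint $\cfree$ by Theorem~\ref{thm: free curved operad}; $U$ is conservative because a curved-operad map whose underlying $\sS$-module map is invertible has an inverse that automatically respects the structure maps; and $\cOp$ admits, and $U$ preserves, coequalizers of $U$-split pairs because such coequalizers can be computed from a presentation of the operads as quotients of $\cfree(-)$, where the construction of $\cfree$ given in Theorem~\ref{thm: free curved operad} only involves coproducts, free operad constructions and quotients by ideals, all of which are created by $U$ in the split situation.

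The main obstacle will be bookkeeping the various pieces of structure carried by a curved operad (composition, unit, predifferential, curvature, and the constitutive relation $d^2=[\theta,-]$) and matching them bijectively with what the monad $U\cdot\cfree$ encodes. Concretely, the delicate point is the passage from the free pointed gr-dg operad $\free_+$ to its quotient $\cfree$: one must verify that imposing the relation ${d_M}^2-[\vartheta,-]=0$ at the free-operad level is exactly what enforces the curved-operad relation $d^2=[\theta,-]$ on algebras, and nothing more. Once this is carefully unwound using Proposition~\ref{prop: free pointed operad} and the construction of $\cfree$ in Theorem~\ref{thm: free curved operad}, all remaining verifications are routine.
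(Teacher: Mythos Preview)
Your proposal is correct. The paper's proof is closest to your second option: it applies the crude monadicity theorem (Barr--Wells), checking that $U$ has a left adjoint, reflects isomorphisms, and that curved operads have and $U$ preserves coequalizers of \emph{reflexive} pairs (rather than $U$-split pairs). The key concrete point the paper isolates is that for a reflexive pair $(f,g)\colon \Oo\rightrightarrows\Pp$, the operadic ideal generated by $\im(f-g)$ is already equal to $\im(f-g)$, so the coequalizer in curved operads is $\Pp/\im(f-g)$ with the induced predifferential and the class of $\theta_\Pp$ as curvature---exactly the coequalizer in $\capgSA$. Your justification of the coequalizer condition is vaguer than this, but the idea is right; the reflexive-pair formulation makes the verification one line. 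Your first, direct approach (building the inverse to the comparison functor by hand) is a genuine alternative that bypasses monadicity theorems at the cost of more bookkeeping; it buys nothing extra here since the crude monadicity hypotheses are so easy to check.
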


\begin{proof}
We apply the crude monadicity theorem that we recall from \cite[Section 3.5]{BW05}. The functor $U : \mathsf{Curved\ operads} \to \capgSA$ is monadic if it satisfies the hypotheses:
\begin{itemize}
\item
the functor $U$ has a left adjoint,
\item
the functor $U$ reflects isomorphisms,
\item
the category $\mathsf{Curved\ operads}$ has coequalizers of those reflexive pairs $(f,\, g)$ for which $(U f,\, U g)$ is a coequalizer and $U$ preserves those coequalizers.
\end{itemize}
The free functor is left adjoint to the forgetful functor $U$. The forgetful functor clearly reflects isomorphisms. The coequalizer of a pair $f,\, g : \Oo \to \Pp$ in the category of curved operads is given by the quotient map
\[
(\Pp,\, d_\Pp,\, \theta_\Pp) \to (\Pp/\left(\im(f-g) \right),\, \bar d_\Pp,\, \bar \theta_\Pp)
\]
where $\left(\im(f-g)\right)$ is the (operadic) ideal generated by $\im(f-g)$, the map $\bar d_\Pp$ induced by $d_\Pp$ is well-defined since $\left(\im(f-g)\right)$ is stable under $d_\Pp$ and the element $\bar \theta_\Pp$ is the class of $\theta_\Pp$ in $\Pp/\left(\im(f-g) \right)$. The element $\bar \theta_\Pp$ is a curvature since $\theta_\Pp$ is. When $(f,\, g)$ is a reflexive pair, the ideal generated by $\im(f-g)$ is equal to $\im(f-g)$. It follows that the third condition is satisfied since the coequalizers in $\capgSA$ are given by $\Pp/\im(f-g)$. 
Hence the forgetful functor $U$ is monadic and the result follows from the crude monadicity theorem.
\end{proof}

\begin{prop}
\label{prop: limits and colimits for curved operads}
The category of curved operads has all limits and small colimits.
\end{prop}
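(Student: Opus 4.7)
The plan is to exploit the monadic adjunction established in Proposition \ref{prop: monad and curved operads} between the category $\mathsf{Curved\ operads}$ and the base category $\capgSA$, together with the good categorical properties of the base. Recall from Lemma \ref{lemma: the cat gr-dg modules is presentable} (extended arity-wise) that $\capgSA$ is locally presentable, hence in particular both complete and cocomplete.

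For limits, I would simply appeal to the standard fact that a monadic functor creates (small) limits. Since $U : \mathsf{Curved\ operads} \to \capgSA$ is monadic and $\capgSA$ is complete, it follows that $\mathsf{Curved\ operads}$ admits all small limits and that they are computed objectwise: given a diagram $\{\Oo_i\}$, the underlying $\sS$-module of the limit is $\lim_i U\Oo_i$, and the composition product, unit, predifferential, and curvature $\theta$ are induced componentwise from the corresponding data on the $\Oo_i$. The conditions $d^2 = [\theta,-]$ and $d(\theta) = 0$ are stable under taking limits in $\capgSA$, so the result indeed lands in curved operads.

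For colimits, I would invoke the criterion of Linton: a category of algebras over a monad on a cocomplete category is cocomplete as soon as it has coproducts and reflexive coequalizers. The existence of reflexive coequalizers was already established in the proof of Proposition \ref{prop: monad and curved operads}, where it was shown that for a reflexive pair $(f,g)$ of maps of curved operads $\Oo \rightrightarrows \Pp$, the quotient $\Pp/(\im(f-g))$ inherits a well-defined induced predifferential $\bar d_\Pp$ and curvature $\bar\theta_\Pp$, and computes the coequalizer. It remains to construct coproducts. Given a family $\{\Oo_i\}$ of curved operads, present each as a coequalizer of free curved operads $\cfree(N_i) \rightrightarrows \cfree(M_i) \twoheadrightarrow \Oo_i$; one then defines $\coprod_i \Oo_i$ as the corresponding coequalizer of $\cfree(\coprod_i N_i) \rightrightarrows \cfree(\coprod_i M_i)$, using that $\cfree$ is a left adjoint and $\capgSA$ admits all coproducts. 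Alternatively, one can observe that the monad $U \cdot \cfree$ is accessible (it is built from the tree construction of Section \ref{section: free operad}, which is a colimit of iterated applications of the composition product, together with a quotient by a small ideal), so the category of algebras is itself locally presentable by the standard theorem of Adámek--Rosický.

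The main technical obstacle is verifying that the coproduct so constructed carries a well-defined curvature satisfying the coherence $d^2 = [\theta,-]$; this amounts to checking that the relevant relation is compatible with the coequalizer defining the coproduct, which is a direct bookkeeping verification using the explicit description of $\cfree$ in Theorem \ref{thm: free curved operad}. Once both coproducts and reflexive coequalizers are in hand, every small colimit is constructed in the usual way as a reflexive coequalizer of coproducts.
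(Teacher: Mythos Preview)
Your proof is correct but takes a genuinely different route to cocompleteness than the paper does. For limits, both you and the paper invoke monadicity (the paper via Borceux's Proposition~4.3.1, you via the standard fact that monadic functors create limits); these are essentially the same argument. For colimits, however, the approaches diverge. You appeal to Linton's criterion: reflexive coequalizers (obtained in the proof of Proposition~\ref{prop: monad and curved operads}) together with coproducts (built from free presentations, or via accessibility of the monad) yield all small colimits. The paper instead first shows that the monad $U\cdot\cfree$ preserves \emph{filtered} colimits---because the free operad is a filtered colimit of iterated $\circ$-products and the curved quotient is a coequalizer---so that curved operads inherit filtered colimits; it then constructs \emph{pushouts} explicitly, describing the filtered-operad coproduct $\mathbf{a}\vee_{\mathbf{c}}\mathbf{b}$ with its induced filtration, completing, and quotienting by the ideal $(\theta_{\mathbf{a}}-\theta_{\mathbf{b}})$ to identify the two curvatures. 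Mac Lane's criterion (filtered colimits plus pushouts give all small colimits) then finishes.

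Your argument is categorically cleaner, but the paper's hands-on pushout description is not incidental: it is reused in the proofs of Lemma~\ref{lemma: relative cfreeJ-cell complex} and Theorem~\ref{thm: model structure on curved operads} to identify $\Oo\vee\cfree(Z)$ concretely and to check that the functor $\Gr$ interacts well with these coproducts. So the paper's detour through explicit formulas pays dividends later. A minor expository point: your appeal to Lemma~\ref{lemma: the cat gr-dg modules is presentable} is a forward reference in the paper's ordering, though not a logical circularity since that lemma does not depend on the present proposition.
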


\begin{proof}
By Proposition 4.3.1 in \cite{fB94} and Proposition \ref{prop: monad and curved operads}, the category of curved operads admits the same type of limits as the category $\capgSA$ which is complete (and they are preserved by $U$).

By Proposition 4.3.2 in \cite{fB94} and Proposition \ref{prop: monad and curved operads}, if some colimits present in $\capgSA$ are preserved by the monad $U \cdot \free$, the category of curved operads admits the same type of colimits and they are preserved by $U$. Using the fact that $\circ$ preserves filtered colimits in each variable, since $\otimes$ does, and that colimits commutes with colimits, we get by the construction of the underlying $\sS$-module of the free complete operad given in Section \ref{section: free operad} that the monad $U' \cdot \free$, where $U'$ is the forgetful functor from complete gr-dg operad to complete gr-dg $\sS$-modules, commutes with filtered colimits. Similarly, the same is true when we replace $\free$ by $\free_+$ and the forgetful functor by the corresponding forgetful functor. Then, the free curved operad $\cfree (M,\, d_M)$ described in Section \ref{section: free curved operad} is given by a quotient of $\free_+ (M,\, d_M)$ which is a coequalizer (hence a colimit). We finally get that the monad $U \cdot \cfree$ commutes with filtered colimits. 
We therefore obtain that the category of curved operads admits filtered colimits. It is therefore enough to check that it admits pushouts (by Theorem 1 in Chapter IX of \cite{sML71}, a category with filtered colimits and pushouts has all small colimits). The pushouts can be computed explicitly. The construction is well-known in the category of operads in $\Aa$ (see for example the proof of Theorem 1.13 in \cite{GetzlerJones}). Given two maps of filtered operads $\mathbf{c} \to \mathbf{a}$ and $\mathbf{c} \to \mathbf{b}$ and the pushout in operads $\mathbf{F_0 a \vee_{F_0 c} F_0 b}$. We endow the pushout with the filtration given by $\im\left( F_p \free(\mathbf{a} \amalg \mathbf{b}) \to \mathbf{F_0 a \vee_{F_0 c} F_0 b} \right)$ which is the initial one such that there are filtered maps $\mathbf{a} \to \mathbf{a \vee_{c} b}$ and $\mathbf{b} \to \mathbf{a \vee_{c} b}$ and $\mathbf{a \vee_{c} b}$ is a filtered operad. It is easy to check that this defines the pushout in filtered operads. Taking completion gives the pushout in complete operads and this extends in a straightforward manner to the gr-dg setting. In order to obtain the pushout of two maps $(\mathbf{c},\, d_\mathbf{c},\, \theta_{\mathbf{c}}) \to (\mathbf{a},\, d_\mathbf{a},\, \theta_{\mathbf{a}})$ and $(\mathbf{c},\, d_\mathbf{c},\, \theta_{\mathbf{c}}) \to (\mathbf{b},\, d_\mathbf{b},\, \theta_{\mathbf{b}})$ in the curved setting, the only difference is to consider the quotient $\mathbf{a \vee_{c} b}/(\theta_{\mathbf{a}} - \theta_{\mathbf{b}})$ so that
\[ \mathbf{(\mathbf{a},\, d_\mathbf{a},\, \theta_{\mathbf{a}}) \vee_{(\mathbf{c},\, d_\mathbf{c},\, \theta_{\mathbf{c}})} (\mathbf{b},\, d_\mathbf{b},\, \theta_{\mathbf{b}})} \coloneqq (\mathbf{a \vee_{c} b}/(\theta_{\mathbf{a}} - \theta_{\mathbf{b}}),\, d_{\mathbf{a \vee_{c} b}/(\theta_{\mathbf{a}} - \theta_{\mathbf{b}})},\, \bar \theta_{\mathbf{a}} = \bar \theta_{\mathbf{b}}). \]
\end{proof}

We now apply the transfer Theorem \ref{thm: transfer theorem} to the adjunction
\[
\xymatrix{\cfree : \capgSA \ar@<.5ex>@^{->}[r] & \mathsf{Curved\ operads} : U. \ar@<.5ex>@^{->}[l]}
\]
We have seen in the beginning of Section \ref{section: model structure on complete curved operads} that the category on the left-hand side is a cofibrantly generated model category. In order to apply Theorem \ref{thm: transfer theorem}, we have to understand $\cfree \texttt{J}$-cell complexes, with
\[ \cfree \texttt{J} = \{ \cfree(0) \to \cfree(\hat \Zz^{0, \infty}_{q, n}(m)) \}_{n \in \zZ,\, q \in \nN,\, m \in \nN}. \]
Then, we study $\cfree \texttt{I}$-cell complexes, with
\[ \cfree \texttt{I} = \{ \cfree(\hat \Zz^{1, \infty}_{q, n}(m)) \to \cfree(\hat \Bb^{1, \infty}_{q, n}(m)) \}_{n \in \zZ,\, q \in \nN,\, m \in \nN} \]
in order to describe cofibrant objects. 
The classes $\cfree \texttt{I}$, resp. $\cfree \texttt{J}$, will be the generating cofibrations, resp. generating acyclic cofibrations.

\begin{lemma}
\label{lemma: relative cfreeJ-cell complex}
A morphism of curved operads is a relative $\cfree \texttt{J}$-cell complex if and only if it is a map $\Oo \to \Oo \vee \cfree(Z,\, d_Z)$, where $(Z,\, d_Z)$ is a complete gr-dg $\sS$-module equal to a direct sum of complete gr-dg $\sS$-modules $\hat \Zz^{0, \infty}_{q, n}(m)$. In particular, $(Z,\, d_Z)$ is a free $\sS$-module and it is gr-acyclic (that is to say its gr-homology is $0$) and it satisfies $\ker(d_Z) = \{ 0\}$. 
Explicitly, we can write
\[ \cfree (Z,\, d_Z) \cong \left(\free (\vartheta I \amalg (Z/ \im {d_Z}^2)),\, d,\, \vartheta\right), \]
with $d \vartheta = 0$ and $d^2 = [\vartheta,\, -]$.
\end{lemma}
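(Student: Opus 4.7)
The plan splits into two parts: first, identifying $\cfree J$-cell complexes via pushouts; second, establishing the explicit description of $\cfree(Z, d_Z)$.

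I begin by analyzing a single pushout. The set $\cfree J$ consists of maps $\cfree(0) \to \cfree(\hat\Zz^{0,\infty}_{q,n}(m))$, where $\cfree(0) = \free(\vartheta I)$ carries the zero predifferential and curvature $\vartheta$. I claim $\cfree(0)$ is the initial curved operad: a morphism of curved operads $\cfree(0) \to \Oo$ must send $\vartheta$ to $\theta_\Oo$ and is uniquely determined by freeness, and the condition $d^2 = [\theta_\Oo, -]$ imposes no new constraint since the direct sign computation $[\theta_\Oo, \theta_\Oo^k] = \theta_\Oo^{k+1} - (-1)^{(-2)(-2k)} \theta_\Oo^{k+1} = 0$ shows $[\theta_\Oo, -]$ vanishes on $\free(\vartheta I)$. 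Hence the pushout of $\cfree(0) \to \cfree(\hat\Zz^{0,\infty}_{q,n}(m))$ along the initial map $\cfree(0) \to \Oo$ is the coproduct $\Oo \vee \cfree(\hat\Zz^{0,\infty}_{q,n}(m))$ in curved operads.

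Since $\cfree$ is a left adjoint it preserves coproducts, so $\cfree(Z_1) \vee \cfree(Z_2) \cong \cfree(Z_1 \amalg Z_2)$. Induction on ordinals shows that a composition of pushouts of elements of $\cfree J$ indexed by an ordinal $\alpha$ is a map of the form $\Oo \to \Oo \vee \cfree\bigl(\bigoplus_{i < \alpha} \hat\Zz^{0,\infty}_{q_i,n_i}(m_i)\bigr)$. The transfinite limit step uses that $U \cdot \cfree$ preserves filtered colimits (as in the proof of Proposition \ref{prop: limits and colimits for curved operads}), so filtered colimits of curved operads can be computed on the underlying $\sS$-module. The reverse direction is immediate: given $Z = \bigoplus_i \hat\Zz^{0,\infty}_{q_i,n_i}(m_i)$, one simply pushes out the summands one at a time. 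The properties of $(Z, d_Z)$ then follow by inspection: each $\hat\Zz^{0,\infty}_{q,n}(m)$ has a free $\sS_m$-action, each graded piece $\Gr_p \hat\Zz^{0,\infty}_{q,n}$ is a two-term complex with identity differential (hence acyclic), and on the generators $1_k^p$ the predifferential sends $1_k^p$ to the nonzero generator $1_{k-1}^{p'}$, so $\ker d_Z = 0$. All three properties are stable under direct sums.

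The hard part will be the explicit description $\cfree(Z, d_Z) \cong \bigl(\free(\vartheta I \amalg (Z/\im d_Z^2)), d, \vartheta\bigr)$. By definition the left-hand side is $\free(Z \amalg \vartheta I)$ modulo the ideal generated by $\{d_Z^2(z) - [\vartheta, z] : z \in Z\}$, so what must be shown is that these relations precisely eliminate each generator in $\im d_Z^2 \subset Z$ in favour of an iterated $\vartheta$-bracket of a generator in $Z/\im d_Z^2$. The key structural input is the shape of each summand $\hat\Zz^{0,\infty}_{q,n}$: its generators $1_k^p$ satisfy $d_Z^2(1_k^p) = 1_{k-2}^{p'}$, so $\im d_Z^2$ is spanned by all generators except the two ``top'' ones $1_n^q$ and $1_{n-1}^q$, while iterating the relation yields $1_{n-2j}^{p''} = [\vartheta, [\vartheta, \cdots [\vartheta, 1_n^q] \cdots ]]$ as an arity-one composition in the free operad. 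I would construct an explicit section of the canonical surjection $\free(\vartheta I \amalg (Z/\im d_Z^2)) \twoheadrightarrow \cfree(Z, d_Z)$ via this rewriting, then verify it is a well-defined map of complete gr-dg operads and that the transported predifferential is $d(\vartheta) = 0$ together with the projection of $d_Z$ onto $Z/\im d_Z^2$ plus the bracket-with-$\vartheta$ contribution from the complementary part. Completeness does not produce additional obstructions since all the constructions commute with completion; the delicate point is the bookkeeping showing that the iterative substitution introduces no hidden relation beyond those already enumerated.
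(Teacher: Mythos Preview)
Your proposal is correct and follows essentially the same approach as the paper. The paper also identifies $\cfree(0)\cong\free(\vartheta I)$ (using exactly the vanishing-bracket computation you give), describes pushouts along $\cfree J$ as coproducts, and establishes the explicit isomorphism via the iterated-bracket rewriting $d^k x \mapsto [\vartheta,\ldots[\vartheta,d^{k-2\lfloor k/2\rfloor}x]\ldots]$; the only cosmetic difference is that the paper builds the map in the direction $\free(\vartheta I\amalg Z)\to\free(\vartheta I\amalg Z/\im d_Z^2)$ and then exhibits an inverse, whereas you describe the inverse surjection first and then seek a section.
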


\begin{proof}
We have $\cfree(0) = \free(\vartheta I)/ (\im(0^2 - [\vartheta,\, -])) \cong \free(\vartheta I)$. Pushouts of elements of $\cfree J$ are therefore as follows
\[
\begin{tikzcd}
\free(\vartheta I) \ar{d}{\vee_\alpha \cfree(j_\alpha)} \ar{r} & \Oo \ar{d}\\
\bigvee_\alpha \cfree(\hat \Zz^{0, \infty}_\alpha) \ar{r} & \Oo \vee \left( \bigvee_\alpha \cfree(\hat \Zz^{0, \infty}_\alpha) \right),
\end{tikzcd}
\]
with each $\hat \Zz^{0, \infty}_\alpha$ equal to a $\hat \Zz^{0, \infty}_{q, n}(m)$. Since the coproduct of free curved operads is the free curved operad on the sum of their generating modules (see the proof of Proposition \ref{prop: limits and colimits for curved operads}), the composite of two such pushouts is equal to $\Oo \to \Oo \vee \cfree \left( \amalg_\alpha \hat \Zz^{0, \infty}_\alpha \coprod \amalg_\beta \hat \Zz^{0, \infty}_\beta \right)$. Hence a transfinite composition of such pushouts has the form $\Oo \to \Oo \vee \cfree \left( Z \right)$, with $Z$ a gr-acyclic gr-dg $\sS$-module whose components are free complete gr-dg $\sS$-modules $\hat \Zz^{0, \infty}_{q, n}(m)$. 
To prove the isomorphism given an explicit description of $\cfree (Z,\, d_Z)$, we define a map $\vartheta I \amalg Z \to \free(\vartheta I \amalg Z/ \im {d_Z}^2)$ by sending $\vartheta$ to $\vartheta$ and an element $d^k x$, for a generator $x$ of a free gr-dg $\sS$-module $\hat \Zz^{0, \infty}_{q, n}(m)$, to
\[ d^k x \mapsto \underbrace{[\vartheta,\, \dots [\vartheta}_{\lfloor \frac{k}{2} \rfloor \text{ times}},\, d^{k- 2\lfloor \frac{k}{2} \rfloor} x] \dots ].\]
By the universal property of the free complete gr-dg operad, we obtain a (surjective) map
\[ \free(\vartheta I \amalg Z) \to \free(\vartheta I \amalg Z/ \im {d_Z}^2)\]
and a direct computation shows that the ideal $(\im({d_Z}^2-[\vartheta,\, -]))$ is sent to zero. It is easy to describe an inverse to this map in complete $\sS$-modules.
\end{proof}

\begin{thm}
\label{thm: model structure on curved operads}
The category of complete curved operads is endowed with a cofibrantly generated model category structure where the generating (acyclic) cofibrations are the images under the free functor of the generated (acyclic) cofibrations. A map $f : \Oo \to \Pp$ is a
\begin{itemize}
\item
weak equivalence if and only if, in every arity, it is a graded quasi-iso\-mor\-phism of gr-dg $\sS$-modules,
\item
fibration if and only if, in every arity, it is a gr-surjection,
\item
cofibrations if and only if it has the left lifting property with respect to acyclic fibrations.
\end{itemize}
Moreover $(\cfree,\, U)$ is a Quillen pair with respect to the cofibrantly model structures. The generating cofibrations are the maps $\cfree \texttt{I}$ and the generating acyclic cofibrations are the maps $\cfree \texttt{J}$.
\end{thm}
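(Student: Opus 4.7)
The plan is to apply the transfer theorem (Theorem \ref{thm: transfer theorem}) to the adjunction $\cfree \dashv U$ between $\capgSA$ and complete curved operads. The target category is complete and cocomplete by Proposition \ref{prop: limits and colimits for curved operads}, so it remains to verify the two hypotheses of the transfer theorem, namely that $U$ preserves filtered $\aleph_1$-colimits, and that $U$ sends relative $\cfree J$-cell complexes to weak equivalences.

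First I would dispatch hypothesis (1). The forgetful functor from curved operads to complete gr-dg $\sS$-modules factors as the forgetful functor from curved operads to complete gr-dg operads followed by the forgetful from operads to $\sS$-modules. Both factors preserve $\aleph_1$-filtered colimits: the first because the quotient defining $\cfree$ out of $\free_+$ is a colimit and thus commutes with filtered colimits, and the second by the argument in the proof of Proposition \ref{prop: limits and colimits for curved operads} (filtered colimits commute with the tensor products and coinvariants building the composition product).

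The main obstacle is hypothesis (2). By Lemma \ref{lemma: relative cfreeJ-cell complex}, every relative $\cfree J$-cell complex has the form $\Oo \to \Oo \vee \cfree(Z,\,d_Z)$ for some complete gr-dg $\sS$-module $(Z,\,d_Z)$ which is gr-acyclic and is a coproduct of components $\hat{\Zz}^{0,\infty}_{q,n}(m)$. My plan is to show that this map becomes a graded quasi-isomorphism after applying the forgetful functor and passing to $\Gr$. Since $\Gr$ is strong symmetric monoidal on gr-flat objects (Lemma \ref{lemma:gr-flat is monoidal subcat}), and since the objects appearing are gr-flat (as coproducts of free $\ringK[\sS_m]$-modules built from $\ringK$), we have $\Gr(\Oo \vee \cfree(Z,\,d_Z)) \cong \Gr\Oo \vee_{\Gr \free(\vartheta I)}\Gr \cfree(Z,\,d_Z)$ computed in graded operads. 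Using the explicit description $\cfree(Z,\,d_Z)\cong (\free(\vartheta I \amalg Z/\im d_Z^2),\,d,\,\vartheta)$ from Lemma \ref{lemma: relative cfreeJ-cell complex} and filtering by weight (number of generators from $Z/\im d_Z^2$), I would show that the inclusion $\Gr\Oo \hookrightarrow \Gr(\Oo \vee \cfree(Z,\,d_Z))$ is a quasi-isomorphism by exhibiting a contracting filtration: the quotient is built from copies of $\Gr Z$ tensored with pieces of $\Gr \Oo$, and $\Gr Z$ is contractible since $Z$ is gr-acyclic. Concretely the weight $\geq 1$ part of the pushout, in each fixed filtration degree, is a finite direct sum of tensor products involving at least one $\Gr$ factor of $Z$, and a standard spectral sequence / Künneth argument gives acyclicity.

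With both hypotheses verified, Theorem \ref{thm: transfer theorem} produces the model structure. The weak equivalences and fibrations are exactly the morphisms sent by $U$ to weak equivalences and fibrations in $\capgSA$, which by Theorem \ref{thm: cofibrantly generated model structure on complete gr-dg modules} and Proposition \ref{prop: fibrations} are precisely the arity-wise graded quasi-isomorphisms and the arity-wise gr-surjections; the cofibrations are characterized by the lifting property against acyclic fibrations. The theorem also states that $(\cfree,\,U)$ is a Quillen pair, completing the proof. The hard step is really the acyclicity of the pushouts along $\cfree J$-cells; once that is in hand, everything else is a direct application of the transfer machinery.
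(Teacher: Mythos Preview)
Your proposal is correct and follows essentially the same route as the paper: apply the transfer theorem via $\cfree\dashv U$, check filtered $\aleph_1$-colimits as in Proposition~\ref{prop: limits and colimits for curved operads}, and use Lemma~\ref{lemma: relative cfreeJ-cell complex} to reduce to showing $\Oo\to\Oo\vee\cfree(Z,d_Z)$ is a graded quasi-isomorphism. The only difference is in the endgame: the paper first rewrites the curved coproduct as an uncurved one, $\Oo\vee_{\text{gr-dg op}}\free(\amalg_\alpha\hat\Zz^{0,\infty}_\alpha/\im d_\alpha^2)$, checks carefully that $\Gr$ commutes with each ingredient (free operad, operadic coproduct) using Maschke and the explicit Getzler--Jones description, and then \emph{cites} Hinich~\cite[Theorem 3.2]{Hinich03} for the acyclicity; you instead sketch a direct weight-filtration/K\"unneth argument, which is precisely Hinich's proof. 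One point worth tightening in your write-up: the isomorphism $\Gr(\Oo\vee\cfree(Z))\cong\Gr\Oo\vee_{\Gr\free(\vartheta I)}\Gr\cfree(Z)$ is not automatic, since $\Gr$ is not a left adjoint on curved operads; the paper earns this by first passing to the uncurved coproduct and then arguing constructively that $\Gr$ preserves it.
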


\begin{proof}
By Proposition \ref{prop: limits and colimits for curved operads}, in order to apply Theorem \ref{thm: transfer theorem}, it is enough to show that $U$ preserves filtered $\aleph_1$-colimits and that it maps relative $\cfree \texttt{J}$-cell complexes to weak equivalences. 
We have already seen in the proof of Proposition \ref{prop: limits and colimits for curved operads} that $U$ preserves filtered colimits; in particular, it preserves filtered $\aleph_1$-colimits. 
By Lemma \ref{lemma: relative cfreeJ-cell complex}, a relative $\cfree \texttt{J}$-cell complex can be written as a map $j : \Oo \to \Oo \vee \left( \bigvee_\alpha \cfree(\hat \Zz^{0, \infty}_{\alpha})\right) = \Oo \vee \cfree\left(\amalg_\alpha \hat \Zz^{0, \infty}_{\alpha} \right)$. 
Using the description of the pushout of curved operads in the proof of Proposition \ref{prop: limits and colimits for curved operads}, we can compute
\begin{align*}
\Oo \vee \cfree\left(\amalg_\alpha \hat \Zz^{0, \infty}_{\alpha} \right) & \cong \left(\Oo \vee_{gr\text{-}dg\ op} \free_+\left(\amalg_\alpha \hat \Zz^{0, \infty}_{\alpha}/\im {d_\alpha}^2 \right)\right)/(\theta_\Oo - \vartheta)\\
& \cong \Oo \vee_{gr\text{-}dg\ op} \free \left(\amalg_\alpha \hat \Zz^{0, \infty}_{\alpha}/\im {d_\alpha}^2 \right).
\end{align*}

The functor
\[ \Gr : \compa(\Aa) \to (\dg\Aa)^{\ob \nN},\ (V,\, F,\, d_V) \mapsto (\Gr_\bullet V,\, \Gr d_V) \]
commutes with direct sums (see for example \cite[Proposition 7.3.8]{bF17}) and satisfies, for gr-flat complete modules, $\Gr M \otimes_{\Gr} \Gr N \cong \Gr (M \hat \otimes N)$ (see Lemma \ref{lemma:gr-flat is monoidal subcat}). By Maschke's Theorem ($\ringK$ is a field of characteristic $0$), any $\ringK[\sS_\ast]$-module is flat. It follows that when $M$ and $N$ are two gr-dg $\sS$-modules, we have $\Gr M \circ \Gr N \cong \Gr (M \circ N)$. Moreover the functor $\Gr$ preserves filtered colimits. Indeed, it is the composition
\[ \compa(\Aa) \xrightarrow{i_1} \Filt(\pg\Aa) \xrightarrow{i_2} (\pg\Aa)^{\nN^{\op}} \xrightarrow{q_1} (\dg\Aa)^{\ob \nN}, \]
where $i_1$ and $i_2$ are inclusions and $q_1$ sends a directed sequence to the ($\nN$-indexed) product of the cokernel of the maps appearing in the directed sequence. By Proposition $1.62$ in \cite{AR94}, the functor $i_2$ preserves filtered colimits. 
Since cokernels are coequalizers (therefore colimits) in $(\pg\Aa)^{\nN^{\op}}$ and $(\dg\Aa)^{\ob \nN}$, and colimits commute with colimits, the map $q_2$ preserves all colimits. Then, because of the presence of the quotient map $q_1$, when $\colim_i M_i$ is a filtered colimit, we have $\Gr \colim_i M_i = (q_1 \cdot i_2)( \colim_i i_1(M_i))$ (even if $i_1$ does not preserve ($\aleph_0$-)filtered colimits). 
The free operad functor is obtained as a filtered colimit of finite coproducts of $\circ$ monoidal products (see Section \ref{section: free operad}), so $\Gr$ commutes with the free curved operad functor. 

The coproduct of two operads can be computed explicitly and is a quotient of the free operad on the direct sum of the two underlying $\sS$-modules of the operads (see the proof of Theorem 1.13 in \cite{GetzlerJones}). The graded functor $\Gr$ therefore preserves the coproduct of two operads. We get
\[
\Gr \left(\Oo \vee_{gr\text{-}dg\ op} \free \left(\amalg_\alpha \hat \Zz^{0, \infty}_{\alpha}/\im {d_\alpha}^2 \right)\right) \cong \Gr (\Oo) \vee_{dg\ op} \free \left(\amalg_\alpha \Gr \left(\hat \Zz^{0, \infty}_{\alpha}/\im {d_\alpha}^2\right) \right).
\]
We get, using the fact that $\ringK$ is a field of characteristic $0$ and the arguments given in \cite[Theorem 3.2]{Hinich03}, that the map $\Gr(U(j))$ is a quasi-isomorphism.
\end{proof}

We conclude this section by computing the cofibrant objects in the model category of complete curved operads. 
We say until the end of this section that a curved operad $\Oo$ is \emph{cofibrant} when the unique morphism of curved operads $\free (\vartheta I) \to \Oo$ is a cofibration. 
We recall that the notion of quasi-free complete curved operad is given in Definition \ref{defi: quasi-free}.

\begin{prop}
\label{prop: cofibrant complete curved operad}
A complete curved operad is cofibrant if and only if it is a retract of a quasi-free complete curved operad $\left(\free_+(S),\, d\right)/\left(\im\left( d^2-[\vartheta,\, -]\right)\right)$, where $S$ is a complete $\sS$-module endowed with an exhaustive filtration
\[ S_0 = \{ 0\} \subset S_1 \subset S_2 \subset \dots \subset \colim_i S_i = S \]
of free $\sS$-modules such that $S_{i-1} \rightarrowtail S_i$ are split monomorphisms of complete $\sS$-modules with cokernels isomorphic to a sum of complete $\sS$-modules
\[ S_i /S_{i-1} \cong \coprod_\alpha \left(\xi^\alpha \cdot \ringK[\sS_{m_\alpha}] \amalg \hat \Zz^{0, \infty}_{q_\alpha+1, n_\alpha}(m_\alpha)\right) \]
where $\xi^\alpha$ is in homological degree $n_\alpha +1$ and filtration degree $q_\alpha$. The predifferential $d$ is the one of $\hat \Zz^{0, \infty}_{q_\alpha+1, n_\alpha}(m_\alpha)$ on $\hat \Zz^{0, \infty}_{q_\alpha+1, n_\alpha}(m_\alpha)$ and 
\[
d(\xi^\alpha) + \zeta^\alpha \in \left(\free_+ (S_{i-1}),\, d\right)/\left( \im\left(d^2-[\vartheta,\, -]\right)\right),
\]
with $\zeta^\alpha$ is a generator of the gr-dg $\sS$-module $\hat \Zz^{0, \infty}_{q_\alpha+1, n_\alpha}(m_\alpha)$. 
\end{prop}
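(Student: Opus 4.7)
The plan is to combine the standard cofibrant generation machinery with an explicit analysis of pushouts along the generating cofibrations. By the small object argument (see, e.g., \cite[Corollary 2.1.15]{mH91}), every cofibrant complete curved operad is a retract of a transfinite composition of pushouts of maps in $\cfree I$ from the initial object $I \cong \cfree(0)$. Since the ``retract of'' clause is already built into the statement, it suffices to prove that the relative $\cfree I$-cell complexes with domain $I$ are precisely the quasi-free operads of the displayed form, and, conversely, that every such quasi-free operad arises in this way.

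The key computation is the form of a single pushout
\[
\begin{tikzcd}
\cfree(\hat \Zz^{1,\infty}_{q,n}(m)) \ar{d}[swap]{\cfree(\varphi^\infty_{q,n})} \ar{r} & \Oo \ar{d}\\
\cfree(\hat \Bb^{1,\infty}_{q,n}(m)) \ar{r} & \Oo'.
\end{tikzcd}
\]
Unpacking the definition of $\varphi^\infty_{q,n}$ and the decomposition $\hat \Bb^{1,\infty}_{q,n}(m) = \hat \Zz^{0,\infty}_{q,n+1}(m) \amalg \hat \Zz^{0,\infty}_{q+1,n}(m)$, the generator $z$ of $\hat \Zz^{1,\infty}_{q,n}(m)$ is sent to $d(\xi) + \zeta$, where $\xi$ is the top generator of $\hat \Zz^{0,\infty}_{q,n+1}(m)$ (in bidegree $(n+1,q)$ and free as an $\sS_m$-module) and $\zeta$ is the top generator of $\hat \Zz^{0,\infty}_{q+1,n}(m)$. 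Using the explicit coproduct formula for curved operads from the proof of Proposition~\ref{prop: limits and colimits for curved operads}, and writing $\Oo \cong (\free_+(S_{i-1}), d)/(\im(d^2 - [\vartheta, -]))$, the pushout takes the form
\[ \Oo' \cong \bigl(\free_+(S_{i-1} \amalg \xi \cdot \ringK[\sS_m] \amalg \hat \Zz^{0,\infty}_{q+1,n}(m)),\ d'\bigr) \big/ (\im(d'{}^2 - [\vartheta, -])) \]
with $d'$ equal to $d$ on $S_{i-1}$ and to the canonical differential on $\hat \Zz^{0,\infty}_{q+1,n}(m)$, and with $d'(\xi) + \zeta = f(z)$, where $f : \hat \Zz^{1,\infty}_{q,n}(m) \to U(\Oo)$ is the attaching map. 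Allowing simultaneous coproducts of such pushouts at each ordinal stage and taking the transfinite composition, which is preserved on underlying $\sS$-modules by $U$ (Proposition~\ref{prop: limits and colimits for curved operads}), produces precisely the operads described in the statement. The converse is a formal inversion: the splitting $S_i \cong S_{i-1} \amalg \coprod_\alpha (\xi^\alpha \cdot \ringK[\sS_{m_\alpha}] \amalg \hat \Zz^{0,\infty}_{q_\alpha+1, n_\alpha}(m_\alpha))$ together with the constraint $d(\xi^\alpha) + \zeta^\alpha \in \cfree(S_{i-1})$ exhibits each inclusion as the pushout of a coproduct of generating cofibrations.

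The main technical obstacle will be verifying that the quotient by $\im(d'{}^2 - [\vartheta, -])$ does not identify the newly attached generator $\xi$, or the acyclic summand $\hat \Zz^{0,\infty}_{q+1,n}(m)$, with anything else in $\Oo'$. To handle this I would adapt the ``change of basis'' rewriting used in the proof of Lemma~\ref{lemma: relative cfreeJ-cell complex}, where the analogous difficulty was resolved for free curved operads on gr-acyclic complexes. The relation $d'{}^2(\xi) = [\vartheta, \xi]$ forces $d'(f(z) - \zeta) = [\vartheta, \xi]$; the term $d'(f(z))$ is already determined inside $\Oo$ by the pre-existing curvature relation there, and the interaction with $\zeta$ can be absorbed into a formal substitution that leaves $\xi$ a genuine free generator and the acyclic summand untouched. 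Once this compatibility is established at a single pushout step, iteration together with the preservation of filtered colimits by $U \cdot \cfree$ completes the argument.
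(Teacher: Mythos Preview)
Your approach is essentially the same as the paper's: cite Hovey to reduce to analyzing relative $\cfree I$-cell complexes, compute a single pushout along a generating cofibration explicitly (obtaining the new generator $\xi$ and the acyclic tail $\hat\Zz^{0,\infty}_{q+1,n}(m)$ with $d\xi = z - \zeta$), and conclude by induction. The paper's proof is terser than yours---it simply asserts the form of the pushout and does not separately discuss the converse or the ``technical obstacle'' you raise---and one small slip in your write-up is that the initial curved operad is $\cfree(0)\cong\free(\vartheta I)$, not the $\sS$-module $I$.
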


\begin{proof}
By Proposition 2.1.18 in \cite{mH91}, cofibrations are retracts of relative $\cfree \texttt{I}$-cell complexes. We therefore study the pushouts of elements of $\cfree \texttt{I}$ of the form
\[
\begin{tikzcd}
\bigvee_\alpha \cfree \left(\hat \Zz^{1, \infty}_{\alpha}\right) \ar{d}{\vee_\alpha \cfree(i_\alpha)} \ar{r}{f} & \left(\free_+ (S_{i-1}), d_{i-1}\right)/\left( \im\left({d_{i-1}}^2-[\vartheta,\, -]\right) \right) \ar{d}\\
\bigvee_\alpha \cfree(\hat \Bb^{1, \infty}_\alpha) \ar{r} & \Pp,
\end{tikzcd}
\]
with each $\hat \Zz^{1, \infty}_{\alpha}$ equal to a $\hat \Zz^{1, \infty}_{q_\alpha, n_\alpha}(m_\alpha)$ and $\hat \Bb^{1, \infty}_{\alpha}$ equal to a $\hat \Bb^{1, \infty}_{q_\alpha, n_\alpha}(m_\alpha)$. We denote by $z^\alpha$ the image under $f$ of the generating (as a gr-dg $\sS$-modules) element of $\hat \Zz^{1, \infty}_{q_\alpha, n_\alpha}(m_\alpha)$. If we denote by $\xi^\alpha$ and by $\zeta^\alpha$ the generating (as a gr-dg $\sS$-modules) elements of $\hat \Bb^{1, \infty}_{q_\alpha, n_\alpha}(m_\alpha) = \hat \Zz^{0, \infty}_{q_\alpha, n_\alpha+1}(m_\alpha) \amalg \hat \Zz^{0, \infty}_{q_\alpha+1, n_\alpha}(m_\alpha)$, and by $d^k \xi^\alpha$, resp. by $d^k \zeta^\alpha$, their successives predifferentials, the pushout $\Pp$ is equal to
\[ \left(\free_+ (S_{i-1}) \vee_{compl.\, op.} \free\left( \xi^\alpha \cdot \ringK[\sS_{m_\alpha}] \coprod \hat \Zz^{0, \infty}_{q_\alpha+1, n_\alpha}(m_\alpha)\right),\ d_i \right)/\left( \im\left({d_{i}}^2-[\vartheta,\, -]\right)\right), \]
where $d_i$ is the derivation defined on $S_{i-1}$ by $d_{i-1}$, the differential on $\hat \Zz^{0, \infty}_{q_\alpha+1, n_\alpha}(m_\alpha)$ is the one of $\hat \Zz^{0, \infty}_{q_\alpha+1, n_\alpha}(m_\alpha)$ and $d_i \xi^\alpha = z^\alpha - \zeta^\alpha$. 
By induction, we get the result.
\end{proof}

\begin{cor}
Equivalently, a complete curved operad is cofibrant if and only if it is a retract of a quasi-free complete curved operad $\left(\free_+(\tilde S),\, d\right)$, where $\tilde S$ is a complete $\sS$-module endowed with an exhaustive filtration
\[ \tilde S_0 = \{ 0\} \subset \tilde S_1 \subset \tilde S_2 \subset \dots \subset \colim_i \tilde S_i = \tilde S \]
of free $\sS$-modules such that $\tilde S_{i-1} \rightarrowtail \tilde S_i$ are split monomorphisms of complete $\sS$-modules with cokernels isomorphic to a sum of complete $\sS$-modules
\[ \tilde S_i /\tilde S_{i-1} \cong \coprod_\alpha \left(\xi^\alpha \cdot \ringK[\sS_{m_\alpha}] \amalg \zeta^\alpha \cdot \ringK[\sS_{m_\alpha}]\right) \]
where $\xi^\alpha$ is in homological degree $n_\alpha +1$ and filtration degree $q_\alpha$ and $\zeta^\alpha$ is in homological degree $n_\alpha $ and filtration degree $q_\alpha+1$. The predifferential $d$ is such that $d(\xi^\alpha) + \zeta^\alpha \in \left(\free_+ (\tilde S_{i-1}),\, d_{i-1}\right)$, and $d(\zeta^\alpha)$ is obtained by the fact that $d^2(\xi^\alpha) = [\vartheta,\, \xi^\alpha]$. 
\end{cor}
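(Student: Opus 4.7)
The plan is to show that the corollary is equivalent to Proposition \ref{prop: cofibrant complete curved operad} by constructing a direct isomorphism between the two descriptions of quasi-free curved operads. The starting observation, adapted from Lemma \ref{lemma: relative cfreeJ-cell complex}, is that modding out the ideal $(\im(d^2 - [\vartheta, -]))$ is precisely what allows one to collapse the infinitely many derivatives $d^k\zeta^\alpha$ (coming from the $\hat \Zz^{0, \infty}_{q_\alpha+1, n_\alpha}(m_\alpha)$ factor) down to a single generator $\zeta^\alpha$ of bidegree $(n_\alpha, q_\alpha+1)$ whose higher derivatives are inductively recovered as iterated brackets with $\vartheta$.

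First, starting from the description in Proposition \ref{prop: cofibrant complete curved operad}, I would define $\tilde S_i$ by replacing each summand $\xi^\alpha \cdot \ringK[\sS_{m_\alpha}] \amalg \hat \Zz^{0,\infty}_{q_\alpha+1, n_\alpha}(m_\alpha)$ in $S_i/S_{i-1}$ by $\xi^\alpha \cdot \ringK[\sS_{m_\alpha}] \amalg \zeta^\alpha \cdot \ringK[\sS_{m_\alpha}]$, where $\zeta^\alpha$ is the lowest-degree generator of the acyclic summand. The relation $d(\xi^\alpha) = z^\alpha - \zeta^\alpha$ from the proposition immediately forces, via $d^2(\xi^\alpha) = [\vartheta, \xi^\alpha]$, the definition
\[ d(\zeta^\alpha) = d(z^\alpha) - [\vartheta, \xi^\alpha] \in \free_+(\tilde S_{i-1}), \]
which is precisely the condition stated in the corollary.

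Second, I would produce a map of pg operads $\Phi : \free_+(\tilde S) \to \left(\free_+(S)\right)/\left(\im(d^2 - [\vartheta, -])\right)$ by sending $\vartheta \mapsto \vartheta$, $\xi^\alpha \mapsto \xi^\alpha$ and $\zeta^\alpha \mapsto \zeta^\alpha$, and in the other direction a map $\Psi$ from $\free_+(S)$ that sends $d^k\zeta^\alpha$ (for $k \geq 1$) to the element obtained by applying $k$ powers of $d$ to $\zeta^\alpha$ in $\free_+(\tilde S)$, using the inductive formula for $d(\zeta^\alpha)$ above and the relation $d^2(\zeta^\alpha) = [\vartheta, \zeta^\alpha]$ (verified below). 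One checks by inspection that $\Psi$ kills the ideal $\im(d^2 - [\vartheta, -])$ and descends to an inverse of $\Phi$.

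The main obstacle, and the step that requires verification, is to confirm that $(\free_+(\tilde S), d)$ is genuinely a curved operad, i.e., that $d^2 = [\vartheta, -]$ holds as a derivation equation and not merely on the generators $\xi^\alpha$. On $\xi^\alpha$ this is what defines $d(\zeta^\alpha)$, so it holds by construction. On $\zeta^\alpha$ itself, I would compute
\[ d^2(\zeta^\alpha) = d\bigl(d(z^\alpha) - [\vartheta, \xi^\alpha]\bigr) = d^2(z^\alpha) - [\vartheta, d(\xi^\alpha)] = d^2(z^\alpha) - [\vartheta, z^\alpha] + [\vartheta, \zeta^\alpha]. \]
Since $z^\alpha \in \free_+(\tilde S_{i-1})$ and the assertion $d^2 = [\vartheta, -]$ holds there by induction (with base case $\tilde S_0 = 0$ where $\free_+(0) = \free(\vartheta I)$ trivially satisfies it), we conclude $d^2(\zeta^\alpha) = [\vartheta, \zeta^\alpha]$. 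Because $d^2 - [\vartheta, -]$ is a derivation which vanishes on generators, it vanishes on all of $\free_+(\tilde S)$, so the ideal in Definition \ref{defi: quasi-free} is zero and $(\free_+(\tilde S), d)$ is a curved operad equal to its own quasi-free completion. Finally, since the characterization of cofibrant objects as retracts of the operads in Proposition \ref{prop: cofibrant complete curved operad} is preserved under the isomorphism $\Phi$, the corollary follows.
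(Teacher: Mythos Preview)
Your proposal is correct. The paper itself gives no proof of this corollary, treating it as an immediate reformulation of Proposition~\ref{prop: cofibrant complete curved operad}; your argument is a faithful elaboration of that implicit reasoning, in particular the collapse of the infinite tail $\hat\Zz^{0,\infty}_{q_\alpha+1,n_\alpha}(m_\alpha)$ to the single generator $\zeta^\alpha$ via the curvature relation, which is exactly the mechanism already exhibited in Lemma~\ref{lemma: relative cfreeJ-cell complex}.
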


\begin{prop}
\label{prop: quasi-free are retract of good quasi-free}
Any quasi-free complete curved operad $(\free_+(X),\, \partial)$ is a retract of a quasi-free complete curved operad $(\free_+(\tilde S),\, \partial')$, where the components of $\tilde S$ are free $\sS$-modules. Moreover, assume that $X$ is endowed with an exhaustive filtration
\[ X_0 = \{ 0\} \subset X_1 \subset X_2 \subset \dots \subset \colim_i X_i = X \]
such that $X_{i-1} \rightarrowtail X_i$ are split monomorphisms of complete $\sS$-modules with cokernels isomorphic to a sum of complete $\sS$-modules
\[ X_i /X_{i-1} \cong \coprod_\alpha \left(\ringK\{\xi^\alpha\} \amalg \ringK\{\zeta^\alpha\}\right), \]
where $\ringK\{\xi^\alpha\}$ is an $\sS_{m_\alpha}$-module of a rank 1 generated by $\xi^\alpha$ in homological degree $n_\alpha +1$ and filtration degree $q_\alpha$ and $\ringK\{\zeta^\alpha\}$ is an $\sS_{m_\alpha}$-module of a rank 1 generated by $\zeta^\alpha$ in homological degree $n_\alpha $ and filtration degree $q_\alpha+1$. 
Assume moreover that the predifferential $\partial$ is such that $\partial(\xi^\alpha) + \zeta^\alpha \in \left(\free_+ (X_{i-1}), \partial\right)$, and $\partial(\zeta^\alpha)$ is obtained by the fact that $\partial^2(\xi^\alpha) = [\vartheta,\, \xi^\alpha]$. 
Then $\tilde S$ can be chosen with the same properties and such that the cokernels of the $\tilde S_{i-1} \rightarrowtail \tilde S_i$ are free $\sS$-modules.

Under these hypotheses, the complete curved operad $(\free_+(X),\, \partial)$ is cofibrant, as a retract of a cofibrant complete curved operad.
\end{prop}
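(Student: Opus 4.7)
The first, general, statement follows from a Maschke-type argument made possible by the characteristic-zero hypothesis on $\ringK$. In each arity $n$, I would realize the complete $\sS_n$-module $X(n)$ as a retract of the free $\sS_n$-module $\tilde S(n) := X(n) \otimes_{\ringK} \ringK[\sS_n]$ via the section $s(x) = \frac{1}{n!}\sum_{\sigma \in \sS_n} x \cdot \sigma^{-1} \otimes \sigma$ and the retraction $r(x \otimes \sigma) = x \cdot \sigma$, which satisfy $r \cdot s = \id$. These maps assemble into a retract of complete $\sS$-modules $s : X \to \tilde S$, $r : \tilde S \to X$. I would then define $\partial'$ to be the unique derivation of $\free_+(\tilde S)$ which sends $\vartheta$ to zero and extends $\free_+(s) \cdot \partial \cdot r : \tilde S \to \free_+(\tilde S)$. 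Using $rs = \id$, a direct check on generators shows that $\free_+(r)$ and $\free_+(s)$ intertwine $\partial$ and $\partial'$ and fix $\vartheta$; consequently each sends the ideal $(\im(d^2 - [\vartheta,-]))$ of one into that of the other, and they descend to maps of complete curved operads between the quotients of Definition~\ref{defi: quasi-free}, giving the required retract.

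Under the additional filtration hypothesis on $X$, I would argue that one may in fact take $\tilde S = X$ (with the identity as both section and retraction, the same filtration $\tilde S_i := X_i$, and $\partial' := \partial$). The point is that the prescribed form of the cokernels $X_i/X_{i-1} \cong \coprod_\alpha (\xi^\alpha \cdot \ringK[\sS_{m_\alpha}] \amalg \zeta^\alpha \cdot \ringK[\sS_{m_\alpha}])$ automatically forces $X$ itself to have free components arity-wise. Indeed, each cokernel is visibly a free $\sS$-module, splitness of the inclusions yields by induction $X_i \cong \coprod_{j \le i}(X_j/X_{j-1})$ as complete $\sS$-modules, and exhaustiveness combined with the fact that the filtered colimit of a split $\nN$-indexed filtration of complete $\sS$-modules reduces to the countable coproduct $\coprod_j (X_j/X_{j-1})$ identifies $X(n)$ arity-wise with a coproduct of free $\sS_n$-modules, hence with a free $\sS_n$-module. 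In this setting every condition required of $\tilde S$ is tautologically satisfied.

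The final sentence (cofibrancy) then follows by combining the two previous steps with Proposition~\ref{prop: cofibrant complete curved operad}, applied in the equivalent form recorded in the corollary immediately preceding the present proposition: the quasi-free $(\free_+(\tilde S), \partial')$ satisfies that corollary's hypotheses (free components together with the prescribed filtration), so is cofibrant, and a retract of a cofibrant object is cofibrant. The principal subtlety when writing out the argument is the corollary itself, namely the equivalence between the Proposition~\ref{prop: cofibrant complete curved operad}-form of the filtration (with $\hat\Zz^{0,\infty}_{q_\alpha+1,n_\alpha}(m_\alpha)$) and the present $\zeta^\alpha \cdot \ringK[\sS_{m_\alpha}]$-form. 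The $\hat\Zz^{0,\infty}$-form implies the $\zeta$-form essentially by forgetting the redundant iterated-derivative generators; the converse, which is what is genuinely needed here, requires freely adjoining new generators $d^k\zeta^\alpha$, extending $\partial'$ accordingly, and observing that the ideal $(\im(d^2-[\vartheta,-]))$ identifies each new generator with the actual value of $d^k\zeta^\alpha$, so that the quotient complete curved operad is unchanged.
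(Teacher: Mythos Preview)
Your argument is correct, and for the general retract statement it is essentially the same Maschke-type idea as the paper's, just packaged differently: the paper chooses orbit representatives $\{\bar x_i\}$ of $X(m)/\sS_m$, lets $\tilde S$ be the free $\sS$-module on these, defines a norm element $N(\bar x_i)=\frac{1}{|\sS_{\bar x_i}|}\sum_{\sigma\in\sS_{\bar x_i}}\chi(\sigma^{-1})\,s_i\sigma$ using the isotropy character, and sets up the retract via $s_i\mapsto \bar x_i$ and $\bar x_i\mapsto N(\bar x_i)$; you instead take the more canonical $\tilde S = X\otimes_\ringK \ringK[\sS]$ with the group-averaging section. Both produce the desired retract of pointed gr-dg operads compatible with~$\partial$.

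Where your route genuinely diverges is in the ``moreover'' clause. The paper carries its orbit-representative construction through and checks that it preserves the given filtration on $X$. You instead observe that the stated hypothesis on the cokernels $X_i/X_{i-1}\cong\coprod_\alpha(\xi^\alpha\cdot\ringK[\sS_{m_\alpha}]\amalg\zeta^\alpha\cdot\ringK[\sS_{m_\alpha}])$ already forces $X$ itself to have free components arity-wise, so that $\tilde S=X$ works with the identity retract. This is correct and strictly simpler for this clause; it also explains why the proposition, as literally stated, has a somewhat redundant-looking conclusion. (The nontrivial content used elsewhere in the paper, e.g.\ in the bar--cobar resolution, is really the first part applied to generators that are only rank-one $\sS_m$-modules.) Your closing discussion of the $\hat\Zz^{0,\infty}$-form versus the $\zeta$-form is really a sketch of the preceding corollary rather than of the present proposition, but it is accurate.
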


\begin{proof}
The proof is similar to the proof of Proposition \ref{prop: cofibration are retract of good cofibration}. 
Let $\overline{X}(m)$ denote the set of equivalence classes under the action of $\ringK[\sS_m]^\times$. 
We choose a set of representatives $\{ \bar x_i\}_{i \in \Ii}$. Let $\tilde S$ be the free $\sS$-module generated by the $\{ \bar x_i\}_{i \in \Ii}$. The generator associated with $\bar x_i$ will be denoted by $s_i$. For any $x \in X(m)$, we consider the subgroup $\sS_x \coloneqq \{ \sigma \in \sS_m \ |\ x\cdot \sigma = \chi(\sigma) x,\ \chi(\sigma) \in \ringK \}$. In this case, $\chi$ is a character of $\sS_x$. We define the following element of $\tilde S$:
\[ \texttt{N}(\bar x_i) \coloneqq \frac{1}{|\sS_{\bar x_i}|} \sum \chi(\sigma^{-1}) \cdot s_i \sigma, \]
where the sum runs over $\sigma \in \sS_{\bar x_i}$ ($\texttt{N}$ preserves the filtration). The image under the boundary map $\partial$ of an $\bar x_i$ is a (potentially infinite) sum of trees $\sum T(\bar x_{i_1},\, \ldots,\, \bar x_{i_k})$. We define the boundary map $\partial'$ on $\free_+(\tilde S)$ by
\[ \partial' (s_i) \coloneqq \sum \frac{1}{|\sS_{\bar x_i}|} \sum \chi(\sigma^{-1}) \cdot T\left(\texttt{N}(\bar x_i),\, \ldots,\, \texttt{N}(\bar x_{i_k}) \right) \sigma, \]
where the second sum runs over $\sigma \in \sS_{\bar x_i}$ and the sum lies in the complete gr-dg module since $\texttt{N}$ preserves the filtration. Finally, we define the maps of curved operads $\free_+(\tilde S) \to \free_+(X)$ by $s_i \mapsto \bar x_i$ and $\free_+(X) \to \free_+(\tilde S)$ by $\bar x_i \mapsto N(\bar x_i)$. They form a deformation retract, which preserves the filtration on $X$ and the different properties on the cokernels also hold.
\end{proof}

\subsection{Model structure on categories of complete curved algebras}
\label{section: model cat for complete curved algfebras}

Following Hinich \cite{Hinich97}, we endow the category $\Alg(\Oo)$ of algebras over an $\sS$-split complete curved operad $(\Oo,\, d,\, \theta)$ with a model category structure. We apply Theorem \ref{thm: transfer theorem} to the free-forgetful functor adjunction between the categories of gr-dg $\ringK$-modules and $\Alg(\Oo)$.

We first describe the free-forgetful adjunction.

\begin{prop}
\label{prop: free-forgetful adjunction for curved algebras}
The forgetful functor $\# : \Alg(\Oo) \to \mathsf{compl.\, gr}\textsf{-}\mathsf{dg}\, \ringK\textsf{-}\mathsf{Mod}$ admits a left adjoint \emph{free $\Oo$-algebra functor} $F_{\Oo} : \mathsf{compl.\, gr}\textsf{-}\mathsf{dg}\, \ringK\textsf{-}\mathsf{Mod} \to \Alg(\Oo)$ given by
\[ (V,\, d_V) \mapsto F_{\Oo}(V,\, d_V) \coloneqq \left(\Oo(V)/\left(\im\left(\eta \otimes ({d_{V}}^2) - \theta \otimes \id_V\right) \right),\, d_{\overline{\Oo(V)}}\right). \]
\end{prop}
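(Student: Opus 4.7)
My plan mirrors the two-stage construction of the free curved operad in Theorem~\ref{thm: free curved operad}.

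\emph{Stage 1: a free gr-dg $\Oo$-algebra functor, ignoring the curvature relation.} First I would establish the intermediate adjunction whose left adjoint sends $(V,\, d_V)$ to $\Oo \hat\circ V$ equipped with the operadic action of $\Oo$ and the predifferential $d = d_\Oo \hat\circ \id_V + \id_\Oo \hat\circ' d_V$, i.e., the unique $\Oo$-equivariant derivation extending $d_\Oo$ on the $\Oo$-factor and $d_V$ on the $V$-tensors. This is the classical free-algebra-over-an-operad construction, lifted to the complete gr-dg context via the universal property of $\hat\circ$ (Corollary~\ref{cor:completion is monoidal for circle product}), and it is left adjoint to the forgetful functor from (un-curved) gr-dg $\Oo$-algebras to $\compa(\Aa)$.

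\emph{Stage 2: imposing the curvature relation.} Next I would compute $d^2$ on an element $(o;\, v_1,\, \ldots,\, v_n) \in \Oo \hat\circ V$. Expanding $d_\Oo^2 = [\theta,\, -] = \theta \circ_1 (-) - \sum_j (-) \circ_j \theta$ and applying the Koszul sign rule, the terms $o \circ_j \theta$ combine with the $d_V^2$-contributions on each leaf to give
\[
d^2(o;\, v_1,\, \ldots,\, v_n) - \theta \cdot (o;\, v_1,\, \ldots,\, v_n) = \sum_i \bigl(o;\, v_1,\, \ldots,\, d_V^2(v_i) - \theta \cdot v_i,\, \ldots,\, v_n\bigr),
\]
where $\theta \cdot x$ denotes the left $\Oo(1)$-action. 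Hence the defect lies entirely in the $\Oo$-algebra ideal $J$ generated by the image of $d_V^2 - \theta \otimes \id_V : V \to \Oo(V)$, with $V$ embedded in $\Oo(V)$ via $V \cong I \circ V$. I would then verify that $J$ is stable under $d$: since $d_\Oo(\theta) = 0$, the predifferential sends the generator $d_V^2(v) - \theta \otimes v$ to $d_V^2(d_V(v)) - \theta \otimes d_V(v)$, which is again of the required form, and stability under the $\Oo$-action is automatic. Thus $d$ descends to $F_\Oo(V) := \Oo(V)/J$, which by the identity above is a complete curved $\Oo$-algebra.

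\emph{Stage 3: adjunction.} Given a curved $\Oo$-algebra $(A,\, d_A,\, \gamma_A)$ and a gr-dg map $f : V \to A$, the Stage 1 adjunction produces a unique $\Oo$-equivariant morphism $\tilde f : \Oo(V) \to A$ with $\tilde f(o;\, v_1,\, \ldots,\, v_n) = \gamma_A(o;\, f(v_1),\, \ldots,\, f(v_n))$, commuting with predifferentials because $f$ does and $d_A$ is a derivation for the $\Oo$-action. The curvature condition on $A$ forces $\tilde f$ to kill every generator of $J$: $\tilde f(d_V^2(v) - \theta \otimes v) = d_A^2 f(v) - \gamma_A(\theta;\, f(v)) = 0$. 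Hence $\tilde f$ descends to $F_\Oo(V) \to A$, and the resulting bijection is natural.

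\emph{Main obstacle.} The most delicate step is Stage 2: the sign and combinatorial bookkeeping when $d_\Oo^2$ is expanded as an operadic bracket with $\theta$, so that the corrections arising from $\sum_j o \circ_j \theta$ absorb cleanly into the $d_V^2$ contributions on each input to produce exactly the ideal $J$. A secondary subtlety is articulating ``$\Oo$-algebra ideal'' in the complete filtered gr-dg setting — ensuring that $J$ is closed under the $\Oo$-action, compatible with the filtration, and that passing to the quotient stays within $\Alg(\Oo)$; this is where the $\sS$-splitting hypothesis on $\Oo$ is invoked.
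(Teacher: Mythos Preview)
Your proposal is correct and follows essentially the same three-stage route as the paper: build the free $U\Oo$-algebra $\Oo(V)$ with its induced predifferential, compute $d^2$ and see that the obstruction to the curvature relation lies in the ideal generated by $\im(d_V^2 - \theta\otimes\id_V)$, then observe that any map to a genuine curved $\Oo$-algebra automatically kills this ideal. The paper's presentation differs only cosmetically: rather than saying ``the defect lies in $J$'', it passes to the quotient first and then verifies directly that $d_{\overline{\Oo(V)}}^{\,2} = \theta \otimes \id_{\Oo(V)}$ there, via the same expansion of $d_\Oo^2 = [\theta,-]$ you outline.

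One correction: the $\sS$-splitting hypothesis on $\Oo$ is \emph{not} invoked in this proposition, contrary to your final remark. The paper's proof makes no use of it; the quotient by an $\Oo$-ideal is well-defined in the complete gr-dg setting purely from the reflective-subcategory machinery of Appendix~\ref{appendix: categorical stuff}. The $\sS$-splitting assumption enters only later (Theorem~\ref{thm: model cat struct for curved algebras}), where it is needed to show that pushouts along generating acyclic cofibrations remain weak equivalences. So you should drop that sentence and simply note that $J$ is an $\Oo$-ideal stable under $d$, hence the quotient inherits a curved $\Oo$-algebra structure.
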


\begin{proof}
The proof is similar to the proof of Theorem \ref{thm: free curved operad}. Let $(V,\, d_V)$ be a complete gr-dg $\ringK$-module and $(A,\, d_A)$ be an $(\Oo,\, d,\, \theta)$-algebra. We denote by $U \Oo = U (\Oo,\, d,\, \theta)$ the complete gr-dg operad underlying $(\Oo,\, d,\, \theta)$. The above construction $F_{\Oo}(V,\, d_V)$ is a $U \Oo$-algebra as a quotient by the ideal $\left(\im\left( \eta \otimes ({d_{V}}^2) - \theta \otimes \id_V \right)\right)$ of the free $U\Oo$-algebra
\[ U \Oo(V,\, d_V) = \left(\amalg_{n\geq 0}\Oo(n) \otimes_{\sS_n} V^{\otimes n},\, d_{\Oo(V)}\right) \]
where $d_{\Oo(V)}$ is the gr-dg predifferential induced by the predifferentials on $\Oo$ and on $V$. It is a $(\Oo,\, d,\, \theta)$-algebra since the condition that $\theta$ is sent to ${d_{\overline{\Oo(V)}}}^2$ in $\End_{F_{\Oo}(V,\, d_V)}$ follows from the fact that we have considered the quotient by the ideal $\left( \im\left(\eta \otimes {d_{V}}^2 - \theta \otimes \id_V\right) \right)$. Indeed, in $F_{\Oo}(V,\, d_V)$
\begin{align*}
{d_{\overline{\Oo(V)}}}^2 & = {d_\Oo}^2 \otimes \id_{V^{\otimes \bullet}} + \sum_j \id_\Oo \otimes {\id_V}^{\otimes j} \otimes {d_V}^2 \otimes \id^{\otimes (\bullet -j)}\\
& = [\theta,\, -] \otimes \id_{V^{\otimes \bullet}} + \sum_j \id_\Oo \otimes {\id_V}^{\otimes j} \otimes (\theta \otimes \id_V) \otimes \id^{\otimes (\bullet -j)}\\
& = \gamma_{\overline{\Oo(V)}}\left(\theta \circ \id_{\overline{\Oo(V)}}\right).
\end{align*}
We have the adjunction
\[ \Hom_{\mathsf{gr}\textsf{-}\mathsf{dg}\, \ringK\textsf{-}\mathsf{Mod}}\left((V,\, d_V),\, (A,\, d_A)^\#\right) \cong \Hom_{U \Oo\textsf{-}\mathsf{alg.}}\left( U \Oo(V,\, d_V),\, U(A,\, d_A)\right),\]
where $U(A,\, d_A)$ is the $U \Oo$-algebra underlying $(A,\, d_A)$. Since $(A,\, d_A)$ is a $(\Oo,\, d,\, \theta)$-algebra, a morphism of $U \Oo$-algebras $U \Oo(V,\, d_V)\to U(A,\, d_A)$ automatically sends the ideal $\left(\im\left(\eta \otimes {d_{V}}^2 - \theta \otimes \id_V \right)\right)$ to $0$ and coincides (bijectively) with a morphism of $(\Oo,\, d,\, \theta)$-algebras $F_{\Oo}(V,\, d_V) \to (A,\, d_A)$. We get the bijection
\[ \Hom_{U \Oo\textsf{-}\mathsf{alg.}}\left( U \Oo(V,\, d_V),\, U(A,\, d_A)\right) \cong \Hom_{\mathsf{gr}\textsf{-}\mathsf{dg}\, \ringK\textsf{-}\mathsf{Mod}}\left(F_{\Oo}(V,\, d_V),\, (A,\, d_A)\right)\]
which gives the result.
\end{proof}

In order to apply Theorem \ref{thm: transfer theorem}, we prove several results. The first one concerns the adjunction between the free $(\Oo,\, d,\, \theta)$-algebra functor $F_\Oo$ and the forgetful functor $\#$.

\begin{prop}
\label{prop: monad and curved algebras}
The adjunction between the free $(\Oo,\, d,\, \theta)$-algebra functor and the forgetful functor (see Proposition \ref{prop: free-forgetful adjunction for curved algebras})
\[
\xymatrix{F_\Oo : \mathsf{compl.\, gr}\textsf{-}\mathsf{dg}\, \ringK\textsf{-}\mathsf{Mod} \ar@<.5ex>@^{->}[r] & \Alg(\Oo) : \# \ar@<.5ex>@^{->}[l]}
\]
provides a monad $\# \cdot F_\Oo$ whose category of algebras is naturally isomorphic to the category of $(\Oo,\, d,\, \theta)$-algebras.
\end{prop}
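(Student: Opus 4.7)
The plan is to mirror the proof of Proposition \ref{prop: monad and curved operads} by invoking the crude monadicity theorem from \cite[Section 3.5]{BW05}. This requires verifying three conditions: (i) the functor $\#$ admits a left adjoint, (ii) $\#$ reflects isomorphisms, and (iii) the category $\Alg(\Oo)$ admits coequalizers of those reflexive pairs $(f,\, g)$ for which $(\#f,\, \#g)$ has a coequalizer, and $\#$ preserves them.

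First, condition (i) is precisely the content of Proposition \ref{prop: free-forgetful adjunction for curved algebras}, so I would simply invoke it. Condition (ii) is straightforward: a morphism $\varphi : (A,\, d_A) \to (B,\, d_B)$ of $(\Oo,\, d,\, \theta)$-algebras is by definition a morphism of the underlying complete gr-dg $\ringK$-modules which is compatible with the $\Oo$-algebra structure; it is an isomorphism in $\Alg(\Oo)$ if and only if its underlying morphism is an isomorphism of complete gr-dg $\ringK$-modules, since the $\Oo$-algebra structure on $A$ transfers uniquely through any such isomorphism.

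The main step is condition (iii). Given a reflexive pair $f,\, g : (A,\, d_A) \to (B,\, d_B)$ in $\Alg(\Oo)$, I would show that the coequalizer can be constructed as
\[ (B/\left(\im(f-g)\right),\, \bar d_B), \]
where $\left(\im(f-g)\right)$ denotes the $\Oo$-algebra ideal generated by $\im(f-g)$. Because $(f,\, g)$ is reflexive, this ideal coincides with the complete gr-dg $\ringK$-submodule $\im(f-g)$ itself (since a common section of $f$ and $g$ forces $\im(f-g)$ to be stable under the $\Oo$-action). The predifferential $d_B$ descends to $\bar d_B$ because $f$ and $g$ commute with predifferentials, and the curvature condition $\bar d_B^{\,2} = \theta \otimes \id$ passes to the quotient for the same reason it does in Proposition \ref{prop: free-forgetful adjunction for curved algebras}. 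This realizes the coequalizer in $\Alg(\Oo)$ as the cokernel of $f-g$ in complete gr-dg $\ringK$-modules, which is exactly the coequalizer computed by $\#$; so $\#$ preserves such coequalizers.

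The potential obstacle is making sure the curved structure transfers cleanly, especially verifying that the quotient inherits a curved structure and not merely a pointed gr-dg structure. This is controlled precisely by the reflexivity hypothesis, which reduces the ideal to a submodule and ensures the quotient remains an $\Oo$-algebra. Once these three conditions are in place, crude monadicity gives that $\#$ is monadic, so the category of algebras of the monad $\# \cdot F_\Oo$ is naturally isomorphic to $\Alg(\Oo)$.
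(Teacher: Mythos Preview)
Your proposal is correct and follows essentially the same route as the paper: both invoke the crude monadicity theorem of \cite[Section~3.5]{BW05}, verify the three conditions in the same order, and identify the coequalizer of a reflexive pair $(f,g)$ with $B/(\im(f-g))$ using that reflexivity forces the $\Oo$-ideal generated by $\im(f-g)$ to coincide with the submodule $\im(f-g)$. Your version spells out a bit more detail on why the curved structure descends to the quotient, but the argument is otherwise identical.
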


\begin{proof}
As in the proof of Proposition \ref{prop: monad and curved operads}, we apply the crude monadicity theorem given in \cite[Section 3.5]{BW05}. The result follows from the fact that the functor $\# : \Alg(\Oo) \to \mathsf{compl.\, gr}\textsf{-}\mathsf{dg}\, \ringK\textsf{-}\mathsf{Mod}$ is monadic. 
The free functor is left adjoint to the forgetful functor $\#$. The forgetful functor clearly reflects isomorphisms. It remains to show that the category $\Alg(\Oo)$ has coequalizers of those reflexive pairs $(f,\, g)$ for which $(\# f,\, \# g)$ is a coequalizer and $\#$ preserves those coequalizers. 
The coequalizer of a pair $f,\, g : (A,\, d_A) \to (B,\, d_B)$ in the category of $(\Oo,\, d,\, \theta)$-algebras is given by the quotient map $(B,\, d_B) \to (B/\left(\im(f-g) \right),\, \bar d_B)$ where $\left(\im(f-g)\right)$ is the ($\Oo$-)ideal generated by $\im(f-g)$ and the map $\bar d_B$ induced by $d_B$ is well-defined since $\left(\im(f-g)\right)$ is stable under $d_B$. The quotient algebra is still a $(\Oo,\, d,\, \theta)$-algebra. When $(f,\, g)$ is a reflexive pair, the ideal generated by $\im(f-g)$ is equal to $\im(f-g)$. It follows that the remaining condition is satisfied since the coequalizers in $\mathsf{compl.\, gr}\textsf{-}\mathsf{dg}\, \ringK\textsf{-}\mathsf{Mod}$ are given by $B/\im(f-g)$.
\end{proof}

\begin{prop}
\label{prop: limits and colimits for curved algebras}
The category of $(\Oo,\, d,\, \theta)$-algebras has all limits and small colimits.
\end{prop}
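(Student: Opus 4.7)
The plan is to follow exactly the template of the proof of Proposition \ref{prop: limits and colimits for curved operads}, substituting the adjunction of Proposition \ref{prop: monad and curved algebras} for that of Proposition \ref{prop: monad and curved operads}.

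For limits, Proposition 4.3.1 in \cite{fB94} combined with the monadicity of $\#$ (Proposition \ref{prop: monad and curved algebras}) immediately yields that $\Alg(\Oo)$ has all limits, that they are computed as in $\compa(\Aa)$, and that they are preserved by $\#$.

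For colimits, I would invoke Proposition 4.3.2 in \cite{fB94}, which reduces the problem to checking that the monad $\# \cdot F_\Oo$ preserves enough colimits in $\compa(\Aa)$. First I would verify that this monad preserves filtered colimits. The underlying object of $F_\Oo(V,d_V)$ is a quotient of $\Oo \circ V = \amalg_n \Oo(n) \otimes_{\sS_n} V^{\otimes n}$ by the $\Oo$-ideal generated by $\im({d_V}^2 - \theta \otimes \id_V)$. Filtered colimits commute with the monoidal product (which preserves colimits in each variable by assumption on $\Mm$), with coproducts, with the $\sS_n$-coinvariants (which are direct summands as $\ringK$ has characteristic zero), and with the cokernel defining the quotient. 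Therefore $\Alg(\Oo)$ admits filtered colimits and they are preserved by $\#$.

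By Theorem 1 of Chapter IX in \cite{sML71}, a category with filtered colimits and pushouts has all small colimits. It thus remains to construct pushouts, mirroring the operadic case: given a diagram $(A,d_A) \xleftarrow{f} (C,d_C) \xrightarrow{g} (B,d_B)$ in $\Alg(\Oo)$, I would first construct the coproduct $(A,d_A) \amalg (B,d_B)$ as a quotient of the free algebra $F_\Oo(A^\# \amalg B^\#)$ by the $\Oo$-ideal encoding the two algebra structure maps, and then form the pushout as the further quotient by the $\Oo$-ideal generated by $\{f(c) - g(c) : c \in C\}$. The predifferential descends to each quotient because $f$ and $g$ intertwine the predifferentials and because $d_A, d_B$ are derivations, while the curvature is inherited since the generating relations are themselves $d$-closed up to $\theta$. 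The main obstacle will be precisely this bookkeeping: verifying that at each stage the $\Oo$-ideals are closed under the predifferential and that the induced curvature on the quotient still satisfies $d^2 = \theta \otimes \id$, so that the output genuinely lies in $\Alg(\Oo)$ and satisfies the expected universal property.
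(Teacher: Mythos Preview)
Your proposal is correct and follows essentially the same approach as the paper's proof: monadicity for limits, preservation of filtered colimits by the monad, reduction to pushouts via Mac Lane, and an explicit construction of the coproduct and pushout as quotients of $\Oo(A^\# \amalg B^\#)$. The only point the paper makes explicit that you leave implicit is that the coproduct ideal must also contain $\im(\gamma_A^0 - \gamma_B^0)$, identifying the images of the $0$-ary operations of $\Oo$ in $A$ and in $B$; this is the algebra analogue of the $(\theta_{\mathbf a}-\theta_{\mathbf b})$ quotient in the operadic pushout you are mirroring, and without it the construction does not have the correct universal property when $\Oo(0)\neq 0$.
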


\begin{proof}
By Proposition 4.3.1 in \cite{fB94} and Proposition \ref{prop: monad and curved algebras}, the category of $(\Oo,\, d,\, \theta)$-algebras admits the same type of limits as the category $\mathsf{compl.\, gr}\textsf{-}\mathsf{dg}\, \ringK\textsf{-}\mathsf{Mod}$ which is complete (and they are preserved by $\#$).

Proposition 4.3.2 in \cite{fB94} and Proposition \ref{prop: monad and curved algebras} implies that if some colimits in $\mathsf{compl.\, gr}\textsf{-}\mathsf{dg}\, \ringK\textsf{-}\mathsf{Mod}$ are preserved by the monad $\# \cdot F_\Oo$, the category of $(\Oo,\, d,\, \theta)$-algebras admits the same type of colimits and they are preserved by $\#$.

Using the fact that $\otimes$ preserves filtered colimits in each variable and that colimits commutes with colimits, we get that the monad $\# \cdot F_\Oo$ preserves filtered colimits. We therefore obtain that the category of $(\Oo,\, d,\, \theta)$-algebras admits filtered colimits. It is therefore enough to check that it admits pushouts (by Theorem 1 in Chapter IX of \cite{sML71}, a category with filtered colimits and pushouts has all small colimits). The pushouts can be computed explicitly as follows. Let $I_A$ (resp. $I_B$) be the kernel of the map $\gamma_A : \Oo(A) \to A$ (resp. $\gamma_B$) and $\gamma_A^0 : \Oo(0) \to A$ (resp. $\gamma_B^0 : \Oo(0) \to B$) the algebra structure given by $0$-ary elements in $\Oo$. The coproduct of two $(\Oo,\, d,\, \theta)$-algebras $(A,\, d_A)$ and $(B,\, d_B)$ is given as usual by the quotient
\[ A \vee B \coloneqq \Oo(A^\# \amalg B^\#) / \left( I_A \amalg I_B \amalg \im\left(\gamma_A^0 - \gamma_B^0\right)\right). \]
The filtration (resp. predifferential) is induced by the filtrations (resp. predifferentials) on $A$ and $B$. It gives a $(\Oo,\, d,\, \theta)$-algebra since $A$ and $B$ are and since $\Oo$ is curved. 
Given two maps of complete $(\Oo,\, d,\, \theta)$-algebras $\phi : (C,\, d_C) \to (A,\, d_A)$ and $\psi : (C,\, d_C) \to (B,\, d_B)$, we obtain the corresponding pushout $A \vee_C B$ as the quotient of the coproduct $A\vee B$ by the ideal generated by the image of the map $\phi - \psi$.
\end{proof}

We apply the transfer Theorem \ref{thm: transfer theorem} to the adjunction
\[
\xymatrix{F_\Oo : \mathsf{compl.\, gr}\textsf{-}\mathsf{dg}\, \ringK\textsf{-}\mathsf{Mod} \ar@<.5ex>@^{->}[r] & \Alg(\Oo) : \#. \ar@<.5ex>@^{->}[l]}
\]
We have seen in the beginning of Section \ref{section: model structure on complete curved operads} that the category $ \mathsf{compl.\, gr}\textsf{-}\mathsf{dg}\, \ringK\textsf{-}\mathsf{Mod}$ is a cofibrantly generated model category with generating cofibrations $I_0^\infty$ and generating acyclic cofibrations $J_0^\infty$. In order to apply Theorem \ref{thm: transfer theorem}, we have to understand $F_\Oo J_0^\infty$-cell complexes, with
\[ F_\Oo J_0^\infty = \{ \Oo(0) \to F_\Oo(\hat \Zz^{0, \infty}_{q, n}) \}_{n \in \zZ,\, q \in \nN}. \]
Then, we study $\cfree I$-cell complexes, with
\[ \cfree I_0^\infty = \{ F_\Oo(\hat \Zz^{1, \infty}_{q, n}) \to F_\Oo(\hat \Bb^{1, \infty}_{q, n}) \}_{n \in \zZ,\, q \in \nN} \]
in order to describe cofibrant objects. 
The classes $F_\Oo I_0^\infty$, resp. $F_\Oo J_0^\infty$, will be the generating cofibrations, resp. generating acyclic cofibrations.

\begin{lemma}
\label{lemma: relative FOJ0i-cell complex}
A morphism of $(\Oo,\, d,\, \theta)$-algebras is a relative $F_\Oo J_0^\infty$-cell complex if and only if it is a map $A \to A \vee F_\Oo(Z,\, d_Z)$, where $(Z,\, d_Z)$ is a complete gr-dg module equal to a direct sum of complete gr-dg modules $\hat \Zz^{0, \infty}_{q, n}$. In particular, $(Z,\, d_Z)$ is a free module and it is gr-acyclic (that is to say its gr-homology is $0$) and it satisfies $\ker(d_Z) = \{ 0\}$. 
Explicitly, we can write
\[ F_\Oo (Z,\, d_Z) \cong \left(\Oo (Z/ \im {d_Z}^2),\, d_{F_\Oo(Z)}\right), \]
with ${d_{F_\Oo(Z)}}^2 = \theta \otimes \id_{Z/\im {d_Z}^2}$.
\end{lemma}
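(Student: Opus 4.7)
My plan is to mirror, step by step, the proof of Lemma~\ref{lemma: relative cfreeJ-cell complex}, replacing the free curved operad functor $\cfree$ by the free curved $\Oo$-algebra functor $F_\Oo$ and exploiting the adjunction of Proposition~\ref{prop: free-forgetful adjunction for curved algebras} together with the construction of pushouts and coproducts of $(\Oo,\, d,\, \theta)$-algebras from Proposition~\ref{prop: limits and colimits for curved algebras}.

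First I would observe that $F_\Oo(0) \cong \Oo(0)$, so a pushout of an element of $F_\Oo J_0^\infty$ along a structure map $\Oo(0) \to A$ fits into a square
\[
\begin{tikzcd}
\Oo(0) \ar{d}{F_\Oo(j_\alpha)} \ar{r} & A \ar{d}\\
F_\Oo(\hat{\Zz}^{0,\infty}_\alpha) \ar{r} & A\vee F_\Oo(\hat{\Zz}^{0,\infty}_\alpha)
\end{tikzcd}
\]
with $\hat{\Zz}^{0,\infty}_\alpha$ equal to some $\hat{\Zz}^{0,\infty}_{q,n}$, since the initial $(\Oo,d,\theta)$-algebra is $\Oo(0)$. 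Taking coproducts of generators at once uses that $F_\Oo$ is a left adjoint and so preserves coproducts: $\bigvee_\alpha F_\Oo(\hat{\Zz}^{0,\infty}_\alpha) \cong F_\Oo\!\left(\coprod_\alpha \hat{\Zz}^{0,\infty}_\alpha\right)$. A transfinite composition of such pushouts then yields exactly a map of the form $A\to A \vee F_\Oo(Z,d_Z)$ where $(Z,d_Z)$ is a direct sum of copies of the $\hat{\Zz}^{0,\infty}_{q,n}$, so gr-acyclic with trivial kernel of $d_Z$ and free as an underlying module. The converse direction (that any such map arises this way) is immediate by choosing the transfinite filtration of $Z$ by finite subsums of its summands.

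For the explicit description of $F_\Oo(Z,d_Z)$, I would imitate the corresponding computation in the proof of Lemma~\ref{lemma: relative cfreeJ-cell complex}. Define a map of complete gr-dg modules
\[
Z \longrightarrow \Oo\bigl(Z/\im d_Z^2\bigr)
\]
by sending a generator $d^k x$ of a summand $\hat{\Zz}^{0,\infty}_{q,n}$ (with $x$ its degree-$n$ generator) to the element $\theta^{\lfloor k/2\rfloor}\!\cdot d^{k-2\lfloor k/2\rfloor}(\bar x)$, where we use that $\theta\in\Oo(1)$ and the $\Oo$-action on $\Oo(Z/\im d_Z^2)$ are already available, and $\bar x$ denotes the class of $x$. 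By the universal property in Proposition~\ref{prop: free-forgetful adjunction for curved algebras}, this extends to a map of $(\Oo,d,\theta)$-algebras $F_\Oo(Z,d_Z) \to (\Oo(Z/\im d_Z^2),\, d_{F_\Oo(Z)})$; a direct check shows that the ideal $(\im(d_Z^2-\theta\otimes \id_Z))$ is sent to zero by construction of the assignment on generators. An inverse in complete gr-dg modules is straightforward to write down on $\Oo(Z/\im d_Z^2)$, and under this identification the induced predifferential $d_{F_\Oo(Z)}$ satisfies $d_{F_\Oo(Z)}^2 = \theta\otimes \id_{Z/\im d_Z^2}$ exactly because $F_\Oo(Z,d_Z)$ is a curved $\Oo$-algebra.

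The main technical point, as in the operadic analog, is verifying that the formal assignment $d^k x \mapsto \theta^{\lfloor k/2\rfloor}\!\cdot d^{k-2\lfloor k/2\rfloor}(\bar x)$ is compatible with the relation $d_Z^2 = \theta\otimes \id_Z$ imposed by $F_\Oo$, so that one really obtains a well-defined map descending to the quotient and an honest isomorphism of $(\Oo,d,\theta)$-algebras. The rest (preservation of the filtration and the completeness of the algebras produced by pushouts, via Lemma~\ref{lemma: pushout}) is routine and already handled at the level of complete gr-dg modules.
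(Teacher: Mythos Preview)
Your proposal is correct and follows essentially the same approach as the paper: identify $F_\Oo(0)\cong\Oo(0)$, recognize pushouts along $F_\Oo J_0^\infty$ as coproducts $A\vee F_\Oo(\hat\Zz^{0,\infty}_\alpha)$, use that $F_\Oo$ preserves coproducts to reduce transfinite compositions to $A\to A\vee F_\Oo(Z)$. The paper dispatches the explicit identification $F_\Oo(Z,d_Z)\cong(\Oo(Z/\im d_Z^2),d_{F_\Oo(Z)})$ in one word (``direct''), while you spell out the map on generators $d^k x\mapsto \theta^{\lfloor k/2\rfloor}\cdot d^{k-2\lfloor k/2\rfloor}(\bar x)$ in analogy with Lemma~\ref{lemma: relative cfreeJ-cell complex}; this is a welcome expansion but not a different argument.
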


\begin{proof}
We have $F_\Oo(0) = \Oo(0)$. Pushouts of elements of $F_\Oo J_0^\infty$ are therefore as follows
\[
\begin{tikzcd}
\Oo(0) \ar{d}{\vee_\alpha F_\Oo(j_\alpha)} \ar{r} & A \ar{d}\\
\bigvee_\alpha F_\Oo(\hat \Zz^{0, \infty}_\alpha) \ar{r} & A \vee \left( \bigvee_\alpha F_\Oo(\hat \Zz^{0, \infty}_\alpha) \right),
\end{tikzcd}
\]
with each $\hat \Zz^{0, \infty}_\alpha$ equal to a $\hat \Zz^{0, \infty}_{q, n}$. Since the coproduct of free $(\Oo,\, d,\, \theta)$-algebras is the free $(\Oo,\, d,\, \theta)$-algebra on the sum of their generating modules (see the proof of Proposition \ref{prop: limits and colimits for curved algebras}), the composite of two such pushouts is equal to $A \to A \vee F_\Oo \left( \amalg_\alpha \hat \Zz^{0, \infty}_\alpha \coprod \amalg_\beta \hat \Zz^{0, \infty}_\beta \right)$. Hence a transfinite composition of such pushouts has the form $A \to A \vee F_\Oo \left( Z \right)$, with $Z$ a gr-acyclic gr-dg module whose components are free gr-dg modules $\hat \Zz^{0, \infty}_{q, n}$. 
The last identification is direct.
\end{proof}

We call \emph{$\sS$-split operad} what is defined to be $\Sigma$-split in \cite{Hinich97}. This definition extends without modification to complete curved operads.

\begin{thm}
\label{thm: model cat struct for curved algebras}
Let $(\Oo,\, d,\, \theta)$ be a complete curved operad which is $\sS$-split. The category $\Alg(\Oo)$ of $(\Oo,\, d,\, \theta)$-algebras is a cofibrantly generated model category with generating cofibrations $F_\Oo (I_0^\infty)$ and generating acyclic cofibrations $F_\Oo(J_0^\infty)$. The weak equivalences (resp. fibrations) are the maps that are graded quasi-isomorphisms (resp. strict surjections). 
\end{thm}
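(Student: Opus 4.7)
The plan is to apply the transfer Theorem \ref{thm: transfer theorem} to the free-forgetful adjunction $(F_\Oo, \#)$ from Proposition \ref{prop: free-forgetful adjunction for curved algebras}, using the cofibrantly generated model structure on $\compa(\ringK\textsf{-}\mathsf{Mod})$ established in Theorem \ref{thm: cofibrantly generated model structure on complete gr-dg modules}. Bicompleteness of $\Alg(\Oo)$ is Proposition \ref{prop: limits and colimits for curved algebras}, so the two conditions to check are: (i) $\#$ preserves filtered $\aleph_1$-colimits, and (ii) $\#$ takes relative $F_\Oo J_0^\infty$-cell complexes to graded quasi-isomorphisms. Once these are verified, the characterization of fibrations (resp.\ weak equivalences) as strict surjections (resp.\ graded quasi-isomorphisms) follows from the description in Theorem \ref{thm: transfer theorem} together with Propositions \ref{prop: fibrations} and the definition of $\Ww$.

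For (i), the proof of Proposition \ref{prop: limits and colimits for curved algebras} shows that filtered colimits in $\Alg(\Oo)$ are created by $\#$: the monad $\# \cdot F_\Oo$ preserves filtered colimits because $\otimes$ preserves colimits in each variable and colimits commute with colimits, so filtered colimits in $\Alg(\Oo)$ are computed on underlying complete gr-dg $\ringK$-modules. In particular $\#$ preserves filtered $\aleph_1$-colimits.

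For (ii), which is the main obstacle, I would use Lemma \ref{lemma: relative FOJ0i-cell complex} to reduce to maps of the form $j : A \to A \vee F_\Oo(Z, d_Z)$ where $(Z,d_Z)$ is a direct sum of the gr-acyclic complete gr-dg modules $\hat\Zz^{0,\infty}_{q,n}$. I would then apply the associated graded functor $\Gr$ and argue, as in the proof of Theorem \ref{thm: model structure on curved operads}, that $\Gr$ is strong symmetric monoidal on gr-flat objects (Lemma \ref{lemma:gr-flat is monoidal subcat}), commutes with filtered colimits and with direct sums, and is compatible with the free-algebra construction and the coproduct of algebras since these are assembled from coproducts, tensor products, and quotients/coequalizers. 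This yields a natural isomorphism
\[
\Gr\bigl(A \vee F_\Oo(Z,d_Z)\bigr) \;\cong\; \Gr(A)\, \vee_{\Gr\Oo}\, F_{\Gr\Oo}(\Gr Z),
\]
under which $\Gr(\#(j))$ is identified with the canonical inclusion $\Gr A \to \Gr A \vee_{\Gr\Oo} F_{\Gr\Oo}(\Gr Z)$. Since $\Gr Z$ is acyclic and the $\sS$-splitting of $\Oo$ induces an $\sS$-splitting of $\Gr\Oo$ (the splitting is a morphism of underlying $\sS$-modules, and $\Gr$ is functorial), the classical argument of Hinich \cite{Hinich97} applies: coproducts with free algebras on acyclic modules over $\sS$-split dg operads are quasi-isomorphisms. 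Hence $\Gr(\#(j))$ is a quasi-isomorphism, i.e.\ $\#(j)$ is a graded quasi-isomorphism. Transfinite composites and retracts of such maps remain graded quasi-isomorphisms (by two-out-of-three and closure of $\Ww$ under retracts), so (ii) follows.

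The main obstacle is this step (ii): the verification that $\Gr$ intertwines the coproduct $\vee$ of $\Oo$-algebras with the coproduct of $\Gr\Oo$-algebras and the construction $F_\Oo$ with $F_{\Gr\Oo}$, and the use of the $\sS$-splitting hypothesis to invoke Hinich's acyclicity argument in the gr-dg setting — all other ingredients (completeness, cocompleteness, the cell-complex description, and the right-lifting-property characterizations of fibrations and trivial fibrations on the base) are already available from earlier in the paper.
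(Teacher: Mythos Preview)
Your proposal is correct and follows essentially the same approach as the paper: apply the transfer Theorem \ref{thm: transfer theorem} to the adjunction $(F_\Oo,\#)$, use Proposition \ref{prop: limits and colimits for curved algebras} for bicompleteness and preservation of filtered colimits, reduce condition (ii) via Lemma \ref{lemma: relative FOJ0i-cell complex} to a single map $A\to A\vee F_\Oo(Z)$, pass through $\Gr$ using strong monoidality over a field, and invoke Hinich's argument for $\sS$-split dg operads. The only minor deviation is that the paper writes the target after $\Gr$ as $\Gr(A)\vee_{\Gr\Oo}\Gr\Oo\bigl(\amalg_\alpha\Gr(\hat\Zz^{0,\infty}_\alpha/\im d_\alpha^2)\bigr)$ using the explicit description of $F_\Oo(Z)$ from Lemma \ref{lemma: relative FOJ0i-cell complex}, rather than your $F_{\Gr\Oo}(\Gr Z)$; both are free $\Gr\Oo$-algebras on acyclic dg modules, so the conclusion is the same, and your closing remark about transfinite composites is unnecessary since Lemma \ref{lemma: relative FOJ0i-cell complex} already absorbs the transfinite composition.
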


\begin{proof}
We apply Theorem \ref{thm: transfer theorem} to the free-forgetful adjunction given in Proposition \ref{prop: free-forgetful adjunction for curved algebras}. By Proposition \ref{prop: limits and colimits for curved algebras}, the category $\Alg(\Oo)$ is complete and cocomplete. We have seen in the proof of Proposition \ref{prop: limits and colimits for curved algebras} that the forgetful functor $\#$ preserves filtered colimit; in particular it preserves filtered $\aleph_1$-colimits. 
We finally have to show that $\#$ maps relative $F_\Oo(J_0^\infty)$-cell complexes to weak equivalences.  By Lemma \ref{lemma: relative FOJ0i-cell complex}, a relative $F_\Oo J_0^\infty$-cell complex can be written as a map $j : A \to A \vee F_\Oo\left(\amalg_\alpha \hat \Zz_\alpha^{0, \infty}\right)$. By means of the description of the pushouts of $(\Oo,\, d,\, \theta)$-algebras given in the proof of Proposition \ref{prop: limits and colimits for curved algebras}, we can compute
\begin{align*}
A \vee F_\Oo\left(\amalg_\alpha \hat \Zz^{0, \infty}_{\alpha} \right) & \cong A \vee \Oo\left(\amalg_\alpha \hat \Zz^{0, \infty}_{\alpha}/\im {d_\alpha}^2 \right)\\
& \cong \Oo\left(A^\# \amalg \left(\amalg_\alpha \hat \Zz^{0, \infty}_{\alpha}/\im {d_\alpha}^2 \right)\right)/\left(I_A \amalg \left(\gamma_A^0 - \id_{\Oo(0)}\right)\right).
\end{align*}
The functor
\[ \Gr : \compa(\Aa) \to (\dg\Aa)^{\ob \nN},\ (V,\, F,\, d_V) \mapsto (\Gr_\bullet V,\, \Gr d_V) \]
commutes with direct sums (see for example \cite[Proposition 7.3.8]{bF17}) and satisfies, for gr-flat complete modules, $\Gr M \otimes_{\Gr} \Gr N \cong \Gr (M \hat \otimes N)$ (see Lemma \ref{lemma:gr-flat is monoidal subcat}). Here $\ringK$ is a field of characteristic $0$, so any module is flat. It follows that the functor $\Gr$ commutes with the free $U \Oo$-algebra functor and we get
\[
\Gr \left(A \vee F_\Oo\left(\amalg_\alpha \hat \Zz^{0, \infty}_{\alpha} \right)\right) \cong \Gr (A) \vee_{dg\ \Oo\text{-}alg.} \Gr \Oo \left(\amalg_\alpha \Gr \left(\hat \Zz^{0, \infty}_{\alpha}/\im {d_\alpha}^2\right) \right).
\]
By the proof of Theorem 4.1.1 in \cite{Hinich97}, using the fact that $\Oo$ is $\sS$-split, we obtain that the map $\Gr(j^\#)$ is a quasi-isomorphism.
\end{proof}

We conclude this section by computing the cofibrant objects in the model category of complete $(\Oo,\, d,\, \theta)$-algebras. 
In the following results, a complete $(\Oo,\, d,\, \theta)$-algebra $A$ is called \emph{cofibrant} when the morphism $\Oo(0) \to A$ is a cofibration. 
We use the term \emph{quasi-free} for a complete $(\Oo,\, d,\, \theta)$-algebras whose underlying module is free when we forget the predifferential.

\begin{prop}
\label{prop: cofibrant complete curved algebra}
A complete $(\Oo,\, d,\, \theta)$-algebra is cofibrant if and only if it is a retract of a quasi-free complete $(\Oo,\, d,\, \theta)$-algebra $\left(\Oo(S),\, d\right)/\left(\im\left(\eta \otimes d^2-\theta \otimes \id\right)\right)$, where $S$ is a complete module endowed with an exhaustive filtration
\[ S_0 = \{ 0\} \subset S_1 \subset S_2 \subset \dots \subset \colim_i S_i = S \]
of free modules such that $S_{i-1} \rightarrowtail S_i$ are split monomorphisms of complete modules with cokernels isomorphic to a sum of complete modules
\[ S_i /S_{i-1} \cong \coprod_\alpha \left(\xi^\alpha \cdot \ringK \amalg \hat \Zz^{0, \infty}_{q_\alpha+1, n_\alpha}\right) \]
where $\xi^\alpha$ is in homological degree $n_\alpha +1$ and filtration degree $q_\alpha$. The predifferential $d$ is the one of $\hat \Zz^{0, \infty}_{q_\alpha+1, n_\alpha}$ on $\hat \Zz^{0, \infty}_{q_\alpha+1, n_\alpha}$ and
\[
d(\xi^\alpha) + \zeta^\alpha \in \left(\Oo (S_{i-1}),\, d\right)/\left( \im\left(\eta \otimes d^2-\theta\otimes \id \right)\right), 
\]
with $\zeta^\alpha$ is a generator of the gr-dg module $\hat \Zz^{0, \infty}_{q_\alpha+1, n_\alpha}$. 
\end{prop}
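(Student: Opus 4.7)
The strategy is to mirror the argument given for Proposition~\ref{prop: cofibrant complete curved operad}, replacing the free curved operad functor $\cfree$ by the free curved algebra functor $F_\Oo$ and the coproduct $\vee$ of curved operads by the coproduct $\vee$ of curved $\Oo$-algebras described in the proof of Proposition~\ref{prop: limits and colimits for curved algebras}. Both directions follow once we have a good description of relative $F_\Oo I_0^\infty$-cell complexes starting from $\Oo(0)$; the ``if'' direction then comes from the fact that cofibrations are closed under retracts, while the ``only if'' direction is Hovey's Proposition~2.1.18 in \cite{mH91}, which identifies cofibrations with retracts of relative $F_\Oo I_0^\infty$-cell complexes.

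First I would analyze a single pushout of a generating cofibration. Given a diagram
\[
\begin{tikzcd}
\bigvee_\alpha F_\Oo(\hat{\Zz}^{1,\infty}_{\alpha}) \ar{d}\ar{r}{f} & A \ar{d} \\
\bigvee_\alpha F_\Oo(\hat{\Bb}^{1,\infty}_{\alpha}) \ar{r} & P,
\end{tikzcd}
\]
with $\hat{\Zz}^{1,\infty}_\alpha = \hat{\Zz}^{1,\infty}_{q_\alpha,n_\alpha}$ and $\hat{\Bb}^{1,\infty}_\alpha = \hat{\Zz}^{0,\infty}_{q_\alpha,n_\alpha+1}\amalg\hat{\Zz}^{0,\infty}_{q_\alpha+1,n_\alpha}$, let $\xi^\alpha$ and $\zeta^\alpha$ denote the respective generators and let $z^\alpha\in A$ be the image under $f$ of the generator of $\hat{\Zz}^{1,\infty}_\alpha$. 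By adjunction and the explicit formula for $\vee$ from the proof of Proposition~\ref{prop: limits and colimits for curved algebras}, the pushout $P$ is
\[
\left(\Oo\bigl(A^{\#}\amalg\coprod_\alpha(\xi^\alpha\cdot\ringK\amalg\hat{\Zz}^{0,\infty}_{q_\alpha+1,n_\alpha})\bigr),\, d_P\right) \bigg/ \left(I_A\amalg (\gamma_A^0-\id_{\Oo(0)})\amalg \im(d_P^2-\theta\otimes\id)\right),
\]
where $d_P$ extends $d_A$ together with the differential on each cone and the defining equation $d_P(\xi^\alpha)=z^\alpha-\zeta^\alpha$. This is exactly the shape of extension described in the statement.

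Next, iterating through a transfinite composition indexed by the ordinal filtration and starting from $\Oo(0) = F_\Oo(0)$, the underlying (un-quotiented) object of the colimit is $\Oo(S)$ for a complete module $S$ with an exhaustive filtration $0=S_0\subset S_1\subset \cdots$ whose successive quotients $S_i/S_{i-1}$ are of the precise form $\coprod_\alpha(\xi^\alpha\cdot\ringK\amalg\hat{\Zz}^{0,\infty}_{q_\alpha+1,n_\alpha})$ and on which the predifferential satisfies $d(\xi^\alpha)+\zeta^\alpha\in \Oo(S_{i-1})/(\im(d^2-\theta\otimes\id))$. Taking successive quotients by the ideals $I_{A}$ appearing at each stage collapses to a single quotient by $\im(d^2-\theta\otimes\id)$, using the fact that $\Oo$ already encodes its own relations freely through $F_\Oo$. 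This yields exactly the quasi-free model described in the statement, and cofibrant objects are then retracts of such algebras by Hovey's characterization.

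The main technical obstacle will be the bookkeeping showing that the filtered colimit of these pushouts gives an algebra whose underlying object is $\Oo(S)/(\im(d^2-\theta\otimes \id))$ with the claimed filtered-free structure on $S$, rather than something more complicated; in particular, one must verify that the completion, the $0$-ary operations of $\Oo$, and the passage to the quotient by $(\im(d^2-\theta\otimes\id))$ all commute with the transfinite composition. This is handled exactly as in the proof of Lemma~\ref{lemma: relative cfreeJ-cell complex} and Proposition~\ref{prop: cofibrant complete curved operad}, using that $\otimes$ preserves filtered colimits in each variable (so $F_\Oo$ does too, colimitwise in $\Oo$), and that the ideal generated by the image of $d^2-\theta\otimes\id$ on each new generator is created freshly at each stage without interfering with the ideals of the earlier $S_{i-1}$.
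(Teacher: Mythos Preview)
Your proposal is correct and takes essentially the same approach as the paper: the paper's own proof consists of the single sentence ``The proof is similar to the proof of Proposition~\ref{prop: cofibrant complete curved operad},'' and you have spelled out precisely that adaptation, replacing $\cfree$ by $F_\Oo$ and the operadic coproduct by the algebra coproduct. If anything, you have supplied more detail than the paper does.
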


\begin{proof}
The proof is similar to the proof of Proposition \ref{prop: cofibrant complete curved operad}.
\end{proof}

\begin{cor}
Equivalently, a complete $(\Oo,\, d,\, \theta)$-algebra is cofibrant if and only if it is a retract of a quasi-free complete $(\Oo,\, d,\, \theta)$-algebra $\left(\Oo(\tilde S),\, d\right)$, where $\tilde S$ is a complete module endowed with an exhaustive filtration
\[ \tilde S_0 = \{ 0\} \subset \tilde S_1 \subset \tilde S_2 \subset \dots \subset \colim_i \tilde S_i = \tilde S \]
of free modules such that $\tilde S_{i-1} \rightarrowtail \tilde S_i$ are split monomorphisms of complete modules with cokernels isomorphic to a sum of complete modules
\[ \tilde S_i /\tilde S_{i-1} \cong \coprod_\alpha \left(\xi^\alpha \cdot \ringK \amalg \zeta^\alpha \cdot \ringK\right) \]
where $\xi^\alpha$ is in homological degree $n_\alpha +1$ and filtration degree $q_\alpha$ and $\zeta^\alpha$ is in homological degree $n_\alpha $ and filtration degree $q_\alpha+1$. The predifferential $d$ is such that $d(\xi^\alpha) + \zeta^\alpha \in \left(\Oo (\tilde S_{i-1}),\, d_{i-1}\right)$, and $d(\zeta^\alpha)$ is obtained by the fact that $d^2(\xi^\alpha) = \theta\otimes \xi^\alpha$. 
\end{cor}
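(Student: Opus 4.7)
The corollary is a repackaging of Proposition~\ref{prop: cofibrant complete curved algebra}, so my plan is to produce an explicit isomorphism of quasi-free $(\Oo,d,\theta)$-algebras identifying the two descriptions. The class of retracts is then automatically the same.

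Starting from the proposition's data, where $S = \colim_i S_i$ has successive cokernels of the form $\coprod_\alpha\bigl(\xi^\alpha\cdot\ringK\amalg\hat\Zz^{0,\infty}_{q_\alpha+1,n_\alpha}\bigr)$, I would write $\zeta^\alpha$ for the top generator of the acyclic factor $\hat\Zz^{0,\infty}_{q_\alpha+1,n_\alpha}$ (degree $n_\alpha$, filtration $q_\alpha+1$) and define $\tilde S_i\subset S_i$ to be the complete sub-module generated by the $\xi^\alpha$'s and $\zeta^\alpha$'s appearing up to stage $i$. Using the direct-sum decomposition of each cokernel, $\tilde S_{i-1}\rightarrowtail\tilde S_i$ inherits a split monomorphism structure whose cokernel has exactly the form $\coprod_\alpha\bigl(\xi^\alpha\cdot\ringK\amalg\zeta^\alpha\cdot\ringK\bigr)$ demanded by the corollary. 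The inclusion $\tilde S\hookrightarrow S$ induces a map of $(\Oo,d,\theta)$-algebras
\[
\Phi\colon F_{\Oo}(\tilde S,d') \longrightarrow \bigl(\Oo(S),d\bigr)\big/\bigl(\im(d^{2}-\theta\otimes\id)\bigr),
\]
where the predifferential $d'$ on the right-hand side is restricted to $\tilde S$ (noting $d(\xi^\alpha)=-\zeta^\alpha+y_\alpha$ with $y_\alpha\in\Oo(\tilde S_{i-1})$ by construction, and where $d'(\zeta^\alpha)$ is forced by $d^{2}(\xi^\alpha)=\theta\otimes\xi^\alpha$, giving exactly the formula claimed in the corollary).

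The key step is proving $\Phi$ is an isomorphism by an induction on filtration stage. For surjectivity, I would show inductively that every deeper generator $\zeta^\alpha_k\coloneqq d^{k}\zeta^\alpha\in\hat\Zz^{0,\infty}_{q_\alpha+1,n_\alpha}$ with $k\ge 1$ is already hit by the image of $\Phi$. From $d(\xi^\alpha)+\zeta^\alpha=y_\alpha\in\Oo(S_{i-1})$, applying $d$ inside the quotient gives
\[
\theta\otimes\xi^\alpha \;=\; d^{2}(\xi^\alpha) \;=\; -d(\zeta^\alpha)+d(y_\alpha),
\]
hence $d(\zeta^\alpha)=-\theta\otimes\xi^\alpha+d(y_\alpha)$, which lies in $\Oo(\tilde S_i)$ by the induction hypothesis applied to $y_\alpha$. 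Iterating yields $\zeta^\alpha_{k+1}=d(\zeta^\alpha_k)\in\Oo(\tilde S)$ for all $k\ge 1$, so the quotient relation absorbs all higher acyclic generators into polynomial expressions involving $\theta$, $\xi^\alpha$, $\zeta^\alpha$, and earlier-stage elements. Injectivity follows from the fact that, stage by stage, the generators of $S$ removed in passing to $\tilde S$ are precisely those identified with expressions in $\tilde S$ by the relation $d^{2}-\theta\otimes\id$; one makes this precise by comparing the associated graded of each stage using the splittings. The reverse direction (every algebra of the corollary's form is also of the proposition's form) follows by inverting this construction: adjoin formal acyclic complexes $\hat\Zz^{0,\infty}_{q_\alpha+1,n_\alpha}$ over each $\zeta^\alpha$ and extend the predifferential tautologically, so that the $(d^{2}-\theta\otimes\id)$-quotient recovers the original algebra.

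The main technical obstacle will be bookkeeping the filtration splittings in the complete setting, so that the cokernel splittings of $S$ genuinely restrict to cokernel splittings of $\tilde S$, and so that the induction on $i$ against the completeness of the modules is well-founded (this is where we use that completion is computed as a limit and the filtration is exhaustive). Once this is set up, the transport of the predifferential is purely formal: the corollary's prescription $d(\zeta^\alpha)=-\theta\otimes\xi^\alpha+d(y_\alpha)$ is exactly what the isomorphism $\Phi$ induces.
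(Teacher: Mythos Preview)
Your proposal is correct and is essentially the natural way to spell out the equivalence; the paper itself gives no proof for this corollary, treating it as an immediate reformulation of Proposition~\ref{prop: cofibrant complete curved algebra}. Your construction is precisely the algebra analogue of the isomorphism established in Lemma~\ref{lemma: relative FOJ0i-cell complex}, where $F_\Oo(Z,d_Z)\cong\Oo(Z/\im d_Z^2)$ shows that the higher generators $d^k\zeta^\alpha$ of each $\hat\Zz^{0,\infty}_{q_\alpha+1,n_\alpha}$ become redundant in the quotient by $\bigl(\im(d^2-\theta\otimes\id)\bigr)$, being expressible via iterated applications of $\theta$ and lower-stage elements; your inductive surjectivity argument recovers exactly this. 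One small remark: you write the domain of $\Phi$ as $F_\Oo(\tilde S,d')$, but the corollary's algebra is $(\Oo(\tilde S),d)$ with no further quotient; these agree because the prescription $d(\zeta^\alpha)=d(y_\alpha)-\theta\otimes\xi^\alpha$ forces $d^2=\theta\otimes\id$ on generators (and hence on all of $\Oo(\tilde S)$ by the derivation property and $d_\Oo^2=[\theta,-]$), so the ideal $\bigl(\im(d^2-\theta\otimes\id)\bigr)$ is already zero---it would be worth saying this explicitly.
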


\subsection{Homotopy category of algebras over a curved operad}
\label{section: homotopy category of algebras over a curved operad}

We now come to the study of the homotopy category. We denote by $\Hoalg(\Oo)$ the homotopy category of $\Alg(\Oo)$. We show how a morphism of complete curved operads can provide a Quillen adjunction or a Quillen equivalence between the model category structures.

\begin{defi}
Let $\alpha : (\Oo,\, d,\, \theta) \to (\Oo',\, d',\, \theta')$ be a morphism of complete curved operads.
\begin{enumerate}
\item
We denote by $\alpha_* : \Alg(\Oo') \to \Alg(\Oo)$ the \emph{direct image functor} given by precomposition
\[ (\Oo,\, d,\, \theta) \xrightarrow{\alpha} (\Oo',\, d',\, \theta') \to \End_A. \]
This functor is exact since only the algebra structure changes.
\item
We denote by $\alpha^* : \Alg(\Oo) \to \Alg(\Oo')$ the \emph{inverse image functor}, left adjoint to $\alpha_*$, given by the following definition: for $(A,\, d_A)$ in $\Alg(\Oo)$,
\[ \alpha^*(A,\, d_A) \coloneqq \Oo'(A^\#)/\left(\alpha(\id_{A^\#})(I_A) \right). \]
We can check that $\alpha^*(A,\, d_A)$ satisfies ${d_{\alpha^*(A,\, d_A)}}^2 = \theta' \otimes \id_{\alpha^*(A,\, d_A)}$. By a computation similar to the one in the proof of Proposition \ref{prop: free-forgetful adjunction for curved algebras}, this follows from the fact that $\im(\eta \otimes {d_A}^2 - \theta \otimes \id_A) \subset I_A$ by the fact that $(A,\, d_A)$ is a $(\Oo,\, d,\, \theta)$-algebra and since $\alpha(\theta) = \theta'$.
\end{enumerate}
\end{defi}

This pair of adjoint functors form a Quillen pair and therefore provides an adjunction on the level of the homotopy categories.

\begin{thm}
\label{thm: inverse and direct image functor are adjoint derived functors}
Inverse and direct image functors form a Quillen pair, that is we have the adjunction
\[
\xymatrix{\lL \alpha^* : \Hoalg(\Oo) \ar@<.5ex>@^{->}[r] & \Hoalg(\Oo') : \rR \alpha_* = \alpha_*. \ar@<.5ex>@^{->}[l]}
\]
\end{thm}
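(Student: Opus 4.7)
The plan is to verify that $(\alpha^*, \alpha_*)$ is a Quillen pair, from which the derived adjunction on $\Hoalg$ follows by standard model category theory. The existence of the adjunction $\alpha^* \dashv \alpha_*$ itself is part of the construction (and mirrors the extension/restriction of scalars adjunction for modules over rings), so the only thing left to check is compatibility with the model structures described in Theorem \ref{thm: model cat struct for curved algebras}. I would use the standard criterion: it suffices to show that the right adjoint $\alpha_*$ preserves fibrations and acyclic fibrations.

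The key observation driving the argument is that the functor $\alpha_*$ does not alter the underlying complete gr-dg module of a $(\Oo',d',\theta')$-algebra: given $(A,d_A)$ with structure map $\gamma_A : \Oo' \to \End_A$, the algebra $\alpha_*(A,d_A)$ has the same underlying gr-dg module $(A,d_A)$, just with structure map $\gamma_A \cdot \alpha : \Oo \to \Oo' \to \End_A$. Equivalently, $\alpha_*$ commutes strictly with the forgetful functors $\# : \Alg(\Oo') \to \mathsf{compl.\, gr}\textsf{-}\mathsf{dg}\,\ringK\textsf{-}\mathsf{Mod}$ and $\# : \Alg(\Oo) \to \mathsf{compl.\, gr}\textsf{-}\mathsf{dg}\,\ringK\textsf{-}\mathsf{Mod}$.

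With this observation in hand, the two required preservation properties are immediate from Theorem \ref{thm: model cat struct for curved algebras}: fibrations in either category are exactly the maps whose underlying gr-dg module maps are strict surjections, and weak equivalences are exactly the maps whose underlying gr-dg module maps are graded quasi-isomorphisms. Since $\alpha_*$ leaves the underlying map unchanged, it preserves (acyclic) fibrations on the nose. Therefore $(\alpha^*, \alpha_*)$ is a Quillen pair, yielding the derived adjunction between the homotopy categories.

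Finally, to see the refinement $\rR\alpha_* = \alpha_*$ (no fibrant replacement required), note that by the same argument $\alpha_*$ actually preserves \emph{all} weak equivalences, not just those between fibrant objects. Alternatively, one may observe that every object in $\Alg(\Oo')$ is fibrant: the unique map to the terminal object (the zero algebra, which is a $(\Oo',d',\theta')$-algebra since $\theta' \otimes \id_0 = 0$) is vacuously a strict surjection, and more generally every fibration condition reduces to a strict surjectivity condition that is automatic for maps to $0$. There is no genuine obstacle in this proof; the only thing to notice is that the defining properties of fibrations and weak equivalences in $\Alg(\Oo)$ and $\Alg(\Oo')$ are transferred from the same underlying category, so the forgetful-invariant functor $\alpha_*$ interacts with them trivially.
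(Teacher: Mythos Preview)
Your proof is correct but proceeds via the dual criterion to the one used in the paper. You verify that the right adjoint $\alpha_*$ preserves fibrations and acyclic fibrations, which is immediate once you observe that $\alpha_*$ is the identity on underlying complete gr-dg modules and that both classes are created by the forgetful functor. The paper instead appeals to the criterion that the left adjoint $\alpha^*$ preserve cofibrations and acyclic cofibrations (citing \cite[Proposition~8.5.3]{pH03}), and checks this on generating (acyclic) cofibrations by computing explicitly that $\alpha^*\bigl(F_\Oo(\hat\Zz^{k,\infty}_{q,n})\bigr)\cong F_{\Oo'}(\hat\Zz^{k,\infty}_{q,n})$ and similarly for $\hat\Bb^{1,\infty}_{q,n}$, using that $\alpha(\theta)=\theta'$. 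Your route is shorter and requires no computation; the paper's route has the mild advantage of recording concretely how $\alpha^*$ acts on the generating cells, which is sometimes useful when one later needs to analyse cofibrant replacements or cell structures. Your remark that every object is fibrant (so $\rR\alpha_*=\alpha_*$) is also correct and not made explicit in the paper's proof of this theorem.
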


\begin{proof}
From Proposition 8.5.3 in \cite{pH03}, it is enough to prove that the left adjoint functor $\alpha^*$ preserves cofibrations and acyclic cofibrations. By means of the fact that (acyclic) cofibrations are retract of relative $I_0^\infty$-cell (resp. $J_0^\infty$-cell) complexes (see Proposition 11.2.1 in \cite{pH03}), it is enough to prove the result on generating (acyclic) cofibrations. We have $\alpha^*(\Oo(0),\, 0) \cong \Oo'(0)$ since $I_{\Oo(0)} = \left( \mu \otimes (\nu_1 \otimes \cdots \otimes \nu_n) - \gamma_\Oo(\mu \otimes \nu_1 \otimes \cdots \otimes \nu_n) \otimes 1,\ \mu \in \Oo(n),\ \nu_i \in \Oo(0)\right)$. Similarly, we show that $\alpha^*(F_\Oo(\hat \Zz_{q, n}^{0, \infty})) \cong F_\Oo'(\hat \Zz_{q, n}^{0, \infty})$ by means of the fact that the ideal $\im(\eta \otimes {d_{\hat \Zz_{q, n}^{0, \infty}}}^2 - \theta \otimes \id_{\hat \Zz_{q, n}^{0, \infty}}) \subset I_{F_\Oo(\hat \Zz_{q, n}^{0, \infty})}$ is sent to the ideal $\im(\eta \otimes {d_{\hat \Zz_{q, n}^{0, \infty}}}^2 - \alpha(\theta) \otimes \id_{\hat \Zz_{q, n}^{0, \infty}})$ with $\alpha(\theta) = \theta'$. This shows that $\alpha^*$ preserves acyclic cofibrations. The same reasoning shows that $\alpha^*$ sends (generating) cofibrations to (generating) cofibrations.
\end{proof}

Finally, we compare the categories $\Hoalg(\Oo)$ and $\Hoalg(\Oo')$ when the morphism $\alpha : (\Oo,\, d,\, \theta) \to (\Oo',\, d',\, \theta')$ is a weak equivalence, that is a graded quasi-isomorphism, of $\sS$-split complete curved operads compatible with the splitting (that is $\alpha$ sends the splitting to the splitting).

\begin{thm}
\label{thm: equivalence of the homotopy categories}
Let $\alpha : (\Oo,\, d,\, \theta) \to (\Oo',\, d',\, \theta')$ be a graded quasi-isomorphism of $\sS$-split complete curved operads compatible with the splittings. Then the functors $\alpha^*$ and $\alpha_*$ form a pair of Quillen equivalences, that is the functors
\[
\xymatrix{\lL \alpha^* : \Hoalg(\Oo) \ar@<.5ex>@^{->}[r] & \Hoalg(\Oo') : \rR \alpha_* = \alpha_* \ar@<.5ex>@^{->}[l]}
\]
are equivalences of the homotopy categories.
\end{thm}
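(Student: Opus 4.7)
The plan is to upgrade the Quillen adjunction of Theorem \ref{thm: inverse and direct image functor are adjoint derived functors} to a Quillen equivalence. Every object of both $\Alg(\Oo)$ and $\Alg(\Oo')$ is fibrant, since fibrations are strict surjections and the terminal object is the zero object, so the standard criterion for a Quillen equivalence reduces to showing that the unit $\eta_A : A \to \alpha_* \alpha^* A$ is a graded quasi-isomorphism for every cofibrant $A \in \Alg(\Oo)$. Moreover, $\alpha_*$ is the identity on underlying complete gr-dg modules, so it both preserves and reflects weak equivalences; the question is therefore whether the canonical map $A^{\#} \to (\alpha^* A)^{\#}$ is a graded quasi-isomorphism.

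Second, I would reduce to the case of quasi-free cofibrant algebras. By Proposition \ref{prop: cofibrant complete curved algebra}, any cofibrant $(\Oo,\, d,\, \theta)$-algebra is a retract of a quasi-free algebra $A = (\Oo(\tilde S),\, d)$ obtained as a transfinite composition of cell attachments $A_{i-1} \to A_i = A_{i-1} \vee F_\Oo(\hat \Bb^{1,\infty}_{q,n})$. Graded quasi-isomorphisms are closed under retracts and under the relevant filtered colimits, since $\Gr$ preserves directed colimits of monomorphisms under the AB5 hypothesis, so it suffices to treat each finite stage $A_i$. Using the natural isomorphism $\alpha^* F_\Oo(V,\, d_V) \cong F_{\Oo'}(V,\, d_V)$ (which follows from uniqueness of left adjoints together with the evident identity $\# \cdot \alpha_* = \#$), the cell attachments are transported along $\alpha$: one has $\alpha^* A_i \cong \alpha^* A_{i-1} \vee F_{\Oo'}(\hat \Bb^{1,\infty}_{q,n})$. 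The inductive step reduces to the statement that a graded quasi-isomorphism $A_{i-1}^{\#} \to (\alpha^* A_{i-1})^{\#}$, extended along the same free cell on each side, still yields a graded quasi-isomorphism on underlying modules.

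For the final step I would apply the strong monoidal functor $\Gr$. Because the filtration of $\tilde S$ in Proposition \ref{prop: cofibrant complete curved algebra} is built from gr-flat cells, $\Gr$ commutes with the composition products involved (Lemma \ref{lemma:gr-flat is monoidal subcat}) and with the pushouts defining the cell attachments. The problem then becomes the analogous base-change statement for the quasi-isomorphism $\Gr \alpha : \Gr \Oo \to \Gr \Oo'$ of $\sS$-split dg operads in the classical Hinich model structure, which is a standard base-change result along the lines of the argument invoked in the proof of Theorem \ref{thm: model cat struct for curved algebras}; the compatibility of $\alpha$ with the $\sS$-splittings is precisely what allows that classical argument to apply. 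The main obstacle will be the commutation of $\Gr$ with the pushouts in $\Alg(\Oo)$, since these are not computed as colimits in gr-dg modules but as quotients by an ideal: one must verify that the cells are attached along strict monomorphisms with gr-flat cokernel so that the short exact sequences at the level of $\Gr$ remain exact, thereby legitimizing the transfer across the associated graded.
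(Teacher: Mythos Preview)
Your proposal is correct and shares the paper's overall strategy: reduce to checking that the unit $A\to\alpha_*\alpha^*A$ is a graded quasi-isomorphism on cofibrant $A$ (using that $\alpha_*$ reflects weak equivalences), and then pass through $\Gr$ to the classical Hinich base-change argument for $\sS$-split dg operads.

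The paper, however, takes a shorter route at the second step. Rather than running a cell-by-cell induction on the quasi-free structure of $A$ and worrying about the commutation of $\Gr$ with pushouts in $\Alg(\Oo)$, the paper uses the explicit quotient description $\alpha^*(A)=\Oo'(A^\#)/(\alpha(\id_{A^\#})(I_A))$ directly. Since over a field of characteristic~$0$ the functor $\Gr$ is strong monoidal and commutes with direct sums, one obtains in one stroke
\[
\Gr\,\alpha^*(A,\,d_A)\;\cong\;(\Gr\Oo')(\Gr A^\#)\big/\big((\Gr\alpha)(\id_{\Gr A^\#})(I_{\Gr A})\big)\;\cong\;(\Gr\alpha)^*(\Gr A),
\]
using $\Gr I_A\cong I_{\Gr A}$. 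This reduces the statement immediately to Hinich's Section~4.7 for the quasi-isomorphism $\Gr\alpha$ of $\sS$-split dg operads, without ever unpacking the cell structure of $A$ or confronting the pushout issue you flag. Your inductive argument would also work, but the identity $\Gr\circ\alpha^*\cong(\Gr\alpha)^*\circ\Gr$ obtained from the explicit formula is what lets the paper dispatch the proof in a few lines.
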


\begin{proof}
The functors $\alpha_*$ reflects weak equivalences so by Corollary 1.3.16 in \cite{mH91}, it is enough to show that the unit of the adjunction $A \to \alpha_*(\alpha^*(A))$ is a weak equivalence, that is a graded quasi-isomorphism, for every cofibrant $(\Oo,\, d,\, \theta)$-algebra $(A,\, d_A)$. Since we are working over a field $\ringK$ of characteristic $0$, we have already seen that the functor $\Gr$ commutes with direct sums and preserves the tensor products. It follows that
\[ \Gr \alpha^*(A,\, d_A) \cong (\Gr \Oo')(\Gr A^\#)/\left((\Gr \alpha)(\id_{\Gr A^\#})(I_{\Gr A}) \right), \]
since $\Gr I_A \cong I_{\Gr A}$. The result now follows from the proof given in Section 4.7 in \cite{Hinich97}.
\end{proof}

\bibliographystyle{alpha}
\bibliography{bib}

\end{document}